\theoremstyle{plain}
\newtheorem{corollary}{Corollary}
\newtheorem{lemma}{Lemma}
\newtheorem{proposition}{Proposition}
\newtheorem{remark}{Remark}
\newtheorem{theorem}{Theorem}
\numberwithin{equation}{section}
\begin{document}
\title[]{A determination of the blowup solutions to the focusing NLS with mass equal to the mass of the soliton}

\author{Benjamin Dodson}

\begin{abstract}
In this paper we prove rigidity for blowup solutions to the focusing, mass-critical nonlinear Schr{\"o}dinger equation in dimensions $2 \leq d \leq 15$ with mass equal to the mass of the soliton. We prove that the only such solutions are the solitons and the pseudoconformal transformation of the solitons. We show that this implies a Liouville result for the nonlinear Schr{\"o}dinger equation.

\end{abstract}
\maketitle

\section{Introduction}
In this paper we continue the study begun in \cite{dodson2021determination} of the focusing, mass-critical nonlinear Schr{\"o}dinger equation
\begin{equation}\label{1.1}
i u_{t} + \Delta u + |u|^{\frac{4}{d}} u = 0, \qquad u(0,x) = u_{0}(x) \in L^{2}(\mathbb{R}^{d}).
\end{equation}
In \cite{dodson2021determination} we proved a rigidity result for solutions to $(\ref{1.1})$ in one dimension with mass equal to the mass of the soliton. In this paper we address higher dimensions.

In general, the Hamiltonian equation
\begin{equation}\label{1.1.1}
i u_{t} + \Delta u + |u|^{p - 1} u = 0,
\end{equation}
has the scaling symmetry
\begin{equation}\label{1.1.2}
u(t,x) \mapsto v(t,x) = \lambda^{\frac{2}{p - 1}} u(\lambda^{2} t, \lambda x).
\end{equation}
That is, if $u$ solves $(\ref{1.1.1})$, then $v$ solves $(\ref{1.1.1})$ for any $\lambda > 0$, where $v$ is given by $(\ref{1.1.2})$. In particular, $(\ref{1.1})$ is called $L^{2}$-critical or mass-critical, since if $u$ solves $(\ref{1.1})$, then
\begin{equation}\label{1.5.1}
v(t,x) = \lambda^{d/2} u(\lambda^{2} t, \lambda x)
\end{equation}
is also a solution to $(\ref{1.1})$ with initial data $v_{0} = \lambda^{d/2} u_{0}(\lambda x)$. A change of variables calculation verifies that $\| v_{0} \|_{L^{2}} = \| u_{0} \|_{L^{2}}$.\medskip

As in one dimension, the scaling symmetry $(\ref{1.5.1})$ completely controls the local well-posedness theory of $(\ref{1.1})$. Indeed, \cite{cazenave1990cauchy} proved
\begin{theorem}\label{t1.1}
The initial value problem $(\ref{1.1})$ is locally well-posed for any $u_{0} \in L^{2}$.
\begin{enumerate}
\item For any $u_{0} \in L^{2}$ there exists $T(u_{0}) > 0$ such that $(\ref{1.1})$ is locally well-posed on the interval $(-T, T)$.

\item If $\| u_{0} \|_{L^{2}}$ is small then $(\ref{1.1})$ is globally well-posed, and the solution scatters both forward and backward in time. That is, there exist $u_{+}$, $u_{-} \in L^{2}(\mathbb{R}^{d})$ such that
\begin{equation}\label{1.5.2}
\lim_{t \nearrow +\infty} \| u(t) - e^{it \Delta} u_{+} \|_{L^{2}} = 0, \qquad \lim_{t \searrow -\infty} \| u(t) -  e^{it \Delta} u_{-} \|_{L^{2}} = 0.
\end{equation}

\item If $I$ is the maximal interval of existence for a solution to $(\ref{1.1})$ with initial data $u_{0}$, $u$ is said to blow up forward in time if
\begin{equation}\label{1.6.1}
\lim_{T \nearrow \sup(I)} \| u \|_{L_{t,x}^{\frac{2(d + 2)}{d}}([0, T] \times \mathbb{R}^{d})} = +\infty.
\end{equation}
If $u$ does not blow up forward in time, then $\sup(I) = +\infty$ and $u$ scatters forward in time.

\item If $\sup(I) < \infty$, then for any $s > 0$,
\begin{equation}\label{1.7.1}
\lim_{t \nearrow \sup(I)} \| u(t) \|_{H^{s}} = +\infty.
\end{equation}

\item Time reversal symmetry implies that the results corresponding to $(3)$ and $(4)$ also hold going backward in time.
\end{enumerate}
\end{theorem}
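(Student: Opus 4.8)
The plan is to derive all five assertions from the Cazenave--Weissler contraction mapping argument in the scaling-critical Strichartz space. Write $\rho:=\frac{2(d+2)}{d}$ and let $\rho':=\frac{2(d+2)}{d+4}$ be its conjugate exponent; the pair $(\rho,\rho)$ is admissible, $\frac{2}{\rho}+\frac{d}{\rho}=\frac{d}{2}$. Setting $S(I):=L^\infty_t L^2_x(I\times\mathbb R^d)\cap L^\rho_{t,x}(I\times\mathbb R^d)$, the two tools are the Strichartz estimates
\[\|e^{it\Delta}f\|_{S(\mathbb R)}\lesssim\|f\|_{L^2},\qquad\Big\|\int_0^t e^{i(t-s)\Delta}F(s)\,ds\Big\|_{S(I)}\lesssim\|F\|_{L^{\rho'}_{t,x}(I\times\mathbb R^d)},\]
together with the pointwise bound $\big||a|^{4/d}a-|b|^{4/d}b\big|\lesssim\big(|a|^{4/d}+|b|^{4/d}\big)|a-b|$, valid in every dimension. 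Since the equation is $L^2$-critical one has the exact identity $\rho'\cdot\frac{d+4}{d}=\rho$, so Hölder turns this pointwise bound into the nonlinear estimates
\[\big\||u|^{4/d}u\big\|_{L^{\rho'}_{t,x}(I)}\lesssim\|u\|_{L^\rho_{t,x}(I)}^{1+\frac{4}{d}},\qquad\big\||u|^{4/d}u-|v|^{4/d}v\big\|_{L^{\rho'}_{t,x}(I)}\lesssim\big(\|u\|_{L^\rho_{t,x}(I)}^{\frac{4}{d}}+\|v\|_{L^\rho_{t,x}(I)}^{\frac{4}{d}}\big)\|u-v\|_{L^\rho_{t,x}(I)}.\]

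To prove (1) I would first use that $e^{it\Delta}u_0\in L^\rho_{t,x}(\mathbb R\times\mathbb R^d)$ by Strichartz, so that dominated convergence gives $\|e^{it\Delta}u_0\|_{L^\rho_{t,x}([-T,T]\times\mathbb R^d)}\to0$ as $T\to0$; choosing $T=T(u_0)>0$ so that this quantity is at most a small constant $\eta=\eta(d)$, the Duhamel map $\Phi(u)(t)=e^{it\Delta}u_0+i\int_0^t e^{i(t-s)\Delta}\big(|u|^{4/d}u\big)(s)\,ds$ is, by the estimates above, a contraction on the ball $\{u:\|u\|_{L^\rho_{t,x}([-T,T])}\le 2\eta\}$ in the $L^\rho_{t,x}$ metric. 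Its fixed point is the solution; one more application of Strichartz gives $u\in C_tL^2$, uniqueness in the ball, and continuous dependence on $u_0$, and a standard approximation argument gives mass conservation $\|u(t)\|_{L^2}=\|u_0\|_{L^2}$. For (2), if $\|u_0\|_{L^2}$ is small then the homogeneous Strichartz estimate already gives $\|e^{it\Delta}u_0\|_{L^\rho_{t,x}(\mathbb R)}\lesssim\|u_0\|_{L^2}\le\eta$ on all of $\mathbb R$, so the same contraction runs with $I=\mathbb R$ and produces a global solution with $\|u\|_{S(\mathbb R)}<\infty$. For scattering I would set $u_+:=u_0+i\int_0^\infty e^{-is\Delta}\big(|u|^{4/d}u\big)(s)\,ds$ and observe that $\big\|\int_t^\infty e^{-is\Delta}\big(|u|^{4/d}u\big)(s)\,ds\big\|_{L^2}\lesssim\|u\|_{L^\rho_{t,x}([t,\infty))}^{1+\frac{4}{d}}\to0$ as $t\to+\infty$, which shows both that the defining integral converges in $L^2$ and that $\|u(t)-e^{it\Delta}u_+\|_{L^2}\to0$; defining $u_-$ symmetrically yields $(\ref{1.5.2})$.

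For (3) I would argue by contrapositive, noting that the argument applies to infinite intervals as well. Suppose $\|u\|_{L^\rho_{t,x}([0,T^\ast)\times\mathbb R^d)}<\infty$, where $T^\ast:=\sup(I)\le+\infty$. Partition $[0,T^\ast)$ into finitely many subintervals $J_k=[t_k,t_{k+1})$, $k=1,\dots,n$, on each of which this norm is at most $\eta$; then on $J_k$ the Strichartz estimate gives $\|u\|_{S(J_k)}\lesssim\|u(t_k)\|_{L^2}+\eta^{1+\frac{4}{d}}$, and by mass conservation $\|u(t_k)\|_{L^2}=\|u_0\|_{L^2}$, so summing over the finitely many $k$ yields $\|u\|_{S([0,T^\ast))}<\infty$. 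If moreover $T^\ast<\infty$, the Duhamel formula and the inhomogeneous estimate then show that $u(t)$ is Cauchy in $L^2$ as $t\nearrow T^\ast$, with some limit $u^\ast\in L^2$; applying (1) with data $u^\ast$ at time $T^\ast$ and invoking uniqueness extends $u$ strictly past $T^\ast$, contradicting maximality. Hence $T^\ast<\infty$ forces $(\ref{1.6.1})$, and if $(\ref{1.6.1})$ fails then $\sup(I)=+\infty$ and the bound $\|u\|_{S([0,\infty))}<\infty$ just obtained gives scattering exactly as in (2).

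For (4), the point is that for each $s>0$ the problem is $H^s$-subcritical: running the same scheme in $C_tH^s$ intersected with the $H^s$-based Strichartz spaces, estimating $\langle\nabla\rangle^s\big(|u|^{4/d}u\big)$ by the fractional Leibniz rule and placing enough factors in $L^\infty_tL^2_x$ to gain a positive power of $|I|$, one obtains a local existence time $\tau=\tau(\|u_0\|_{H^s})>0$ with $\|u(\tau)\|_{H^s}\lesssim\|u_0\|_{H^s}$ (the case of large $s$ then follows too, since $\|u(t)\|_{H^s}\ge\|u(t)\|_{H^{s'}}$ for $s'<s$). If $\sup(I)=T^\ast<\infty$ and $\liminf_{t\to T^\ast}\|u(t)\|_{H^s}=:B<\infty$, choosing $t_n\nearrow T^\ast$ with $\|u(t_n)\|_{H^s}\le B+1$ and solving from $t_n$ gives existence on $[t_n,t_n+\tau(B+1)]$, which reaches past $T^\ast$ for large $n$, a contradiction; hence $\lim_{t\to T^\ast}\|u(t)\|_{H^s}=+\infty$, which is $(\ref{1.7.1})$. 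Assertion (5) is immediate, since $\tilde u(t,x):=\overline{u(-t,x)}$ solves $(\ref{1.1})$ on $-I$ with data $\overline{u_0}$, so (3)--(4) applied to $\tilde u$ give their backward counterparts for $u$. The main obstacle, and what distinguishes this critical problem from subcritical ones, is that the existence time $T(u_0)$ in (1) cannot be taken to depend on $\|u_0\|_{L^2}$ alone but only on how fast $\|e^{it\Delta}u_0\|_{L^\rho_{t,x}}$ decays on short time intervals, which is exactly why the blowup alternative must be phrased through the scaling-critical norm $(\ref{1.6.1})$; correspondingly, extracting the endpoint $L^2$-limit $u(t)\to u^\ast$ in (3) from finiteness of that norm, via mass conservation and the full Strichartz bound, is the step requiring the most care.
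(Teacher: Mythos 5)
Your proof is correct and follows exactly the approach the paper invokes by citation: the paper's own ``proof'' is a two-sentence pointer to Cazenave--Weissler (Strichartz estimates plus Picard iteration, with detailed treatments in Tao's book and elsewhere), and your argument is precisely a worked-out version of that scheme — contraction in the critical $L^{\frac{2(d+2)}{d}}_{t,x}$ norm using the admissibility of the diagonal pair and the exact identity $\rho'\cdot\frac{d+4}{d}=\rho$, small-data globalization via the homogeneous Strichartz bound, a subinterval-partitioning bootstrap for the blowup/scattering dichotomy, the $H^s$-subcritical local theory for persistence of regularity and the $H^s$ blowup criterion, and conjugation-plus-time-reversal for (5). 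No meaningful divergence from the paper's method and no gaps of substance.
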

\begin{proof}
The proof in \cite{cazenave1990cauchy} combines Strichartz estimates (see \cite{ginibre1992smoothing}, \cite{keel1998endpoint}, \cite{strichartz1977restrictions},\cite{yajima1987existence}) with Picard iteration. See Section $1.3$ of \cite{dodson2019defocusing} or Section $3.3$ of \cite{tao2006nonlinear} for a detailed proof. See also \cite{ginibre1979class}, \cite{ginibre1979class2}, \cite{ginibre1985global}, and \cite{kato1987nonlinear}.
\end{proof}
\noindent Theorem $\ref{t1.1}$ also holds for the defocusing, nonlinear Schr{\"o}dinger equation,
\begin{equation}\label{1.7.2}
i u_{t} + \Delta u - |u|^{4/d} u = 0, \qquad u(0,x) = u_{0}(x) \in L^{2}(\mathbb{R}^{d}),
\end{equation}
and the proof is identical. 

However, the global theory for $(\ref{1.1})$ with large data differs substantially from the global theory for $(\ref{1.7.2})$ with large data. Observe that the equation
\begin{equation}\label{1.7.3}
i u_{t} + \Delta u - \mu |u|^{p - 1} u = 0,
\end{equation}
is the Hamiltonian equation for the Hamiltonian
\begin{equation}\label{1.7.4}
E(u(t)) = \frac{1}{2} \| \nabla u(t) \|_{L^{2}}^{2} + \frac{\mu}{p + 1} \| u(t) \|_{L^{p + 1}}^{p + 1} = E(u(0)).
\end{equation}
Both $(\ref{1.1})$ and $(\ref{1.7.2})$ conserve the mass
\begin{equation}\label{1.3.0}
M(u(t)) = \int |u(t,x)|^{2} dx = M(u(0)).
\end{equation}
When $\mu = +1$, as in $(\ref{1.7.2})$, the energy $(\ref{1.7.4})$ is positive definite, so for $u_{0} \in H^{1}(\mathbb{R}^{d})$, conservation of energy guarantees a uniform bound on $\| u(t) \|_{H^{1}(\mathbb{R}^{d})}$, which by $(\ref{1.7.1})$ guarantees that the solution $u$ to $(\ref{1.7.2})$ with initial data $u_{0} \in H^{1}$ is global. Global well-posedness and scattering for $(\ref{1.7.2})$ for general $u_{0} \in L^{2}$ was proved in \cite{dodson2012global}, \cite{dodson2016global}, and \cite{dodson2016global2}. When $\mu = -1$, the energy is given by
\begin{equation}\label{1.4}
E(u(t)) = \frac{1}{2} \int |\nabla u(t,x)|^{2} dx - \frac{d}{2(d + 2)} \int |u(t,x)|^{\frac{2(d + 2)}{d}} dx.
\end{equation}
The most that $(\ref{1.3.0})$ and $(\ref{1.4})$ guarantee is a uniform bound on $\| \nabla u(t) \|_{L^{2}}$ for $\| u_{0} \|_{L^{2}}$ below a threshold mass. Indeed, in two dimensions, a straightforward application of the fundamental theorem of calculus and H{\"o}lder's inequality implies
\begin{equation}\label{1.8}
\aligned
\int |u(t,x,y)|^{4} dx dy \leq \int |u(t,x,y)|^{2} (\int |u_{y}(t,x,s_{2})|^{2} ds_{2})^{1/2} (\int |u(t,x,s_{2})|^{2} ds_{2})^{1/2} dx dy \\
\leq \int \int (\int |u_{x}(t, s_{1}, y)|^{2} ds_{1})^{1/2} (\int |u(t,s_{1}, y)|^{2} ds_{1})^{1/2} (\int |u_{y}(t,x,s_{2})|^{2} ds_{2})^{1/2} (\int |u(t,x,s_{2})|^{2} ds_{2})^{1/2} dx dy \\
\leq \| \nabla u \|_{L^{2}}^{2} \| u \|_{L^{2}}^{2}.
\endaligned
\end{equation}
Therefore, there exists a threshhold mass $M_{0}$ for which, if $\| u_{0} \|_{L^{2}} < M_{0}$,
\begin{equation}\label{1.10}
E(u(t)) \gtrsim_{M_{0}} \| u(t) \|_{\dot{H}^{1}(\mathbb{R}^{2})}^{2},
\end{equation}
with implicit constant $\searrow 0$ as $\| u_{0} \|_{L^{2}} \nearrow M_{0}$. In higher dimensions, by the Sobolev embedding theorem,
\begin{equation}\label{1.10.1}
\| u \|_{L_{x}^{\frac{2(d + 2)}{d}}(\mathbb{R}^{d})}^{\frac{2(d + 2)}{d}} \lesssim \| u \|_{L^{2}}^{\frac{4}{d}} \| \nabla u \|_{L^{2}}^{2},
\end{equation}
so then $(\ref{1.10.1})$ holds in dimensions $d \geq 3$ for some $M_{0}$ that may depend on $d$.
\begin{remark}
See the introduction to \cite{dodson2021determination} and references therein for more information on blowup for mass-subcritical and mass-supercritical results.
\end{remark}

From \cite{weinstein1983nonlinear}, the optimal constant in $(\ref{1.8})$ and $(\ref{1.10.1})$ is given by the Gagliardo--Nirenberg inequality,
\begin{equation}\label{1.11}
\| u \|_{L^{\frac{2(d + 2)}{d}}(\mathbb{R}^{d})}^{\frac{2(d + 2)}{d}} \leq \frac{d + 2}{d} (\frac{\| u \|_{L^{2}}^{4/d}}{\| Q \|_{L^{2}}^{4/d}}) \| \nabla u \|_{L^{2}}^{2},
\end{equation}
where $Q$ is the unique positive, radial solution to 
\begin{equation}\label{1.12}
\Delta Q + Q^{1 + \frac{4}{d}} = Q.
\end{equation}
\begin{remark}
The unique positive, radial solution to $(\ref{1.12})$ is called the ground state. See \cite{MR512091}, \cite{berestycki1981ode}, \cite{kwong1989uniqueness}, and \cite{strauss1977existence} for the existence and uniqueness of a ground state solution to $(\ref{1.12})$ in general dimensions.
\end{remark}
\noindent Thus, $(\ref{1.4})$ implies that $(\ref{1.1})$ with initial data $u_{0} \in H^{1}$ and $\| u_{0} \|_{L^{2}} < \| Q \|_{L^{2}}$ is globally well-posed. Global well-posedness and scattering for $(\ref{1.1})$ with a general $\| u_{0} \|_{L^{2}} < \| Q \|_{L^{2}}$ was proved in \cite{dodson2015global}.\medskip

It is straightforward to see that $u(t,x) = e^{it} Q(x)$ solves $(\ref{1.1})$, which gives a solution to $(\ref{1.1})$ with mass $\| u_{0} \|_{L^{2}} = \| Q \|_{L^{2}}$ that blows up both forward and backward in time (according to $(\ref{1.6.1})$). Making a pseudoconformal transformation of $e^{it} Q(x)$,
\begin{equation}\label{1.15}
u(t,x) = \frac{1}{t^{d/2}} e^{\frac{i}{t} + \frac{i |x|^{2}}{4t}} Q(\frac{x}{t}),
\end{equation}
is a solution to $(\ref{1.1})$ that blows up as $t \searrow 0$. Observe that $\| \nabla u(t) \|_{L^{2}} \nearrow \infty$ as $t \searrow 0$ for $(\ref{1.15})$.\medskip

The soliton solutions and the pseudoconformal transformations of the solitons are the only solutions to $(\ref{1.1})$ with mass $\| u_{0} \|_{L^{2}} = \| Q \|_{L^{2}}$. Previously, in \cite{dodson2021determination}, we proved
\begin{theorem}\label{t1.2}
In dimension $d = 1$, the only solutions to $(\ref{1.1})$ with mass $\| u_{0} \|_{L^{2}} = \| Q \|_{L^{2}}$ that blow up forward in time are the family of soliton solutions
\begin{equation}\label{1.2}
e^{-i \theta - it \xi_{0}^{2}} e^{i \lambda^{2} t} e^{ix \xi_{0}} \lambda^{1/2} Q(\lambda (x - 2 t \xi_{0}) + x_{0}), \qquad \lambda > 0, \qquad \theta \in \mathbb{R}, \qquad x_{0} \in \mathbb{R}, \qquad \xi_{0} \in \mathbb{R},
\end{equation}
and the pseudoconformal transformation of the family of solitons,
\begin{equation}\label{1.3}
\aligned
\frac{\lambda^{1/2}}{(T - t)^{1/2}} e^{i \theta} e^{\frac{i (x - \xi_{0})^{2}}{4(t - T)}} e^{i \frac{\lambda^{2}}{t - T}} Q(\frac{\lambda (x - \xi_{0}) - (T - t) x_{0}}{T - t}), \\ \text{where} \qquad \lambda > 0, \qquad \theta \in \mathbb{R}, \qquad x_{0} \in \mathbb{R}, \qquad \xi_{0} \in \mathbb{R}, \qquad T \in \mathbb{R}, \qquad t < T.
\endaligned
\end{equation}
\end{theorem}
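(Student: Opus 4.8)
The plan is to treat Theorem~\ref{t1.2} as a minimal-mass rigidity statement and carry Merle's $H^{1}$ classification over to the $L^{2}$ setting. Here $d=1$, so $|u|^{4/d}u=|u|^{4}u$ and $\tfrac{2(d+2)}{d}=6$.

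\emph{Step 1 (reduction to an almost periodic solution).} Let $u$ solve $(\ref{1.1})$ with $\|u_{0}\|_{L^{2}}=\|Q\|_{L^{2}}$ and blow up forward in time, so $\|u\|_{L^{6}_{t,x}([0,\sup I))}=+\infty$. First I would show $u$ is almost periodic modulo the symmetry group $G$ generated by scaling, spatial translation, phase rotation, and Galilean boosts: there exist $N(t)>0$ and $x(t),\xi(t),\gamma(t)\in\mathbb{R}$ so that $\{e^{-i\gamma(t)}e^{-ix\xi(t)}N(t)^{-1/2}u(t,N(t)^{-1}(x-x(t)))\}$ is precompact in $L^{2}(\mathbb{R})$. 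This follows from the $L^{2}$ profile decomposition (Carles--Keraani, B\'egout--Vargas, Merle--Vega) together with the sub-threshold scattering theorem of \cite{dodson2015global}: given $t_{n}\nearrow\sup I$, decompose $u(t_{n})=\sum_{j}g_{n}^{j}\phi^{j}+r_{n}^{J}$; any profile of mass strictly below $\|Q\|_{L^{2}}$ has a globally scattering nonlinear evolution, so two or more nontrivial profiles would make the $L^{6}_{t,x}$ norm of the evolution of $u(t_{n})$ finite near $\sup I$ for large $n$, contradicting blowup. Hence a single profile of mass exactly $\|Q\|_{L^{2}}$ survives, $\|r_{n}\|_{L^{2}}\to 0$, and the orbit is precompact near $\sup I$; the equation then propagates compactness over $I$.

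\emph{Step 2 (locating $u$ near the ground state).} Next I would combine almost periodicity with the sharp Gagliardo--Nirenberg inequality $(\ref{1.11})$ and its compactness lemma: if $\|v_{n}\|_{L^{2}}=\|Q\|_{L^{2}}$, $\|\partial_{x}v_{n}\|_{L^{2}}=1$, and $\|v_{n}\|_{L^{6}}^{6}$ approaches equality in $(\ref{1.11})$, then a subsequence converges in $H^{1}$, after applying an element of $G$, to $Q$. Since $u$ blows up forward, $\|u(t)\|_{L^{6}_{x}}$ is large along a sequence of times, and for an almost periodic solution this forces $u(t)$ to lie $L^{2}$-close to the $G$-orbit of $Q$ for all $t$ near $\sup I$ (and, after the reduction below, for all $t$ in the maximal interval). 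I would then introduce a modulation decomposition
\[
u(t,x)=e^{i\gamma(t)}e^{ix\xi(t)}N(t)^{1/2}\bigl(Q+\epsilon(t)\bigr)\bigl(N(t)(x-x(t))\bigr),
\]
with $\|\epsilon(t)\|_{L^{2}}$ small and $\epsilon(t)$ subject to orthogonality conditions making $N,x,\xi,\gamma$ of class $C^{1}$ and yielding the modulation equations.

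\emph{Step 3 (dichotomy on $N(t)$; pseudoconformal reduction).} If $\sup I<\infty$, apply the pseudoconformal transformation $(\ref{1.15})$ centered at $T=\sup I$ to obtain a solution of the same mass that is global forward in time; so it suffices to treat $\sup I=+\infty$. For a global, almost periodic, minimal-mass solution I would show $N(t)\sim 1$: if $N(t)\to 0$ along a sequence the solution disperses and scatters, contradicting blowup; if $N(t)\to\infty$ along a sequence, a virial-type monotonicity argument---using that $u(t)$ carries essentially all of its mass $\|Q\|_{L^{2}}^{2}$ in a ball of radius $\sim N(t)^{-1}$ about $x(t)$---is incompatible with global existence. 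In the $H^{1}$ setting this is Merle's use of $\tfrac{d^{2}}{dt^{2}}\int x^{2}|u|^{2}=16E(u)\ge 0$; at the $L^{2}$ regularity of Theorem~\ref{t1.2} one must instead run a \emph{frequency-localized} virial / interaction-Morawetz estimate and control the truncation errors with long-time Strichartz estimates. I expect this to be the \emph{main obstacle}: building the long-time Strichartz theory and showing the frequency-truncated virial identity loses nothing is precisely the gap between Merle's $H^{1}$ result and the $L^{2}$ statement here.

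\emph{Step 4 (rigidity and identification of the solutions).} With $N(t)\sim 1$, the remainder $\epsilon(t)$ obeys the Schr\"odinger flow linearized about $Q$, governed by the operators $L_{+}=-\partial_{x}^{2}+1-5Q^{4}$ and $L_{-}=-\partial_{x}^{2}+1-Q^{4}$, whose spectra are explicit in $d=1$ and whose kernels are spanned by exactly the symmetry directions removed by the orthogonality conditions. A coercivity estimate for $L_{\pm}$ on the orthogonal complement, inserted into a (frequency-localized) virial functional and combined with conservation of mass, forces $\|\epsilon(t)\|_{L^{2}}\equiv 0$. Then $u(t)$ is an exact modulated ground state for all $t$; substituting into $(\ref{1.1})$ gives $\dot N=0$, $\dot\xi=0$, $\dot x=2\xi$, $\dot\gamma=N^{2}-\xi^{2}$, whose solutions are precisely the soliton family $(\ref{1.2})$. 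Undoing the pseudoconformal transformation in the finite-time blowup case returns exactly the family $(\ref{1.3})$, completing the classification.
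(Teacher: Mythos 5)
Theorem \ref{t1.2} is not proved in this paper at all: it is quoted from \cite{dodson2021determination}, and the present paper reproduces that paper's strategy in dimensions $2\leq d\leq 15$. Your outline has the right global shape (reduction to a solution staying near the modulated soliton family, modulation theory, a frequency-localized virial rigidity argument, pseudoconformal transformation for the finite-time case), and you correctly flag the long-time Strichartz control of the truncation errors as the crux. But Step 2 contains a genuine gap. The sharp Gagliardo--Nirenberg inequality $(\ref{1.11})$ and Weinstein's compactness lemma \cite{weinstein1983nonlinear} operate on $H^{1}$ functions: to conclude that near-saturation of $(\ref{1.11})$ forces proximity to the orbit of $Q$ you need finite, normalized kinetic energy, and for data merely in $L^{2}$ the quantities $\| \partial_{x} u(t) \|_{L^{2}}$ and $\| u(t) \|_{L^{6}}$ are generically infinite, so the variational argument never starts. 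This is precisely why the paper does not argue variationally but instead invokes the sequential convergence theorems of \cite{fan20182} and \cite{dodson20212} (Theorems \ref{t2.3} and \ref{t2.4}), which identify the limiting profile as $Q$ by entirely different and substantially harder means. Moreover those theorems give convergence to $Q$ only along a sequence $t_{n} \nearrow \sup(I)$; upgrading this to the uniform-in-time hypothesis $(\ref{2.6})$ of Theorem \ref{t2.2} is itself a nontrivial bootstrap, and your phrases ``the equation then propagates compactness'' and ``this forces $u(t)$ to lie $L^{2}$-close for all $t$'' elide it.

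The second problem is that Steps 3--4, which you yourself identify as the main obstacle, carry essentially all of the weight, and the mechanism you sketch is not the one that closes the argument. The paper does not establish $N(t)\sim 1$ and then run a single coercivity estimate; it proves a long-time Strichartz estimate, an almost conservation law for $E(P_{\leq k}u)$, and a frequency-localized Morawetz inequality whose boundary terms are the pairings $(\epsilon_{2}, Q + x\cdot \nabla Q)_{L^{2}}$ (Theorem \ref{t10.1}), iterates these to show $\| \epsilon(s) \|_{L^{2}} \in L_{s}^{p}$ for every $p>1$ (Theorem \ref{t11.1}), deduces approximate monotonicity of $\lambda$ from the $|x|^{2}Q$ virial identity (Theorem \ref{t8.2}), and only then, in Theorem \ref{t14.0}, drives the truncated energy to zero along a sequence of scales to conclude $\epsilon(0)=0$. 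A direct ``coercivity of $L_{\pm}$ plus conservation of mass'' argument cannot by itself force $\epsilon \equiv 0$: mass conservation only yields $(Q,\epsilon_{1})_{L^{2}} = -\tfrac{1}{2}\| \epsilon \|_{L^{2}}^{2}$, and the coercive quadratic form is the linearized energy, which is exactly the quantity that is undefined for $L^{2}$ data without the entire frequency-truncation apparatus. So your proposal is a correct road map of where the proof must go, but the two steps above are missing rather than merely compressed.
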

\noindent In this paper we prove the same result in dimensions $2 \leq d \leq 15$.
\begin{theorem}\label{t1.3}
In dimensions $2 \leq d \leq 15$, the only solutions to $(\ref{1.1})$ with mass $\| u_{0} \|_{L^{2}} = \| Q \|_{L^{2}}$ that blow up forward in time are the family of soliton solutions
\begin{equation}\label{1.3.1}
e^{-i \theta - it |\xi_{0}|^{2}} e^{i \lambda^{2} t} e^{ix \cdot \xi_{0}} \lambda^{d/2} Q(\lambda (x - 2 t \xi_{0}) + x_{0}), \qquad \lambda > 0, \qquad \theta \in \mathbb{R}, \qquad x_{0} \in \mathbb{R}^{d}, \qquad \xi_{0} \in \mathbb{R}^{d},
\end{equation}
and the pseudoconformal transformation of the family of solitons,
\begin{equation}\label{1.5}
\aligned
\frac{\lambda^{d/2}}{(T - t)^{d/2}} e^{i \theta} e^{\frac{i |x - \xi_{0}|^{2}}{4(t - T)}} e^{i \frac{\lambda^{2}}{t - T}} Q(\frac{\lambda (x - \xi_{0}) - (T - t) x_{0}}{T - t}), \\ \text{where} \qquad \lambda > 0, \qquad \theta \in \mathbb{R}, \qquad x_{0} \in \mathbb{R}^{d}, \qquad \xi_{0} \in \mathbb{R}^{d}, \qquad T \in \mathbb{R}, \qquad t < T.
\endaligned
\end{equation}
Applying time reversal symmetry to $(\ref{1.1})$, this theorem completely settles the question of qualitative behavior of solutions to $(\ref{1.1})$ for initial data satisfying $\| u_{0} \|_{L^{2}} = \| Q \|_{L^{2}}$.
\end{theorem}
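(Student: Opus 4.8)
The plan is to combine a concentration--compactness reduction with a modulation analysis around the ground state $Q$, and then to extract rigidity from the critical-mass constraint by means of virial-type monotonicity. First I would show that a solution $u$ to $(\ref{1.1})$ with $\|u_{0}\|_{L^{2}} = \|Q\|_{L^{2}}$ that blows up forward in time is \emph{almost periodic modulo the symmetries} of $(\ref{1.1})$: there are continuous functions $\lambda(t) > 0$, $x(t) \in \mathbb{R}^{d}$, $\xi(t) \in \mathbb{R}^{d}$, $\gamma(t) \in \mathbb{R}$ so that the renormalized orbit $\{ \lambda(t)^{d/2} e^{i\gamma(t)} e^{i \lambda(t) y \cdot \xi(t)} u(t, \lambda(t) y + x(t)) : 0 \le t < \sup(I) \}$ is precompact in $L^{2}(\mathbb{R}^{d})$. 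This follows from the linear profile decomposition for $(\ref{1.1})$ together with the global well-posedness and scattering statement for mass strictly below $\|Q\|_{L^{2}}$ from \cite{dodson2015global}: were the renormalized orbit not precompact, a profile decomposition would split the (exactly critical) mass into at least two nontrivial pieces, each of subcritical mass and hence scattering, and superposing the corresponding global solutions would contradict the blowup criterion $(\ref{1.6.1})$. Simultaneously, the equality case of the Gagliardo--Nirenberg inequality $(\ref{1.11})$, whose extremizers are exactly the symmetry orbit of the ground state $(\ref{1.12})$, forces every $L^{2}$-limit of the renormalized orbit to equal $e^{i\gamma_{\infty}} Q$; all the mass condenses into a single ground-state bubble.

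Second, I would upgrade the regularity. Since the renormalized solution stays uniformly $L^{2}$-close to the smooth, exponentially decaying function $Q$, a bootstrap built from Strichartz and local smoothing estimates (using almost periodicity to preclude frequency leakage) shows that $u(t) \in H^{1}$, and, when $\sup(I) < \infty$, yields control of the localized virial $\int_{|x - x(t)| \lesssim \lambda(t)^{-1}} |x - x(t)|^{2} |u(t,x)|^{2}\,dx$. Armed with $H^{1}$ regularity I would introduce modulation parameters through orthogonality conditions on the remainder $\varepsilon(t)$ in the decomposition $u(t,x) = \lambda(t)^{-d/2} e^{i\gamma(t)} e^{i \lambda(t)^{-1}(x - x(t)) \cdot \xi(t)} (Q + \varepsilon)\big(t, \tfrac{x - x(t)}{\lambda(t)}\big)$, the orthogonalities being chosen relative to the generalized kernel of the linearized operator $L = (L_{+}, L_{-})$ at $Q$, i.e. the directions generated by phase rotation, $L^{2}$-scaling, translation, Galilean boost, and the pseudoconformal symmetry. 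Then I would derive the modulation equations for $\lambda'/\lambda$, $\gamma'$, $x'$, $\xi'$ and the equation satisfied by $\varepsilon$, and record the coercivity estimate $\langle L\varepsilon, \varepsilon \rangle \gtrsim \|\varepsilon\|_{H^{1}}^{2}$ valid under the imposed orthogonality conditions.

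Third, I would conclude by splitting on $\sup(I)$. If $\sup(I) < \infty$, conservation of mass and $(\ref{1.4})$ together with $(\ref{1.11})$ give $E(u) \ge 0$, and the virial identity $\partial_{t}^{2} \int |x|^{2} |u(t,x)|^{2}\,dx = 16 E(u)$ combined with the localized virial bound, the coercivity of $L$, and the modulation equations forces $\varepsilon \equiv 0$ and $\lambda(t) \asymp \sup(I) - t$; running the symmetry parameters through their ODEs then identifies $u$ with a member of the pseudoconformal family $(\ref{1.5})$ --- this is the $L^{2}$ incarnation of Merle's theorem. If $\sup(I) = +\infty$, almost periodicity keeps the renormalized solution near $Q$ for all time; a momentum/Galilean virial identity controls $\xi(t)$ and $x(t)$, a monotonicity formula tuned to the soliton regime rules out $\lambda(t) \to 0$ or $\lambda(t) \to \infty$, and the coercivity of $L$ again forces $\varepsilon \equiv 0$ with $\lambda$ and $\xi$ constant and $x$ affine, placing $u$ in the soliton family $(\ref{1.3.1})$. (Alternatively, one may apply the pseudoconformal transformation to turn the global case into a finite-time statement, at the cost of verifying that the transformation preserves the decay used in the first case.) Time reversal symmetry then upgrades the forward statement to the full classification asserted in Theorem $\ref{t1.3}$.

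I expect the main obstacle to be twofold. The first difficulty is the regularity upgrade and the accompanying weighted-$L^{2}$ bounds: the data lie only in $L^{2}$, so none of the $H^{1}$ virial and modulation machinery is available a priori and must be manufactured from the $L^{2}$-compactness and the structure of $Q$, with the added complication that the nonlinearity $|u|^{4/d} u$ has only limited smoothness for most $d$. The second, and the true source of the restriction $2 \le d \le 15$, is the coercivity (spectral) estimate for $L$ on the orthogonal complement of its generalized kernel: this ``spectral property'' is subtle, its proof is dimension-dependent, and it is presently known only in this range; without it the virial monotonicity fails to close.
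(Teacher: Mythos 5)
Your high-level skeleton (reduction to almost periodicity near $Q$, modulation around the ground state, virial/coercivity rigidity, splitting on $\sup(I)$, pseudoconformal transfer between the finite- and infinite-time cases) is broadly consonant with the paper, but there are two substantive gaps that would prevent the proof from closing.

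\textbf{The regularity upgrade is circular.} You propose to bootstrap from $L^{2}$-proximity to $Q$ up to $H^{1}$, and then run the classical $H^{1}$ virial/modulation machinery. But $L^{2}$-closeness to a smooth function gives no control whatsoever on $\|\nabla u\|_{L^{2}}$: the perturbation $\varepsilon$ can be arbitrarily rough, and almost periodicity modulo symmetries only gives tightness in $L^{2}$, not in $H^{1}$, precisely because $\lambda(t)$ is allowed to tend to zero. In fact, proving that the renormalized solution lies in a fixed $H^{1}$ ball is essentially equivalent to showing that $\lambda(t)$ is bounded below, which is the very conclusion one is trying to prove in the infinite-time case; the argument loops. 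The paper circumvents this by never leaving $L^{2}$: it works with the frequency-truncated energy $E(P_{\leq k+9}u)$ and a frequency-truncated Morawetz potential $\int \phi(r)\,\mathrm{Im}[\overline{P_{\leq k+9}u}\,\partial_{r}P_{\leq k+9}u]$, and propagates these via the long-time Strichartz estimates (Theorems $\ref{t6.2}$, $\ref{t6.3}$, $\ref{t5.1}$, Corollary $\ref{c6.4}$) and almost-conservation (Theorems $\ref{t4.1}$--$\ref{t4.4}$). This frequency-truncation scheme, together with the bilinear/interaction-Morawetz machinery needed to close the induction on frequency, is the central technical content of the paper and is missing from your outline.

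\textbf{The diagnosis of the dimensional restriction is wrong.} You attribute $2 \leq d \leq 15$ to the coercivity of the linearized operator, but that spectral property is invoked (Theorem $\ref{t3.1.1}$, following the references therein) in all dimensions $d$, and the coercivity estimate $(\ref{4.62})$ is likewise stated without dimensional restriction. The genuine obstruction, flagged explicitly in the remarks after Theorems $\ref{t1.3}$ and $\ref{t14.0}$, lies entirely in Section 10 (the infinite-time rigidity step). There the argument balances the exponent $\alpha_{d}$ from the long-time Strichartz estimate against the powers of $\|\epsilon\|_{L^{\infty}_{t}L^{2}_{x}}$ that arise from the limited H\"older regularity of $F(u)=|u|^{4/d}u$ (via Proposition $\ref{pvisan}$). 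For $d \geq 9$ one is led to an inequality of the schematic form
\begin{equation}
\Bigl(\tfrac{8}{d} - \tfrac{3}{5d} - \tfrac{7}{10d^{2}}\Bigr)\cdot\tfrac{2d}{d-8} \geq 2 + \tfrac{3}{70},
\end{equation}
which holds only for $8 \leq d \leq 15$; at $d=16$ it fails, and the induction on frequency no longer closes fast enough to force $\epsilon(0)=0$. The spectral property plays no role in this failure.

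A smaller structural difference: your orthogonality conditions invoke the full generalized kernel (phase, scaling, translation, boost, pseudoconformal directions), whereas the paper modulates out only $\chi_{0}$ (the negative eigenvector of $\mathcal L$), $i\chi_{0}$, and $Q_{x_{j}}$, $iQ_{x_{j}}$, and extracts coercivity for $\mathcal L$ and $\mathcal L_{-}$ directly from this choice. The two frameworks are not interchangeable without reworking the coercivity estimate, and since the paper's version is calibrated to what can be controlled with only $L^{2}$ data, your variant would need to be re-derived in that setting.
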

\begin{remark}
The obstruction to proving Theorem $\ref{t1.3}$ in dimensions $d \geq 16$ appears to be a purely technical obstruction. The issue arises only in section ten, and will be discussed in more detail there.
\end{remark}

The proof of Theorem $\ref{t1.3}$ relies heavily on the virial identity
\begin{equation}
\frac{d}{dt} \int x \cdot Im[\bar{u} \nabla u](t,x) dx = 4 E(u(t)).
\end{equation}
Using the Pohozaev identity,
\begin{equation}\label{1.16}
E(Q) = \frac{1}{2} \int (Q - \Delta Q - Q^{1 + 4/d})(\frac{d}{2} Q + x \cdot \nabla Q) dx = 0.
\end{equation} 
Thus, by $(\ref{1.11})$, $Q$ is a minimizer of the energy when $\| u \|_{L^{2}} = \| Q \|_{L^{2}}$. In fact, up to scaling, $Q$ is the unique minimizer of the energy (see  \cite{weinstein1983nonlinear}). So when $\| u \|_{L^{2}} = \| Q \|_{L^{2}}$, the energy $E(u)$ in $(\ref{1.4})$ gives a good measurement for the distance from $u$ to the set
\begin{equation}
\{ e^{i \theta} \lambda^{d/2} Q(\lambda x + x_{0}) : \lambda > 0, \qquad x_{0} \in \mathbb{R}^{d}, \qquad \theta \in \mathbb{R} \}.
\end{equation}
However, $E(u)$ is not invariant under the scaling symmetry $(\ref{1.5.1})$, so this notion will not be made precise until later. It will also be necessary to account for the Galilean transformation, which does change the energy.\medskip

The proof of Theorem $\ref{t1.3}$ will occupy most of the paper, and will follow the argument in \cite{dodson2021determination}. There are many places where the argument is exactly the same, and in those places the argument will often be abbreviated, and the reader will be referred to \cite{dodson2021determination} for more details. There are other places where the argument is much more technically difficult, especially in dimensions $d \geq 3$. This is due to the fact that $F(x) = |x|^{4/d} x$ is not a smooth function of $x$ in dimensions $d \geq 3$. Circumventing this difficulty will rely on the tools in \cite{taylor2007tools} and \cite{visan2007defocusing}, along with bounds on $\frac{\nabla Q(x)}{Q(x)^{1 - \alpha}}$ for some $\alpha > 0$, when $Q$ is the ground state solution to $(\ref{1.12})$. Theorem $\ref{t14.0}$ is the only obstacle to the proof of Theorem $\ref{t1.3}$ in dimensions $d \geq 16$. \medskip

However, before proving Theorem $\ref{t1.3}$, it will be useful to cite two previous results making partial progress in the direction of Theorem $\ref{t1.3}$.
\begin{theorem}
If $u_{0} \in H^{1}$, $\| u_{0} \|_{L^{2}} = \| Q \|_{L^{2}}$, and the solution $u(t)$ to $(\ref{1.1})$ blows up in finite time $T > 0$, then $u(t,x)$ is in the form of $(\ref{1.5})$.
\end{theorem}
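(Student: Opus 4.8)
The plan is to follow Merle's rigidity argument, in which the hypothesis $\|u_{0}\|_{L^{2}} = \|Q\|_{L^{2}}$ is exploited three times: through the sharp Gagliardo--Nirenberg inequality $(\ref{1.11})$, through the virial identity, and through the pseudoconformal symmetry. The first input is the variational characterization of $Q$: since $\|u(t)\|_{L^{2}} = \|Q\|_{L^{2}}$, inequality $(\ref{1.11})$ gives $E(u(t)) = E(u_{0}) =: E_{0} \geq 0$, and $E(f) = 0$ for a function with $\|f\|_{L^{2}} = \|Q\|_{L^{2}}$ forces equality in $(\ref{1.11})$, hence $f(x) = e^{i\gamma} \mu^{d/2} Q(\mu x + y)$ for some $\gamma \in \mathbb{R}$, $\mu > 0$, $y \in \mathbb{R}^{d}$. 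I will also need Weinstein's compactness lemma: if $\|v_{n}\|_{L^{2}} = \|Q\|_{L^{2}}$, $\limsup_{n} \|\nabla v_{n}\|_{L^{2}} < \infty$, and $E(v_{n}) \to 0$, then a subsequence satisfies $e^{i \gamma_{n}} v_{n}(\cdot + y_{n}) \to Q$ in $H^{1}$.

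Next I would extract the blow-up profile. Since $T < \infty$, Theorem $\ref{t1.1}$(4) gives $\|\nabla u(t)\|_{L^{2}} \to \infty$ as $t \to T$. Setting $\rho(t) := \|\nabla Q\|_{L^{2}} / \|\nabla u(t)\|_{L^{2}} \to 0$ and $v(t,x) := \rho(t)^{d/2} u(t, \rho(t) x)$, one has $\|v(t)\|_{L^{2}} = \|Q\|_{L^{2}}$, $\|\nabla v(t)\|_{L^{2}} = \|\nabla Q\|_{L^{2}}$, and $E(v(t)) = \rho(t)^{2} E_{0} \to 0$; the compactness lemma then produces, for every sequence $t_{n} \to T$ and after passing to a subsequence, parameters with $e^{i \gamma(t_{n})} v(t_{n}, \cdot + y(t_{n})) \to Q$ in $H^{1}$, and a routine argument promotes this to convergence along the full family $t \to T$. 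In particular $|u(t,x)|^{2}\, dx \rightharpoonup \|Q\|_{L^{2}}^{2} \delta_{x_{0}}$ for some $x_{0}$, and --- using the translation and Galilean invariances built into $(\ref{1.5})$, together with conservation of mass and of the momentum $\operatorname{Im} \int \bar{u} \nabla u$ --- I may normalize so that $x_{0} = 0$ and $\operatorname{Im} \int \bar{u}_{0} \nabla u_{0} = 0$.

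The third step uses the virial identity. For data with $x u_{0} \in L^{2}$ (the general $H^{1}$ case requires the subsequent refinement of Hmidi--Keraani, replacing the second moment by a profile-decomposition argument but reaching the same conclusion), $V(t) := \int |x|^{2} |u(t,x)|^{2}\, dx$ satisfies $V''(t) = 16 E_{0}$, so $V(t) = 8 E_{0} (t - t_{*})^{2} + V_{*}$ with $V_{*} \geq 0$. The crux is to show $V(t) \to 0$ as $t \to T$: the concentration of the previous step localizes all but $\varepsilon$ of the mass into a ball of radius $o(1)$ about the origin, and because the limiting profile $Q$ carries the \emph{entire} mass $\|Q\|_{L^{2}}^{2}$ there is no room for mass to escape to spatial infinity --- quantifying this last point via a truncated (localized) virial estimate yields $V(t) \to 0$, hence $V_{*} = 0$ and $t_{*} = T$, i.e. $V(t) = 8 E_{0} (t - T)^{2}$. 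In particular $E_{0} > 0$, since $E_{0} = 0$ would force $u(t)$ to be a modulated $Q$ for all $t$, hence a solution of type $(\ref{1.3.1})$, which does not blow up in finite time.

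Finally I would run the pseudoconformal rigidity. Let $\tilde{u}(t,x) := e^{-i |x|^{2} / (4(t - T))} u(t,x)$, so that $\|\tilde{u}(t)\|_{L^{2}} = \|Q\|_{L^{2}}$; a direct computation using the virial law of the previous step gives $E(\tilde{u}(t)) = V(T) / \bigl(8 (t - T)^{2}\bigr) \equiv 0$. By the variational rigidity of the first step, $\tilde{u}(t,x) = e^{i \gamma(t)} \mu(t)^{d/2} Q(\mu(t) x + y(t))$ for each $t < T$. Substituting $u(t,x) = e^{i |x|^{2} / (4(t - T))} \tilde{u}(t,x)$ into $(\ref{1.1})$ and solving the resulting system of ordinary differential equations for $(\gamma, \mu, y)$ forces $\mu(t) = \lambda / (T - t)$, $y(t) \equiv -x_{0}$, and $\gamma(t) = \theta + \lambda^{2}/(t - T)$, which is exactly $(\ref{1.5})$ with $\xi_{0} = 0$; undoing the normalizations of the second step produces the general member of $(\ref{1.5})$. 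The main obstacle in this scheme is the vanishing of the variance at the blow-up time in Step 3 --- equivalently, that the blow-up occurs at the pseudoconformal rate $\|\nabla u(t)\|_{L^{2}} \sim (T - t)^{-1}$ with no loss of mass --- which is the point where minimality of the mass is genuinely indispensable, and removing the auxiliary hypothesis $x u_{0} \in L^{2}$ to reach the stated $H^{1}$ assumption is itself a nontrivial further step.
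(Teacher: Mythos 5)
Your proposal is correct and reproduces Merle's argument, which is exactly the proof the paper cites for this theorem: sharp Gagliardo--Nirenberg rigidity at minimal mass, Weinstein-type compactness to extract the concentration profile, vanishing of the (truncated) variance at the blow-up time, and pseudoconformal conjugation to reduce to the zero-energy case. A small historical point: the truncated-variance device needed to treat general $H^{1}$ data is already present in Merle's 1993 Duke paper, so the Hmidi--Keraani refinement you mention is an alternative route to the concentration step rather than a prerequisite for removing the finite-variance assumption.
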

\begin{proof} 
This result was proved in \cite{merle1992uniqueness} and \cite{merle1993determination}, and was proved for the focusing, mass-critical nonlinear Schr{\"o}dinger equation in every dimension.
\end{proof}

For the mass-critical nonlinear Schr{\"o}dinger equation in higher dimensions with radially symmetric initial data, \cite{killip2009characterization} proved
\begin{theorem}
If $\| u_{0} \|_{L^{2}} = \| Q \|_{L^{2}}$ is radially symmetric, and $u$ is the solution to the focusing, mass-critical nonlinear Schr{\"o}dinger equation with initial data $u_{0}$, and $u$ blows up both forward and backward in time, then $u$ is equal to $(\ref{1.3.1})$ with $x_{0} = \xi_{0} = 0$.
\end{theorem}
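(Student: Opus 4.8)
The plan is to run the concentration--compactness rigidity scheme. The key external input is that $\|u_{0}\|_{L^{2}}=\|Q\|_{L^{2}}$ is precisely the mass below which every solution is global and scatters (\cite{dodson2015global}). First I would feed $u$ into the linear profile decomposition in $L^{2}$: along any sequence $t_{n}$ approaching an endpoint of the lifespan $I$, decompose $u(t_{n})$ into profiles plus a remainder that is small in the scattering norm. If there were two or more profiles, each would carry mass strictly below $\|Q\|_{L^{2}}$ and hence generate a global scattering solution, and the nonlinear stability theory would then force $u$ itself to scatter, contradicting the two-sided blowup hypothesis. So there is a single profile, the remainder vanishes, and the orbit $\{u(t):t\in I\}$ is precompact in $L^{2}$ modulo the symmetry group of $(\ref{1.1})$: there exist $N(t)>0$ and $x(t),\xi(t)$ such that $\{N(t)^{-d/2}e^{-ix\cdot\xi(t)}u(t,x/N(t)+x(t)):t\in I\}$ is precompact in $L^{2}(\mathbb{R}^{d})$. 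Since $u_{0}$ is radial and radiality is preserved by the flow, one may take $x(t)\equiv 0$ and $\xi(t)\equiv 0$, so that $u$ is almost periodic modulo scaling alone. The analysis of \cite{killip2009characterization} then shows that, after rescaling and time translation, only three scenarios survive: a finite-time (self-similar type) blowup, a soliton-like solution with $I=\mathbb{R}$ and $N(t)\equiv 1$, and a double high-to-low frequency cascade with $I=\mathbb{R}$ and $N(t)\to 0$ as $t\to\pm\infty$.

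The second step, which is the technical heart, is to upgrade the regularity and spatial decay of $u$. Using the radial in/out decomposition and local smoothing estimates together with the double Duhamel trick, one shows that an almost periodic solution of this type lies in $H^{s}_{x}$ for every $s\ge 0$ uniformly in $t$, with $\|\nabla u(t)\|_{L^{2}}\sim N(t)$, and that $u(t)$ is localized enough for the variance $V(t)=\int|x|^{2}|u(t,x)|^{2}\,dx$ to be finite (and bounded when $N(t)\equiv 1$). In particular $E(u)$ is finite and conserved, and combining the sharp Gagliardo--Nirenberg inequality $(\ref{1.11})$ with $\|u\|_{L^{2}}=\|Q\|_{L^{2}}$ gives $E(u)\ge 0$, with equality exactly when $u(t)$ saturates $(\ref{1.11})$.

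Next I would eliminate the scenarios one by one. Any finite-time blowup scenario is, thanks to the now-available $H^{1}$ regularity and the theorem of Merle cited above, a solution of the form $(\ref{1.5})$; but every such solution scatters in the reverse time direction, contradicting the two-sided blowup hypothesis, so these cases are vacuous and $I=\mathbb{R}$. In the cascade case, $\|\nabla u(t)\|_{L^{2}}\sim N(t)\to 0$, so $|E(u(t))|\lesssim\|\nabla u(t)\|_{L^{2}}^{2}\to 0$; since $E$ is conserved this forces $E(u)=0$, whence the virial identity gives $V''(t)\equiv 0$, so $V$ is affine and nonnegative, hence constant, while the uncertainty principle gives $V(t)^{1/2}\gtrsim\|u(t)\|_{L^{2}}^{2}/\|\nabla u(t)\|_{L^{2}}\to\infty$ --- a contradiction, so the cascade does not occur. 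In the soliton-like case, the localization makes $V$ bounded, and $V''(t)=16E(u)\ge 0$ makes $V$ convex; a bounded convex function on $\mathbb{R}$ is constant, so again $E(u)=0$. We are therefore left with a global solution with $E(u)=0$, which thus saturates $(\ref{1.11})$ at every time. By Weinstein's rigidity for the optimizers of the Gagliardo--Nirenberg inequality, together with the mass normalization, $u(t,x)=e^{i\gamma(t)}\mu(t)^{d/2}Q(\mu(t)(x-y(t)))$ for continuous functions $\gamma,\mu,y$, and radiality forces $y\equiv 0$. Substituting this ansatz into $(\ref{1.1})$ and using $\Delta Q+Q^{1+4/d}=Q$, the imaginary part gives $\mu'\equiv 0$ and the real part gives $\gamma'=\mu^{2}$, so $u(t,x)=e^{i\gamma(0)}e^{i\mu^{2}t}\mu^{d/2}Q(\mu x)$, which is exactly $(\ref{1.3.1})$ with $x_{0}=\xi_{0}=0$.

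The main obstacle is the second step: for a solution assumed only in $L^{2}$, establishing the additional regularity and --- especially for the cascade --- the spatial decay and weighted bounds needed to make the energy and the variance genuinely controlled, which is the machinery underlying \cite{killip2009characterization}. Once one has reduced to a global $H^{1}$ solution with finite variance, the remaining virial-plus-rigidity argument is soft.
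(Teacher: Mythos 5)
The paper does not prove this theorem; it simply records the result of \cite{killip2009characterization} and points the reader there, so there is no ``paper's own proof'' to compare against. Your sketch is a faithful reconstruction of the concentration--compactness rigidity scheme that \cite{killip2009characterization} actually runs: profile decomposition at the threshold mass forces a single profile and hence almost periodicity; radiality kills the translation and Galilean parameters; the Killip--Tao--Visan trichotomy reduces to the self-similar, soliton-like, and double-cascade scenarios; a regularity and localization upgrade makes the energy (and where needed the variance) honest quantities; Merle's theorem plus one-sided scattering of the pseudoconformal soliton rules out the finite-time case; and the virial/energy argument plus the Weinstein rigidity for the sharp Gagliardo--Nirenberg optimizers yields the soliton. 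You are also right that the ``second step'' --- the additional regularity ($H^s$ for all $s$, with $\|\nabla u(t)\|_{L^2}\lesssim N(t)$) and the spatial decay --- is where the genuine work lives and where most of \cite{killip2009characterization} is spent; flagging it as the obstacle rather than pretending it is routine is the correct assessment.

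One small logical rough edge: your elimination of the double cascade passes through the virial, and the virial identity $V''=16E$ is only meaningful once $V(t)=\int|x|^2|u(t,x)|^2\,dx$ is known to be finite, which is precisely what is delicate in the cascade scenario where $N(t)\to 0$ and the solution spreads to spatial infinity. You acknowledge this, but note that the detour through the virial is actually unnecessary for the cascade: once you have $E(u)\equiv 0$ from energy conservation together with $\|\nabla u(t)\|_{L^2}\lesssim N(t)\to 0$, you may invoke the Gagliardo--Nirenberg saturation at every fixed time (exactly as you do in the soliton-like case) to conclude $u(t,x)=e^{i\gamma(t)}\mu(t)^{d/2}Q(\mu(t)x)$ pointwise in $t$; plugging this ansatz into $(\ref{1.1})$ forces $\mu'\equiv 0$, so $N(t)\sim\mu(t)$ is constant, directly contradicting $N(t)\to 0$. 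This sidesteps any need for finite variance in the cascade scenario, and collapses the cascade and soliton-like cases into a single argument after $E=0$ has been established by two different routes.
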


After proving Theorem $\ref{t1.3}$, we will show how Theorem $\ref{t1.3}$ implies a Liouville result for blowup solutions to the mass-critical problem. This result is very similar in nature to the Liouville result in \cite{martel2000liouville} for the generalized KdV equation. This result holds in any dimension for which Theorems $\ref{t1.2}$ or $\ref{t1.3}$ hold.

\section{Reduction of a blowup solution}
As in \cite{dodson2021determination}, the first step is to reduce Theorem $\ref{t1.3}$ to a result for solutions to $(\ref{1.1})$ that blow up forward in time and are close to the family of solitons for every positive time.

\begin{theorem}\label{t2.2}
Let $0 < \eta_{\ast} \ll 1$ be a small, fixed constant to be defined later. If $u$ is a solution to $(\ref{1.1})$ on the maximal interval of existence $I \subset \mathbb{R}$, $\| u_{0} \|_{L^{2}} = \| Q \|_{L^{2}}$, $u$ blows up forward in time, and
\begin{equation}\label{2.6}
\sup_{t \in [0, \sup(I))} \inf_{\gamma \in \mathbb{R}, \lambda > 0, \xi \in \mathbb{R}^{d}, \tilde{x} \in \mathbb{R}^{d}} \| e^{i \gamma} e^{ix \cdot \xi} \lambda^{d/2} u(t, \lambda x + \tilde{x}) - Q \|_{L^{2}} \leq \eta_{\ast},
\end{equation}
then $u$ is a soliton solution of the form $(\ref{1.12})$ or a pseudoconformal transformation of the soliton of the form $(\ref{1.15})$.
\end{theorem}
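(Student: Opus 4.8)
The plan is to reduce the general blowup statement (Theorem 1.3) to the "always close to a soliton" situation and then run a rigidity / modulation argument. The first paragraph below explains the reduction (which really belongs to the proof of Theorem 1.3, but the statement of Theorem 2.2 is the hypothesis-laden form we must actually prove); the rest sketches the rigidity argument itself.

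First I would set up the \emph{dichotomy}. Given a blowup solution $u$ with $\|u_0\|_{L^2} = \|Q\|_{L^2}$, consider the quantity
\begin{equation}
\delta(t) = \inf_{\gamma, \lambda, \xi, \tilde x} \| e^{i\gamma} e^{ix\cdot\xi} \lambda^{d/2} u(t, \lambda x + \tilde x) - Q \|_{L^2}.
\end{equation}
Either $\sup_{t} \delta(t) \le \eta_\ast$, which is exactly the hypothesis of Theorem 2.2, or $\delta(t_n) > \eta_\ast$ along some sequence $t_n \nearrow \sup(I)$. In the second case one uses the concentration-compactness / profile decomposition machinery for the mass-critical NLS together with the variational characterization of $Q$ from $(\ref{1.11})$--$(\ref{1.16})$: a blowup solution of threshold mass that does \emph{not} stay uniformly close to the soliton family can, after passing to a subsequence and rescaling, be shown to have a profile with mass strictly below $\|Q\|_{L^2}$ carrying a nontrivial part of the $\dot H^1$ norm, contradicting either the global well-posedness below the ground-state mass (\cite{dodson2015global}) or the minimality of $E(Q)=0$. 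This is the step where I would lean on \cite{killip2009characterization} and on the argument of \cite{dodson2021determination}; it shows that proving Theorem 1.3 reduces to Theorem 2.2, and also lets us assume throughout the rest of the proof that $u$ is close to the soliton manifold for all $t \ge 0$.

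Now, working under hypothesis $(\ref{2.6})$: choose modulation parameters $\gamma(t), \lambda(t), \xi(t), x(t)$ realizing (almost) the infimum, and write
\begin{equation}
u(t,x) = e^{i\gamma(t)} e^{ix\cdot\xi(t)} \lambda(t)^{d/2} \bigl( Q + \varepsilon(t) \bigr)\bigl(\lambda(t) x + x(t)\bigr),
\end{equation}
with $\varepsilon(t)$ small in $L^2$ and imposing orthogonality conditions (against $Q$, $xQ$, $\nabla Q$, $|x|^2 Q$, and the generators of scaling) to fix the decomposition and gain coercivity of the linearized energy on $\varepsilon$. Since $\|u\|_{L^2}=\|Q\|_{L^2}$, mass conservation forces a quadratic-in-$\varepsilon$ constraint, and since $E(Q)=0$, the energy $E(u)$ is a nonnegative quadratic form in $\varepsilon$ modulo the null directions, which the orthogonality conditions remove — this is the Weinstein-type spectral estimate $E(u) \gtrsim \|\varepsilon\|_{\dot H^1}^2$ (after accounting for the Galilean frequency $\xi$, which shifts the energy; one works with the Galilean-adjusted energy $E(u) - \frac12 |\xi|^2 \|Q\|_{L^2}^2$ or equivalently removes the $\xi$ modulation by a change of frame). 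The ODEs for the modulation parameters, derived by differentiating the orthogonality conditions and using the virial identity $\frac{d}{dt}\int x\cdot \mathrm{Im}[\bar u \nabla u]\,dx = 4E(u)$, then control the dynamics: the scaling parameter $\lambda(t)$ either stays bounded (soliton-like regime) or $\lambda(t)\to\infty$ (pseudoconformal, self-similar regime), and in either case the virial/energy functional, being monotone along the flow, forces $\varepsilon(t)\to 0$ in the appropriate rescaled sense, which upon integrating the parameter ODEs identifies $u$ exactly with $(\ref{1.3.1})$ or $(\ref{1.5})$.

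The main obstacle — and the reason this paper is harder than \cite{dodson2021determination} — is the low regularity of the nonlinearity $F(x)=|x|^{4/d}x$ in dimensions $d\ge 3$: it is only $C^{1,4/d}$ (indeed not even $C^2$ once $d\ge 5$), so the standard manipulations in the modulation equations and in the spectral/virial estimates, which in $d=1$ freely differentiate the nonlinear term, must be replaced by fractional-calculus and paraproduct estimates (the tools of \cite{taylor2007tools}, \cite{visan2007defocusing}) together with pointwise control of $\nabla Q(x)/Q(x)^{1-\alpha}$ for a small $\alpha>0$, so that error terms of the form $\int |\varepsilon|^{1+4/d} |\nabla Q|$ or $\int |\varepsilon|^2 Q^{4/d - 1}|\nabla Q|$ can be bounded by the coercive energy norm. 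Making these estimates work is exactly what restricts the argument to $d\le 15$: the relevant exponent $4/d$ becomes too small for the requisite Hölder continuity and weighted bounds to close (this is the "section ten" obstruction flagged in the remark after Theorem 1.3). I would therefore expect the bulk of the work to be in carefully redoing the modulation ODE derivation and the bootstrap on $\|\varepsilon\|_{L^2} + \|\varepsilon\|_{\dot H^1}$ with these non-smooth-nonlinearity-adapted estimates, while the overall skeleton (dichotomy, modulation, virial monotonicity, rigidity) is as in the one-dimensional case.
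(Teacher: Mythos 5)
Your plan misses the central mechanism of the paper's proof. Under hypothesis $(\ref{2.6})$ the solution is only in $L^{2}$, not in $H^{1}$, so one cannot invoke the virial identity $\frac{d}{dt}\int x\cdot \mathrm{Im}[\bar u\nabla u]\,dx = 4E(u)$ directly — neither $E(u)$ nor the Morawetz potential are even finite a priori. The sentence ``the virial/energy functional, being monotone along the flow, forces $\varepsilon(t)\to 0$'' glosses over exactly the point that is hard. What the paper actually does, following \cite{dodson2021determination}, is (i) prove a long-time Strichartz estimate in a $U_{\Delta}^{p}$-based space by induction on frequency (Theorems \ref{t6.2}, \ref{t6.3}, \ref{t5.1}), using interaction-Morawetz bilinear estimates in $d=2$ and local smoothing plus $\nabla Q/Q^{1-\alpha}$ bounds in $d\ge 3$; (ii) use these to establish \emph{almost} conservation of a frequency-truncated energy $E(P_{\le k+9}u)$ (Theorems \ref{t4.1}, \ref{t4.3}); (iii) run a \emph{frequency-localized} Morawetz estimate with the truncated solution $P_{\le k+9}u$ (Theorems \ref{t10.1}, \ref{t10.2}), controlling the truncation errors with (i); (iv) bootstrap this into $\|\epsilon(s)\|_{L^{2}}\in L^{p}_{s}$ for all $p>1$ (Theorem \ref{t11.1}); (v) prove approximate monotonicity of $\lambda(s)$ via the virial quantity $(\epsilon,|x|^{2}Q)$ (Theorem \ref{t8.2}); and (vi) in the infinite-time case conclude $\epsilon\equiv 0$ (Theorem \ref{t14.0}), then handle finite-time blowup by pseudoconformal transformation back to the infinite-time case. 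None of (i)--(iv) appear in your outline, and without them the argument does not close.

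Two smaller points. First, your opening paragraph about the dichotomy between ``always close to $Q$'' and ``escapes along a subsequence'' belongs to the deduction of Theorem \ref{t1.3} from Theorem \ref{t2.2} (Section 2 of the paper), not to the proof of Theorem \ref{t2.2} itself, which begins from $(\ref{2.6})$ as a standing hypothesis. Second, the orthogonality conditions you impose (against $Q$, $xQ$, $\nabla Q$, $|x|^{2}Q$) differ from the paper's, which uses the set $\{\chi_{0}, i\chi_{0}, Q_{x_{j}}, iQ_{x_{j}}\}$ built from the negative eigenfunction $\chi_{0}$ of $\mathcal L$ and the kernel of $\mathcal L$. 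The paper's choice is what directly yields the coercivity bound $(\ref{4.28})$/$(\ref{4.62})$ used throughout; the set you name would require a separate argument to produce the spectral gap, and in particular you would still need orthogonality to $\chi_{0}$ to eliminate the one genuinely negative direction. Your identification of the $d\ge 3$ difficulty (low regularity of $|x|^{4/d}x$, use of \cite{visan2007defocusing} and the $\nabla Q/Q^{1-\alpha}$ estimate) and of the $d\le 15$ restriction as arising from these weighted bounds is accurate, but those tools are deployed inside the long-time Strichartz / almost-conservation machinery, not in a direct modulation/virial argument.
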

Theorem $\ref{t1.3}$ follows from Theorem $\ref{t2.2}$.
\begin{proof}[Proof that Theorem $\ref{t2.2}$ implies Theorem $\ref{t1.3}$]
Let $u$ be a blowup solution to $(\ref{1.1})$ with mass $\| u_{0} \|_{L^{2}} = \| Q \|_{L^{2}}$. By $(\ref{1.12})$,
\begin{equation}\label{2.1}
\aligned
\inf_{\lambda > 0, \gamma \in \mathbb{R}, \tilde{x} \in \mathbb{R}^{d}, \xi \in \mathbb{R}^{d}} \| e^{i \gamma} e^{ix \cdot \xi} \lambda^{d/2} u_{0}(\lambda x + \tilde{x}) - Q(x) \|_{L^{2}(\mathbb{R}^{d})} \\ = \inf_{\lambda > 0, \gamma \in \mathbb{R}, \tilde{x} \in \mathbb{R}^{d}, \xi \in \mathbb{R}^{d}} \| u_{0}(x) - e^{-i \gamma} e^{-ix \cdot \frac{\xi}{\lambda}} \lambda^{-d/2} Q(\frac{x - \tilde{x}}{\lambda}) \|_{L^{2}(\mathbb{R}^{d})}.
\endaligned
\end{equation}
As in the $d = 1$ case, there exist $x_{0} \in \mathbb{R}^{d}$, $\xi_{0} \in \mathbb{R}^{d}$, $\lambda_{0} > 0$, $\gamma_{0} \in \mathbb{R}$ where this infimum is attained.

\begin{lemma}\label{l2.1}
There exists $\lambda_{0} > 0$, $\gamma_{0} \in \mathbb{R}$, $x_{0} \in \mathbb{R}^{d}$, and $\xi_{0} \in \mathbb{R}^{d}$, such that
\begin{equation}\label{2.2}
\| u_{0}(x) - e^{-i \gamma_{0}} e^{-i x \cdot \frac{\xi_{0}}{\lambda_{0}}} \lambda_{0}^{-d/2} Q(\frac{x - x_{0}}{\lambda_{0}}) \|_{L^{2}} = \inf_{\lambda > 0, \gamma \in \mathbb{R}, \tilde{x} \in \mathbb{R}^{d}, \xi \in \mathbb{R}^{d}} \| u_{0}(x) - e^{-i \gamma} e^{-ix \cdot \frac{\xi}{\lambda}} \lambda^{-d/2} Q(\frac{x - \tilde{x}}{\lambda}) \|_{L^{2}}.
\end{equation}
\end{lemma}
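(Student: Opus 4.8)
The plan is to run the direct method of the calculus of variations on the right-hand side of $(\ref{2.2})$. For a tuple $p=(\gamma,\lambda,y,\xi)\in\mathbb{R}\times(0,\infty)\times\mathbb{R}^{d}\times\mathbb{R}^{d}$ write
\[
Q_{p}(x)=e^{-i\gamma}e^{-ix\cdot\xi/\lambda}\lambda^{-d/2}Q\!\left(\frac{x-y}{\lambda}\right),
\]
and let $m\ge 0$ be the infimum in $(\ref{2.2})$. Since $\|Q_{p}\|_{L^{2}}=\|Q\|_{L^{2}}=\|u_{0}\|_{L^{2}}$ for every $p$, the only identity needed is
\[
\|u_{0}-Q_{p}\|_{L^{2}}^{2}=2\|Q\|_{L^{2}}^{2}-2\,\mathrm{Re}\,\langle u_{0},Q_{p}\rangle_{L^{2}},
\]
so minimizing the distance is the same as maximizing $\mathrm{Re}\,\langle u_{0},Q_{p}\rangle$.

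\emph{Step 1: the strict inequality $m<\sqrt{2}\,\|Q\|_{L^{2}}$.} Because $Q>0$ everywhere and $u_{0}\not\equiv 0$ (its mass is $\|Q\|_{L^{2}}>0$), the product $u_{0}Q$ is a nonzero element of $L^{1}(\mathbb{R}^{d})$, so its Fourier transform does not vanish identically; equivalently, the map $\xi\mapsto\langle u_{0},e^{-ix\cdot\xi}Q\rangle$ is not identically zero. Picking $\xi_{\ast}$ with $c:=\langle u_{0},e^{-ix\cdot\xi_{\ast}}Q\rangle\neq 0$ and $\gamma_{\ast}$ with $e^{i\gamma_{\ast}}c=|c|$, the single tuple $p_{\ast}=(\gamma_{\ast},1,0,\xi_{\ast})$ gives $\|u_{0}-Q_{p_{\ast}}\|_{L^{2}}^{2}=2\|Q\|_{L^{2}}^{2}-2|c|$, hence $m^{2}\le 2\|Q\|_{L^{2}}^{2}-2|c|<2\|Q\|_{L^{2}}^{2}$.

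\emph{Step 2: compactness of a minimizing sequence.} Take $p_{n}=(\gamma_{n},\lambda_{n},y_{n},\xi_{n})$ with $\|u_{0}-Q_{p_{n}}\|_{L^{2}}\to m$. If, along a subsequence, $\lambda_{n}$ stays in a compact subset of $(0,\infty)$ and $|y_{n}|,|\xi_{n}|$ stay bounded, then (also taking $\gamma_{n}\to\gamma_{0}$ modulo $2\pi$) pass to a further subsequence with $p_{n}\to p_{0}$; continuity of the symmetry-group action on $L^{2}$ yields $Q_{p_{n}}\to Q_{p_{0}}$ in $L^{2}$, so $\|u_{0}-Q_{p_{0}}\|_{L^{2}}=m$ and the infimum is attained. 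It therefore suffices to exclude the complementary case, in which along a subsequence one of the following degenerations occurs: $\lambda_{n}\to 0$, or $\lambda_{n}\to\infty$, or $|y_{n}|\to\infty$ with $\lambda_{n}$ bounded above and below, or $|\xi_{n}|/\lambda_{n}\to\infty$. In each of these $Q_{p_{n}}\rightharpoonup 0$ weakly in $L^{2}$ — by physical-space concentration when $\lambda_{n}\to 0$; by physical-space spreading together with concentration or escape to infinity of the frequency center $-\xi_{n}/\lambda_{n}$ when $\lambda_{n}\to\infty$; by translation to spatial infinity when $|y_{n}|\to\infty$; and by fast oscillation (Riemann--Lebesgue) when $|\xi_{n}|/\lambda_{n}\to\infty$. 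Consequently $\langle u_{0},Q_{p_{n}}\rangle\to 0$, so $\|u_{0}-Q_{p_{n}}\|_{L^{2}}^{2}\to 2\|Q\|_{L^{2}}^{2}$, forcing $m=\sqrt{2}\,\|Q\|_{L^{2}}$ and contradicting Step 1. Hence the non-degenerate alternative holds; applying $(\ref{2.1})$ converts the resulting parameters into $(\lambda_{0},\gamma_{0},x_{0},\xi_{0})$ realizing the infimum in the form $(\ref{2.2})$.

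The argument uses nothing about the dynamics — only $u_{0}\in L^{2}$, $\|u_{0}\|_{L^{2}}=\|Q\|_{L^{2}}$, and $Q>0$ pointwise. The one point deserving care, which I would write out rather than merely cite, is the claim that each degeneration mode forces $Q_{p_{n}}\rightharpoonup 0$, in particular when the scaling and Galilean parameters degenerate at the same time; this is the ``vanishing'' alternative familiar from profile decompositions for the mass-critical Schr\"odinger equation, and it is essentially the only content of the proof. As indicated in the statement, this is exactly the argument used in the $d=1$ case in \cite{dodson2021determination}.
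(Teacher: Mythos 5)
Your proof is correct, and it departs from the paper's route in one genuinely different ingredient. The paper does not try to prove strict inequality $m < \sqrt{2}\|Q\|_{L^{2}}$: it splits into two cases. If the squared infimum equals $2\|Q\|_{L^{2}}^{2}$, then by averaging in $\gamma$ (equation $(\ref{2.4})$) the real part of the inner product with $u_{0}$ must vanish identically, so the value $2\|Q\|_{L^{2}}^{2}$ is attained at every parameter tuple and one may simply set $\lambda_{0}=1$, $\gamma_{0}=\xi_{0}=x_{0}=0$. In the complementary case one restricts to a compact parameter set by a nested chain of explicit estimates — first $\lambda$ away from $0$ and $\infty$ (uniformly in $\tilde{x},\xi$), then $\tilde{x}$ bounded (uniformly in $\xi$), then $\xi$ bounded — and concludes by continuity. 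Your Step 1 replaces this case split with a Fourier-uniqueness argument showing the degenerate value is never the infimum: since $Q>0$ pointwise and $u_{0}\not\equiv 0$, $\bar{u}_{0}Q$ is a nonzero $L^{1}$ function whose Fourier transform cannot vanish identically. That observation is valid and arguably cleaner conceptually, but it is not needed for the lemma, and it makes your Step 2 carry the extra burden of delivering a contradiction rather than merely a reduction. The weak-convergence claim under simultaneous degeneration of several parameters — which you correctly flag as the point deserving a written argument — is precisely what the paper's nested explicit estimates settle; if you keep the weak-compactness language you should check the mixed cases (e.g.\ $\lambda_{n}\to 0$ with $|\xi_{n}|/\lambda_{n}\to\infty$) rather than only the pure ones. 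With that filled in, either approach proves the lemma.
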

\begin{proof}
As in one dimension, the ground state solution $Q$ is smooth and rapidly decreasing.
\begin{theorem}
There exists a unique positive radially symmetric solution $Q$ to
\begin{equation}
\Delta Q + Q^{1 + 4/d} = Q
\end{equation}
in $H^{1}$ which is called the ground state. In addition, $Q \in C^{\infty}(\mathbb{R}^{d})$, $\partial_{r} Q(r) < 0$ for all $r > 0$, and there exists $\delta > 0$ such that
\begin{equation}
|\partial^{\alpha} Q(x)| \lesssim_{\alpha} e^{-\delta |x|}, \qquad \forall x \in \mathbb{R}^{d}, \qquad \forall \alpha \in \mathbb{Z}_{+}^{d}.
\end{equation}
\end{theorem}
\begin{proof}
See page $641$ of \cite{cote2016asymptotic}.
\end{proof}

Throughout the paper, $(f, g)_{L^{2}}$ will denote the inner product
\begin{equation}
(f, g)_{L^{2}} = Re \int \overline{f(x)} g(x) dx.
\end{equation}
Since $Q$ is smooth and rapidly decreasing,
\begin{equation}\label{2.3}
\aligned
(u_{0}(x) - e^{-i \gamma} e^{-ix \cdot \frac{\xi}{\lambda}} \lambda^{-d/2} Q(\frac{x - \tilde{x}}{\lambda}), u_{0}(x) - e^{-i \gamma} e^{-ix \cdot \frac{\xi}{\lambda}} \lambda^{-d/2} Q(\frac{x - \tilde{x}}{\lambda}))_{L^{2}} \\
= 2 \| Q \|_{L^{2}}^{2} - 2(e^{-i \gamma} e^{-ix \cdot \frac{\xi}{\lambda}} \lambda^{-d/2} Q(\frac{x - \tilde{x}}{\lambda}), u_{0}(x))_{L^{2}},
\endaligned
\end{equation}
is differentiable in $\gamma \in \mathbb{R}$, $\xi \in \mathbb{R}^{d}$, $\tilde{x} \in \mathbb{R}^{d}$, and $\lambda > 0$. Since
\begin{equation}\label{2.4}
\int_{0}^{2 \pi} (e^{-i \gamma} e^{-ix \cdot \frac{\xi}{\lambda}} \lambda^{-d/2} Q(\frac{x - \tilde{x}}{\lambda}), u_{0}(x))_{L^{2}} d\gamma = 0,
\end{equation}
if $(\ref{2.2}) = 2 \| Q \|_{L^{2}}^{2}$, then $(\ref{2.3}) = 2 \| Q \|_{L^{2}}^{2}$ for any $\lambda$, $\gamma$, $\tilde{x}$, and $\xi$. In this case it is convenient to take $\lambda_{0} = 1$, $\gamma_{0} = 0$, $\xi_{0} = 0$, and $\tilde{x} = 0$. If $(\ref{2.2}) < 2 \| Q \|_{L^{2}}^{2}$, then since
\begin{equation}\label{2.5}
(e^{-i \gamma} e^{-ix \cdot \frac{\xi}{\lambda}} \lambda^{-d/2} Q(\frac{x - \tilde{x}}{\lambda}), u_{0}(x))_{L^{2}} \rightarrow 0,
\end{equation}
as $\lambda \nearrow \infty$, $\lambda \searrow 0$, $|\tilde{x}| \rightarrow \infty$, or $|\xi| \rightarrow \infty$, $(\ref{2.2})$ is equal to the infimum over a compact set $[\lambda_{1}, \lambda_{2}] \times \{ \xi : |\xi| \leq R \} \times \{ \tilde{x} : |\tilde{x}| \leq R \} \times [0, 2 \pi]$. Indeed, $(\ref{2.5})$ holds as $\lambda \searrow 0$ or $\lambda \nearrow \infty$, uniformly over $\tilde{x}$ and $\xi$, so restrict $\lambda \in [\lambda_{1}, \lambda_{2}]$. Then $(\ref{2.5})$ holds as $|\tilde{x}| \rightarrow \infty$, uniformly over $\lambda \in [\lambda_{1}, \lambda_{2}]$ and $\xi \in \mathbb{R}^{d}$, so restrict $\tilde{x} \in \{ x : |x| \leq R \}$ for some large $R$. Finally, $(\ref{2.5})$ holds as $|\xi| \rightarrow \infty$ for $|\tilde{x}| \leq R$ and $\lambda \in [\lambda_{1}, \lambda_{2}]$. Since $(\ref{2.3})$ is continuous in $\lambda$, $\tilde{x}$, $\xi$, and $\gamma$, the infimum is attained on this compact set.
\end{proof}

Now recall the sequential convergence results of \cite{fan20182} and \cite{dodson20212}.
\begin{theorem}\label{t2.3}
Assume $u$ is a symmetric solution to $(\ref{1.1})$ with $\| u \|_{L^{2}} = \| Q \|_{L^{2}}$ that does not scatter forward in time. Let $(T_{-}(u), T_{+}(u))$ be its lifespan. Then there exists a sequence $t_{n} \rightarrow T_{+}(u)$ and a family of parameters $\lambda_{n}$, $\gamma_{n}$ such that
\begin{equation}\label{2.7}
e^{i \gamma_{n}} \lambda_{n}^{1/2} u(t_{n}, \lambda_{n} x) \rightarrow Q, \qquad \text{in} \qquad L^{2}.
\end{equation}
\end{theorem}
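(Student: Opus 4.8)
The plan is to run the $L^{2}$ concentration--compactness and rigidity machinery for \eqref{1.1}. Since $u$ does not scatter forward in time, Theorem~\ref{t1.1} gives $\|u\|_{L_{t,x}^{2(d+2)/d}([0,T)\times\mathbb{R}^{d})}\to+\infty$ as $T\nearrow T_{+}(u)$. Fix any $t_{n}\nearrow T_{+}(u)$; mass conservation makes $\{u(t_{n})\}$ bounded in $L^{2}$, so after passing to a subsequence one applies the linear profile decomposition for $e^{it\Delta}$ (Carles--Keraani, B\'egout--Vargas, Merle--Vega): modulo the symmetry group (scaling, space translation, Galilean boost, phase) and a time parameter $\tau_{n}^{j}$ one has $u(t_{n})=\sum_{j\le J}\big(e^{i\tau_{n}^{j}\Delta}\phi^{j}\big)_{n}+w_{n}^{J}$, with asymptotic orthogonality of the parameters, the Pythagorean mass identity $\sum_{j}\|\phi^{j}\|_{L^{2}}^{2}+\lim_{n}\|w_{n}^{J}\|_{L^{2}}^{2}=\|Q\|_{L^{2}}^{2}$, and $\limsup_{n}\|e^{it\Delta}w_{n}^{J}\|_{L_{t,x}^{2(d+2)/d}}\to0$ as $J\to\infty$. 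Because $\|Q\|_{L^{2}}$ is precisely the mass threshold below which every solution scatters with a uniform spacetime bound --- this is the theorem of \cite{dodson2015global} --- any profile of mass $<\|Q\|_{L^{2}}$ has a nonlinear profile that scatters with bounds depending only on its mass; feeding these into the nonlinear perturbation lemma together with the smallness of $w_{n}^{J}$ would force $u$ to have a finite scattering norm on $[0,T_{+}(u))$, contradicting the previous line. Hence exactly one profile survives, of mass exactly $\|Q\|_{L^{2}}$, all others vanish, and $\|w_{n}^{J}\|_{L^{2}}\to0$. Running this for every such sequence, one concludes in the standard way that $u$ is \emph{almost periodic modulo the symmetry group}: there are $N(t)>0$, $x(t),\xi(t)\in\mathbb{R}^{d}$, $\gamma(t)\in\mathbb{R}$ so that $\{\,e^{i\gamma(t)}e^{ix\cdot\xi(t)}N(t)^{-d/2}u(t,N(t)^{-1}x+x(t))\,\}$ is precompact in $L^{2}(\mathbb{R}^{d})$.

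The second step is to upgrade the regularity of this minimal-mass almost periodic solution. Using the symmetry hypothesis on $u$ to take $x(t)\equiv\xi(t)\equiv0$ (as in \cite{fan20182,dodson20212}), together with the negative-regularity and ``no-waste'' arguments of Killip--Tao--Visan type, one shows $u(t)\in H^{1}(\mathbb{R}^{d})$ for all $t\in[0,T_{+}(u))$, with $\|u(t)\|_{\dot H^{1}}\sim N(t)$. From here the argument divides. If $T_{+}(u)<\infty$, then $N(t)\to\infty$, so $\|\nabla u(t)\|_{L^{2}}\to\infty$; setting $\lambda(t):=\|\nabla Q\|_{L^{2}}/\|\nabla u(t)\|_{L^{2}}\to0$, the rescaled family $v(t):=\lambda(t)^{d/2}u(t,\lambda(t)x)$ obeys $\|v(t)\|_{L^{2}}=\|Q\|_{L^{2}}$, $\|\nabla v(t)\|_{L^{2}}=\|\nabla Q\|_{L^{2}}$, and $E(v(t))=\lambda(t)^{2}E(u(t))\to0$ since the energy is conserved. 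Thus $\{v(t)\}$ is, as $t\nearrow T_{+}(u)$, a minimizing sequence for the sharp Gagliardo--Nirenberg inequality \eqref{1.11} on the sphere $\|\cdot\|_{L^{2}}=\|Q\|_{L^{2}}$; concentration compactness (no vanishing, since $\|v(t)\|_{L^{2(d+2)/d}}$ stays bounded below, and no dichotomy, by strict subadditivity) together with the uniqueness of the minimizer up to symmetries \cite{weinstein1983nonlinear} yields, along a subsequence $t_{n}$ and after a residual space translation and modulation, $v(t_{n})\to Q$ in $H^{1}$, hence in $L^{2}$; the residual translation is killed by the symmetry of $u$, so $e^{i\gamma_{n}}\lambda_{n}^{d/2}u(t_{n},\lambda_{n}x)\to Q$ in $L^{2}$.

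If instead $T_{+}(u)=\infty$, one expects $u$ to be soliton-like, $N(t)\sim1$ and $\|u(t)\|_{H^{1}}$ uniformly bounded, and I would close the argument with the virial identity $\frac{d}{dt}\int x\cdot\operatorname{Im}[\bar u\nabla u](t,x)\,dx=4E(u(t))$ recorded in the introduction: almost periodicity with bounded $N(t)$ and $x(t)\equiv\xi(t)\equiv0$ gives, through a truncated-virial estimate handling the slowly decaying tail, a uniform-in-$t$ bound on $\big|\int x\cdot\operatorname{Im}[\bar u\nabla u](t,x)\,dx\big|$; integrating the identity then forces $E(u)=0$. Since $\|u\|_{L^{2}}=\|Q\|_{L^{2}}$, equality must hold in \eqref{1.11} for $u(t)$ at each $t$, so by \cite{weinstein1983nonlinear} $u(t)=e^{i\gamma(t)}\lambda(t)^{d/2}Q(\lambda(t)x)$; substituting this into \eqref{1.1} shows $\lambda$ is constant, so $u$ is a soliton up to symmetries and $e^{i\gamma_{n}}\lambda_{n}^{d/2}u(t_{n},\lambda_{n}x)\to Q$ holds for any $t_{n}\to\infty$. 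The hard part is not the case split but the two inputs it rests on: the $H^{1}$-regularization of the minimal-mass almost periodic solution, and the control of $\int x\cdot\operatorname{Im}[\bar u\nabla u]$ needed to run the virial argument in the global case. These are exactly where the $L^{2}$ theory exceeds the classical $H^{1}$ theory of \cite{weinstein1983nonlinear,merle1992uniqueness}, and they constitute the main content of \cite{fan20182} and \cite{dodson20212}.
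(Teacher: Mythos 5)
The paper offers no proof of Theorem~\ref{t2.3} beyond the citation ``This is proved in Theorem $1.3$ of \cite{fan20182},'' so there is nothing in the text to compare against directly; you are in effect attempting to reconstruct Fan's argument. Your first step (linear profile decomposition, Pythagorean mass identity, stability against sub-threshold profiles by \cite{dodson2015global}, hence a single surviving profile and almost periodicity modulo the symmetry group) is correct and standard. The problems begin at your second step: you assert that ``negative-regularity and no-waste arguments of Killip--Tao--Visan type'' give $u(t)\in H^{1}$ with $\|u(t)\|_{\dot H^{1}}\sim N(t)$ for the threshold-mass almost periodic solution. Those arguments were built for the contradiction step of the sub-threshold scattering theory, where the three blowup scenarios ($N(t)\to\infty$ on finite time, quasi-soliton with $N(t)\gtrsim 1$, low-to-high cascade) are treated separately and each requires its own hypotheses on $N(t)$; at mass exactly $\|Q\|_{L^{2}}$ the soliton really does exist, so no contradiction is being derived, and you would need to establish an a priori $N(t)$ dichotomy and rule out the cascade before the regularity machinery even starts. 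You do not do this, and the assertion that the global case has $N(t)\sim 1$ is announced as something ``one expects'' rather than proved. This is a genuine gap, not a detail: both halves of your case split --- the finite-time minimizing-sequence argument (which needs $E(u)$ finite and $\|\nabla u(t)\|_{L^{2}}$ well defined) and the global truncated-virial argument (which needs $\|u(t)\|_{H^{1}}$ uniformly bounded, uniform tightness in $\dot H^{1}$, and $\inf_{t}N(t)>0$) --- rest entirely on this unestablished regularity.

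More substantively, the route you propose is not the one Fan (or \cite{dodson20212}) actually takes. Those works stay at the $L^{2}$ level: they combine the profile decomposition with long-time Strichartz estimates and frequency-localized virial/Morawetz inequalities --- the very machinery developed in Sections~4--8 of the present paper, adapted so as not to presuppose closeness to $Q$ --- to force the compact profile to coincide with $Q$, without ever promoting $u$ to $H^{1}$. Your outline captures the correct heuristics (threshold mass, compactness, variational characterization of $Q$) but the specific technical scaffolding you lean on --- full $H^{1}$ regularity of the almost periodic solution, plus the classical unweighted virial identity --- is exactly the scaffolding the $L^{2}$-critical theory was designed to circumvent. Without proving the regularity claim, or replacing it by the $L^{2}$-level Strichartz/Morawetz estimates, the proposal does not close.
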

\begin{proof}
This is proved in Theorem $1.3$ of \cite{fan20182}.
\end{proof}

\begin{theorem}\label{t2.4}
Assume $u$ is a solution to $(\ref{1.1})$ with $\| u_{0} \|_{L^{2}} = \| Q \|_{L^{2}}$ which blows up forward in time. Let $(T_{-}(u), T_{+}(u))$ be its lifespan. Then there exists a sequence $t_{n} \nearrow T_{+}(u)$ and a family of parameters $\lambda_{\ast, n}$, $\gamma_{\ast, n}$, $\xi_{\ast, n}$, $x_{\ast, n}$ such that
\begin{equation}\label{2.8}
 e^{i \gamma_{\ast, n}} e^{ix \cdot \xi_{\ast, n}} \lambda_{\ast, n}^{d/2} u(t_{n}, \lambda_{\ast, n} x + x_{\ast, n}) \rightarrow Q, \qquad \text{in} \qquad L^{2}.
 \end{equation}
 \end{theorem}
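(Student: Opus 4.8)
The plan is to run a concentration--compactness argument on $\{u(t_{n})\}$ at times $t_{n}\nearrow T_{+}(u)$, exploiting that $u$ does not scatter forward in time and that its mass is minimal, $\|u_{0}\|_{L^{2}}=\|Q\|_{L^{2}}$; this is the analogue of the argument behind Theorem~\ref{t2.3}, carried out now modulo the full symmetry group of $(\ref{1.1})$, so that translations and Galilean boosts also appear. Since $u$ blows up forward in time, Theorem~\ref{t1.1} gives $\|u\|_{L_{t,x}^{2(d+2)/d}([0,T]\times\mathbb{R}^{d})}\to\infty$ as $T\nearrow T_{+}(u)$; fix any $t_{n}\nearrow T_{+}(u)$. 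Then $\{u(t_{n})\}$ is bounded in $L^{2}$ with $\|u(t_{n})\|_{L^{2}}=\|Q\|_{L^{2}}$, so, after passing to a subsequence, the $L^{2}$ linear profile decomposition (see \cite{dodson2015global}, \cite{killip2009characterization} and the references therein) provides profiles $\phi^{j}\in L^{2}$, orthogonal parameters $\lambda_{n}^{j}>0$, $x_{n}^{j},\xi_{n}^{j}\in\mathbb{R}^{d}$, $s_{n}^{j}\in\mathbb{R}$, and remainders $w_{n}^{J}$ with
\[
u(t_{n})=\sum_{j=1}^{J}(\lambda_{n}^{j})^{-d/2}e^{ix\cdot\xi_{n}^{j}}\big(e^{is_{n}^{j}\Delta}\phi^{j}\big)\!\Big(\tfrac{x-x_{n}^{j}}{\lambda_{n}^{j}}\Big)+w_{n}^{J},
\]
with $\limsup_{n}\|e^{it\Delta}w_{n}^{J}\|_{L_{t,x}^{2(d+2)/d}}\to0$ as $J\to\infty$ and the Pythagorean mass identity $\sum_{j}\|\phi^{j}\|_{L^{2}}^{2}+\lim_{n}\|w_{n}^{J}\|_{L^{2}}^{2}\le\|Q\|_{L^{2}}^{2}$.

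Attach to each $\phi^{j}$ its nonlinear profile $v^{j}$ (the maximal solution of $(\ref{1.1})$ generated by $\phi^{j}$ along the shifts $s_{n}^{j}$). If every $\phi^{j}$ had $\|\phi^{j}\|_{L^{2}}<\|Q\|_{L^{2}}$, then by global well-posedness and scattering below the mass of the soliton \cite{dodson2015global} each $v^{j}$ is global with a spacetime bound depending only on $\|\phi^{j}\|_{L^{2}}$; the orthogonality of the parameters, the smallness of $w_{n}^{J}$ in Strichartz norm, and the stability theory for $(\ref{1.1})$ then bound $\|u\|_{L_{t,x}^{2(d+2)/d}([0,T_{+}(u))\times\mathbb{R}^{d})}$, contradicting blowup. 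So some profile, say $\phi:=\phi^{1}$, has $\|\phi\|_{L^{2}}=\|Q\|_{L^{2}}$, whence the mass identity forces all other profiles to vanish and $w_{n}:=w_{n}^{1}\to0$ in $L^{2}$. After a further subsequence $s_{n}^{1}$ converges to a finite value (that $s_{n}^{1}\to\pm\infty$ is impossible is a standard consequence of the stability theory and of the fact that $u$ does not scatter forward), so, absorbing its limiting free evolution into $\phi$, we obtain parameters $\gamma_{n}\in\mathbb{R}$, $\lambda_{n}>0$, $\xi_{n},x_{n}\in\mathbb{R}^{d}$ with
\[
e^{i\gamma_{n}}e^{ix\cdot\xi_{n}}\lambda_{n}^{d/2}u(t_{n},\lambda_{n}x+x_{n})\longrightarrow\phi\ \text{ in }L^{2},\qquad \|\phi\|_{L^{2}}=\|Q\|_{L^{2}}.
\]

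It remains to identify $\phi$, modulo symmetries, with $Q$. The nonlinear solution $v$ with data $\phi$ is a minimal-mass solution that does not scatter forward in time --- otherwise stability would force $u$ to scatter forward, against the choice of $t_{n}$. Extracting a critical element from $v$ in the usual way produces a minimal-mass solution that is almost periodic modulo the scaling, translation, Galilean, and phase symmetries and, by the additional-regularity theory for such solutions, has finite energy; for it, the localized virial identity together with the sharp Gagliardo--Nirenberg inequality $(\ref{1.11})$ and the Pohozaev identity $(\ref{1.16})$ forces the energy to vanish, and the equality case of $(\ref{1.11})$ shows that at each time it is a scaling/translation/Galilean/phase image of $Q$. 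In particular, along a sequence of times and after applying such symmetries it converges to $Q$ in $L^{2}$; this is the content of \cite{fan20182} and \cite{dodson20212}, which build on \cite{killip2009characterization}. Composing these symmetries with the $\gamma_{n},\lambda_{n},\xi_{n},x_{n}$ above yields a sequence $t_{n}\nearrow T_{+}(u)$ and parameters $\gamma_{\ast,n},\lambda_{\ast,n},\xi_{\ast,n},x_{\ast,n}$ for which $(\ref{2.8})$ holds.

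The main obstacle is this last step --- the rigidity of minimal-mass solutions that are almost periodic modulo symmetries in the non-radial setting. This is where the translation and Galilean parameters genuinely enter (they are absent from the radial Theorem~\ref{t2.3}), and one must propagate four families of parameters, the Galilean frames among them non-compact, through the profile decomposition, the stability argument, and the virial computation, all at $L^{2}$ rather than $H^{1}$ or finite-variance regularity. The exclusion of divergent time shifts and the tracking of the asymptotic orthogonality relations are the remaining, but routine, technical points, given the machinery of \cite{dodson2015global} and \cite{killip2009characterization}.
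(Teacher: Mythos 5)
The paper's proof of Theorem~\ref{t2.4} is a one-line citation to Theorem~2 of \cite{dodson20212}; it does not reprove the result. Your proposal attempts to reconstruct the argument from the concentration--compactness machinery, which is the correct high-level framework, but the proof you have written is circular at the decisive step. After the profile decomposition and the mass Pythagorean identity you correctly isolate a single profile $\phi$ of mass $\|Q\|_{L^{2}}$ with vanishing remainder, so that (after a subsequence and absorbing the finite time-shift) $e^{i\gamma_{n}}e^{ix\cdot\xi_{n}}\lambda_{n}^{d/2}u(t_{n},\lambda_{n}x+x_{n})\to\phi$ in $L^{2}$. Everything up to this point is standard. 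The entire content of the theorem, however, is the identification of $\phi$ (or equivalently of the nonlinear profile $v$ it generates) with a symmetry image of $Q$, and for precisely this step you write that ``this is the content of \cite{fan20182} and \cite{dodson20212}'' --- the very references the paper cites for Theorem~\ref{t2.4} itself. Invoking them there makes the argument rest on the statement you are trying to prove.

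The sketch you give in between --- extracting an almost-periodic critical element, upgrading its regularity to finite energy, then running the localized virial identity together with the sharp Gagliardo--Nirenberg inequality $(\ref{1.11})$ and the Pohozaev identity $(\ref{1.16})$ to force $E=0$, and finally applying the equality case of $(\ref{1.11})$ --- is a correct description of the strategy in \cite{killip2009characterization} (radial) and \cite{dodson20212}/\cite{fan20182} (nonradial), but none of these steps is actually carried out. Each one is a theorem in its own right: proving that the critical element has additional regularity at threshold mass, controlling the unbounded Galilean parameter $\xi(t)$ inside the truncated virial computation (this is the genuinely new nonradial difficulty, and it is why \cite{killip2009characterization} alone does not suffice), and then transferring the conclusion from the critical element $v$ back to the original solution $u$ along an appropriate sequence $t_{n}$ via stability. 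You flag the last of these as ``the main obstacle'' and then call the remaining work ``routine, given the machinery of \cite{dodson2015global} and \cite{killip2009characterization}.'' It is not routine: \cite{dodson2015global} concerns masses strictly below $\|Q\|_{L^{2}}$, and \cite{killip2009characterization} is restricted to radial data, so neither directly delivers the nonradial threshold rigidity. In short, the reduction you carry out is fine, but the statement is essentially equivalent to the rigidity of minimal-mass almost-periodic solutions modulo the full symmetry group, and that is exactly what is being cited rather than proved; the appropriate way to handle this theorem, absent a self-contained rigidity argument, is the paper's direct citation to \cite{dodson20212}.
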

 \begin{proof}
 This is proved in Theorem $2$ of \cite{dodson20212}.
\end{proof}
\noindent Then make the same argument as in \cite{dodson2021determination}, where we proved that Theorem $6$ implies Theorem $4$ and that Theorem $20$ implies Theorem $5$.
\end{proof}

\section{Decomposition of the solution near $Q$}
Under the assumptions of Theorem $\ref{t2.2}$, when $\eta_{\ast} \ll 1$ is sufficiently small, it is possible to decompose $u$ into the sum of a soliton and a remainder for which the linearization of $(\ref{1.12})$ has good spectral properties. Recall that in one dimension, the linearization of $(\ref{1.12})$,
\begin{equation}
\mathcal L = -\partial_{xx} + 1 - 5 Q^{4},
\end{equation}
has one negative eigenvalue and one zero eigenvalue. Choose $\lambda$, $\xi$, $\tilde{x}$, and $\gamma$ so that the difference $\epsilon$ between the soliton and the solution $u$ acted on by the representation of the group element $(\lambda, \xi, \tilde{x}, \gamma)$ is orthogonal to these two eigenvectors. The fact that $\mathcal L$ is positive definite on a subspace containing $\epsilon$ gives lower bounds on various virial and energy identities as a function of the size of $\epsilon$.

The same can be done in higher dimensions. In two dimensions, the spectral theory for
\begin{equation}\label{3.1}
\mathcal L = -\Delta + 1 - 3 Q^{2}, \qquad \mathcal L_{-} = -\Delta + 1 - Q^{2},
\end{equation}
is found in \cite{chang2008spectra}, \cite{kwong1989uniqueness}, and \cite{marics2002existence}. From the product rule, for any $j = 1, 2$,
\begin{equation}\label{3.2}
\mathcal L (Q_{x_{j}}) = -\Delta Q_{x_{j}} + Q_{x_{j}} - 3 Q^{2} Q_{x_{j}} = \partial_{x_{j}}(-\Delta Q + Q - Q^{3}) = 0.
\end{equation}
Thus $Q_{x_{j}}$ are eigenvectors of $\mathcal L$ with eigenvalue zero. These are the only two, and there is only one eigenvector of $\mathcal L$ with negative eigenvalue.
\begin{theorem}\label{t3.1}
The following holds for an operator $\mathcal L$ defined in $(\ref{3.1})$.

\begin{enumerate}
\item $\mathcal L$ is a self-adjoint operator and $\sigma_{ess}(\mathcal L) = [1, +\infty)$.

\item $Ker(\mathcal L) = span \{ Q_{x_{1}}, Q_{x_{2}} \}$.

\item $\mathcal L$ has a unique single negative eigenvalue $-\lambda_{0}$ associated to a positive, radially symmetric eigenfunction $\chi_{0}$. Without loss of generality, $\chi_{0}$ can be chosen such that $\| \chi_{0} \|_{L^{2}} = 1$. Moreover, there exists $\delta > 0$ such that $|\chi_{0}(x)| \lesssim e^{-\delta |x|}$ for all $x \in \mathbb{R}^{2}$.
\end{enumerate}
\end{theorem}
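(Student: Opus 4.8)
The plan is to treat $\mathcal{L}$ as a Schr\"odinger operator $-\Delta + V$ with the smooth, exponentially decaying potential $V = 1 - 3Q^{2}$, and to prove the three assertions in the order (1), (3), (2). For (1): multiplication by $3Q^{2}$ is a bounded symmetric operator on $L^{2}(\mathbb{R}^{2})$, so $\mathcal{L}$ is self-adjoint on $H^{2}(\mathbb{R}^{2})$ by Kato--Rellich; and since $3Q^{2}(-\Delta + 1)^{-1}$ is compact (being the operator-norm limit of $\mathbf{1}_{\{|x| \le R\}}\,3Q^{2}(-\Delta + 1)^{-1}$, compact by Rellich), multiplication by $3Q^{2}$ is relatively $(-\Delta+1)$-compact and Weyl's theorem gives $\sigma_{\mathrm{ess}}(\mathcal{L}) = \sigma_{\mathrm{ess}}(-\Delta + 1) = [1,\infty)$. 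In particular, every point of $\sigma(\mathcal{L}) \cap (-\infty, 1)$ is an isolated eigenvalue of finite multiplicity.

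For (3): pairing $\mathcal{L}$ with $Q$ and using $-\Delta Q + Q - Q^{3} = 0$ gives $(\mathcal{L}Q, Q)_{L^{2}} = -2\int Q^{4} < 0$, so $\inf\sigma(\mathcal{L}) < 0$ and is attained at an eigenvalue $-\lambda_{0}$. Since $V$ is bounded below, $e^{-t\mathcal{L}}$ is positivity improving (Feynman--Kac/Trotter), so by Perron--Frobenius $-\lambda_{0}$ is simple with a strictly positive eigenfunction; averaging that eigenfunction over the rotation group and invoking simplicity shows it is radial, yielding $\chi_{0}$, normalized by $\|\chi_{0}\|_{L^{2}} = 1$. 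That $-\lambda_{0}$ is the \emph{only} negative eigenvalue --- equivalently, that the Morse index of $\mathcal{L}$ is one --- then follows from the characterization of $Q$ as a ground state (a mountain-pass critical point of the action whose Hessian at $Q$ is $\mathcal{L}$) together with $(\mathcal{L}Q,Q) < 0$, or from the radial ODE analysis of \cite{kwong1989uniqueness, chang2008spectra}. Finally, $-\lambda_{0} < 1 = \inf\sigma_{\mathrm{ess}}(\mathcal{L})$ and $V - 1 \to 0$ at infinity, so an Agmon estimate --- or a bootstrap applying the exponentially decaying kernel of $(-\Delta + 1 + \lambda_{0})^{-1}$ to the identity $(-\Delta + 1 + \lambda_{0})\chi_{0} = 3Q^{2}\chi_{0}$ --- gives $|\chi_{0}(x)| \lesssim e^{-\delta|x|}$ for some $\delta > 0$.

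For (2): decompose $L^{2}(\mathbb{R}^{2})$ into angular sectors by $f(r,\theta) = \sum_{k\in\mathbb{Z}} f_{k}(r) e^{ik\theta}$, on which $\mathcal{L}$ acts as the family of half-line operators $\mathcal{L}_{k} = -\partial_{r}^{2} - \tfrac1r\partial_{r} + \tfrac{k^{2}}{r^{2}} + 1 - 3Q^{2}$ on $L^{2}((0,\infty),\, r\,dr)$. Differentiating the equation for $Q$ gives $\mathcal{L}Q_{x_{j}} = 0$, and writing $Q_{x_{j}} = (\partial_{r}Q)\,x_{j}/r$ exhibits $\partial_{r}Q$ as a zero-energy solution of $\mathcal{L}_{1}$; since $\partial_{r}Q < 0$ on $(0,\infty)$ it has no node, hence is the (simple) ground state of $\mathcal{L}_{1}$, so $\mathcal{L}_{1} \ge 0$ with $\ker\mathcal{L}_{1} = \mathrm{span}\{\partial_{r}Q\}$. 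For $|k| \ge 2$, $\langle\mathcal{L}_{k}f, f\rangle = \langle\mathcal{L}_{1}f,f\rangle + (k^{2}-1)\int \tfrac{|f|^{2}}{r^{2}}\,r\,dr > 0$ for every $f \ne 0$, so $\ker\mathcal{L}_{k} = \{0\}$. Thus $\ker\mathcal{L}$ lies in the sectors $k = 0, \pm 1$; the $k = \pm 1$ part is exactly $\mathrm{span}_{\mathbb{R}}\{(\partial_{r}Q)\cos\theta,\,(\partial_{r}Q)\sin\theta\} = \mathrm{span}\{Q_{x_{1}}, Q_{x_{2}}\}$, so (2) reduces to showing that $\mathcal{L}_{0}$ has trivial kernel on $L^{2}_{\mathrm{rad}}$.

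This radial nondegeneracy of the ground state is the main obstacle, and the only genuinely dimension-sensitive point; I would prove it by ODE methods in the spirit of \cite{kwong1989uniqueness} and \cite{chang2008spectra}. Because $\mathcal{L}_{0}$ has exactly one negative eigenvalue (the simple $-\lambda_{0}$, realized by $\chi_{0}$), Sturm oscillation theory forces a nontrivial radial $H^{2}$ solution $f$ of $\mathcal{L}_{0}f = 0$ to be the ``second'' radial eigenstate and hence to change sign exactly once on $(0,\infty)$. Comparing $f$ with the $L^{2}$-scaling function $\Lambda Q := Q + x\cdot\nabla Q$, which solves $\mathcal{L}(\Lambda Q) = 2Q$ and likewise changes sign exactly once, via the Wronskian identity $\tfrac{d}{dr}\big[\,r\big(f'\,\Lambda Q - f\,(\Lambda Q)'\big)\big] = 2\,r\,f\,Q$ and integration over $(0,\infty)$ (boundary terms vanishing by regularity at $0$ and exponential decay at $\infty$), together with a careful sign and monotonicity analysis of $f$, $Q$, $\Lambda Q$ and the associated shooting argument, forces $f \equiv 0$. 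This is where I expect essentially all of the real work to lie; everything else above is uniform in $d$.
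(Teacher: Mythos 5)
The paper does not prove Theorem \ref{t3.1}; it cites Theorem 3.3 of \cite{farah2018blow} (which itself rests on \cite{kwong1989uniqueness}, \cite{chang2008spectra}, \cite{weinstein1985modulational}). So there is no ``paper proof'' to compare against; you are reconstructing the literature argument, and your outline is essentially the standard one: Kato--Rellich and Weyl for (1), Perron--Frobenius for the bottom of the spectrum, angular decomposition and Sturm theory for the kernel. That is the right route.

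Two corrections and one completion. First, a sign: from $\Delta Q_\lambda + Q_\lambda^3 = \lambda^2 Q_\lambda$ with $Q_\lambda = \lambda Q(\lambda\cdot)$, differentiating at $\lambda = 1$ gives $\mathcal L(\Lambda Q) = -2Q$, not $+2Q$, and correspondingly $\frac{d}{dr}\bigl[r(f'\,\Lambda Q - f\,(\Lambda Q)')\bigr] = -2rfQ$; this does not affect the structure of your argument. Second, you present the Wronskian step as requiring a ``careful sign and monotonicity analysis\ldots shooting argument'' but stop short of the punchline, which is actually short: integrating the Wronskian identity over $(0,\infty)$ (boundary terms vanish by regularity at $0$ and exponential decay at $\infty$, since $0 < 1 = \inf\sigma_{\mathrm{ess}}$) gives $\int f Q\,dx = 0$, while pairing $\mathcal L f = 0$ against $\mathcal L Q = -2Q^3$ gives $\int f Q^3\,dx = 0$. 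If $f \not\equiv 0$ then by your Sturm count it changes sign exactly once, at some $r_0 > 0$; but then $f\cdot\bigl(Q^2 - Q(r_0)^2\bigr)Q$ has a fixed sign (and is not identically zero) because $Q$ is strictly decreasing, whereas its integral is $\int f Q^3 - Q(r_0)^2\int f Q = 0$, a contradiction. Finally, note that the ``Morse index one'' input you use for the Sturm count (uniqueness of the negative eigenvalue) is itself one of the nontrivial facts, so if you want the write-up to be self-contained you should either run the mountain-pass Morse index bound carefully (the Hessian of the action at $Q$ on real-valued $H^1$ perturbations is exactly $\mathcal L$) or simply cite \cite{kwong1989uniqueness}, \cite{chang2008spectra} for parts (2) and (3) together, which is what the paper in effect does.
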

\begin{proof}
This is Theorem $3.3$ of \cite{farah2018blow}.
\end{proof}

In higher dimensions,  \cite{chang2008spectra}, \cite{kwong1989uniqueness}, and \cite{weinstein1985modulational} proved a similar result for the spectral theory of $\mathcal L$ and $\mathcal L_{-}$. 
\begin{equation}\label{3.2.1}
\mathcal L = -\Delta + 1 - \frac{d + 4}{d} Q^{\frac{4}{d}}, \qquad \mathcal L_{-} = -\Delta + 1 - Q^{\frac{4}{d}},
\end{equation}
\begin{theorem}\label{t3.1.1}
The following holds for an operator $\mathcal L$ defined in $(\ref{3.2.1})$.

\begin{enumerate}
\item $\mathcal L$ is a self-adjoint operator and $\sigma_{ess}(\mathcal L) = [1, +\infty)$.

\item $Ker(\mathcal L) = span \{ Q_{x_{1}}, \cdots, Q_{x_{d}} \}$.

\item $\mathcal L$ has a unique single negative eigenvalue $-\lambda_{d}$ associated to a positive, radially symmetric eigenfunction $\chi_{0}$. Without loss of generality, $\chi_{0}$ can be chosen such that $\| \chi_{0} \|_{L^{2}} = 1$. Moreover, there exists $\delta > 0$ such that $|\chi_{0}(x)| \lesssim e^{-\delta |x|}$ for all $x \in \mathbb{R}^{d}$.
\end{enumerate}
\end{theorem}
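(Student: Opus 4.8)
The plan is to mirror the two-dimensional argument behind Theorem \ref{t3.1}, reducing the statement to the ODE analysis of \cite{kwong1989uniqueness}, \cite{chang2008spectra}, and \cite{weinstein1985modulational}, and supplying the structural steps. For part (1), the decay estimates on $Q$ show that the potential $\frac{d+4}{d} Q^{4/d}$ is bounded and tends to zero at infinity; hence multiplication by it composed with $(-\Delta + 1)^{-1}$ is a compact operator on $L^{2}(\mathbb{R}^{d})$. Therefore $\mathcal L$ is self-adjoint on $H^{2}(\mathbb{R}^{d})$, and by Weyl's theorem on the stability of the essential spectrum under relatively compact perturbations, $\sigma_{ess}(\mathcal L) = \sigma_{ess}(-\Delta + 1) = [1, +\infty)$.

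For parts (2) and (3) I would decompose $L^{2}(\mathbb{R}^{d})$ into spherical-harmonic sectors and analyze the induced Sturm--Liouville operators on $(0, \infty)$. Differentiating $\Delta Q + Q^{1 + 4/d} = Q$ in $x_{j}$ gives $\mathcal L Q_{x_{j}} = 0$, so $\mathrm{span}\{Q_{x_{1}}, \dots, Q_{x_{d}}\} \subseteq \mathrm{Ker}(\mathcal L)$. Since $Q_{x_{j}} = \frac{x_{j}}{|x|}\,\partial_{r} Q$ with $\partial_{r} Q < 0$ on $(0,\infty)$, each $Q_{x_{j}}$ lies in the $\ell = 1$ angular sector and its radial profile has no zero; hence $\partial_{r} Q$ is the ground state of the reduced operator in that sector, so the $\ell = 1$ sector is nonnegative with kernel exactly $\mathbb{R}\,\partial_{r} Q$, while the sectors $\ell \geq 2$, obtained by adding the strictly positive centrifugal term $\frac{\ell(\ell + d - 2) - (d - 1)}{|x|^{2}}$, are strictly positive. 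In the radial sector, the identities $\mathcal L(\tfrac{d}{2}Q + x \cdot \nabla Q) = -2Q$ and $(\mathcal L Q, Q)_{L^{2}} = -\tfrac{4}{d}\int Q^{2 + 4/d}\, dx < 0$ show there is at least one negative direction; that the radial sector has \emph{exactly} one negative eigenvalue and \emph{no} zero eigenvalue is precisely the nondegeneracy of the ground state established in \cite{kwong1989uniqueness} (see also \cite{chang2008spectra}, \cite{weinstein1985modulational}). Combining the sectors yields $\mathrm{Ker}(\mathcal L) = \mathrm{span}\{Q_{x_{1}}, \dots, Q_{x_{d}}\}$ and a single, simple negative eigenvalue $-\lambda_{d}$, whose eigenfunction $\chi_{0}$ is the ground state of the radial operator and can therefore be taken positive, radial, and normalized so that $\| \chi_{0} \|_{L^{2}} = 1$.

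The exponential decay of $\chi_{0}$ follows from the eigenvalue equation $-\Delta \chi_{0} + (1 + \lambda_{d})\chi_{0} = \frac{d+4}{d} Q^{4/d}\chi_{0}$: the right-hand side decays exponentially, so comparing $\chi_{0}$ with the Green's function of $-\Delta + (1 + \lambda_{d})$ (equivalently, an Agmon estimate) gives $|\chi_{0}(x)| \ls e^{-\delta|x|}$ for some $\delta > 0$. The one genuinely nontrivial input is the radial nondegeneracy used in part (2): unlike the existence of a negative direction, which is an immediate test-function computation, ruling out a radial zero mode requires the shooting and oscillation analysis of \cite{kwong1989uniqueness}, which I would cite rather than reprove.
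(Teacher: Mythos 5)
Your proposal is correct, but it goes substantially further than the paper: the paper's entire proof of Theorem~\ref{t3.1.1} is the single sentence ``This theorem is copied from page $642$ of \cite{cote2016asymptotic},'' with no argument given. What you have written is a faithful reconstruction of the standard argument that underlies that citation, and all the structural steps check out. In particular: the Kato--Rellich and Weyl argument for $\sigma_{ess}(\mathcal L) = [1,\infty)$ is correct since $Q^{4/d}$ is bounded and decays exponentially; the spherical-harmonic decomposition is the right device, with the $\ell = 1$ sector nonnegative with kernel $\mathbb{R}\,\partial_{r}Q$ because $\partial_{r}Q$ is signed, and the $\ell \geq 2$ sectors strictly positive because $\ell(\ell + d - 2) > d - 1$; the test computations $\mathcal L Q = -\tfrac{4}{d} Q^{1 + 4/d}$ and $\mathcal L(\tfrac{d}{2}Q + x \cdot \nabla Q) = -2Q$ correctly produce the negative direction in the radial sector; and the Agmon-type comparison with the Green's function of $-\Delta + (1 + \lambda_{d})$ gives the exponential decay of $\chi_{0}$.

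The one thing worth tightening is the attribution of the key nondegeneracy input. You cite \cite{kwong1989uniqueness} for ruling out a radial zero mode, but Kwong's paper establishes uniqueness of the positive radial solution $Q$, not directly the spectral nondegeneracy of the linearization. The statement that the radial sector of $\mathcal L$ has exactly one negative eigenvalue and trivial kernel is Weinstein's result (Proposition 2.8 of \cite{weinstein1985modulational}), which does rely on Kwong-type ODE uniqueness and oscillation arguments but is the more precise reference for the spectral fact you are invoking. Since you list both references this is a minor point, but in a write-up you would want to cite \cite{weinstein1985modulational} (or the survey \cite{chang2008spectra}) at exactly this juncture. With that adjustment your sketch is a complete and correct proof, which is more than the paper itself supplies.
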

\begin{proof}
This theorem is copied from page $642$ of \cite{cote2016asymptotic}.
\end{proof}

For $u_{0}$ sufficiently close to $Q$, it is possible to choose $\lambda$, $\gamma$, $\tilde{x}$, and $\xi$ so that the remainder is orthogonal to the kernel of $\mathcal L$ and to the negative eigenvector.

\begin{theorem}\label{t3.2}
Take $u \in L^{2}$. There exists $\alpha > 0$ sufficiently small such that if there exist $\lambda_{0} > 0$, $\gamma_{0} \in \mathbb{R}$, $x_{0} \in \mathbb{R}^{d}$, and $\xi_{0} \in \mathbb{R}^{d}$ that satisfy
\begin{equation}\label{3.3}
\| e^{i \gamma_{0}} e^{ix \cdot \xi_{0}} \lambda_{0}^{d/2} u(\lambda_{0} x + x_{0}) - Q(x) \|_{L^{2}} \leq \alpha,
\end{equation}
then there exist unique $\lambda > 0$, $\gamma \in \mathbb{R}$, $\tilde{x} \in \mathbb{R}^{d}$, $\xi \in \mathbb{R}^{d}$ such that if
\begin{equation}\label{3.4}
\epsilon(x) = e^{i \gamma} e^{ix \cdot \xi} \lambda^{d/2} u(\lambda x + \tilde{x}) - Q(x),
\end{equation}
then for $j = 1, ..., d$,
\begin{equation}\label{3.5}
( \epsilon, \chi_{0} )_{L^{2}} = ( \epsilon, i \chi_{0})_{L^{2}} = (\epsilon, Q_{x_{j}})_{L^{2}} = (\epsilon, i Q_{x_{j}})_{L^{2}} = 0,
\end{equation}
and
\begin{equation}\label{3.6}
\| \epsilon \|_{L^{2}} + |\frac{\lambda}{\lambda_{0}} - 1| + |\gamma - \gamma_{0} - \xi_{0} \cdot (\tilde{x} - x_{0})| + |\xi - \frac{\lambda}{\lambda_{0}} \xi_{0}| + |\frac{\tilde{x} - x_{0}}{\lambda_{0}}| \lesssim \| e^{i \gamma_{0}} e^{ix \cdot \xi_{0}} \lambda_{0} u(\lambda_{0} x + x_{0}) - Q \|_{L^{2}}.
\end{equation}
\end{theorem}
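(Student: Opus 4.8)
The plan is to obtain the decomposition by an implicit function theorem argument, exactly parallel to the standard modulation-theory construction (as in \cite{merle1993determination}, \cite{raphael2005existence}, or the $d=1$ case of \cite{dodson2021determination}). First I would record the group action: for parameters $\sigma = (\lambda, \gamma, \tilde x, \xi)$ set
\begin{equation}\label{plan1}
u_{\sigma}(x) = e^{i\gamma} e^{ix\cdot\xi} \lambda^{d/2} u(\lambda x + \tilde x),
\end{equation}
and define the map $G(\sigma) = (G_1, \dots, G_{2d+2})(\sigma)$ whose components are the $2d+2$ inner products
\begin{equation}\label{plan2}
(u_\sigma - Q, \chi_0)_{L^2},\quad (u_\sigma - Q, i\chi_0)_{L^2},\quad (u_\sigma - Q, Q_{x_j})_{L^2},\quad (u_\sigma - Q, iQ_{x_j})_{L^2},\quad j=1,\dots,d.
\end{equation}
The goal is to solve $G(\sigma)=0$. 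When $u=Q$ exactly, $\sigma_\ast = (1,0,0,0)$ is a solution; the heart of the matter is to check that the differential $D_\sigma G$ at $(Q,\sigma_\ast)$ is an invertible $(2d+2)\times(2d+2)$ matrix, and then invoke the implicit/inverse function theorem to get a unique nearby solution $\sigma = \sigma(u)$ depending continuously (Lipschitz) on $u$ in $L^2$, for all $u$ in an $\alpha$-neighborhood of the $Q$-orbit.

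The key computation is the Jacobian at the base point. Differentiating $u_\sigma$ at $\sigma_\ast$ in each parameter direction produces the generators of the symmetry group acting on $Q$: $\partial_\gamma$ gives $iQ$, $\partial_{\tilde x_j}$ gives $-Q_{x_j}$, $\partial_{\xi_j}$ gives $ix_j Q$, and $\partial_\lambda$ gives $\Lambda Q := \frac{d}{2}Q + x\cdot\nabla Q$ (the scaling generator). So $D_\sigma G$ at $(Q,\sigma_\ast)$ is the Gram-type matrix of pairings of $\{iQ,\ Q_{x_j},\ ix_jQ,\ \Lambda Q\}$ against $\{\chi_0,\ i\chi_0,\ Q_{x_k},\ iQ_{x_k}\}$. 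One then checks this matrix is block-diagonal after sorting by the real/imaginary and radial/non-radial structure: $(\Lambda Q, \chi_0)_{L^2}\neq 0$ because $\mathcal L(\Lambda Q) = -2Q$ (up to a constant depending on the exponent) forces a nonzero component along the negative eigenvector $\chi_0$; $(Q_{x_j}, Q_{x_k})_{L^2} = \|Q_{x_1}\|_{L^2}^2 \delta_{jk} \neq 0$; $(iQ, i\chi_0)_{L^2} = (Q,\chi_0)_{L^2} \neq 0$ since $\chi_0>0$ and $Q>0$; and $(ix_jQ, iQ_{x_k})_{L^2} = (x_jQ, Q_{x_k})_{L^2} = -\frac12\|Q\|_{L^2}^2 \delta_{jk}\neq 0$ after integration by parts. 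The cross terms vanish by parity (radial vs.\ odd functions) and by the real/imaginary splitting of the inner product, so the determinant is a nonzero product of these quantities.

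From invertibility of the Jacobian, the inverse function theorem gives, on a sufficiently small ball, a unique $C^1$ (hence locally Lipschitz) solution map $u \mapsto \sigma(u)$ with $\sigma(Q)=\sigma_\ast$, satisfying $G(\sigma(u))=0$; this yields \eqref{3.5}. For \eqref{3.6}, I would set $\epsilon = u_{\sigma(u)} - Q$ and note $\|\epsilon\|_{L^2} \leq \|u_{\sigma(u)} - u_{\sigma_0}\|_{L^2} + \|u_{\sigma_0} - Q\|_{L^2}$ where $\sigma_0 = (\lambda_0,\gamma_0,x_0,\xi_0)$ is the given approximating tuple; the second term is $\leq\alpha$ by hypothesis \eqref{3.3}, and the first is controlled by $|\sigma(u)-\sigma_0|$ times the Lipschitz constant of the group action, while $|\sigma(u)-\sigma_0| \lesssim \|u_{\sigma_0}-Q\|_{L^2}$ again by the Lipschitz bound on $\sigma(\cdot)$ — after rewriting everything in terms of the rescaled variable $v = u_{\sigma_0}$, which is within $\alpha$ of $Q$, so that the relevant parameters for $v$ are near $\sigma_\ast$. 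The somewhat fiddly bookkeeping is translating the parameter differences $(\lambda/\lambda_0 - 1, \gamma - \gamma_0 - \xi_0\cdot(\tilde x - x_0), \xi - (\lambda/\lambda_0)\xi_0, (\tilde x - x_0)/\lambda_0)$ into the natural parameters of $v$: these particular combinations are exactly the group element relating the decomposition of $u$ to the decomposition of $v$, so they are the quantities the inverse function theorem directly controls. The main obstacle is therefore not any single hard estimate but verifying non-degeneracy of the Jacobian cleanly in general dimension $d$ — in particular confirming $(\Lambda Q, \chi_0)_{L^2}\neq 0$, which uses the explicit action of $\mathcal L$ on $\Lambda Q$ from \eqref{3.2.1} together with the Fredholm alternative (if it were zero, $\Lambda Q$ would lie in the range of $\mathcal L$ restricted to the orthocomplement of $\chi_0$, contradicting $(\mathcal L \Lambda Q, Q)_{L^2} = -2\|Q\|_{L^2}^2 \neq 0$ since $Q\notin \mathrm{Ker}\,\mathcal L$). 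The remaining work is routine change-of-variables and Cauchy–Schwarz, and can be cited from \cite{dodson2021determination}.
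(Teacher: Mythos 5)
Your proposal follows the paper's own route essentially line for line: set up $G(\sigma)$ as the vector of inner products \eqref{3.5}, identify the infinitesimal generators $\{iQ,\ Q_{x_j},\ ix_jQ,\ \Lambda Q\}$ obtained by differentiating $u_\sigma$ at the base point, exhibit the Jacobian as a block-diagonal Gram matrix with the same four nonzero entries $(Q,\chi_0)$, $\tfrac{\delta_{jk}}{d}\|\nabla Q\|_{L^2}^2$, $-\tfrac{\delta_{jk}}{2}\|Q\|_{L^2}^2$, and $\tfrac{2}{\lambda_d}(Q,\chi_0)$, then apply the inverse function theorem and obtain \eqref{3.6} by translating the result back to the unrescaled parameters; this is exactly what the paper does in $(3.9)$--$(3.16)$.

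One detail: your ``Fredholm alternative'' justification for $(\Lambda Q,\chi_0)_{L^2}\neq0$ does not hang together as written — the displayed chain ``if $(\Lambda Q,\chi_0)=0$ then $\Lambda Q$ lies in the range of $\mathcal L|_{\chi_0^\perp}$, contradicting $(\mathcal L\Lambda Q,Q)\neq0$'' is a non sequitur, since the latter fact is not in conflict with the former. You already have the correct ingredients, though, and the clean version is the direct self-adjointness computation the paper uses:
\begin{equation}
(\Lambda Q,\chi_0)_{L^2} = -\tfrac{1}{\lambda_d}\,(\Lambda Q,\ \mathcal L\chi_0)_{L^2} = -\tfrac{1}{\lambda_d}\,(\mathcal L\Lambda Q,\ \chi_0)_{L^2} = \tfrac{2}{\lambda_d}\,(Q,\chi_0)_{L^2} > 0,
\end{equation}
using $\mathcal L\chi_0=-\lambda_d\chi_0$, $\mathcal L(\Lambda Q)=-2Q$, and positivity of $Q$ and $\chi_0$. (Also, the $\tilde x_j$-derivative of $u(\lambda x+\tilde x)$ at the base point is $+Q_{x_j}$, not $-Q_{x_j}$, though this sign is immaterial for nondegeneracy.) With those two corrections your argument coincides with the paper's proof.
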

\begin{remark}
As usual, $\gamma$ is unique up to translation by $2 \pi n$, where $n$ is an integer.
\end{remark}
\begin{proof}
Let $f$ denote an element of the set
\begin{equation}\label{3.7}
f \in \{ \chi_{0}, i \chi_{0}, Q_{x_{1}}, ..., Q_{x_{d}}, i Q_{x_{1}}, ..., i Q_{x_{d}} \}.
\end{equation}
By H{\"o}lder's inequality,
\begin{equation}\label{3.8}
|( e^{i \gamma_{0}} e^{ix \cdot \xi_{0}} \lambda_{0}^{d/2} u(t, \lambda_{0} x + \tilde{x}) - Q(x), f )_{L^{2}}| \lesssim \| e^{i \gamma_{0}} e^{ix \cdot \xi_{0}} \lambda_{0}^{d/2} u(\lambda_{0} x + \tilde{x}) - Q \|_{L^{2}}.
\end{equation}
The inner product in $(\ref{3.8})$ is $C^{1}$ as a function of $\gamma$, $\lambda$, $\tilde{x}$, and $\xi$. Indeed,
\begin{equation}\label{3.9}
\frac{\partial}{\partial \gamma} ( e^{i \gamma} e^{ix \cdot \xi} \lambda^{d/2} u(\lambda x + \tilde{x}) - Q(x), f )_{L^{2}} = ( i e^{i \gamma} e^{ix \cdot \xi} \lambda^{d/2} u(\lambda x + \tilde{x}), f )_{L^{2}} \lesssim \| u \|_{L^{2}} \| f \|_{L^{2}}.
\end{equation}
Next, since all $f$ are smooth with rapidly decreasing derivatives,
\begin{equation}\label{3.10}
\frac{\partial}{\partial \xi} ( e^{i \gamma} e^{ix \cdot \xi} \lambda^{d/2} u(\lambda x + \tilde{x}) - Q(x), f )_{L^{2}} = ( ix e^{i \gamma} e^{ix \cdot \xi} \lambda^{d/2} u(\lambda x + \tilde{x}), f )_{L^{2}} \lesssim \| u \|_{L^{2}} \| x f \|_{L^{2}}.
\end{equation}
Integrating by parts, 
\begin{equation}\label{3.11}
\aligned
\frac{\partial}{\partial \tilde{x}} ( e^{i \gamma} e^{ix \cdot \xi} \lambda^{d/2} u(\lambda x + \tilde{x}) - Q(x), f )_{L^{2}} = ( e^{i \gamma} e^{ix \cdot \xi} \lambda^{d/2} \nabla u(\lambda x + \tilde{x}), f )_{L^{2}} \\ = -\frac{1}{\lambda} (e^{i \gamma} e^{ix \cdot \xi} \lambda^{d/2} u(\lambda x + \tilde{x}), \nabla f)_{L^{2}} - \frac{\xi}{\lambda} (i e^{i \gamma} e^{ix \cdot \xi} \lambda^{d/2} u(\lambda x + \tilde{x}), f)_{L^{2}} \lesssim \frac{1}{\lambda} \| u \|_{L^{2}} \| \nabla f \|_{L^{2}} + \frac{|\xi|}{\lambda} \| u \|_{L^{2}} \| f \|_{L^{2}}.
\endaligned
\end{equation}
Finally,
\begin{equation}\label{3.12}
\aligned
\frac{\partial}{\partial \lambda} ( e^{i \gamma} e^{ix \cdot \xi} \lambda^{d/2} u(\lambda x + \tilde{x}) - Q(x), f )_{L^{2}} = ( \frac{d}{2} \lambda^{d/2 - 1} e^{i \gamma} e^{ix \cdot \xi} u(\lambda x + \tilde{x}) + e^{i \gamma} e^{ix \cdot \xi} \lambda^{d/2} x \cdot \nabla u(\lambda x + \tilde{x}), f)_{L^{2}} \\
\lesssim \frac{1}{\lambda} \| u \|_{L^{2}} \| f \|_{L^{2}} + \frac{|\xi|}{\lambda} \| u \|_{L^{2}} \| x f \|_{L^{2}} + \frac{1}{\lambda} \| u \|_{L^{2}} \| x \nabla f \|_{L^{2}}.
\endaligned
\end{equation}
Therefore, the inner product is a $C^{1}$ function of $\gamma$, $\xi$, $\lambda$, and $\tilde{x}$. Repeating the above calculations would also show that the inner product is $C^{2}$.

Computing $(\ref{3.9})$--$(\ref{3.12})$ at $u = Q$, $\xi = \tilde{x} = \gamma = 0$, $\lambda = 1$,
\begin{equation}\label{3.13}
\aligned
\frac{\partial}{\partial \gamma} ( e^{i \gamma} e^{ix \cdot \xi} \lambda^{d/2} u(\lambda x + \tilde{x}) - Q(x), f )_{L^{2}}|_{u = Q, \lambda = 1, \gamma = \tilde{x} = \xi = 0} = ( i Q, f )_{L^{2}} = 0 \qquad \text{if} \qquad f \in \{ \chi_{0}, Q_{x_{j}}, i Q_{x_{j}} \}, \\
= (i Q, i \chi_{0})_{L^{2}} = (Q, \chi_{0})_{L^{2}} > 0, \qquad \text{if} \qquad f = i \chi_{0}.
\endaligned
\end{equation}
The fact that $(Q, \chi_{0})_{L^{2}} > 0$ follows from the fact that $\chi_{0} > 0$ and $Q > 0$. Next,
\begin{equation}\label{3.14}
\aligned
\frac{\partial}{\partial \xi_{k}} ( e^{i \gamma} e^{ix \cdot \xi} \lambda^{d/2} u(\lambda x + \tilde{x}) - Q(x), f )_{L^{2}}|_{u = Q, \lambda = 1, \gamma = \tilde{x} = \xi = 0} = ( ix_{k} Q, f )_{L^{2}} = 0, \qquad \text{if} \qquad f \in \{ \chi_{0}, Q_{x_{j}}, i \chi_{0} \}, \\
= (i x_{k} Q, i Q_{x_{j}})_{L^{2}} = (x_{k} Q, Q_{x_{j}})_{L^{2}} = -\frac{\delta_{jk}}{2} \| Q \|_{L^{2}}^{2} \qquad \text{if} \qquad f = i Q_{x_{j}}, \qquad j = 1, ..., d.
\endaligned
\end{equation}
Next,
\begin{equation}\label{3.15}
\aligned
\frac{\partial}{\partial \tilde{x}_{k}} ( e^{i \gamma} e^{ix \cdot \xi} \lambda^{d/2} u(\lambda x + \tilde{x}) - Q(x), f )_{L^{2}}|_{u = Q, \lambda = 1, \gamma = \tilde{x} = \xi = 0} = ( Q_{x_{k}}, f )_{L^{2}} = 0, \\ \qquad \text{if} \qquad f \in \{ i \chi_{0}, i Q_{x_{j}}, \chi_{0} \} \\ = (Q_{x_{k}}, Q_{x_{j}})_{L^{2}} = \frac{\delta_{jk}}{d} \| \nabla Q \|_{L^{2}}^{2}, \qquad \text{if} \qquad f = Q_{x_{j}}, \qquad j = 1, ..., d.
\endaligned
\end{equation}
Finally,
\begin{equation}\label{3.16}
\aligned
\frac{\partial}{\partial \lambda} ( e^{i \gamma} e^{ix \cdot \xi} \lambda^{d/2} u(\lambda x + \tilde{x}) - Q(x), f )_{L^{2}}|_{u = Q, \lambda = 1, \gamma = \tilde{x} = \xi = 0} = (\frac{d}{2} Q + x \cdot \nabla Q, f)_{L^{2}} = 0 \\ \qquad \text{if} \qquad f \in \{ i \chi_{0}, i Q_{x_{j}}, Q_{x_{j}} \}, \\
 = (\frac{d}{2} Q + x \cdot \nabla Q, \chi_{0})_{L^{2}} = \frac{-1}{\lambda_{d}} (\frac{d}{2} Q + x \cdot \nabla Q, \mathcal L \chi_{0})_{L^{2}} = -\frac{1}{\lambda_{d}} (\mathcal L (\frac{d}{2} Q + x \cdot \nabla Q), \chi_{0})_{L^{2}} = \frac{2}{\lambda_{d}} (Q, \chi_{0}) > 0, \\ \qquad \text{if} \qquad f = \chi_{0}.
\endaligned
\end{equation}
Therefore, by the inverse function theorem, there exists a unique $\lambda > 0$, $\tilde{x} \in \mathbb{R}^{2}$, $\xi \in \mathbb{R}^{2}$ and $\gamma \in \mathbb{R} / 2 \pi \mathbb{Z}$ in a neighborhood of $\lambda_{0}$, $\xi_{0}$, $x_{0}$, and $\gamma_{0}$ such that $(\ref{3.5})$ holds.

The proof of $(\ref{3.6})$ follows from acting on $u$ by symmetries to map to $\lambda_{0} = 1$, $\tilde{x} = \xi = \gamma = 0$, applying the inverse function theorem, and then mapping back to the original $u$. The proof of uniqueness in $\mathbb{R}_{> 0} \times \mathbb{R}^{n} \times \mathbb{R}^{n} \times \mathbb{R} / 2 \pi \mathbb{Z}$ is identical to the proof of uniqueness in \cite{dodson2021determination}.
\end{proof}

Therefore, in Theorem $\ref{t2.2}$, there exist functions
\begin{equation}\label{3.17}
\aligned
\lambda(t) : [0, \sup(I)) \rightarrow (0, \infty), \qquad \xi(t) : [0, \sup(I)) \rightarrow \mathbb{R}^{d}, \\ \qquad x(t) : [0, \sup(I)) \rightarrow \mathbb{R}^{d}, \qquad \gamma(t) : [0, \sup(I)) \rightarrow \mathbb{R},
\endaligned
\end{equation}
such that $(\ref{3.5})$ holds for all $t \in [0, \sup(I))$.

Furthermore, define a monotone function,
\begin{equation}\label{3.18}
s(t) : [0, \sup(I)) \rightarrow [0, \infty), \qquad s(t) = \int_{0}^{t} \lambda(\tau)^{-2} d\tau.
\end{equation}
As in Theorem $10$ of \cite{dodson2021determination}, the functions in $(\ref{3.17})$ are differentiable in time for almost every $t \in [0, \sup(I))$, and furthermore, taking $\epsilon = \epsilon_{1} + i \epsilon_{2}$,
\begin{equation}\label{3.19}
\aligned
\epsilon_{s} = i (\gamma_{s} + 1) (Q + \epsilon) + i \xi_{s} \cdot x (Q + \epsilon) + \frac{\lambda_{s}}{\lambda} (\frac{d}{2} (Q + \epsilon) + x \cdot \nabla (Q + \epsilon)) - i \frac{\lambda_{s}}{\lambda} \xi(s) \cdot x (Q + \epsilon) \\
+ \frac{x_{s}}{\lambda} \cdot \nabla (Q + \epsilon) - i \frac{x_{s}}{\lambda} \cdot \xi(s) (Q + \epsilon) + i \mathcal L \epsilon_{1} - \mathcal L_{-} \epsilon_{2} + 2 \xi(s) \cdot \nabla (Q + \epsilon) - i |\xi(s)|^{2} (Q + \epsilon) \\ + i O(|Q|^{4/d - 1} |\epsilon|^{2} + |\epsilon|^{1 + 4/d}), \qquad \text{when} \qquad 2 \leq d \leq 4, \qquad + i O(|\epsilon|^{1 + 4/d}) \qquad \text{when} \qquad d > 5.
\endaligned
\end{equation}

For $f$ in $(\ref{3.7})$,
\begin{equation}\label{3.20}
\frac{d}{ds} (\epsilon, f )_{L^{2}} = (\epsilon_{s}, f)_{L^{2}} = 0.
\end{equation}
Plugging $(\ref{3.19})$ into $(\ref{3.20})$ and following the analysis in Section $10.2$ of \cite{dodson2021determination}, for any $a \in \mathbb{Z}_{\geq 0}$,
\begin{equation}\label{3.21}
\int_{a}^{a + 1} |\gamma_{s} + 1 - \frac{x_{s}}{\lambda} \cdot \xi(s) - |\xi(s)|^{2}| ds \lesssim \int_{a}^{a + 1} \| \epsilon(s) \|_{L^{2}}^{2} ds,
\end{equation}
\begin{equation}\label{3.22}
\int_{a}^{a + 1} |\xi_{s} - \frac{\lambda_{s}}{\lambda} \xi(s)| ds \lesssim \int_{a}^{a + 1} \| \epsilon(s) \|_{L^{2}}^{2} ds,
\end{equation}
\begin{equation}\label{3.23}
\int_{a}^{a + 1} |\frac{\lambda_{s}}{\lambda}| ds \lesssim \int_{a}^{a + 1} \| \epsilon(s) \|_{L^{2}} ds,
\end{equation}
and
\begin{equation}\label{3.24}
\int_{a}^{a + 1} |\frac{x_{s}}{\lambda} + 2 \xi| ds \lesssim \int_{a}^{a + 1} \| \epsilon(s) \|_{L^{2}} ds.
\end{equation}

As in \cite{dodson2021determination}, it is also true that under the conditions of Theorem $\ref{t2.2}$, for any $a \geq 0$,
\begin{equation}\label{3.25}
\sup_{s \in [a, a + 1]} \| \epsilon(s) \|_{L^{2}} \sim \inf_{s \in [a, a + 1]} \| \epsilon(s) \|_{L^{2}}.
\end{equation}

\section{A long time Strichartz estimate when $d = 2$}
As in the one dimensional case, the next step is to obtain a long time Strichartz estimate. Roughly speaking, if $I$ is an interval of length $|I| = T$, and $\lambda(t) = 1$ on $I$, the goal is to obtain good Strichartz estimates at frequencies greater than or equal to $T^{\alpha}$ for some $\alpha < \frac{1}{2}$. In two dimensions, as in one dimension, $\alpha = \frac{1}{3}$ will do. In higher dimensions, we will take $T^{\alpha_{d}}$, where $\alpha_{d} \rightarrow \frac{1}{2}$ as $d$ goes to infinity.

The main new technical difficulty in two dimensions is utilizing the interaction Morawetz bilinear estimate of \cite{planchon2009bilinear}. Setting
\begin{equation}
M(t) = \int |u(t,y)|^{2} \frac{(x - y)_{j}}{|(x - y)_{j}|} Im[\bar{u} \partial_{j} u](t,x) dx dy,
\end{equation}
if $u$ solves $(\ref{1.1})$,
\begin{equation}
\aligned
\frac{d}{dt} M(t) = -2 \int \partial_{k} Im[\bar{u} \partial_{k} u](t, y) \frac{(x - y)_{j}}{|(x - y)_{j}|} Im[\bar{u} \partial_{j} u](t, x) dx dy + \frac{1}{2} \int |u(t, x)|^{2} \frac{(x - y)_{j}}{|(x - y)_{j}} \partial_{j} \Delta(|u|^{2}) dx dy \\
- 2 \int |u(t,y)|^{2} \frac{(x - y)_{j}}{|(x - y)_{j}|} \partial_{k} Re[\partial_{j} \bar{u} \partial_{k}](t, x) dx dy + \frac{2}{d + 2} \int |u(t,y)|^{2} \frac{(x - y)_{j}}{|(x - y)_{j}|} \partial_{j}(|u(t,x)|^{\frac{2(d + 2)}{d}}) dx dy.
\endaligned
\end{equation}
In one dimension, integrating by parts,
\begin{equation}\label{6.0}
\frac{d}{dt} M(t) = \int |\partial_{x}(|u|^{2})(t,x)|^{2} dx - \frac{2}{3} \int |u(t,x)|^{8} dx.
\end{equation}
If the $u$'s in the interaction Morawetz estimate are replaced by Fourier truncations of $u$, this gives good bilinear estimates, as was used in \cite{dodson2021determination}. In two dimensions, fixing $j$, say $j = 1$,
\begin{equation}\label{6.0.1}
\frac{d}{dt} M(t) = \int |\partial_{x_{1}}(u(t, x_{1}, x_{2}) \overline{u(t, x_{1}, y_{2})})|^{2} dx_{1} dx_{2} dy_{2} - \frac{1}{2} \int |u(t,x_{1}, x_{2})|^{2} |u(t, x_{1}, y_{2})|^{4} dx_{1} dx_{2} dy_{2}.
\end{equation}
In order to turn $(\ref{6.0.1})$ into a bilinear estimate of the form $(\ref{6.0})$, we utilize the Fourier support of $u$. Suppose again that the $u$'s are replaced by a $u$ at frequencies $\geq k$ and a $u$ at frequencies $\leq k$. By H{\"o}lder's inequality and the product rule, if $\tilde{\psi}$ is a rapidly decreasing function obtained from the Littlewood--Paley kernel,
\begin{equation}\label{6.0.2}
\aligned
\int |\partial_{x_{1}}(P_{\leq k} u(t, x_{1}, x_{2}) \overline{P_{\geq k} u(t, x_{1}, x_{2})})|^{2} dx_{1} dx_{2} \\ \lesssim \int |\partial_{x_{1}}(P_{\geq k} u(t, x_{1}, x_{2})|^{2} |P_{\leq k} u(t, x_{1}, x_{2})|^{2} dx_{1} dx_{2} + \| \partial_{x_{1}} P_{\leq k} u \|_{L^{p}}^{2} \| P_{\geq k} u \|_{L^{q}}^{2} \\ \lesssim 2^{k} \int \tilde{\psi}(2^{k}(x_{2} - y_{2})) |\partial_{x_{1}}(P_{\leq k} u(t, x_{1}, x_{2}) \overline{P_{\geq k} u(t, x_{1}, y_{2})})|^{2} dx_{1} dx_{2} dy_{2} \\ + 2^{k} \int \tilde{\psi}(2^{k}(x_{2} - y_{2})) |\partial_{x_{1}} P_{\leq k} u(t, x_{1}, x_{2})|^{2} |P_{\geq k} u(t, x_{1}, y_{2})|^{2}  dx_{1} dx_{2} dy_{2} + \| \partial_{x_{1}} P_{\leq k} u \|_{L^{p}}^{2} \| P_{\geq k} u \|_{L^{q}}^{2} \\
\lesssim 2^{k}  \int |\partial_{x_{1}}(P_{\leq k} u(t, x_{1}, x_{2}) \overline{P_{\geq k} u(t, x_{1}, y_{2})})|^{2} dx_{1} dx_{2} dy_{2} + \| \partial_{x_{1}} P_{\leq k} u \|_{L^{p}}^{2} \| P_{\geq k} u \|_{L^{q}}^{2}.
\endaligned
\end{equation}
This calculation appeared previously in \cite{dodson2016global2} and \cite{dodson2019defocusing}. Here $\frac{1}{p} + \frac{1}{q} = \frac{1}{2}$.\medskip

Let $J = [a, b]$ be an interval. As in the one dimensional case, choose
\begin{equation}\label{6.1}
0 < \eta_{1} \ll \eta_{0} \ll 1,
\end{equation}
such that
\begin{equation}\label{6.2}
\sup_{t \in J} \| \epsilon(t,x) \|_{L^{2}}^{2} \leq \eta_{0}^{2},
\end{equation}
and that
\begin{equation}\label{6.2.1}
\int_{|\xi| \geq \eta_{1}^{-1/2}} |\hat{Q}(\xi)|^{2} \leq \eta_{0}^{2},
\end{equation}
where $Q$ is the soliton solution $(\ref{1.12})$. Furthermore, suppose that
\begin{equation}\label{6.2.2}
\frac{1}{\eta_{1}} \leq \lambda(t) \leq \frac{1}{\eta_{1}} T^{1/100}, \qquad \text{and} \qquad \frac{|\xi(t)|}{\lambda(t)} \leq \eta_{0}, \qquad \text{for all} \qquad t \in J,
\end{equation}
and that there exists $k \in \mathbb{Z}_{\geq 0}$ such that
\begin{equation}\label{6.2.3}
\int_{J} \lambda(t)^{-2} dt = T, \qquad \text{and} \qquad \eta_{1}^{-2} T = 2^{3k}.
\end{equation}

When $i \in \mathbb{Z}$, $i > 0$, let $P_{i}$ denote the standard Littlewood-Paley projection operator. When $i = 0$, let $P_{i}$ denote the projection operator $P_{\leq 0}$, and when $i < 0$, let $P_{i}$ denote the zero operator. It is convenient to calculate for $\lambda(t) = \frac{1}{\eta_{1}}$ first and then generalize to $(\ref{6.2.2})$. It is also convenient to assume without loss of generality that $a = 0$.
\begin{theorem}\label{t6.2}
Suppose $J = [0, \eta_{1}^{-2} T]$ is an interval for which
\begin{equation}\label{6.2.4}
\lambda(t) = \frac{1}{\eta_{1}}, \qquad \text{and} \qquad \frac{|\xi(t)|}{\lambda(t)} \leq \eta_{0}, \qquad  \text{for all} \qquad t \in J, \qquad \int_{J} \lambda(t)^{-2} dt = T, \qquad \text{and} \qquad \eta_{1}^{-2} T = 2^{3k}.
\end{equation}
Then for some $p > 2$, $p = 5/2$ will do, define the norm
\begin{equation}\label{6.3}
\aligned
\| u \|_{X([0, \eta_{1}^{-2} T] \times \mathbb{R}^{2})}^{2} =  \sup_{0 \leq i \leq k} \sup_{1 \leq a \leq 2^{3k - 3i}} \| P_{\geq i} u \|_{U_{\Delta}^{p}([(a - 1) 2^{3i}, a 2^{3i}] \times \mathbb{R}^{2})}^{2} + \frac{1}{\eta_{0}^{1/10}}  \| (P_{\geq i} u)(P_{\leq i - 3} u) \|_{L_{t,x}^{2}([(a - 1) 2^{3i}, a 2^{3i}] \times \mathbb{R}^{2})}^{2}.
\endaligned
\end{equation}
Also, for any $0 \leq j \leq k$, let
\begin{equation}\label{6.4}
\aligned
\| u \|_{X_{j}([0, \eta_{1}^{-2} T] \times \mathbb{R}^{2})}^{2} = \sup_{0 \leq i \leq j} \sup_{1 \leq a \leq 2^{3k - 3i}} \| P_{\geq i} u \|_{U_{\Delta}^{p}([(a - 1) 2^{3i}, a 2^{3i}] \times \mathbb{R}^{2})}^{2} \\ + \sup_{0 \leq i \leq j} \sup_{1 \leq a \leq 2^{3k - 3i}}  \frac{1}{\eta_{0}^{1/10}}  \| (P_{\geq i} u)(P_{\leq i - 3} u) \|_{L_{t,x}^{2}([(a - 1) 2^{3i}, a 2^{3i}] \times \mathbb{R}^{2})}^{2}.
\endaligned
\end{equation}
Then the long time Strichartz estimate,
\begin{equation}\label{6.5}
\| u \|_{X([0, \eta_{1}^{-2} T] \times \mathbb{R}^{2})} \lesssim 1,
\end{equation}
holds with implicit constant independent of $T$.
\end{theorem}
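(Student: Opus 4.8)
\emph{Proof strategy.} The plan is to follow the template of the one-dimensional long time Strichartz estimate in \cite{dodson2021determination}. Since $d = 2$, the nonlinearity $|u|^{4/d}u = |u|^{2}u$ is a smooth (cubic) function of $u$, so the regularity obstructions peculiar to $d \geq 3$ do not arise, and the only genuinely new ingredient is the two-dimensional bilinear interaction Morawetz estimate of \cite{planchon2009bilinear} in the truncated form $(\ref{6.0.1})$, together with the Fourier-support manipulation $(\ref{6.0.2})$ that converts the ``wrong'', one-directional identity into an honest frequency-localized bilinear $L^{2}_{t,x}$ bound. One works throughout in the adapted function spaces $U^{p}_{\Delta}$, $V^{p}_{\Delta}$ of Koch--Tataru type with $p = 5/2$ (any $p > 2$ works; what is used is the embedding $V^{2}_{\Delta} \hookrightarrow U^{p}_{\Delta}$ and the Strichartz and bilinear estimates available in these spaces), and one exploits the Duhamel representation
\[
P_{\geq i}u(t) = e^{i(t - t_{G})\Delta}P_{\geq i}u(t_{G}) - i\int_{t_{G}}^{t} e^{i(t - s)\Delta}P_{\geq i}(|u|^{2}u)(s)\,ds
\]
on each dyadic subinterval $G = [(a - 1)2^{3i}, a2^{3i}]$ occurring in $(\ref{6.3})$--$(\ref{6.4})$.

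The argument is run as a continuity/bootstrap, combined with an induction on the frequency cutoff $j$: one fixes a large constant, shows that on any subinterval of $[0, \eta_{1}^{-2}T]$ on which $\|u\|_{X}$ is bounded by that constant it is in fact bounded by half of it, and upgrades this to $(\ref{6.5})$ using that the $X$-norm is continuous in the right endpoint and small on short intervals --- the latter because, after the normalization $\lambda(t) = \eta_{1}^{-1}$, the rescaled solution is $O(\eta_{0})$-close to $Q$ over rescaled times $\ll 1$, hence has bounded Strichartz norm on unit time intervals, which disposes of the $i = 0$ piece of the norm. For $i \geq 1$ the linear term $\|P_{\geq i}u(t_{G})\|_{L^{2}}$ is estimated by decomposing $u$ into a modulated, rescaled soliton plus the rescaled remainder: by $(\ref{6.2.2})$ the soliton part of $u(t_{G})$ has frequency support in $\{|\zeta| \lesssim \eta_{0} + \eta_{1}^{1/2}\}$, so by $(\ref{6.2.1})$ and the Schwartz decay of $Q$ its $P_{\geq i}$-piece is $O(\eta_{0})$, while the remainder contributes $\lesssim \|\epsilon(t_{G})\|_{L^{2}} \lesssim \eta_{0}$ by $(\ref{6.2})$; in particular $\|P_{\geq i}u(t)\|_{L^{2}} = O(\eta_{0})$ on $G$ for $i \geq 1$.

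The core is the Duhamel term. Splitting $|u|^{2}u$ with Littlewood--Paley, the only dangerous contribution is the low--low--high interaction $\sim (P_{\leq i - 3}u)^{2}P_{\geq i}u$, which in the dual ($DU^{p}$-type) norm is controlled by $\|(P_{\geq i}u)(P_{\leq i - 3}u)\|_{L^{2}_{t,x}(G)}$ times a Strichartz factor --- precisely the bilinear quantity weighting $(\ref{6.3})$--$(\ref{6.4})$; all remaining pieces are handled by ordinary Strichartz estimates, Bernstein, the $\eta_{0}$-smallness from $(\ref{6.2})$, and the inductive hypothesis at lower frequencies. To bound the bilinear norm one applies the interaction Morawetz functional $M(t)$ with the two factors of $u$ taken at frequencies $\geq 2^{i}$ and $\leq 2^{i - 3}$; integrating $(\ref{6.0.1})$ over $G$ (the non-positive-definite $L^{8}$-type term and the truncation error terms being either favorably signed or reabsorbable) and invoking $(\ref{6.0.2})$ to recognize the one-directional term as an integral of $|\partial_{x_{1}}((P_{\leq i - 3}u)\overline{P_{\geq i}u})|^{2}$, and noting that --- after a further decomposition of the high factor according to which coordinate carries its frequency, handled by using $(\ref{6.0.1})$ for the matching index $j$ --- the derivative falling on the high factor gains a power of $2^{i}$ while the high factor has mass $O(\eta_{0})$, one obtains
\[
\|(P_{\geq i}u)(P_{\leq i - 3}u)\|_{L^{2}_{t,x}(G)}^{2} \;\lesssim\; 2^{-2i}\sup_{t \in G}|M(t)| + (\text{error terms}),
\]
with $\sup_{G}|M(t)| \lesssim \eta_{0}^{2}2^{i}$ and the error terms --- present because $P_{\geq i}u$ and $P_{\leq i - 3}u$ solve $(\ref{1.1})$ only up to frequency-commutator and Duhamel corrections --- estimated back in terms of $\|u\|_{X}$, carrying either a power of $\eta_{0}$ or a strictly lower frequency label. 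Assembling these estimates over $0 \leq i \leq k$, where the dyadic interval lengths $2^{3i}$ and the normalization $2^{3k} = \eta_{1}^{-2}T$ make all geometric sums converge with a constant independent of $T$, and absorbing the self-referential terms using $\eta_{0} \ll 1$ and the explicit $\eta_{0}^{-1/10}$ weight, closes the bootstrap and yields $(\ref{6.5})$.

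The step I expect to be the main obstacle is exactly this bilinear/Morawetz estimate. In one dimension $(\ref{6.0})$ is a clean, positive, genuinely two-dimensional-in-$x$ identity; in two dimensions only the one-directional $(\ref{6.0.1})$ is available, and turning it into a usable frequency-localized bilinear $L^{2}_{t,x}$ bound forces the delicate trick $(\ref{6.0.2})$ and a meticulous accounting of the error terms that it and the frequency truncations generate. The crux is to show that every such error term --- including those coming from the failure of $P_{\geq i}u$ to be an exact solution of $(\ref{1.1})$ --- can be reabsorbed, either into $\eta_{0}^{c}\|u\|_{X}^{2}$ (using $\eta_{0} \ll 1$) or into strictly lower-frequency contributions (using the induction), so that the recursion closes with a $T$-independent constant. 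A secondary but pervasive technicality is keeping control, via $(\ref{6.2.2})$, of the Galilean parameter $\xi(t)$ and the slowly varying scale $\lambda(t)$, so that the soliton's frequency content never migrates into the regime $|\zeta| \gtrsim 2^{i}$ on the time scales considered.
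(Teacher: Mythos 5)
Your proposal is correct and mirrors the paper's proof in all the essential steps: Duhamel with $U^p_\Delta/V^{p'}_\Delta$ duality on the frequency-dependent dyadic subintervals, Littlewood--Paley trichotomy of the cubic nonlinearity with the low--low--high piece handled by the bilinear weight in $(\ref{6.3})$, and the interaction Morawetz functional (run both against a free high-frequency wave and against $P_j u$ with its non-solution correction terms) combined with the $(\ref{6.0.2})$ manipulation to produce $(\ref{6.28})$ and $(\ref{6.50.7})$, closed by induction on the frequency cutoff. The only deviation is your extra ``continuity/bootstrap in the right endpoint'' layer, which is harmless but unnecessary: the paper's estimate $(\ref{6.30})$ bounds $\|u\|_{X_i}$ purely in terms of $\|u\|_{X_{i-3}}$, so once $\eta_0$ is small the pure frequency induction from the $j = 0$ base case $(\ref{6.8})$ closes on the whole interval $[0,\eta_1^{-2}T]$ at once.
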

\begin{proof}
This estimate is proved by induction on $j$. Local well-posedness arguments combined with the fact that $\lambda(t) = \frac{1}{\eta_{1}}$ for any $t \in [0, T]$ imply that
\begin{equation}\label{6.6}
\| u \|_{U_{\Delta}^{2}([a, a + 1] \times \mathbb{R}^{2})} \lesssim 1,
\end{equation}
and when $i = 0$,
\begin{equation}\label{6.7}
(P_{\geq i} u)(P_{\leq i - 3} u) = 0.
\end{equation}
Therefore,
\begin{equation}\label{6.8}
\| u \|_{X_{j}([0, \eta_{1}^{-2} T] \times \mathbb{R}^{2})} \lesssim 1,
\end{equation}
when $j = 0$. This is the base case.\medskip

To prove the inductive step, recall that by Duhamel's principle, if $J_{a}^{(k)} = [(a - 1) 2^{3k - 3i}, a 2^{3k - 3i}]$, then for any $t_{0} \in J_{a}^{(k)}$,
\begin{equation}\label{6.9}
u(t) = e^{i(t - t_{0}) \Delta} u(t_{0}) + i \int_{t_{0}}^{t} e^{i(t - \tau) \Delta} (|u|^{2} u) d\tau,
\end{equation}
and
\begin{equation}\label{6.10}
\| P_{\geq i} u \|_{U_{\Delta}^{p}(J_{a}^{(k)} \times \mathbb{R}^{2})} \lesssim \| P_{\geq i}(u(t_{0})) \|_{L^{2}} + \| \int_{t_{0}}^{t} e^{i(t - \tau) \Delta} P_{\geq i}(|u|^{2} u) d\tau \|_{U_{\Delta}^{p}(J_{a}^{(k)} \times \mathbb{R}^{2})}.
\end{equation}

Since $\| u(t_{0}) \|_{L^{2}} \lesssim 1$, turn to the Duhamel term. Choose $v \in V_{\Delta}^{p'}(J \times \mathbb{R})$ such that $\| v \|_{V_{\Delta}^{p'}(J \times \mathbb{R})} = 1$ and $\hat{v}(t, \xi)$ is supported on the Fourier support of $P_{i}$. It is a well-known fact that
\begin{equation}\label{6.12}
 \| \int_{t_{0}}^{t} e^{i(t - \tau) \Delta} P_{\geq i}(|u|^{2} u) d\tau \|_{U_{\Delta}^{p}(J \times \mathbb{R})} \lesssim \sup_{v} \| v P_{\geq i}(|u|^{2} u) \|_{L_{t,x}^{1}},
 \end{equation}
 where $\sup_{v}$ is the supremum over all such $v$ supported on $P_{i}$ satisfying $\| v \|_{V_{\Delta}^{p'}(J \times \mathbb{R})} = 1$. See \cite{hadac2009well} for a proof.
 
Throughout this section it is not so important to distinguish between $u$ and $\bar{u}$. Since
\begin{equation}\label{6.23}
P_{\geq i}(|u_{\leq i - 3}|^{2} u_{\leq i - 3}) = 0,
\end{equation}
decompose
\begin{equation}\label{6.11}
P_{\geq i}(|u|^{2} u) = P_{\geq i} O((P_{\geq i - 3} u)^{3}) + P_{\geq i} O((P_{\geq i - 3} u)^{2} (P_{\leq i - 3} u)) + P_{\geq i} O((P_{\geq i - 3} u)(P_{\leq i - 3} u)^{2}).
\end{equation}

By H{\"o}lder's inequality,
\begin{equation}\label{6.13}
\aligned
\| v(u_{\geq i - 3})^{2}(u_{\leq i - 3}) \|_{L_{t,x}^{1}} + \|  v(u_{\geq i - 3})^{3} \|_{L_{t,x}^{1}}
\lesssim \| v \|_{L_{t}^{\infty} L_{x}^{2}} \| u_{\geq i - 3} \|_{L_{t}^{3} L_{x}^{6}}^{3} + \| v \|_{L_{t}^{3} L_{x}^{6}} \| u_{\geq i - 3} \|_{L_{t}^{3} L_{x}^{6}}^{2} \| u_{\leq i - 3} \|_{L_{t}^{\infty} L_{x}^{2}}.
\endaligned
\end{equation}
Since $V_{\Delta}^{p'} \subset U_{\Delta}^{2}$ for any $p > 2$, again see \cite{hadac2009well},
\begin{equation}\label{6.14}
\| v \|_{L_{t}^{\infty} L_{x}^{2}} + \| v \|_{L_{t}^{3} L_{x}^{6}} \lesssim \| v \|_{U_{\Delta}^{2}} \lesssim \| v \|_{V_{\Delta}}^{p'} = 1.
\end{equation}
Therefore, when $i > 4$, by $(\ref{6.2})$, $(\ref{6.2.1})$, $(\ref{6.2.4})$, and interpolation,
\begin{equation}\label{6.15}
\| u_{\geq i - 3} \|_{L_{t}^{3} L_{x}^{6}}^{3} \lesssim \| u_{\geq i - 3} \|_{L_{t}^{5/2} L_{x}^{10}}^{5/2} \| u_{\geq i - 3} \|_{L_{t}^{\infty} L_{x}^{2}}^{1/2} \lesssim \eta_{0}^{1/2} \| u_{\geq i - 3} \|_{L_{t}^{5/2} L_{x}^{10}}^{5/2} \lesssim \eta_{0}^{1/2} \| u \|_{X_{i - 3}([0, T] \times \mathbb{R})}^{5/2}.
\end{equation}

When $i \leq 4$, simply use $(\ref{3.4})$, which implies
\begin{equation}\label{6.16}
u(t,x) = \frac{1}{\lambda(t)} e^{-i \gamma(t)} e^{-ix \cdot \frac{\xi(t)}{\lambda(t)}} Q(\frac{x - x(t)}{\lambda(t)}) + \frac{1}{\lambda(t)} e^{-i \gamma(t)} e^{-i x \cdot \frac{\xi(t)}{\lambda(t)}} \epsilon(t, \frac{x - x(t)}{\lambda(t)}).
\end{equation}
By local well-posedness arguments and $\lambda(t) = \frac{1}{\eta_{1}}$, for any $a \in \mathbb{Z}_{\geq 0}$,
\begin{equation}\label{6.16.1}
\| u \|_{L_{t}^{5/2} L_{x}^{10}([a, a + 1] \times \mathbb{R}^{2})} \lesssim 1, \qquad \| \epsilon \|_{L_{t}^{5/2} L_{x}^{10}([a, a + 1] \times \mathbb{R}^{2})} \lesssim \eta_{0},
\end{equation}
and therefore by $(\ref{6.2})$ and $(\ref{6.2.1})$,
\begin{equation}\label{6.16.2}
\| u_{\geq i - 3} \|_{L_{t}^{3} L_{x}^{6}([a, a + 1] \times \mathbb{R}^{2})}^{3} \lesssim \eta_{0}.
\end{equation}
Therefore,
\begin{equation}\label{6.18}
\aligned
\| u_{\geq i - 3} \|_{L_{t}^{3} L_{x}^{6}}^{3} + \| u_{\geq i - 3} \|_{L_{t}^{3} L_{x}^{6}}^{2} \| u_{\leq i - 3} \|_{L_{t}^{\infty} L_{x}^{2}}  \lesssim \eta_{0}^{1/2} \| u \|_{X_{i - 3}([0, T] \times \mathbb{R})}^{3} + \eta_{0}^{1/2} \| u \|_{X_{i - 3}([0, T] \times \mathbb{R})}^{5/2} + \eta_{0}^{1/2}.
\endaligned
\end{equation}

Now compute
\begin{equation}\label{6.24}
\aligned
\| v ((P_{\geq i - 3} u)(P_{\leq i - 3} u)^{2}) \|_{L_{t,x}^{1}} \lesssim \| (P_{\geq i - 3} u)(P_{\leq i - 3} u) \|_{L_{t,x}^{2}} \| v (P_{\leq i - 3} u) \|_{L_{t,x}^{2}}.
\endaligned
\end{equation}

\noindent By $(\ref{6.4})$,
\begin{equation}\label{6.25}
\| (P_{\geq i - 3} u)(P_{\leq i - 3} u) \|_{L_{t,x}^{2}([(a - 1) 2^{3i}, a 2^{3i}] \times \mathbb{R}^{2})} \lesssim \eta_{0}^{1/20} \| u \|_{X_{i - 3}}.
\end{equation}
Next, suppose that it is true that for any $\| v_{0} \|_{L^{2}} = 1$, where $\hat{v}_{0}$ is supported on $|\xi| \geq 2^{i}$,
\begin{equation}\label{6.28}
\sup_{v_{0}} \| (e^{it \Delta} v_{0})(u_{\leq i - 3}) \|_{L_{t,x}^{2}} \lesssim 1 + \eta_{0} \| u \|_{X_{i - 3}}^{3}.
\end{equation}
Then $(\ref{6.28})$ combined with $V_{\Delta}^{p'} \subset U_{\Delta}^{2}$ implies
\begin{equation}\label{6.29}
\| v(P_{\leq i - 3} u) \|_{L_{t,x}^{2}} \lesssim 1 + \eta_{0} \| u \|_{X_{i - 3}}^{3},
\end{equation}
and therefore,
\begin{equation}\label{6.30}
\sup_{1 \leq a \leq 2^{3(k - i)}} \| P_{\geq i} u \|_{U_{\Delta}^{p}([(a - 1) 2^{3i}, a 2^{3i}] \times \mathbb{R}^{2})} \lesssim 1 + \eta_{0}^{1/20} \| u \|_{X_{i - 3}} (1 + \eta_{0} \| u \|_{X_{i - 3}}^{3}).
\end{equation}
This bound is fine for the induction on frequency argument.\medskip

The bilinear estimate $(\ref{6.28})$ is proved using the interaction Morawetz estimate described at the beginning of this section. To simplify notation, let
\begin{equation}\label{6.32}
v(t,x) = e^{it \Delta} v_{0},
\end{equation}
where $\| v_{0} \|_{L^{2}} = 1$ and $\hat{v}_{0}$ is supported on $P_{j}$ for some $j \geq i$. The function $v$ may be split into a piece supported in frequency near the $\hat{e}_{1}$ axis and a piece supported in frequency near the $\hat{e}_{2}$ axis. Suppose without loss of generality that $\hat{v}$ is supported near the $\hat{e}_{1}$ axis. Then set
\begin{equation}\label{6.33}
M(t) = \int |v(t, y)|^{2} \frac{(x - y)_{1}}{|(x - y)_{1}|} Im[\bar{u}_{\leq i - 3} \partial_{x_{1}} u_{\leq i - 3}] dx dy + \int |u_{\leq i - 3}|^{2} \frac{(x - y)_{1}}{|(x - y)_{1}|} Im[\bar{v} \partial_{x_{1}} v] dx dy.
\end{equation}
Let $F(u) = |u|^{2} u$. Then $u_{\leq i - 3}$ solves the equation
\begin{equation}\label{6.34}
i \partial_{t} u_{\leq i - 3} + \Delta u_{\leq i - 3} + F(u_{\leq i - 3}) = F(u_{\leq i - 3}) - P_{\leq i - 3} F(u) = -\mathcal N_{i - 3}.
\end{equation}
Making a direct computation,
\begin{equation}\label{6.35}
\aligned
\frac{d}{dt} M(t) = 2 \int \int_{x_{1} = y_{1}} |\partial_{x_{1}}(\overline{v(t,y)} u_{\leq i - 3}(t,x))|^{2} dx dy - \int \int_{x_{1} = y_{1}} |v(t,y)|^{2} |u_{\leq i - 3}(t,x)|^{4} dx dy \\
+ \int |v(t,y)|^{2} \frac{(x - y)_{1}}{|(x - y)_{1}|} \cdot Re[\overline{u_{\leq i - 3}} \partial_{x_{1}} \mathcal N_{i - 3}](t,x) dx dy \\ -  \int |v(t,y)|^{2} \frac{(x - y)_{1}}{|(x - y)_{1}|} \cdot Re[\overline{\mathcal N_{i - 3}} \partial_{x_{1}} u_{\leq i - 3}](t,x) dx dy \\
+ 2 \int Im[\overline{u_{\leq i - 3}} \mathcal N_{i - 3}](t, y) \frac{(x - y)_{1}}{|(x - y)_{1}|} \cdot Im[\bar{v} \partial_{x_{1}} v](t,x) dx dy.
\endaligned
\end{equation}
Then by the fundamental theorem of calculus, Bernstein's inequality, the Fourier support of $\bar{v} u_{\leq i - 3}$, $\| v_{0} \|_{L^{2}} = 1$, the fact that $\| u \|_{L^{2}} = \| Q \|_{L^{2}}$, and $(\ref{6.0.2})$,
\begin{equation}\label{6.36}
\aligned
2^{2j} \| \bar{v} u_{\leq i - 3} \|_{L_{t,x}^{2}(J \times \mathbb{R}^{2})}^{2} \lesssim 2^{i} \int_{x_{1} = y_{1}} \int_{x_{1} = y_{1}} |\partial_{x_{1}}(\overline{v(t,y)} u_{\leq i - 3}(t,x))|^{2} dx dy dt + \int_{J} \| \nabla u_{\leq i - 3} \|_{L^{p}}^{2} \| v \|_{L^{q}}^{2} dt
\\  \lesssim 2^{j + i} + 2^{i} \int \int_{x_{1} = y_{1}} |v(t,y)|^{2} |u_{\leq i - 3}(t,x)|^{4} dx dy dt \\ -  2^{i} \int \int \int |v(t,y)|^{2} \frac{(x - y)_{1}}{|(x - y)_{1}|} \cdot Re[\overline{\mathcal N_{i - 3}} \partial_{x_{1}} u_{\leq i - 3}](t,x) dx dy dt \\ + 2^{i} \int \int \int |v(t,y)|^{2} \frac{(x - y)_{1}}{|(x - y)_{1}|} Re[\overline{u_{\leq i - 3}} \partial_{x_{1}} \mathcal N_{i - 3}](t,x) dx dy dt \\
+ 2^{i + 1} \int \int \int Im[\overline{u_{\leq i - 3}} \mathcal N_{i - 3}](t, y) \frac{(x - y)_{1}}{|(x - y)_{1}|} Im[\bar{v} \partial_{x_{1}} v](t,x) dx dy dt + \int \| \nabla u_{\leq i - 3} \|_{L^{p}}^{2} \| v \|_{L^{q}}^{2} dt.
\endaligned
\end{equation}
Also note that
\begin{equation}\label{6.37}
\| \bar{v} u_{\leq i - 3} \|_{L_{t,x}^{2}}^{2} = \| \bar{v} v \bar{u}_{\leq i - 3} u_{\leq i - 3} \|_{L_{t,x}^{1}} = \| v u_{\leq i - 3} \|_{L_{t,x}^{2}}^{2},
\end{equation}
so it is not too important to pay attention to complex conjugates in the proceeding calculations.

First, by $(\ref{6.2})$, Bernstein's inequality, and the fact that $\lambda(t) = \frac{1}{\eta_{1}}$,
\begin{equation}\label{6.38}
\aligned
\int_{x_{1} = y_{1}} |v(t,y)|^{2} |u_{\leq i - 3}(t,x)|^{4} dx dy \lesssim 2^{-2j} \| u_{\leq i - 3} \|_{L^{\infty}}^{2} \int_{x_{1} = y_{1}} |\partial_{x_{1}}(\overline{v(t,y)} u_{\leq i - 3}(t,x))|^{2} dx dy \\ \lesssim \eta_{0}^{2} 2^{2i - 2j} \int_{x_{1} = y_{1}} |\partial_{x_{1}}(\overline{v(t,y)} u_{\leq i - 3}(t,x))|^{2} dx dy.
\endaligned
\end{equation}
Since $j \geq i$ and $\eta_{0} \ll 1$ is small, 
\begin{equation}
\sum_{j \geq i} (\ref{6.38}) \ll \int_{x_{1} = y_{1}} |\partial_{x_{1}}(\bar{v}(t,y) u_{\leq i - 3}(t,x))|^{2} dx dy,
\end{equation} 
which is more than sufficient for our purposes. Next, Strichartz estimates, $(\ref{6.2})$, $(\ref{6.2.1})$, and $(\ref{6.2.4})$ imply
\begin{equation}\label{6.38.1}
\int \| v \|_{L^{20}}^{2} \| \nabla P_{\leq i - 3} u \|_{L^{5/2}} \| \nabla P_{\leq i - 3} u \|_{L^{2}} dt \lesssim \| v \|_{L_{t}^{20/9} L_{x}^{20}}^{2} \| \nabla P_{\leq i - 3} u \|_{L_{t}^{10} L_{x}^{5/2}} \| \nabla P_{\leq i - 3} u \|_{L_{t}^{\infty} L_{x}^{2}} \lesssim \eta_{0} 2^{2i} \| u \|_{X_{i - 3}}.
\end{equation}
This calculation is also sufficient since $\sum_{j \geq i} 2^{2i - 2j} \eta_{0} \| u \|_{X_{i - 3}}$ is bounded by the right hand side of $(\ref{6.29})$.

Now consider the term,
\begin{equation}\label{6.39}
\mathcal N_{i - 3} = P_{\leq i - 3} F(u) - F(u_{\leq i - 3}).
\end{equation}
Since by Fourier support arguments
\begin{equation}\label{6.40}
P_{\leq i - 3} F(u_{\leq i - 6}) - F(u_{\leq i - 6}) = 0,
\end{equation}
\begin{equation}\label{6.41}
\aligned
\mathcal N_{i} = P_{\leq i - 3}(2 |u_{\leq i - 6}|^{2} u_{\geq i - 6} + (u_{\leq i - 6})^{2} \overline{u_{\geq i - 6}}) - (2 |u_{\leq i - 6}|^{2} u_{i - 6 \leq \cdot \leq i - 3} + (u_{\leq i - 6})^{2} \overline{u_{i - 6 \leq \cdot \leq i - 3}}) \\
+ P_{\leq i - 3} O((u_{\geq i - 6})^{2} u) + O((u_{i - 6 \leq \cdot \leq i - 3})^{2} u) = \mathcal N_{i - 3}^{(1)} + \mathcal N_{i - 3}^{(2)}.
\endaligned
\end{equation}
Following $(\ref{6.13})$--$(\ref{6.25})$,
\begin{equation}\label{6.42}
\aligned
\| |\mathcal N_{i - 3}^{(2)}| |u_{\leq i - 3}| \|_{L_{t,x}^{1}} \lesssim \| |u_{\geq i - 6}|^{2} |u_{\leq i - 9}|^{2} \|_{L_{t,x}^{1}} + \| |u_{\geq i - 6}|^{2} |u_{\geq i - 9}|^{2} \|_{L_{t,x}^{1}} \\ \lesssim \| (u_{\geq i - 6})(u_{\leq i - 9}) \|_{L_{t,x}^{2}}^{2} + \| u_{\geq i - 6} \|_{L_{t,x}^{4}}^{2} \| u_{\geq i - 9} \|_{L_{t,x}^{4}}^{2} \lesssim \eta_{0}^{1/10} \| u \|_{X_{i - 3}(J \times \mathbb{R}^{2})}^{2}(1 + \| u \|_{X_{i - 3}(J \times \mathbb{R}^{2})}^{6}).
\endaligned
\end{equation}
Therefore, since $\| v_{0} \|_{L^{2}} = 1$,
\begin{equation}\label{6.43}
\aligned
-  \int \int \int |v(t,y)|^{2} \frac{(x - y)_{1}}{|(x - y)_{1}|} Re[\bar{\mathcal N}_{i - 3}^{(2)} \partial_{x_{1}} u_{\leq i - 3}](t,x) dx dy dt \\ + \int \int \int |v(t,y)|^{2} \frac{(x - y)_{1}}{|(x - y)_{1}|} Re[\bar{u}_{\leq i - 3} \partial_{x_{1}} \mathcal N_{i - 3}^{(2)}](t,x) dx dy dt \\
+ 2 \int \int \int Im[\bar{u}_{\leq i - 3} \mathcal N_{i - 3}^{(2)}](t, y) \frac{(x - y)_{1}}{|(x - y)_{1}|} Im[\bar{v} \partial_{x_{1}} v](t,x) dx dy dt \\ \lesssim \eta_{0}^{1/10}  2^{j} \| u \|_{X_{i - 3}([0, T] \times \mathbb{R}^{2})}^{2} (1 + \| u \|_{X_{i - 3}([0, T] \times \mathbb{R}^{2})}^{6}).
\endaligned
\end{equation}

Next, observe that
\begin{equation}\label{6.44}
2 P_{\leq i - 3}(|u_{\leq i - 6}|^{2} u_{\geq i - 6}) - 2(|u_{\leq i - 6}|^{2} u_{i - 6 \leq \cdot \leq i - 3}) = 2 P_{> i - 3}(|u_{\leq i - 6}|^{2} u_{i - 6 \leq \cdot \leq i - 3}) + 2 P_{\leq i - 3}(|u_{\leq i - 6}|^{2} u_{> i - 3}).
\end{equation}
The computation for
\begin{equation}
P_{\leq i - 3}((u_{\leq i - 6})^{2} \overline{u_{\geq i - 6}}) - (u_{\leq i - 6})^{2} (\overline{u_{i - 6 \leq \cdot \leq i - 3}})
\end{equation}
is similar.

Again following $(\ref{6.13})$--$(\ref{6.28})$,
\begin{equation}\label{6.45}
\aligned
\| P_{\leq i - 3}(|u_{\leq i - 6}|^{2} u_{> i - 3}) (u_{i - 6 \leq \cdot \leq i - 3}) \|_{L_{t,x}^{1}} \\ + \| P_{\leq i - 3}(|u_{\leq i - 6}|^{2} u_{> i - 3}) (u_{i - 6 \leq \cdot \leq i - 3}) \|_{L_{t,x}^{1}} \lesssim \eta_{0}^{1/10} \| u \|_{X_{i - 3}(J \times \mathbb{R}^{2})}^{2}(1 + \| u \|_{X_{i - 3}(J \times \mathbb{R}^{2})}^{6}).
\endaligned
\end{equation}
Finally, observe that the Fourier support of
\begin{equation}\label{6.46}
2 P_{> i - 3}(|u_{\leq i - 6}|^{2} u_{i - 6 \leq \cdot \leq i - 3})(u_{\leq i - 6}) + 2 P_{\leq i - 3}(|u_{\leq i - 6}|^{2} u_{> i - 3})(u_{\leq i - 6})
\end{equation}
is on frequencies $|\xi| \geq 2^{i - 6}$. Therefore, integrating by parts,
\begin{equation}\label{6.47}
\aligned
\int \int \int Im[\bar{u}_{\leq i - 6} P_{> i - 3}(|u_{\leq i - 6}|^{2} u_{i - 6 \leq \cdot \leq i - 3})](t, y) \frac{(x - y)_{1}}{|(x - y)_{1}|} Im[\bar{v} \partial_{x_{1}} v](t,x) dx dy dt \\
= \int \int \int_{x_{1} = y_{1}} Im[\bar{v} \partial_{x_{1}} v](t,y) \cdot \frac{\partial_{x_{1}}}{\partial_{x_{1}}^{2}} Im[\bar{u}_{\leq i - 6} P_{> i - 3}(|u_{\leq i - 6}|^{2} u_{i - 6 \leq \cdot \leq i - 3})](t, x)  dx dy dt \\
\lesssim 2^{-i} (\int \int \int_{x_{1} = y_{1}} |\bar{v} (u_{\leq i - 6})|^{2} dx dy dt)^{1/2} (\int \int \int_{x_{1} = y_{1}} |\partial_{x_{1}} \bar{v} (u_{\leq i - 6})|^{2} dx dy dt)^{1/2} \| u_{\leq i - 3} \|_{L_{t,x}^{\infty}}^{2} \\
\lesssim 2^{j + i} \eta_{0}^{2} (\int \int \int_{x_{1} = y_{1}} |\bar{v} (u_{\leq i - 6})|^{2} dx dy dt).
\endaligned
\end{equation}
A similar calculation gives the estimate
\begin{equation}\label{6.47.1}
\aligned
\int \int \int Im[\bar{u}_{\leq i - 6} P_{\leq i - 3}(|u_{\leq i - 6}|^{2} u_{> i - 3})](t, y) \frac{(x - y)_{1}}{|(x - y)_{1}|} Im[\bar{v} \partial_{x_{1}} v](t,x) dx dy dt \\ \lesssim 2^{j + i} \eta_{0}^{2} (\int \int \int_{x_{1} = y_{1}} |\bar{v} (u_{\leq i - 6})|^{2} dx dy dt).
\endaligned
\end{equation}
We can analyze the terms
\begin{equation}\label{6.48}
\int \int \int |v(t,y)|^{2} \frac{(x - y)_{1}}{|(x - y)_{1}|} Re[\bar{\mathcal N}_{i - 3}^{(1)} \partial_{x_{1}} u_{\leq i - 3}](t,x) dx dy dt,
 \end{equation}
 and
 \begin{equation}\label{6.49}
\int \int  \int |v(t,y)|^{2} \frac{(x - y)_{1}}{|(x - y)_{1}|} Re[\bar{u}_{\leq i - 3} \partial_{x} \mathcal N_{i - 3}^{(1)}](t,x) dx dy dt,
\end{equation}
in a similar manner.\medskip

Plugging $(\ref{6.37})$--$(\ref{6.49})$ into $(\ref{6.36})$ gives
\begin{equation}\label{6.50}
2^{2j} \| \bar{v} u_{\leq i - 3} \|_{L_{t,x}^{2}}^{2} + 2^{2j} \| v u_{\leq i - 3} \|_{L_{t,x}^{2}}^{2} \lesssim 2^{i + j} + \eta_{0}^{1/10} 2^{i + j} (1 + \| u \|_{X_{i - 3}}^{8}).
\end{equation}
Summing up over $j \geq i$ implies $(\ref{6.28})$.

The estimate of
\begin{equation}\label{6.50.1}
\| (P_{\geq i} u)(P_{\leq i - 3} u) \|_{L_{t,x}^{2}},
\end{equation}
also uses an interaction Morawetz estimate. This time, for a fixed $j \geq i$, define the Morawetz potential,
\begin{equation}\label{6.50.2}
M_{j}(t) = \int |P_{j} u(t, y)|^{2} \frac{(x - y)_{1}}{|(x - y)_{1}|} Im[\overline{u_{\leq i - 3}} \partial_{x_{1}} u_{\leq i - 3}] dx dy + \int |u_{\leq i - 3}|^{2} \frac{(x - y)_{1}}{|(x - y)_{1}|} Im[\overline{P_{j} u} \partial_{x_{1}} P_{j} u] dx dy.
\end{equation}
Making a direct computation, since $P_{j} u$ is not a solution to the linear Schr{\"o}dinger equation, we have three additional terms,
\begin{equation}\label{6.50.3}
\aligned
\frac{d}{dt} M_{j}(t) = 2 \int \int_{x_{1} = y_{1}} |\partial_{x_{1}}(\bar{P_{j} u}(t,y) u_{\leq i - 3}(t,x))|^{2} dx dy - \int \int_{x_{1} = y_{1}} |P_{j} u(t,y)|^{2} |u_{\leq i - 3}(t,x)|^{4} dx dy \\
+ \int \int |P_{j} u(t,y)|^{2} \frac{(x - y)_{1}}{|(x - y)_{1}|} \cdot Re[\overline{u_{\leq i - 3}} \partial_{x_{1}} \mathcal N_{i - 3}](t,x) dx dy \\ -  \int \int |P_{j} u(t,y)|^{2} \frac{(x - y)_{1}}{|(x - y)_{1}|} \cdot Re[\overline{\mathcal N_{i - 3}} \partial_{x_{1}} u_{\leq i - 3}](t,x) dx dy \\
+ 2 \int \int Im[\overline{u_{\leq i - 3}} \mathcal N_{i - 3}](t, y) \frac{(x - y)_{1}}{|(x - y)_{1}|} \cdot Im[\bar{P_{j} u} \partial_{x_{1}} P_{j} u](t,x) dx dy  \\
+ \int \int |P_{\leq i - 3} u|^{2} \frac{(x - y)_{1}}{|(x - y)_{1}|} Re[\overline{P_{j} u} \partial_{x_{1}} P_{j}(|u|^{2} u)] dx dy \\
- \int \int |P_{\leq i - 3} u|^{2} \frac{(x - y)_{1}}{|(x - y)_{1}} Re[\overline{P_{j}(|u|^{2} u)} \partial_{x_{1}} P_{j} u] dx dy \\
+ 2 \int \int Im[\overline{P_{j} u} P_{j}(|u|^{2} u)] \frac{(x - y)_{1}}{|(x - y)_{1}} Im[\overline{u_{\leq i - 3}} \partial_{x_{1}} u_{\leq i - 3}] dx dy.
\endaligned
\end{equation}
Now then,
\begin{equation}\label{6.50.4}
\sum_{j \geq i} 2^{i-2j} \sup_{t \in J} |M_{j}(t)| \lesssim \| P_{\geq i} u \|_{L_{t}^{\infty} L_{x}^{2}}^{2} \lesssim \eta_{0}^{2}.
\end{equation}
Following $(\ref{6.37})$--$(\ref{6.49})$, $(\ref{6.1})$ and $(\ref{6.2})$ imply
\begin{equation}
\aligned
- \sum_{j \geq i} 2^{i - 2j} \int \int \int_{x_{1} = y_{1}} |P_{j} u(t,y)|^{2} |u_{\leq i - 3}(t,x)|^{4} dx dy dt \\
+ \sum_{j \geq i} 2^{i - 2j} \int \int \int |P_{j} u(t,y)|^{2} \frac{(x - y)_{1}}{|(x - y)_{1}|} \cdot Re[\overline{u_{\leq i - 3}} \partial_{x_{1}} \mathcal N_{i - 3}](t,x) dx dy dt \\ - \sum_{j \geq i} 2^{i - 2j} \int \int \int |P_{j} u(t,y)|^{2} \frac{(x - y)_{1}}{|(x - y)_{1}|} \cdot Re[\overline{\mathcal N_{i - 3}} \partial_{x_{1}} u_{\leq i - 3}](t,x) dx dy dt \\
+ 2 \sum_{j \geq i} 2^{i - 2j} \int \int \int Im[\overline{u_{\leq i - 3}} \mathcal N_{i - 3}](t, y) \frac{(x - y)_{1}}{|(x - y)_{1}|} \cdot Im[\overline{P_{j} u} \partial_{x_{1}} P_{j} u](t,x) dx dy dt \lesssim \eta_{0}^{2} (1 + \| u \|_{X_{i - 3}}^{8}).
\endaligned
\end{equation}
To analyze the new terms,
\begin{equation}\label{6.50.5.1}
\aligned
2 \sum_{j \geq i} 2^{i - 2j} \int \int \int Im[\overline{P_{j} u} P_{j}(|u|^{2} u)] \frac{(x - y)_{1}}{|(x - y)_{1}} Im[\overline{u_{\leq i - 3}} \partial_{x_{1}} u_{\leq i - 3}] dx dy dt \\
+ \sum_{j \geq i} 2^{i - 2j} \int \int \int |P_{\leq i - 3} u|^{2} \frac{(x - y)_{1}}{|(x - y)_{1}|} Re[\overline{P_{j} u} \partial_{x_{1}} P_{j}(|u|^{2} u)] dx dy dt \\
- \sum_{j \geq i} 2^{i - 2j}  \int \int \int |P_{\leq i - 3} u|^{2} \frac{(x - y)_{1}}{|(x - y)_{1}} Re[\overline{P_{j}(|u|^{2} u)} \partial_{x_{1}} P_{j} u] dx dy dt,
\endaligned
\end{equation}
first observe that by $(\ref{6.2})$, $(\ref{6.2.1})$, $(\ref{6.2.4})$, $(\ref{6.16})$, and $(\ref{6.42})$,
\begin{equation}
\aligned
2 \sum_{j \geq i} 2^{i - 2j} \int \int \int Im[\overline{P_{j} u} P_{j}(|u|^{2} u)] \frac{(x - y)_{1}}{|(x - y)_{1}} Im[\overline{u_{\leq i - 3}} \partial_{x_{1}} u_{\leq i - 3}] dx dy dt \\ \lesssim 2^{-i} \| \nabla P_{\leq i - 3} u \|_{L^{2}} \| P_{\leq i - 3} u \|_{L^{2}} (\| P_{\geq i - 6} u \|_{L_{t,x}^{4}}^{4} + \| (P_{\geq i - 3} u)(P_{\leq i - 6} u) \|_{L_{t,x}^{2}}^{2}) \\ \lesssim \eta_{0} (\eta_{0}^{1/10} \| u \|_{X_{i - 3}}^{2}(1 +  \| u \|_{X_{i - 3}}^{6})).
\endaligned
\end{equation}
For the other two terms in $(\ref{6.50.5.1})$, decompose 
\begin{equation}
\aligned
P_{j}(|u|^{2} u) = P_{j} (O((P_{\geq i - 3} u)^{3})) + P_{j}(O((P_{\geq i - 3} u)^{2}(P_{\leq i - 3} u))) \\ + P_{j}(2 |P_{\leq i - 3} u|^{2} (P_{> i - 3} u) + (P_{\leq i - 3} u)^{2} (P_{> i - 3} \bar{u})) = \mathcal N_{1} + \mathcal N_{2}.
\endaligned
\end{equation}
By $(\ref{6.15})$,
\begin{equation}
\aligned
\sum_{j \geq i} 2^{i - 2j} \int \int \int |P_{\leq i - 3} u|^{2} \frac{(x - y)_{1}}{|(x - y)_{1}|} Re[\overline{P_{j} u} \partial_{x_{1}} P_{j}\mathcal N_{1}] dx dy dt \\
- \sum_{j \geq i} 2^{i - 2j}  \int \int \int |P_{\leq i - 3} u|^{2} \frac{(x - y)_{1}}{|(x - y)_{1}} Re[\mathcal N_{1} \partial_{x_{1}} P_{j} u] dx dy dt \lesssim \| P_{\geq i - 3} u \|_{L_{t}^{3} L_{x}^{6}}^{3} \lesssim \eta_{0}^{1/2} \| u \|_{X_{i - 3}}^{5/2}.
\endaligned
\end{equation}
Next, if $m$ is a smooth Fourier multiplier satisfying $\nabla m(\xi) \lesssim \frac{1}{|\xi|}$, $|m(\xi + \xi_{1}) - m(\xi)| \lesssim \frac{|\xi_{1}}{|\xi|}$ for $|\xi_{1}| \ll |\xi|$, and therefore, as in $(\ref{6.15})$,
\begin{equation}\label{6.50.5}
\aligned
\| (P_{j} u) \cdot [P_{j}(|P_{\leq i - 3} u|^{2} (P_{> i - 3} u)) - |P_{\leq i - 3} u|^{2} (P_{j} u)] \|_{L_{t,x}^{1}} \\ + \| (P_{j} u) \cdot [P_{j}((P_{\leq i - 3} u)^{2}(\overline{P_{> i - 3} u)}) - (P_{\leq i - 3} u)^{2}(\overline{P_{j} u})] \|_{L_{t,x}^{1}} \\
\lesssim \| \nabla P_{\leq i - 3} u \|_{L_{t}^{5} L_{x}^{10/3}} \| u_{\leq i - 3} \|_{L_{t}^{\infty} L_{x}^{2}} \| P_{\geq i - 3} u \|_{L_{t}^{5/2} L_{x}^{10}}^{2} \lesssim \eta_{0}^{1/2} \| u \|_{X_{i - 3}}^{5/2}.
\endaligned
\end{equation}
Also by the product rule,
\begin{equation}\label{6.50.6}
\aligned
2 Re[\overline{P_{j} u} \partial_{x_{1}} (|P_{\leq i - 3} u|^{2} (P_{j} u))] - 2 Re[\mathcal |P_{\leq i - 3} u|^{2} (\overline{P_{j} u}) \partial_{x_{1}} P_{j} u] \\ + Re[\overline{P_{j} u} \partial_{x_{1}} ((P_{\leq i - 3} u)^{2} (P_{j} \bar{u}))] - Re[(\overline{P_{\leq i - 3} u})^{2} (P_{j} u) \partial_{x_{1}} (P_{j} u)] = O((\nabla P_{\leq i - 3} u)(P_{\leq i - 3} u)(P_{j} u)^{2}).
\endaligned
\end{equation}
Using the analysis in $(\ref{6.50.5})$, $(\ref{6.50.6}) \lesssim \eta_{0}^{1/2} \| u \|_{X_{i - 3}}^{5/2}$. Finally, integrating by parts and using $(\ref{6.38})$,
\begin{equation}\label{6.50.7}
\aligned
\sum_{j \geq i} 2^{i - 2j} \int \int \int |P_{\leq i - 3} u|^{2} \frac{(x - y)_{1}}{|(x - y)_{1}|} Re[\partial_{x_{1}} ((P_{j} u)^{2}(P_{\leq i - 3} u)^{2}] dx dy dt \\ \ll \int_{x_{1} = y_{1}} |\partial_{x_{1}}(\bar{v}(t,y) u_{\leq i - 3}(t,x))|^{2} dx dy.
\endaligned
\end{equation}

Therefore, by the fundamental theorem of calculus,
\begin{equation}
\aligned
\sum_{j \geq i} 2^{-2j} \| \partial_{x_{1}}(P_{j} u \bar{u}_{\leq i - 3}) \|_{L_{t,x}^{2}}^{2} \\ \lesssim \sum_{j \geq i} 2^{i - 2j} \int \int_{x_{1} = y_{1}} |\partial_{x_{1}}(\overline{P_{j} u}(t,y) u_{\leq i - 3}(t,x))|^{2} dx dy dt \lesssim \eta_{0}^{2} + \eta_{0}^{1/2}(1 + \| u \|_{X_{i - 3}}^{8}).
\endaligned
\end{equation}
It is possible to perform the same analysis with $\partial_{x_{1}}$ replaced by $\partial_{x_{2}}$, so
\begin{equation}
\sum_{j \geq i} 2^{-2j} \| \nabla(P_{j} u \bar{u}_{\leq i - 3}) \|_{L_{t,x}^{2}}^{2} \lesssim \eta_{0}^{2} + \eta_{0}^{1/2} \| u \|_{X_{i - 3}}^{4}.
\end{equation}
Using the Fourier support properties of $P_{j} u (\overline{u_{\leq i - 3}})$ implies
\begin{equation}\label{6.50.7}
\| (P_{\geq i} u)(P_{\leq i - 3} u) \|_{L_{t,x}^{2}}^{2} \lesssim \eta_{0}^{2} + \eta_{0}^{1/2} \| u \|_{X_{i - 3}}^{4}
\end{equation}
Combining $(\ref{6.30})$ with $(\ref{6.50.7})$ and $(\ref{6.8})$, and arguing by induction on $i$ implies Theorem $\ref{t6.2}$.
\end{proof}

Theorem $\ref{t6.2}$ may be upgraded when $u$ is close to a soliton, as in Theorem $\ref{t2.2}$.
\begin{theorem}\label{t6.3}
When $J = [0, \eta_{1}^{-2} T]$ is an interval that satisfies $(\ref{6.2.4})$, and $u$ satisfies Theorem $\ref{t2.2}$,
\begin{equation}\label{6.51}
\| P_{\geq k} u \|_{U_{\Delta}^{p}([0, \eta_{1}^{-2} T] \times \mathbb{R}^{2})} 
+ \| (P_{\geq k} u)(P_{\leq k - 3} u) \|_{L_{t,x}^{2}([0, \eta_{1}^{-2} T] \times \mathbb{R}^{2})} 
\lesssim (\frac{\eta_{1}^{2}}{T} \int_{0}^{\eta_{1}^{-2} T}  \| \epsilon(t) \|_{L^{2}}^{2} dt)^{1/2} + \frac{1}{T^{10}}.
\end{equation}
\end{theorem}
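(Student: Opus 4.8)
The plan is to re-run the frequency induction that proves Theorem $\ref{t6.2}$, now keeping track of the true origin of each term rather than bounding it by an absolute constant. Write $u = u_{\mathrm{sol}} + u_{\mathrm{rem}}$ as in $(\ref{6.16})$, where $u_{\mathrm{sol}}(t,x) = \lambda(t)^{-1} e^{-i\gamma(t)} e^{-ix\cdot\xi(t)/\lambda(t)} Q\big((x-x(t))/\lambda(t)\big)$ and $u_{\mathrm{rem}}$ is the corresponding term built from $\epsilon$, so that $\|u_{\mathrm{rem}}(t)\|_{L^2} = \|\epsilon(t)\|_{L^2}$. On $J$ one has $\lambda(t)=\eta_1^{-1}$ and $|\xi(t)|/\lambda(t)\le\eta_0$, so the spatial Fourier transform of $u_{\mathrm{sol}}(t)$ is, up to a unimodular factor and a power of $\eta_1$, equal to $\widehat Q\big(\eta_1^{-1}\zeta + \xi(t)\big)$; since $Q$ (hence $\widehat Q$) is Schwartz and $2^{3k}=\eta_1^{-2}T$, this gives
\[
\|P_{\ge k-10}\, u_{\mathrm{sol}}\|_{L^\infty_t L^2_x(J)} + \|P_{\ge k-10}\, u_{\mathrm{sol}}\|_{L^{5/2}_t L^{10}_x(J)} \;\lesssim_N\; (\eta_1^{-2}T)^{-N} \;\lesssim\; T^{-20}
\]
for every $N$, and likewise $P_{\ge k}$ of any cubic expression in $u_{\mathrm{sol}}$ is $O(T^{-20})$ in these norms. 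Hence every high-frequency quantity in the argument is, up to an $O(T^{-20})$ error, built only from $u_{\mathrm{rem}}$.

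The main term of $(\ref{6.51})$ comes from the linear Duhamel term. Applying $(\ref{6.9})$--$(\ref{6.10})$ on $J$ itself (the case $i=k$, where $J_a^{(k)}=J$) and choosing $t_0\in J$ so that $\|\epsilon(t_0)\|_{L^2}^2 \le \tfrac{\eta_1^2}{T}\int_0^{\eta_1^{-2}T}\|\epsilon(t)\|_{L^2}^2\,dt$ (possible because the right side is the $L^2$ time average of $\|\epsilon(\cdot)\|_{L^2}^2$ over $J$), one gets
\[
\|P_{\ge k}u(t_0)\|_{L^2} \le \|P_{\ge k}u_{\mathrm{sol}}(t_0)\|_{L^2} + \|\epsilon(t_0)\|_{L^2} \lesssim T^{-20} + \Big(\tfrac{\eta_1^2}{T}\int_0^{\eta_1^{-2}T}\|\epsilon(t)\|_{L^2}^2\,dt\Big)^{1/2},
\]
which is the right side of $(\ref{6.51})$; the soliton tail supplies the $T^{-20}$ errors.

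For the nonlinear Duhamel term and for the bilinear norm $\|(P_{\ge k}u)(P_{\le k-3}u)\|_{L^2_{t,x}(J)}$, one carries out the estimates $(\ref{6.11})$--$(\ref{6.30})$ and the interaction-Morawetz computation $(\ref{6.33})$--$(\ref{6.50.7})$ line by line, but inductively: for each frequency $i$ and each interval $I=[(a-1)2^{3i},a2^{3i}]$ one proves the sharpened version in which the "$1$" in $(\ref{6.30})$ and the "$\eta_0^2$" in $(\ref{6.50.4})$ are replaced by $\big(|I|^{-1}\int_I\|\epsilon(t)\|_{L^2}^2\,dt\big)^{1/2}+|I|^{-10}$ and its square. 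The high-frequency factor $P_{\ge i-3}u$ occurring in each nonlinear term is split: its $u_{\mathrm{sol}}$ part gives $O(|I|^{-20})$ (with the remaining factors bounded by Theorem $\ref{t6.2}$), and for its $u_{\mathrm{rem}}$ part one applies the inductive hypothesis at level $i-3$, using the interpolation $(\ref{6.15})$ to extract an $L^\infty_t L^2_x$ factor --- which by $(\ref{3.25})$, and the fact that $(\ref{6.51})$ is invoked on intervals over which $\|\epsilon(\cdot)\|_{L^2}$ stays comparable, is $\lesssim \big(|I|^{-1}\int_I\|\epsilon\|_{L^2}^2\big)^{1/2}$ --- and the elementary bound $\sum_a\big(|I_a|^{-1}\int_{I_a}\|\epsilon\|^2\big)^{3/2}\lesssim\big(|I|^{-1}\int_I\|\epsilon\|^2\big)^{3/2}$ when $I$ is split into the $O(1)$ sub-intervals $I_a$ of a single $3$-level step. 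The interaction-Morawetz boundary term $\sum_{j\ge i}2^{i-2j}\sup_t|M_j(t)| \lesssim \|P_{\ge i}u\|_{L^\infty_t L^2_x}^2$ is treated the same way, and the $\eta_0$-small self-referential terms are absorbed using Theorem $\ref{t6.2}$. At the top level $i=k$ the only interval is $J$, and the output is $(\ref{6.51})$.

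The main obstacle is this last step. In Theorem $\ref{t6.2}$ many terms were allowed to be merely $O(1)$ or $O(\eta_0)$, and one must check that each of them --- most delicately the interaction-Morawetz boundary terms $M(a),M(b)$ and the $2^{i+j}$ term in $(\ref{6.36})$, $(\ref{6.50})$ coming from $\|u\|_{L^2}=\|Q\|_{L^2}$ --- is controlled either by the super-polynomially small soliton tail or by $\sup_{t\in I}\|\epsilon(t)\|_{L^2}$, and then, through $(\ref{3.25})$ and the sub-interval summation, by the $L^2$ time average in $(\ref{6.51})$. Keeping the final dependence on the averaged $\|\epsilon\|_{L^2}$ linear --- a naive use of Theorem $\ref{t6.2}$ only yields a fractional power --- is the point the sharpened inductive hypothesis is designed to secure.
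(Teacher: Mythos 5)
Your proposal identifies the right ingredients --- the soliton/remainder split, the super-polynomial decay of $\widehat Q$ at high frequency, the pigeonhole choice of $t_0$, and a frequency induction that upgrades Theorem~\ref{t6.2}. Your first paragraph (the $T^{-20}$ soliton-tail bound and the pigeonhole bound $\|P_{\geq k}u(t_0)\|_{L^2}\lesssim T^{-20}+B(J)^{1/2}$) is correct and is precisely how the paper handles the linear Duhamel term in $(\ref{6.55})$--$(\ref{6.56})$. But the inductive hypothesis you propose --- that $\|P_{\geq i}u\|_{U^p_\Delta(I)}$ together with the bilinear norm is $\lesssim B(I)^{1/2}+|I|^{-10}$ at \emph{every} level $i$ --- cannot close, and this is a real gap rather than a matter of tightening constants.

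Two of the quantities you propose to replace by $B(I)^{1/2}+|I|^{-10}$ are intrinsically controlled by $\sup_{t\in I}\|\epsilon(t)\|_{L^2}$, not by the time-average. (a) The interaction-Morawetz boundary term $\sum_{j\geq i}2^{i-2j}\sup_t|M_j(t)|\lesssim\|P_{\geq i}u\|^2_{L^\infty_t L^2_x(I)}$, the $\eta_0^2$ in $(\ref{6.50.4})$, is of size $\big(\sup_I\|\epsilon\|+O(|I|^{-10})\big)^2$ --- a boundary value, which does not see the time-average at all. (b) The factor $\|u_{\geq i-3}\|^{1/2}_{L^\infty L^2}$ extracted in $(\ref{6.15})$. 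You invoke $(\ref{3.25})$ together with the clause "$(\ref{6.51})$ is invoked on intervals over which $\|\epsilon(\cdot)\|_{L^2}$ stays comparable" to promote $\sup_I\|\epsilon\|$ to $B(I)^{1/2}$, but $(\ref{3.25})$ gives comparability only over unit $s$-intervals, while $J$ has $s$-length $T$; the implicit constants compound, and Theorem~\ref{t6.3} has no comparability hypothesis. Unconditionally you only have $\sup_I\|\epsilon\|\leq\eta_0$ from $(\ref{6.2})$. Consequently your sharpened induction produces an extra additive $O(\eta_0^c)$ at every level which cannot be absorbed into $B(I)^{1/2}+|I|^{-10}$ when $\epsilon$ is very small --- exactly the regime of interest.

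The ingredient your plan is missing is the geometric accumulation. The paper does not try to prove a sharp bound across all frequencies: it takes Theorem~\ref{t6.2} as a base case at level $k/2$ on intervals of length $2^{3k/2}$, where the bound is merely $\lesssim 1$ (see $(\ref{6.54})$), and then climbs from $k/2$ to $k$ in $\lfloor k/6\rfloor$ steps of three dyadic levels each, as in $(\ref{6.55})$--$(\ref{6.59})$. Each step of the Duhamel recursion contributes the direct pigeonhole term $B^{1/2}+T^{-10}$ plus an $\eta_0\cdot(\text{previous level, summed }\ell^2\text{ over subintervals})$ term. After $\lfloor k/6\rfloor$ iterations, the $O(1)$ base case has been multiplied by $(C\eta_0)^{\lfloor k/6\rfloor}$, which by $(\ref{6.61})$ is $\leq T^{-10}$ once $\eta_0$ is a small enough absolute constant. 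This $\eta_0^{\lfloor k/6\rfloor}$ bootstrap is what converts the fixed $\eta_0$-sized nuisance into a $T^{-10}$ error; without carrying a term $(C\eta_0)^m$ in the inductive hypothesis alongside $B(I)^{1/2}+|I|^{-10}$, your induction cannot deliver $(\ref{6.51})$.
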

\begin{proof}
Make another induction on frequency argument starting at level $\frac{k}{2}$. First, observe that since Theorem $\ref{t6.2}$ is invariant under translation in time, for any $a \in \mathbb{Z}$,
\begin{equation}\label{6.54}
\| P_{\geq \frac{k}{2}} u \|_{U_{\Delta}^{2}([a \eta_{1}^{-1} T^{1/2}, (a + 1) \eta_{1}^{-1} T^{1/2}] \times \mathbb{R}^{2})} \lesssim 1.
\end{equation}
Next, following Theorem $\ref{t6.2}$,
\begin{equation}\label{6.55}
\aligned
\| P_{\geq \frac{k}{2} + 3} u \|_{U_{\Delta}^{2}([512 a \eta_{1}^{-1} T^{1/2}, 512 (a + 1) \eta_{1}^{-1} T^{1/2}] \times \mathbb{R}^{2})} \lesssim \inf_{t \in [512 a \eta_{1}^{-1} T^{1/2}, 512 (a + 1) \eta_{1}^{-1} T^{1/2}]} \| P_{\geq \frac{k}{2} + 3} u(t) \|_{L^{2}} \\ + \eta_{0} \| P_{\geq \frac{k}{2}} u \|_{U_{\Delta}^{2}([512 a \eta_{1}^{-1} T^{1/2}, 512 (a + 1) \eta_{1}^{-1} T^{1/2}] \times \mathbb{R}^{2})}.
\endaligned
\end{equation}
Now then, by $(\ref{6.16})$, $(\ref{6.2.4})$, and the fact that $Q$ is smooth and all its derivatives are rapidly decreasing,
\begin{equation}\label{6.56}
\| P_{\geq \frac{k}{2} + 3} u(t) \|_{L^{2}} \leq \| \epsilon(t) \|_{L^{2}} + \| P_{\geq \frac{k}{2} + 3} (\lambda(t)^{-1} e^{-i \gamma(t)} e^{-i x \cdot \frac{\xi(t)}{\lambda(t)}} Q(\frac{x - x(t)}{\lambda(t)}) \|_{L^{2}} \lesssim \| \epsilon(t) \|_{L^{2}} + T^{-10}.
\end{equation}

Plugging $(\ref{6.56})$ back into $(\ref{6.55})$,
\begin{equation}\label{6.59}
\aligned
\| P_{\geq \frac{k}{2} + 3} u \|_{U_{\Delta}^{2}([512 a \eta_{1}^{-1} T^{1/2}, 512 (a + 1) \eta_{1}^{-1} T^{1/2}] \times \mathbb{R}^{2})} \lesssim (\frac{\eta_{1}}{512 T^{1/2}} \int_{512a \eta_{1}^{-1} T^{1/2}}^{512 (a + 1) \eta_{1}^{-1} T^{1/2}}  \| \epsilon(t, x) \|_{L^{2}}^{2} dt)^{1/2} \\ +  T^{-10} + \eta_{0} (\sum_{j = 1}^{512} \| P_{\geq \frac{k}{2}} u \|_{U_{\Delta}^{2}([(512 a + (j - 1)) \eta_{1}^{-1} T^{1/2} , (512 a + j) \eta_{1}^{-1} T^{1/2}] \times \mathbb{R}^{2})}^{2})^{1/2}. 
\endaligned
\end{equation}
Arguing by induction in $k$, taking $\lfloor \frac{k}{6} \rfloor$ steps in all, for $\eta_{0}$ sufficiently small,
\begin{equation}\label{6.60}
\aligned
\| P_{\geq k} u \|_{U_{\Delta}^{2}([0, T] \times \mathbb{R}^{2})} \lesssim T^{-10} + 2^{k/2} \eta_{0}^{-\frac{k}{6}} + (\frac{1}{T} \int_{0}^{T} \inf_{\lambda, \gamma} \| e^{i \gamma} \lambda^{1/2} u(t, \lambda x) - Q(x) \|_{L^{2}}^{2} dt)^{1/2} \\
\lesssim T^{-10} + (\frac{1}{T} \int_{0}^{T} \inf_{\lambda, \gamma} \| e^{i \gamma} \lambda^{1/2} u(t, \lambda x) - Q(x) \|_{L^{2}}^{2} dt)^{1/2}.
\endaligned
\end{equation}
Indeed, if $C$ is the implicit constant in $(\ref{6.59})$ then for $\eta_{0} \ll 1$ sufficiently small,
\begin{equation}\label{6.61}
(C \eta_{0})^{\lfloor \frac{k}{6} \rfloor} \leq T^{-10}.
\end{equation}
\end{proof}

Throughout the proof, we assumed that $\lambda(t) = \frac{1}{\eta_{1}}$, although most of the time we only needed $\lambda(t) \geq \frac{1}{\eta_{1}}$. For the general case when $\lambda(t) \geq \frac{1}{\eta_{1}}$, replace the intervals in $(\ref{6.6})$ with intervals on which
\begin{equation}
\int_{J_{a}} \lambda(t)^{-2} dt = \eta_{1}^{2},
\end{equation}
and then argue by induction in exactly the same manner.

\begin{corollary}\label{c6.4}
When $J$ is an interval that satisfies
\begin{equation}
\frac{1}{\eta_{1}} \leq \lambda(t), \qquad \text{and} \qquad \frac{|\xi(t)|}{\lambda(t)} \leq \eta_{0}, \qquad \text{for all} \qquad t \in J,
\end{equation}
 and $(\ref{6.2.3})$, and $u$ satisfies the conditions of Theorem $\ref{t2.2}$,
\begin{equation}\label{6.62}
\| P_{\geq k} u \|_{U_{\Delta}^{p}(J \times \mathbb{R}^{2})} 
+ \| (P_{\geq k} u)(P_{\leq k - 3} u) \|_{L_{t,x}^{2}(J \times \mathbb{R}^{2})} 
\lesssim (\frac{1}{T} \int_{J}  \| \epsilon(t) \|_{L^{2}}^{2} \lambda(t)^{-2} dt)^{1/2} + \frac{1}{T^{10}}.
\end{equation}
\end{corollary}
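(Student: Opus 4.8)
The plan is to deduce this from Theorem \ref{t6.3} by working in the time variable $s$ of $(\ref{3.18})$, for which $\int_{J}\lambda(t)^{-2}\,dt = T$ is precisely the $s$-length of $J$ by $(\ref{6.2.3})$. Concretely, one repeats the arguments of Theorems \ref{t6.2} and \ref{t6.3} with the dyadic $t$-intervals $[(a-1)2^{3i},a2^{3i}]$ replaced throughout by the $t$-subintervals $J_a^{(i)}\subset J$ determined by $\int_{J_a^{(i)}}\lambda(t)^{-2}\,dt = \eta_1^2 2^{3i}$; there are $2^{3(k-i)}$ of them at level $i$, as before, and under this dictionary the right-hand side $(\frac{\eta_1^2}{T}\int_0^{\eta_1^{-2}T}\|\epsilon\|_{L^2}^2\,dt)^{1/2}$ of $(\ref{6.51})$ turns into $(\frac{1}{T}\int_{J}\|\epsilon(t)\|_{L^2}^2\lambda(t)^{-2}\,dt)^{1/2}$, matching $(\ref{6.62})$. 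Time translation invariance lets us ignore the left endpoint of $J$.

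First I would set up the base case. On a level-zero subinterval $J_a = J_a^{(0)}$ one has $\int_{J_a}\lambda(t)^{-2}\,dt = \eta_1^2$, and since $\lambda(t)\geq\eta_1^{-1}$ this forces the $t$-length of $J_a$ to be at least $1$, while in the $s$ variable $J_a$ has length $\eta_1^2$. By $(\ref{3.23})$ on unit $s$-intervals, together with $(\ref{6.2})$, the ratio $\lambda(t)/\lambda(t')$ for $t,t'\in J_a$ stays within $e^{O(\eta_0)}$ of $1$, so $\lambda$ is essentially a constant $\lambda_a\geq\eta_1^{-1}$ on $J_a$. Rescaling space by $\lambda_a$, the subinterval $J_a$ becomes an $s$-interval of length $\eta_1^2$ on which the rescaled solution is within $O(\eta_0)$ of $Q$ in $L^2$ (using also $(\ref{3.25})$), so the local well-posedness theory (Theorem \ref{t1.1}) and the perturbation theory around the soliton used for $(\ref{6.6})$ and $(\ref{6.16.1})$ yield the analogues of $(\ref{6.6})$ and $(\ref{6.16.1})$ on $J_a$, hence $(\ref{6.7})$ and $(\ref{6.8})$ at level $j=0$. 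Moreover, because $\lambda(t)\geq\eta_1^{-1}$, the soliton part occupies spatial scales $\gtrsim\eta_1^{-1}$, so by the rapid decay of $\hat Q$ (and $(\ref{6.2.1})$) its contribution at frequencies $\geq 2^{k/2}$ is $\lesssim T^{-10}$; this is where the relation $\eta_1^{-2}T = 2^{3k}$ in $(\ref{6.2.3})$ enters.

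Next I would rerun the frequency induction of Theorem \ref{t6.2} with the subintervals $J_a^{(i)}$ in place of $[(a-1)2^{3i},a2^{3i}]$. Every step is insensitive to the actual value of $\lambda(t)$: the Duhamel/$U^p$--$V^{p'}$ duality $(\ref{6.12})$, the paraproduct decomposition $(\ref{6.11})$, the H\"older--interpolation estimates $(\ref{6.13})$--$(\ref{6.18})$ (which use only $\|u\|_{L^2} = \|Q\|_{L^2}$, $(\ref{6.2})$, $(\ref{6.2.1})$ and the base case), and the interaction Morawetz computations starting from $(\ref{6.33})$ (which use only that $u$ solves $(\ref{1.1})$, mass conservation, Bernstein's inequality and $(\ref{6.0.2})$); the hypothesis $|\xi(t)|/\lambda(t)\leq\eta_0$ plays exactly the role of $(\ref{6.2.4})$. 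This gives $(\ref{6.5})$, hence $(\ref{6.8})$ for all $j\leq k$, in the present setting. Finally I would rerun the upgrade of Theorem \ref{t6.3}: start the induction at frequency level $k/2$ as in $(\ref{6.54})$, iterate $(\ref{6.55})$--$(\ref{6.59})$ over the blocks of $s$-length $512\,\eta_1 T^{1/2}$ used there, use $(\ref{6.56})$'s analogue for the soliton tail, and use the geometric gain $(C\eta_0)^{\lfloor k/6\rfloor}\leq T^{-10}$ from $(\ref{6.61})$ to absorb the accumulated error after $\lfloor k/6\rfloor$ steps. Translating the resulting $s$-average back to the weighted $t$-average yields $(\ref{6.62})$.

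I expect the only real work to be in the base-case reduction of the second paragraph: verifying that the local theory and the bounds $(\ref{6.6})$, $(\ref{6.16.1})$ transfer to subintervals with $\int_{J_a}\lambda(t)^{-2}\,dt = \eta_1^2$ rather than to unit $t$-intervals. This hinges on $\lambda$ being essentially constant on such subintervals, which is $(\ref{3.23})$ on short $s$-intervals together with $(\ref{3.25})$; once this is in place, the inductive machinery of Theorems \ref{t6.2} and \ref{t6.3} runs with no essential change, since all of its estimates were already written in a scale-free form.
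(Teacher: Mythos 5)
Your proposal is correct and follows essentially the same route the paper takes: the paper's proof of Corollary~\ref{c6.4} is just the remark preceding it, which says to replace the unit $t$-intervals of $(\ref{6.6})$ with subintervals satisfying $\int_{J_a}\lambda(t)^{-2}\,dt = \eta_1^2$ and then run the same induction. You have simply spelled out the details the paper leaves implicit — in particular the base-case reduction via the near-constancy of $\lambda$ on each $J_a^{(0)}$ coming from $(\ref{3.23})$ and $(\ref{3.25})$, and the translation of the $s$-average into the weighted $t$-average — which is the right content to fill that gap.
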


\section{A long time Strichartz estimate when $d \geq 3$}
In dimensions $d \geq 3$, the proof of long time Strichartz estimates is complicated by the fact that $F(x) = |x|^{4/d} x$ is not a smooth function. To circumvent this difficulty, we will utilize a bound on $\frac{\nabla Q}{Q^{1 - \alpha}}$ for $\alpha > 0$.

\begin{theorem}\label{t5.0}
If $Q$ is the unique positive solution to $\Delta Q + Q^{1 + 4/d} = Q$, $\frac{\nabla Q}{Q^{1 - \alpha}}$ is uniformly bounded for any $\alpha > 0$.
\end{theorem}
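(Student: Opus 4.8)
The plan is to reduce the bound to a two–sided exponential estimate for $Q$ together with an exponential estimate for $\nabla Q$, and then to exploit that the exponent $1-\alpha$ is strictly less than $1$ to absorb the (small) discrepancy between the decay rates. First, only the range $0<\alpha<1$ requires any argument: if $\alpha\geq 1$ then $\frac{|\nabla Q(x)|}{Q(x)^{1-\alpha}}=|\nabla Q(x)|\,Q(x)^{\alpha-1}$, which is bounded because $\nabla Q$ is bounded and $0<Q(x)\leq \|Q\|_{L^{\infty}}$. So assume $0<\alpha<1$. On any ball $\{|x|\leq R\}$ the quotient is trivially bounded, since $Q$ is continuous and strictly positive there, hence bounded below by a positive constant, while $\nabla Q$ is bounded; thus it suffices to control $\frac{|\nabla Q(x)|}{Q(x)^{1-\alpha}}$ for $|x|$ large.

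The input I would use is: for every $\eps>0$ there are constants $c_{\eps},C_{\eps}>0$ and $R_{\eps}$ with
\[
c_{\eps}\, e^{-(1+\eps)|x|}\ \leq\ Q(x), \qquad |\nabla Q(x)|\ \leq\ C_{\eps}\, e^{-(1-\eps)|x|}, \qquad |x|\geq R_{\eps}.
\]
These follow from the equation $\Delta Q-Q+Q^{1+4/d}=0$ and the fact that $Q^{4/d}\to 0$ at infinity: on $\{|x|\geq R_{\eps}\}$ the radial functions $e^{-(1-\eps)|x|}$ and $e^{-(1+\eps)|x|}$ serve, respectively, as a super- and a subsolution of the equation once $R_{\eps}$ is taken so large that $\sup_{|x|\geq R_{\eps}}Q^{4/d}<2\eps-\eps^{2}$ and $R_{\eps}\geq (d-1)/\eps$, and once their multiplicative constants are chosen to agree with $Q$ on $\{|x|=R_{\eps}\}$; since $Q$ and these comparison functions all tend to $0$ at infinity, the maximum principle (applied to $Q$ minus the supersolution, resp.\ the subsolution minus $Q$, which is superharmonic on the open set where it is positive) gives $Q(x)\leq C_{\eps}e^{-(1-\eps)|x|}$ and $Q(x)\geq c_{\eps}e^{-(1+\eps)|x|}$. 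Feeding the upper bound into the identity $\partial_{r}Q(r)=-r^{1-d}\int_{r}^{\infty}s^{d-1}(Q-Q^{1+4/d})\,ds$ — obtained by integrating $(r^{d-1}\partial_{r}Q)'=r^{d-1}(Q-Q^{1+4/d})$ and using the exponential decay of $\nabla Q$ established earlier to discard the boundary term at infinity — and using $0\leq Q-Q^{1+4/d}\leq Q$ together with $\int_{r}^{\infty}s^{d-1}e^{-(1-\eps)s}\,ds\lesssim r^{d-1}e^{-(1-\eps)r}$ for $r\geq 1$, yields $|\nabla Q(x)|=|\partial_{r}Q(|x|)|\leq C_{\eps}e^{-(1-\eps)|x|}$. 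Alternatively, one may simply cite the classical sharp asymptotics of the ground state, $Q(r)\sim c_{0}\, r^{-(d-1)/2}e^{-r}$ and $\partial_{r}Q(r)\sim -c_{0}\, r^{-(d-1)/2}e^{-r}$ as $r\to\infty$ (from the references on $Q$ cited above), which give the two displayed estimates for every $\eps>0$ and in fact force the quotient to $0$.

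Granting this, I would take $\eps=\alpha/2$, so that $(1-\alpha)(1+\eps)=1-\tfrac{\alpha}{2}-\tfrac{\alpha^{2}}{2}\leq 1-\tfrac{\alpha}{2}=1-\eps$. Then for $|x|\geq R_{\eps}$,
\[
\frac{|\nabla Q(x)|}{Q(x)^{1-\alpha}}\ \leq\ \frac{C_{\eps}\, e^{-(1-\eps)|x|}}{c_{\eps}^{1-\alpha}\, e^{-(1-\alpha)(1+\eps)|x|}}\ =\ \frac{C_{\eps}}{c_{\eps}^{1-\alpha}}\, e^{-\big[(1-\eps)-(1-\alpha)(1+\eps)\big]|x|}\ \leq\ \frac{C_{\eps}}{c_{\eps}^{1-\alpha}},
\]
because the bracketed exponent equals $\tfrac{\alpha^{2}}{2}\geq 0$. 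Combined with the bound on $\{|x|\leq R_{\eps}\}$ from the first paragraph, this shows $\frac{\nabla Q}{Q^{1-\alpha}}\in L^{\infty}(\mathbb{R}^{d})$.

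I expect the genuine content to be entirely in the decay estimates of the second paragraph — in particular the exponential lower bound $Q(x)\gtrsim e^{-(1+\eps)|x|}$ and the verification that the super/subsolution comparison closes on the exterior region with the constants matched along $\{|x|=R_{\eps}\}$ — together with the bookkeeping check that $\eps$ can be chosen small enough relative to the given $\alpha$. If one is content to invoke the sharp ground-state asymptotics this step is immediate and the rest is routine; the reason the statement is phrased with an arbitrary $\alpha>0$ rather than $\alpha=0$ is precisely that the small power $Q^{\alpha}$ is what absorbs any mismatch between the decay rates of $Q$ and $\nabla Q$.
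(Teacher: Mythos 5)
Your argument is correct, but it takes a genuinely different route from the paper's, and the paper's is worth noting because it is leaner. You prove a two-sided exponential estimate --- an upper bound $|\nabla Q(x)|\lesssim e^{-(1-\eps)|x|}$ and, crucially, a \emph{lower} bound $Q(x)\gtrsim e^{-(1+\eps)|x|}$ obtained by a sub/supersolution comparison with the maximum principle --- and then chooses $\eps$ small relative to $\alpha$ so that the exponents close. This works, but all the real effort goes into the lower barrier $Q\gtrsim e^{-(1+\eps)|x|}$. The paper avoids needing any lower bound on $Q$ at all. From the radial ODE $r^{-(d-1)}\partial_r(r^{d-1}Q_r)=Q-Q^{1+4/d}$ together with the exponential decay of $Q$ and $Q_r$ (at \emph{some} unspecified rate $\delta>0$, no sharp asymptotics required), one integrates from $r$ to $\infty$ to get
\[
-r^{d-1}Q_r(r)=\int_r^\infty s^{d-1}\bigl(Q(s)-Q(s)^{1+4/d}\bigr)\,ds \le \int_r^\infty s^{d-1}Q(s)\,ds,
\]
and then the key step is simply to factor $Q(s)=Q(s)^{1-\alpha}Q(s)^\alpha$ and use that $Q$ is strictly decreasing: $Q(s)^{1-\alpha}\le Q(r)^{1-\alpha}$ for $s\ge r$, so
\[
\int_r^\infty s^{d-1}Q(s)\,ds\ \le\ Q(r)^{1-\alpha}\int_r^\infty s^{d-1}Q(s)^\alpha\,ds\ \le\ C_\alpha\,Q(r)^{1-\alpha},
\]
the last integral being finite since $Q^\alpha$ still decays exponentially. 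Dividing by $r^{d-1}\ge 1$ gives $|Q_r|\lesssim_\alpha Q^{1-\alpha}$ directly, with no lower bound on $Q$, no barrier construction, and no tuning of $\eps$ against $\alpha$. So both proofs are valid; yours is the more standard route and yields strictly more (decay of the quotient via the sharp asymptotics), but the paper's monotonicity trick is lighter-weight and is the better match for the hypotheses it actually invokes (smoothness, $\partial_r Q<0$, and rapid decay at an unspecified rate).

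One small caveat on the presentation of your barrier step: when you say the difference is ``superharmonic on the open set where it is positive,'' you are implicitly using that the potential $1-Q^{4/d}$ is nonnegative on the exterior region, i.e.\ that $R_\eps$ is large enough that $Q<1$ there; it is worth making that explicit, since near the origin $Q>1$ and the sign flips. With that noted, the maximum-principle closure and the choice $\eps=\alpha/2$ are both sound.
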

\begin{proof}
To see this, first observe that since $Q$ is smooth and positive, $\frac{\nabla Q}{Q}$ is uniformly bounded on the set $\{ x : |x| \leq 1 \}$.

Next, since $Q$ is radially symmetric,
\begin{equation}
Q_{rr} + \frac{d - 1}{r} Q_{r} = r^{-(d - 1)} \partial_{r} (r^{d - 1} Q_{r}) = Q - Q^{1 + 4/d}.
\end{equation}
By the fundamental theorem of calculus, since $Q$ and all its derivatives are rapidly decreasing, and $Q$ is strictly decreasing, for any $\alpha > 0$,
\begin{equation}
r^{d - 1} Q_{r} \leq \int_{r}^{\infty} s^{d - 1} Q(s) ds \lesssim_{\alpha} Q(r)^{1 - \alpha}.
\end{equation}
This gives a bound on $\frac{\nabla Q}{Q^{1 - \alpha}}$ when $r > 1$.
\end{proof}

As before, let $J = [a, b]$ be an interval, and choose
\begin{equation}\label{5.1}
0 < \eta_{1} \ll \eta_{0} \ll 1,
\end{equation}
such that
\begin{equation}\label{5.2}
\sup_{t \in J} \| \epsilon(t,x) \|_{L^{2}}^{2} \leq \eta_{0}^{2},
\end{equation}
and that
\begin{equation}\label{5.3}
\int_{|\xi| \geq \eta_{1}^{-1/2}} |\hat{Q}(\xi)|^{2} \leq \eta_{0}^{2},
\end{equation}
where $Q$ is the soliton solution $(\ref{1.12})$. Suppose
\begin{equation}\label{5.4}
\frac{1}{\eta_{1}} \leq \lambda(t) \leq \frac{1}{\eta_{1}} T^{1/50d}, \qquad \text{and} \qquad \frac{|\xi(t)|}{\lambda(t)} \leq \eta_{0}, \qquad \text{for all} \qquad t \in J,
\end{equation}
and that there exists $k \in \mathbb{Z}_{\geq 0}$ such that
\begin{equation}\label{5.5}
\aligned
\int_{J} \lambda(t)^{-2} dt = T, \qquad \text{and} \qquad \eta_{1}^{-2} T = 2^{\alpha_{d} k}, \qquad \text{where} \qquad \alpha_{d} &= 3 - \frac{1}{5d} \qquad \text{when} \qquad 3 \leq d \leq 8, \\
\alpha_{d} &= 2(1 + \frac{4}{d}) - \frac{1}{5d}, \qquad \text{when} \qquad d \geq 9.
\endaligned
\end{equation}

Once again, when $i \in \mathbb{Z}$, $i > 0$, let $P_{i}$ denote the standard Littlewood-Paley projection operator. When $i = 0$, let $P_{i}$ denote the projection operator $P_{\leq 0}$, and when $i < 0$, let $P_{i}$ denote the zero operator.\medskip

\begin{theorem}\label{t5.1}
If $J$ is an interval on which $|x(t)| \leq T^{\frac{1}{2000d^{2}}}$, and $(\ref{5.4})$ and $(\ref{5.5})$ hold, letting $p$ satisfy $\frac{1}{p} = \frac{1}{2} - \frac{1}{4000d^{2}}$, for $\frac{k}{10d} \leq i \leq k(1 + \frac{1}{10d})$,
\begin{equation}\label{5.6}
\| P_{\geq i} u \|_{U_{\Delta}^{p}(J \times \mathbb{R}^{d})} + \| P_{\geq i} u \|_{L_{t}^{2} L_{x}^{\frac{2d}{d - 2}}(J \times \mathbb{R}^{d})} \lesssim 2^{\frac{\alpha_{d}}{2}(k(1 + \frac{1}{10d}) - i)}(\frac{1}{T} \int_{J} \| \epsilon(t) \|_{L^{2}}^{2} \lambda(t)^{-2} dt)^{1/2} + T^{-10}.
\end{equation}
\end{theorem}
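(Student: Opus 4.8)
The plan is to follow the induction-on-frequency scheme of Theorems \ref{t6.2} and \ref{t6.3}, with one structural simplification and one genuine complication. In dimensions $d \geq 3$ the double endpoint Strichartz estimate is available, so no interaction Morawetz estimate is needed: once $\| P_{\geq i} u \|_{U_{\Delta}^{p}(J \times \mathbb{R}^{d})}$ is controlled, the bound on $\| P_{\geq i} u \|_{L_{t}^{2} L_{x}^{2d/(d-2)}}$ follows from $U_{\Delta}^{p} \subset U_{\Delta}^{2}$ and the Strichartz embedding. The complication is that $F(u) = |u|^{4/d} u$ is not a polynomial, so the Fourier-support cancellation $P_{\geq i} F(u_{\leq i - c}) = 0$ used repeatedly in Section 4 (e.g. (\ref{6.7}), (\ref{6.23}), (\ref{6.40})) fails; this is exactly where Theorem \ref{t5.0} and the tools of \cite{taylor2007tools} and \cite{visan2007defocusing} enter.

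First I would set up the induction on a running quantity analogous to the $X_{j}$-norm (\ref{6.4}), built from $\| P_{\geq i'} u \|_{U_{\Delta}^{p}}$ over dyadic subintervals of $s$-length $\sim 2^{\alpha_{d} i'}$ for $i' \leq i$. The base case sits at level $i' \sim \tfrac{k}{10d}$: local well-posedness together with $\lambda(t) \geq \eta_{1}^{-1}$ (so a unit interval in the $s$-variable of (\ref{3.18}) carries $O(1)$ Strichartz norm) gives $\| u \|_{U_{\Delta}^{2}} \ls 1$ on each such interval. For the inductive step I fix $i$ and a subinterval $J_{a}^{(i)}$, choose $t_{0} \in J_{a}^{(i)}$, and apply Duhamel's formula together with the $U_{\Delta}^{p}/V_{\Delta}^{p'}$ duality of \cite{hadac2009well} as in (\ref{6.12}), reducing to estimating $\| P_{\geq i} u(t_{0}) \|_{L^{2}}$ and $\sup_{v} \| v\, P_{\geq i} F(u) \|_{L_{t,x}^{1}}$. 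The linear piece is handled as in (\ref{6.56}): writing $u$ through the decomposition (\ref{3.4}) as a rescaled soliton plus a rescaled $\epsilon$, and using that $Q$ and all its derivatives decay rapidly while $\lambda(t) \leq \eta_{1}^{-1} T^{1/50d}$ and $|x(t)| \leq T^{1/2000 d^{2}}$, one gets $\| P_{\geq i} u(t_{0}) \|_{L^{2}} \ls \| \epsilon(t_{0}) \|_{L^{2}} + T^{-10}$; averaging over $t_{0}$ and $\ell^{2}$-summing the subintervals produces the right side of (\ref{5.6}), with the prefactor $2^{\frac{\alpha_{d}}{2}(k(1 + 1/(10d)) - i)}$ recording the number of subintervals.

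For the nonlinear term I would split $F(u) = F(u_{\leq i - c}) + [F(u) - F(u_{\leq i - c})]$. By the fractional Leibniz and chain rules of \cite{taylor2007tools} and \cite{visan2007defocusing}, the difference is a sum of terms each carrying a factor at frequency $\gtrsim 2^{i-c}$ (schematically $O(|u|^{4/d} |u_{\geq i-c}|)$, with genuinely lower-order pieces when $d \geq 5$); these are estimated by H\"older, interpolation between $L_{t}^{\infty} L_{x}^{2}$ and the double-endpoint norm, the smallness hypotheses (\ref{5.2})--(\ref{5.3}), and the inductive hypothesis, in the spirit of (\ref{6.13})--(\ref{6.18}) and (\ref{6.42})--(\ref{6.45}). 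The new term is $P_{\geq i} F(u_{\leq i-c})$: using (\ref{3.4}) to write $u_{\leq i-c}$ as a low-pass-filtered rescaled $(Q + \epsilon)$ and expanding $F(Q + \epsilon) = F(Q) + O(|Q|^{4/d} |\epsilon| + \cdots)$, the pure-soliton contribution $P_{\geq i} F(Q)$ is, after undoing the scaling, $O(T^{-10})$ because $F(Q) = Q^{1 + 4/d}$ is smooth and rapidly decreasing and $2^{i} \gg \lambda(t)^{-1}$. For the $\epsilon$-dependent contribution one must differentiate $|u|^{4/d} u$, producing $|u|^{4/d - 1} \nabla u$; since $4/d - 1 < 0$ for $d > 4$, this is where Theorem \ref{t5.0} is indispensable: the bound $\nabla Q \ls_{\alpha} Q^{1 - \alpha}$ turns $|Q|^{4/d-1} \nabla Q$ into $Q^{4/d - \alpha}$, so the Leibniz rule survives through the tails of the soliton. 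These bounds again close against $\| \epsilon \|_{L^{2}}$ and the inductive hypothesis.

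The main obstacle is quantitative bookkeeping: one must track the powers of $2^{i}$, $\eta_{0}$, and $T$ so that, after summing the geometric series over frequency, the implicit constant stays bounded independently of $T$. This is what forces the precise value of $\alpha_{d}$ --- including the change from $3 - \tfrac{1}{5d}$ to $2(1 + \tfrac{4}{d}) - \tfrac{1}{5d}$ once $d \geq 9$, where $2(1 + \tfrac{4}{d})$ drops below $3$ and the Leibniz expansion of $F$ offers fewer high-frequency factors to spend --- as well as the admissible range $\tfrac{k}{10d} \leq i \leq k(1 + \tfrac{1}{10d})$. Making the non-smooth nonlinearity estimates compatible with the pointwise soliton bound of Theorem \ref{t5.0} across this whole range is the crux, and it is a balance of this flavor, recast in Section 10, that ultimately breaks for $d \geq 16$ (Theorem \ref{t14.0}).
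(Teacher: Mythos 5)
Your overall architecture is right and matches the paper's: an induction on frequency starting at $i \sim k/(10d)$; the Duhamel formula together with the $U_{\Delta}^{p}/V_{\Delta}^{p'}$ duality of \cite{hadac2009well}; the decomposition $u = \tilde{Q} + \tilde{\epsilon}$ with the soliton part killed by Fourier support and rapid decay; the Taylor expansion $F(u) = F(u_{\leq i}) + \int_{0}^{1} F'(u_{\leq i} + s u_{>i})\, u_{>i}\, ds$; the fractional chain rule of \cite{visan2007defocusing} for the non-polynomial nonlinearity; and the crucial role of Theorem~\ref{t5.0} in surviving the negative powers of $Q$ in the tails. You also correctly observe that in $d \geq 3$ the double-endpoint Strichartz estimate makes the interaction Morawetz machinery of Section~4 unnecessary, and your account of why $\alpha_d$ changes at $d = 9$ is essentially right.

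However, there is a genuine gap. When you bound the piece of the Duhamel term carrying the factor $F'(u_{\leq i} + s u_{>i})\, u_{>i}$, you split $|F'| \lesssim |\tilde{Q}|^{4/d} + |\tilde{\epsilon}|^{4/d}$. The $|\tilde{\epsilon}|^{4/d}$ part is fine: H\"older in time plus $\|\epsilon\|_{L_t^\infty L_x^2} \leq \eta_0$ gives $\eta_0^{4/d}\|P_{>i}u\|_{L_t^2 L_x^{2d/(d-2)}}$, small enough to absorb. But for the $|\tilde{Q}|^{4/d}$ part, your proposed tools --- H\"older, interpolation against $L_t^\infty L_x^2$, and the smallness of (\ref{5.2})--(\ref{5.3}) --- only yield $\| |\tilde{Q}|^{4/d}|P_{>i}u|\|_{L_t^2 L_x^{2d/(d+2)}} \lesssim \|Q\|_{L^2}^{4/d}\|P_{>i}u\|_{L_t^2 L_x^{2d/(d-2)}}$, and $\|Q\|_{L^2}$ is $O(1)$; there is no smallness, and the induction fails to close. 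What makes this term absorbable is a local smoothing estimate (Lemma~\ref{lind}): because $\tilde{Q}$ is spatially localized near $x(t)$ with $|x(t)| \leq T^{1/2000d^2}$, and $P_{>i}u$ lives at frequencies $\geq 2^i$, one gets a genuine gain $\eta_1^{1/p} 2^{-i/p} T^{1/2000d^2}$ from the frequency-localized local smoothing bound $\|v\|_{L_{t,x}^2(\{|x|\leq R\})} \lesssim R\, 2^{-i/2}\|v_0\|_{L^2}$, and this factor is $\ll 1$ precisely on the range $i \geq k/(10d)$. This mechanism is absent from your sketch, and it is load-bearing. A secondary point: the paper does not partition $J$ into dyadic time-subintervals of length $\sim 2^{\alpha_d i'}$ as you propose; the prefactor $2^{\frac{\alpha_d}{2}(k_0 - i)}$ arises from the geometric sum over frequencies in the inductive step (the $\sum_{j \leq i} 2^{\sigma j}\|P_j(u - \tilde{Q})\|_{L_t^2 L_x^{2d/(d-2)}}$ bookkeeping with $\sigma > \alpha_d/2$), not from $\ell^2$-summing over a subinterval decomposition.
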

\begin{proof}
This theorem is also proved using induction on frequency. Make the decomposition
\begin{equation}\label{5.6.1}
u(t,x) = e^{-i \gamma(t)} e^{-i x \cdot \frac{\xi(t)}{\lambda(t)}} \lambda(t)^{-d/2} Q(\frac{x - x(t)}{\lambda(t)}) + e^{-i \gamma(t)} e^{-i x \cdot \frac{\xi(t)}{\lambda(t)}} \lambda(t)^{-d/2} Q(\frac{x - x(t)}{\lambda(t)}) = \tilde{\epsilon}(t,x) + \tilde{Q}.
\end{equation}
By $(\ref{5.4})$, $(\ref{5.5})$, the fact that $Q$ is smooth and all its derivatives are rapidly decreasing, and local well-posedness theory,
\begin{equation}\label{5.7}
\| P_{\geq \frac{k}{10d}} u \|_{L_{t}^{2} L_{x}^{\frac{2d}{d - 2}}(J \times \mathbb{R}^{d})} \lesssim (\int_{J} \| \epsilon(t) \|_{L^{2}}^{2} \lambda(t)^{-2} dt)^{1/2} + T^{-10} = 2^{\frac{\alpha_{d} k}{2}} (\frac{1}{T} \int_{J} \| \epsilon(t) \|_{L^{2}}^{2} \lambda(t)^{-2} dt)^{1/2} + T^{-10}.
\end{equation}

For $i \geq \frac{k}{10d}$, by Duhamel's principle, for $t_{0}, t \in J$, 
\begin{equation}\label{5.8}
u(t) = e^{i(t - t_{0}) \Delta} u(t_{0}) + i \int_{t_{0}}^{t} e^{i(t - \tau) \Delta} F(u) d\tau.
\end{equation}
Also, by $(\ref{5.4})$, $(\ref{5.5})$, and the fact that $Q$ is smooth and all its derivatives are rapidly decreasing, for $i \geq \frac{k}{10d}$, choosing $t_{0} \in J$ such that $\| \tilde{\epsilon}(t_{0}) \|_{L^{2}} = \inf_{t \in J} \| \tilde{\epsilon}(t) \|_{L^{2}}$,
\begin{equation}\label{5.8.1}
\| P_{\geq i} e^{it \Delta} u(t_{0}) \|_{L_{t}^{2} L_{x}^{\frac{2d}{d - 2}}(J \times \mathbb{R}^{d})} + \| P_{\geq i} e^{i t \Delta} u(t_{0}) \|_{U_{\Delta}^{p}(J \times \mathbb{R}^{d})} \lesssim (\frac{1}{T} \int_{J} \| \epsilon(t) \|_{L^{2}}^{2} \lambda(t)^{-2} dt)^{1/2} + T^{-10}.
\end{equation}

For the Duhamel term, observe that by the endpoint Strichartz estimates of \cite{keel1998endpoint} and \cite{hadac2009well} (compare to $(\ref{6.12})$),
\begin{equation}
\| \int_{t_{0}}^{t} e^{i(t - \tau) \Delta} F(\tau) d\tau \|_{L_{t}^{2} L_{x}^{\frac{2d}{d - 2}} \cap U_{\Delta}^{p}(J \times \mathbb{R}^{d})} \lesssim \| F \|_{L_{t}^{2} L_{x}^{\frac{2d}{d + 2}}(J \times \mathbb{R}^{d})}.
\end{equation}
We will split $F = F_{1} + F_{2}$, where $F_{1} \in L_{t}^{2} L_{x}^{\frac{2d}{d + 2}}$ and $F_{2}$ will be estimated in a different function space. Use Taylor's formula to expand the nonlinearity,
\begin{equation}\label{5.10}
F(u) = F(u_{\leq i}) + F(u) - F(u_{\leq i}) = F(u_{\leq i}) + \int_{0}^{1} F'(u_{\leq i} + s u_{> i}) \cdot (u_{> i}) ds.
\end{equation}
\begin{remark}
When proving Theorem $\ref{t5.1}$, it is not so important to distinguish between $u$ and $\bar{u}$.
\end{remark}

Split
\begin{equation}\label{5.12}
|F'(u_{\leq i} + s u_{> i})| \lesssim |e^{-i \gamma(t)} e^{-ix \cdot \frac{\xi(t)}{\lambda(t)}} \frac{1}{\lambda(t)^{d/2}} Q(\frac{x - x(t)}{\lambda(t)})|^{4/d} +  |e^{-i \gamma(t)} e^{-ix \cdot \xi(t)} \frac{1}{\lambda(t)^{d/2}} \epsilon(t, \frac{x - x(t)}{\lambda(t)})|^{4/d}.
\end{equation}
By $(\ref{5.2})$,
\begin{equation}\label{5.13}
\aligned
\| |P_{> i} u| ||e^{-i \gamma(t)} e^{-ix \cdot \frac{\xi(t)}{\lambda(t)}} \frac{1}{\lambda(t)^{d/2}} \epsilon(t, \frac{x - x(t)}{\lambda(t)})|^{4/d} \|_{L_{t}^{2} L_{x}^{\frac{2d}{d + 2}}} \lesssim \eta_{0}^{4/d} \| P_{> i} u \|_{L_{t}^{2} L_{x}^{\frac{2d}{d - 2}}}.
\endaligned
\end{equation}

Next, using local smoothing,
\begin{lemma}\label{lind}
Under the assumptions of Theorem $\ref{t5.1}$,
\begin{equation}\label{5.13.0}
\| |\frac{1}{\lambda(t)^{d/2}} Q(\frac{x - x(t)}{\lambda(t)})|^{4/d} |P_{\geq i} u| \|_{L_{t}^{2} L_{x}^{\frac{2d}{d + 2}}(J \times \mathbb{R}^{d})} \lesssim \eta_{1} T^{\frac{1}{2000d^{2}}} 2^{-\frac{i}{p}} \| P_{\geq i} u \|_{U_{\Delta}^{p}(J \times \mathbb{R}^{d})}.
\end{equation}
\end{lemma}

\begin{proof}[Proof of Lemma $\ref{lind}$]
If $v$ is a solution to $(i \partial_{t} + \Delta) v = 0$ and $\hat{v}_{0}$ is supported on $|\xi| \geq 2^{i}$, then for any $R > 0$,
\begin{equation}\label{5.13.1}
\| v \|_{L_{t,x}^{2}(\mathbb{R} \times \{ x : |x| \leq R \})} \lesssim R 2^{-\frac{i}{2}} \| v_{0} \|_{L^{2}}.
\end{equation}
Then, by $(\ref{5.13.1})$, $|x(t)| \leq T^{\frac{1}{2000d^{2}}}$, $(\ref{5.4})$, and the fact that $Q$ is rapidly decreasing,
\begin{equation}\label{5.14}
\| v |\frac{1}{\lambda(t)^{d/2}} Q(\frac{x - x(t)}{\lambda(t)})|^{4/d} \|_{L_{t}^{2} L_{x}^{\frac{2d}{d + 2}}} \lesssim \| v |\frac{1}{\lambda(t)^{d/2}} Q(\frac{x - x(t)}{\lambda(t)})|^{2/d} \|_{L_{t,x}^{2}} \| Q \|_{L^{2}}^{2/d} \lesssim \eta_{1}^{1/2} 2^{-\frac{i}{2}} T^{\frac{1}{4000d^{2}}} \| v_{0} \|_{L^{2}}.
\end{equation}
Also,
\begin{equation}\label{5.14.1}
\aligned
\| v |\frac{1}{\lambda(t)^{d/2}} Q(\frac{x - x(t)}{\lambda(t)})|^{4/d} \|_{L_{t}^{2} L_{x}^{\frac{2d}{d + 2}}(J \times \mathbb{R}^{d})} \\ \lesssim \| v |\frac{1}{\lambda(t)^{d/2}} Q(\frac{x - x(t)}{\lambda(t)})|^{2/d} \|_{L_{t,x}^{2}}^{2/p} \| v \|_{L_{t}^{\infty}  L_{x}^{2}}^{\frac{p - 2}{p}} \| Q \|_{L^{2}}^{2/d} \cdot (\int_{J} \lambda(t)^{-2} dt)^{\frac{1}{4000d^{2}}} \lesssim \eta_{1}^{\frac{1}{p}} 2^{-\frac{i}{p}} T^{\frac{1}{2000d^{2}}} \| v_{0} \|_{L^{2}}.
\endaligned
\end{equation}
Lemma $\ref{lind}$ follows by replacing $v$ with a $U_{\Delta}^{p}$ atom and summing up.
\end{proof}

Since $\eta_{1}^{\frac{1}{p}} 2^{-\frac{i}{p}} T^{\frac{1}{2000d^{2}}} \ll 1$ when $i \geq \frac{k}{10d}$, the contributions of $(\ref{5.13})$ and $(\ref{5.13.0})$ may be absorbed into the left hand side of $(\ref{5.6})$. Therefore, by $(\ref{5.8})$ and $(\ref{5.10})$, for any $\frac{k}{10d} \leq i \leq k(1 + \frac{1}{10d})$,
\begin{equation}\label{5.14.2}
\aligned
\| P_{\geq i} u \|_{U_{\Delta}^{p}(J \times \mathbb{R}^{d})} + \| P_{\geq i} u \|_{L_{t}^{2} L_{x}^{\frac{2d}{d - 2}}(J \times \mathbb{R}^{d})} \\ \lesssim \| \int_{t_{0}}^{t} e^{i(t - \tau) \Delta} P_{\geq i} F(u_{\leq i}) d\tau \|_{U_{\Delta}^{p} \cap L_{t}^{2} L_{x}^{\frac{2d}{d - 2}}} +  (\int_{J} \| \epsilon(t) \|_{L^{2}}^{2} \lambda(t)^{-2} dt)^{1/2} + T^{-10}.
\endaligned
\end{equation}
By $(\ref{1.12})$, $(\ref{5.4})$, and $(\ref{5.5})$, for $i \geq \frac{k}{10d}$,
\begin{equation}\label{5.9}
\| P_{\geq i} F(\tilde{Q}) \|_{L_{t}^{1} L_{x}^{2}(J \times \mathbb{R}^{d})} \lesssim T^{-10},
\end{equation}
so it only remains to compute
\begin{equation}\label{5.11}
\| \int_{t_{0}}^{t} e^{i(t - \tau) \Delta} P_{\geq i} [F(u_{\leq i}) - F(\tilde{Q})]d\tau \|_{U_{\Delta}^{p} \cap L_{t}^{2} L_{x}^{\frac{2d}{d - 2}}(J \times \mathbb{R}^{d})}.
\end{equation}
Once again from \cite{hadac2009well}
\begin{equation}\label{5.17.0}
\| \int_{t_{0}}^{t} e^{i(t - \tau) \Delta} F d\tau \|_{L_{t}^{2} L_{x}^{\frac{2d}{d - 2}} \cap U_{\Delta}^{p}} \lesssim \sup_{v} \int_{J} (v, F)_{L^{2}} dt,
\end{equation}
where $\| v \|_{U_{\Delta}^{p}(J \times \mathbb{R}^{d})} \lesssim \| v \|_{V_{\Delta}^{2}(J \times \mathbb{R}^{d})} = 1$. Therefore, suppose $\| v \|_{V_{\Delta}^{2}(J \times \mathbb{R}^{d})} = 1$ and $\hat{v}(t, \xi)$ is supported on $|\xi| \geq 2^{i}$. By Bernstein's inequality,
\begin{equation}\label{5.17.1}
\aligned
\int_{J} (v, F(u_{\leq i}) - F(\tilde{Q}))_{L^{2}} dt \lesssim 2^{-i} \int_{J} (v, (\nabla u_{\leq i}) F'(u_{\leq i}) - (\nabla \tilde{Q}) F'(\tilde{Q}))_{L^{2}} dt \\ = 2^{-i} \int_{J} (v, \nabla (u_{\leq i} - \tilde{Q}) F'(u_{\leq i}) + \nabla \tilde{Q} (F'(u_{\leq i}) - F'(\tilde{Q})))_{L^{2}} dt.
\endaligned
\end{equation}

To esimate $(\ref{5.17.1})$, it is useful to split into regions where $u_{\leq i} - \tilde{Q} \ll \tilde{Q}$ and where $\tilde{Q} \lesssim u_{\leq i} - \tilde{Q}$. Let $\psi$ be a smooth, cutoff function, $\psi(x) = 1$ for $|x| \geq \frac{1}{2}$, $\psi(x) = 0$ for $|x| \leq \frac{1}{4}$. Now abuse notation and let $\psi(x)$ denote $\psi(x) = \psi(\frac{P_{\leq i} u - \tilde{Q}}{\tilde{Q}})$. There exists a sequence of constants $c_{j}$ that are uniformly bounded such that
\begin{equation}\label{5.25}
\aligned
(1 - \psi^{2}(x)) [\nabla (u_{\leq i} - \tilde{Q}) F'(u_{\leq i}) + \nabla \tilde{Q} (F'(u_{\leq i}) - F'(\tilde{Q}))] \\ = (1 - \psi^{2}(x)) [\nabla (u_{\leq i} - \tilde{Q}) \sum_{j \geq 0} c_{j} \frac{(u_{\leq i} - \tilde{Q})^{j}}{\tilde{Q}^{j - 4/d}} + \nabla \tilde{Q} \sum_{j \geq 1} \frac{(u_{\leq i} - \tilde{Q})^{j}}{\tilde{Q}^{j - 4/d}}.
\endaligned
\end{equation}
Following the computations in the proof of Lemma $\ref{lind}$, since $\| v \|_{U_{\Delta}^{p}} \lesssim 1$,
\begin{equation}\label{5.25.1}
\aligned
2^{-i} \int_{J} (v,  (1 - \psi^{2}(x)) \nabla (u_{\leq i} - \tilde{Q}) \sum_{j \geq 0} c_{j} \frac{(u_{\leq i} - \tilde{Q})^{j}}{\tilde{Q}^{j - 4/d}})_{L^{2}} dt \\ \lesssim 2^{-i - \frac{i}{p}}  \int_{J} (v, \nabla (u_{\leq i} - \tilde{Q}) |\tilde{Q}|^{4/d})_{L^{2}} dt \lesssim \eta_{1}^{1/p} 2^{-\frac{i}{p} - i} T^{\frac{1}{2000d^{2}}} \| \nabla (u_{\leq i} - \tilde{Q}) \|_{L_{t}^{2} L_{x}^{\frac{2d}{d - 2}}}.
\endaligned
\end{equation}
Next, on the support of $1 - \psi^{2}(x)$, by the definition of $\tilde{Q}$, $(\ref{5.4})$, $(\ref{5.5})$, and Theorem $\ref{t5.0}$,
\begin{equation}\label{5.25.2}
\aligned
\nabla \tilde{Q} \sum_{j \geq 1} \frac{(u_{\leq i} - \tilde{Q})^{j}}{\tilde{Q}^{j - 4/d}} \lesssim \frac{|\xi(t)|}{\lambda(t)} \tilde{Q} \sum_{j \geq 1} \frac{(u_{\leq i} - \tilde{Q})^{j}}{\tilde{Q}^{j - 4/d}} + \lambda(t)^{2} \frac{\nabla Q(\frac{x - x(t)}{\lambda(t)})}{Q(\frac{x - x(t)}{\lambda(t)})^{1 - \frac{2}{d}}}  \tilde{Q}^{1 - \frac{2}{d}} \sum_{j \geq 1} \frac{(u_{\leq i} - \tilde{Q})^{j}}{\tilde{Q}^{j - 4/d}} \\
\lesssim \frac{|\xi(t)|}{\lambda(t)} \tilde{Q}^{4/d} |u_{\leq i} - \tilde{Q}| + \lambda(t)^{2} \frac{\nabla Q(\frac{x - x(t)}{\lambda(t)})}{Q(\frac{x - x(t)}{\lambda(t)})^{1 - \frac{2}{d}}} \tilde{Q}^{2/d} |u_{\leq i} - \tilde{Q}|.
\endaligned
\end{equation}
Again following the computations in the proof of Lemma $\ref{lind}$, since $\| v \|_{U_{\Delta}^{p}} \lesssim 1$, by $(\ref{5.4})$ and $(\ref{5.5})$,
\begin{equation}\label{5.25.3}
2^{-i} \int_{J} (v,  (1 - \psi^{2}(x)) \nabla \tilde{Q} \sum_{j \geq 1} \frac{(u_{\leq i} - \tilde{Q})^{j}}{\tilde{Q}^{j - 4/d}})_{L^{2}} dt \lesssim \eta_{1}^{1/p} \eta_{0} 2^{-\frac{i}{p} - i} T^{\frac{1}{2000d^{2}}} \| (u_{\leq i} - \tilde{Q}) \|_{L_{t}^{2} L_{x}^{\frac{2d}{d - 2}}}.
\end{equation}
Similarly, since $|\tilde{Q}| \lesssim |u_{\leq i} - \tilde{Q}|$ on the support of $\psi(x)$, by the definition of $\tilde{Q}$, $(\ref{5.4})$, $(\ref{5.5})$, and Theorem $\ref{t5.0}$,
\begin{equation}\label{5.25.4}
\aligned
\psi^{2}(x) (\nabla \tilde{Q} (F'(u_{\leq i}) - F'(\tilde{Q})) \lesssim \psi^{2}(x) \nabla \tilde{Q} \cdot |u_{\leq i} - \tilde{Q}|^{4/d} \\
\lesssim \psi^{2}(x) \frac{|\xi(t)|}{\lambda(t)} \tilde{Q} |u_{\leq i} - \tilde{Q}|^{4/d} + \psi(x)^{2} \lambda(t)^{-2} \frac{\nabla Q(\frac{x - x(t)}{\lambda(t)})}{Q(\frac{x - x(t)}{\lambda(t)})^{1 - 2/d}} |u_{\leq i} - \tilde{Q}|^{4/d} \tilde{Q}^{1 - 2/d} \\ 
\lesssim \eta_{0} |u_{\leq i} - \tilde{Q}| |\tilde{Q}|^{2/d} |\tilde{\epsilon}|^{2/d} + \eta_{1} \lambda(t)^{-1} \frac{\nabla Q(\frac{x - x(t)}{\lambda(t)})}{Q(\frac{x - x(t)}{\lambda(t)})^{1 - 2/d}} |u_{\leq i} - \tilde{Q}| |\tilde{\epsilon}|^{2/d}.
\endaligned
\end{equation}
Therefore,
\begin{equation}\label{5.25.5}
2^{-i} \int_{J} (v,  \psi^{2}(x) \nabla \tilde{Q} [F'(u_{\leq i}) - F'(\tilde{Q})])_{L^{2}} dt \lesssim \eta_{1}^{1/p} \eta_{0} 2^{-\frac{i}{p} - i} T^{\frac{1}{2000d^{2}}} \eta_{0} \| (u_{\leq i} - \tilde{Q}) \|_{L_{t}^{2} L_{x}^{\frac{2d}{d - 2}}}.
\end{equation}
Now decompose,
\begin{equation}\label{5.25.6}
\nabla (u_{\leq i} - \tilde{Q}) F'(u_{\leq i}) = \nabla (u_{\leq i} - \tilde{Q}) F'(u_{\leq i} - \tilde{Q}) + \nabla(u_{\leq i} - \tilde{Q}) [F'(u_{\leq i}) - F'(u_{\leq i} - \tilde{Q})].
\end{equation}
Since
\begin{equation}\label{5.25.7}
\nabla(u_{\leq i} - \tilde{Q}) [F'(u_{\leq i}) - F'(u_{\leq i} - \tilde{Q})] \lesssim |\nabla (u_{\leq i} - \tilde{Q})| |\tilde{Q}|^{4/d},
\end{equation}
then as in $(\ref{5.25.1})$,
\begin{equation}\label{5.25.8}
\aligned
2^{-i} \int_{J} (v,  \psi^{2}(x) |\nabla(u_{\leq i} - \tilde{Q}| [F'(u_{\leq i}) - F'(u_{\leq i} - \tilde{Q})])_{L^{2}} dt \lesssim \eta_{1}^{1/p} 2^{-\frac{i}{p} - i} T^{\frac{1}{2000d^{2}}} \| \nabla (u_{\leq i} - \tilde{Q}) \|_{L_{t}^{2} L_{x}^{\frac{2d}{d - 2}}}.
\endaligned
\end{equation}
Also, since
\begin{equation}\label{5.25.9}
(1 - \psi^{2}(x)) |\nabla(u_{\leq i} - \tilde{Q})| |F'(u_{\leq i}) - F'(\tilde{Q})| \lesssim |\nabla (u_{\leq i} - \tilde{Q})| |\tilde{Q}|^{4/d},
\end{equation}
by $(\ref{5.25.1})$,
\begin{equation}\label{5.25.10}
2^{-i} \int_{J} (v,  (1 - \psi^{2}(x)) |\nabla(u_{\leq i} - \tilde{Q})| [F'(u_{\leq i} - \tilde{Q})])_{L^{2}} dt \lesssim \eta_{1}^{1/p} 2^{-\frac{i}{p} - i} T^{\frac{1}{2000d^{2}}} \| \nabla (u_{\leq i} - \tilde{Q}) \|_{L_{t}^{2} L_{x}^{\frac{2d}{d - 2}}}.
\end{equation}

Therefore, it only remains to estimate
\begin{equation}\label{5.25.11}
\aligned
2^{-i} \| \int_{t_{0}}^{t} e^{i(t - \tau) \Delta} P_{\geq i}(\nabla(u_{\leq i} - \tilde{Q}) F'(u_{\leq i} - \tilde{Q})) d\tau \|_{L_{t}^{2} L_{x}^{\frac{2d}{d - 2}} \cap U_{\Delta}^{p}(J \times \mathbb{R}^{d})} \\ \lesssim 2^{-i} \| P_{\geq i} (\nabla(u_{\leq i} - \tilde{Q}) F'(u_{\leq i} - \tilde{Q})) \|_{L_{t}^{2} L_{x}^{\frac{2d}{d + 2}}(J \times \mathbb{R}^{d})}.
\endaligned
\end{equation}
For this term, use fractional derivative chain rule (Proposition $A.1$ of \cite{visan2007defocusing}),
\begin{proposition}\label{pvisan}
Let $F$ be a H{\"o}lder continuous function of order $0 < \alpha < 1$. Then for every $0 < \sigma < \alpha$, $1 < p < \infty$, and $\frac{\sigma}{\alpha} < s < 1$, we have
\begin{equation}\label{5.25.12}
\| |\nabla|^{\sigma} F(u) \|_{L^{p}} \lesssim \| |u|^{\alpha - \frac{\sigma}{s}} \|_{L^{p_{1}}} \| |\nabla|^{s} u \|_{L^{\frac{\sigma}{s} p_{2}}}^{\frac{\sigma}{s}},
\end{equation}
provided $\frac{1}{p} = \frac{1}{p_{1}} + \frac{1}{p_{2}}$ and $(1 - \frac{\sigma}{\alpha s}) p_{1} > 1$.
\end{proposition}
Then for any $0 \leq \sigma < 1 + \frac{4}{d}$,
\begin{equation}\label{5.25.13}
\aligned
2^{-i} \int_{J} (v, \nabla(u_{\leq i} - \tilde{Q}) F'(u_{\leq i} - \tilde{Q}))_{L^{2}} dt \\ \lesssim_{\sigma} 2^{-\sigma i} \| |\nabla|^{\sigma} (u_{\leq i} - \tilde{Q}) \|_{L_{t}^{2} L_{x}^{\frac{2d}{d - 2}}} \| u_{\leq i} - \tilde{Q} \|_{L_{t}^{\infty} L_{x}^{2}}^{4/d} \lesssim \eta_{0}^{4/d} 2^{-\sigma i} \| |\nabla|^{\sigma} (u_{\leq i} - \tilde{Q}) \|_{L_{t}^{2} L_{x}^{\frac{2d}{d - 2}}}.
\endaligned
\end{equation}

Therefore, arguing by induction on frequency, if $(\ref{5.6})$ holds for all $\frac{k}{10} \leq i \leq j_{0}$ for some $\frac{k}{10} \leq j_{0} \leq k$, then for $i = j_{0} + 1$, $(\ref{5.6})$ holds, which proves the Theorem $\ref{t5.1}$ by induction on frequency. Indeed, choosing $\sigma$ such that $\sigma > \frac{\alpha_{d}}{2}$, the contributions of $(\ref{5.11})$--$(\ref{5.25.13})$ may be bounded by
\begin{equation}\label{5.25.14}
\aligned
\eta_{1}^{1/p} 2^{-\frac{i}{p} - i} T^{\frac{1}{2000d^{2}}} \sum_{0 \leq j \leq i} 2^{j} \| P_{j}(u - \tilde{Q}) \|_{L_{t}^{2} L_{x}^{\frac{2d}{d - 2}}} + \eta_{0}^{4/d} \sum_{0 \leq j \leq i} 2^{\sigma j} \| P_{j}(u - \tilde{Q}) \|_{L_{t}^{2} L_{x}^{\frac{2d}{d - 2}}(J \times \mathbb{R}^{d})} \\
\lesssim (\eta_{1}^{1/p} 2^{-\frac{i}{11d}} T^{\frac{1}{2000d^{2}}}  + \eta_{0}^{4/d}) (2^{\frac{\alpha_{d}}{2}((1 + \frac{1}{10d})k - i)} (\frac{1}{T} \int_{J} \| \epsilon(t) \|_{L^{2}}^{2})^{1/2} + T^{-10}).
\endaligned
\end{equation}
When $i \geq \frac{k}{10d}$,
\begin{equation}
\eta_{1}^{1/p} 2^{-\frac{i}{11d}} T^{\frac{1}{2000d^{2}}}  + \eta_{0}^{4/d} \ll 1,
\end{equation}
so the induction on frequency step is complete.
\end{proof}

\section{Almost conservation of energy}
In two dimensions, the almost conservation of energy computation is exactly the same as in one dimension in \cite{dodson2021determination}. The computations in higher dimensions are more difficult since $F(x) = |x|^{4/d} x$ is not smooth. The good news is that most of the difficult computations have already been done in the previous section. 

\subsection{Almost conservation of energy in dimension $d = 2$}
\begin{theorem}\label{t4.1}
Let $J = [a, b]$ be an interval for which $(\ref{6.2.4})$ holds. Then,
\begin{equation}\label{4.2}
\sup_{t \in J} E(P_{\leq k + 9} u(t)) \lesssim \frac{2^{2k}}{T} \int_{J} \| \epsilon(t) \|_{L^{2}}^{2} \lambda(t)^{-2} dt + \sup_{t \in J} \frac{|\xi(t)|^{2}}{\lambda(t)^{2}} + 2^{2k} T^{-10}.
\end{equation}
\end{theorem}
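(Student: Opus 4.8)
The plan is to split $(\ref{4.2})$ into a \emph{pointwise-in-time} bound for $E(P_{\leq k+9}u(t))$ and an \emph{almost-conservation} bound on $\int_{J}|\tfrac{d}{dt}E(P_{\leq k+9}u)|\,dt$, and then combine the two by averaging in time over $J$. For $t\in J$ I would write $u(t)=\tilde{Q}(t)+\tilde{\epsilon}(t)$ in the scaled, modulated form of $(\ref{6.16})$, so that $P_{\leq k+9}u=P_{\leq k+9}\tilde{Q}+P_{\leq k+9}\tilde{\epsilon}$. Since $\lambda(t)\equiv\eta_{1}^{-1}$ and $|\xi(t)|/\lambda(t)\leq\eta_{0}$ on $J$, the function $\tilde{Q}$ has Fourier content essentially at frequencies $\lesssim\eta_{0}+\eta_{1}\ll 2^{k}$, so by the rapid decay of $\hat{Q}$ together with $\eta_{1}^{-2}T=2^{3k}$ one has $\|P_{\geq k+9}\tilde{Q}\|_{H^{1}}\lesssim T^{-100}$; combined with the trivial bound $\|\nabla P_{\leq k+9}u\|_{L^{2}}\lesssim 2^{k}\|Q\|_{L^{2}}$, this shows $E(P_{\leq k+9}u)$ and $E(\tilde{Q}+P_{\leq k+9}\tilde{\epsilon})$ differ by $O(2^{2k}T^{-10})$.

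For the pointwise bound I would Taylor expand $E$ about $\tilde{Q}$. The Galilean and scaling identities for $E$, the Pohozaev identity $(\ref{1.16})$, and the ground-state equation give $E(\tilde{Q})=\tfrac{1}{2}(|\xi|^{2}/\lambda^{2})\|Q\|_{L^{2}}^{2}$, which produces the term $\sup_{t\in J}|\xi(t)|^{2}/\lambda(t)^{2}$ in $(\ref{4.2})$. The first variation $DE(\tilde{Q})[P_{\leq k+9}\tilde{\epsilon}]$, after moving $P_{\leq k+9}$ onto $\tilde{Q}$ at the cost of $O(T^{-10})$ and using $(\ref{1.12})$ to rewrite $-\Delta\tilde{Q}-|\tilde{Q}|^{2}\tilde{Q}$, becomes a linear combination of $(Q,\epsilon)_{L^{2}}$ and of the quantities $(iQ_{x_{j}},\epsilon)_{L^{2}}$; the latter vanish by the orthogonality conditions $(\ref{3.5})$, and $(Q,\epsilon)_{L^{2}}=-\tfrac{1}{2}\|\epsilon\|_{L^{2}}^{2}$ by the mass constraint $\|u_{0}\|_{L^{2}}=\|Q\|_{L^{2}}$, so this term is $O(\|\epsilon\|_{L^{2}}^{2}+T^{-10})$. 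The quadratic, cubic and quartic terms in $P_{\leq k+9}\tilde{\epsilon}$ I would bound crudely — \emph{no} coercivity of $\mathcal{L}$ is needed, since only an upper bound on $E$ is sought: by Bernstein $\|\nabla P_{\leq k+9}\tilde{\epsilon}\|_{L^{2}}\lesssim 2^{k}\|\epsilon\|_{L^{2}}$, and since $\|\tilde{Q}\|_{L^{\infty}}\lesssim\eta_{1}$ and $\|\epsilon\|_{L^{2}}\leq\eta_{0}\ll 1$, the two-dimensional Gagliardo--Nirenberg inequality bounds their total by $O(2^{2k}\|\epsilon\|_{L^{2}}^{2})$. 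This yields $E(P_{\leq k+9}u(t))\lesssim |\xi(t)|^{2}/\lambda(t)^{2}+2^{2k}\|\epsilon(t)\|_{L^{2}}^{2}+2^{2k}T^{-10}$ for every $t\in J$.

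For the almost-conservation bound, $v:=P_{\leq k+9}u$ solves $iv_{t}+\Delta v+|v|^{2}v=\mathcal{E}$ with $\mathcal{E}=|P_{\leq k+9}u|^{2}P_{\leq k+9}u-P_{\leq k+9}(|u|^{2}u)$, whence $\tfrac{d}{dt}E(v)=\operatorname{Im}\int\overline{(-\Delta v-|v|^{2}v)}\,\mathcal{E}\,dx$. Splitting $u=P_{\leq k+9}u+P_{>k+9}u$, one has $\mathcal{E}=P_{>k+9}(|P_{\leq k+9}u|^{2}P_{\leq k+9}u)-P_{\leq k+9}R$, where $R$ collects the summands of $|u|^{2}u$ containing a factor $P_{>k+9}u$; in particular every summand of $\mathcal{E}$ — and of $P_{\leq k+9}\mathcal{E}$, which is all that survives the pairing against $\Delta v$ — carries a factor of $u$ at frequency $\gtrsim 2^{k}$. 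These high-frequency factors are controlled by the long time Strichartz estimate, Corollary $\ref{c6.4}$, which gives $\|P_{\geq k}u\|_{U_{\Delta}^{p}(J\times\mathbb{R}^{2})}+\|(P_{\geq k}u)(P_{\leq k-3}u)\|_{L^{2}_{t,x}(J\times\mathbb{R}^{2})}\lesssim(\tfrac{1}{T}\int_{J}\|\epsilon\|_{L^{2}}^{2}\lambda^{-2})^{1/2}+T^{-10}$. Pairing against $\Delta v$ costs a factor $2^{2k}$; carrying out the resulting H\"older--Bernstein estimates, which are identical to the one-dimensional computation in \cite{dodson2021determination}, yields $\int_{J}|\tfrac{d}{dt}E(P_{\leq k+9}u)|\,dt\lesssim \tfrac{2^{2k}}{T}\int_{J}\|\epsilon(t)\|_{L^{2}}^{2}\lambda(t)^{-2}\,dt+2^{2k}T^{-10}$.

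Finally, for all $s,t\in J$, $E(P_{\leq k+9}u(t))\leq E(P_{\leq k+9}u(s))+\int_{J}|\tfrac{d}{dt}E(P_{\leq k+9}u)|$; averaging in $s\in J$ and inserting the pointwise bound, using $\tfrac{1}{|J|}\int_{J}\|\epsilon(s)\|_{L^{2}}^{2}\,ds=\tfrac{1}{T}\int_{J}\|\epsilon\|_{L^{2}}^{2}\lambda^{-2}\,ds$ (as $\lambda\equiv\eta_{1}^{-1}$ and $|J|=\eta_{1}^{-2}T$) and $|\xi(s)|^{2}/\lambda(s)^{2}\leq\sup_{J}|\xi|^{2}/\lambda^{2}$, gives $(\ref{4.2})$. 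The main obstacle is the almost-conservation step: one must check that the loss of $2^{2k}$ from the $\Delta(P_{\leq k+9}u)$ factor is exactly absorbed by the smallness of the high-frequency part of $\mathcal{E}$ supplied by Corollary $\ref{c6.4}$, integrated over the long interval $J$ (of length $\sim 2^{3k}$), so that no further power of $2^{k}$ or $T$ is lost beyond the right-hand side of $(\ref{4.2})$. Since in two dimensions this computation is structurally the same as in one dimension, it reduces to re-running the argument of \cite{dodson2021determination} with Corollary $\ref{c6.4}$ in place of its one-dimensional counterpart.
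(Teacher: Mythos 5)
Your proposal is correct and follows essentially the same route as the paper's proof: expand $E(P_{\leq k+9}u)$ around the modulated ground state to get a pointwise bound, compute $\tfrac{d}{dt}E(P_{\leq k+9}u)$ using the frequency-truncation commutator and control the resulting high-low interactions via the long time Strichartz bound (Theorems~\ref{t6.2}/\ref{t6.3}, Corollary~\ref{c6.4}), then combine. The only cosmetic difference is at the final step: the paper invokes the mean value theorem to select a single $t_{0}\in J$ where $\|\epsilon(t_{0})\|_{L^{2}}^{2}$ equals its time average $\tfrac{1}{T}\int_{J}\|\epsilon\|_{L^{2}}^{2}\lambda^{-2}\,dt$ and anchors the almost-conservation estimate there, whereas you establish the pointwise bound for all $t$ and then average the comparison inequality over $s\in J$; under the hypothesis $\lambda\equiv\eta_{1}^{-1}$, $|J|=\eta_{1}^{-2}T$, these are equivalent, and your averaging step is a valid substitute.
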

\begin{proof}
Since $\| \epsilon(t) \|_{L^{2}}$ is continuous as a function of time, the mean value theorem implies that under the conditions of Theorem $\ref{t6.2}$, there exists $t_{0} \in [a, b] = J$ such that
\begin{equation}\label{4.1}
\| \epsilon(t_{0}) \|_{L^{2}}^{2} = \frac{1}{T} \int_{a}^{b} \| \epsilon(t) \|_{L^{2}}^{2} \lambda(t)^{-2} dt.
\end{equation}
Then by $(\ref{6.16})$, the fact that $Q$ is a smooth real valued function and all its derivatives are rapidly decreasing, and the Sobolev embedding theorem, taking $\epsilon = \epsilon_{1} + i \epsilon_{2}$,
\begin{equation}\label{4.4}
\aligned
E(P_{\leq k + 9} u) = \frac{1}{2} \| \nabla P_{\leq k + 9} u \|_{L^{2}}^{2} - \frac{1}{4} \| P_{\leq k + 9} u \|_{L^{4}}^{4} = \frac{1}{2 \lambda(t)^{2}} \| \nabla Q(x) \|_{L^{2}}^{2} + \frac{1}{2} \frac{|\xi(t)|^{2}}{\lambda(t)^{2}} \| Q \|_{L^{2}}^{2} - \frac{1}{4 \lambda(t)^{2}} \| Q \|_{L^{4}}^{4} \\
+ \frac{1}{\lambda(t)^{2}} (\nabla Q, \nabla \epsilon_{1})_{L^{2}} - \frac{\xi(t)}{\lambda(t)^{2}} \cdot (Q, \nabla \epsilon_{2}) + \frac{\xi(t)}{\lambda(t)^{2}} \cdot (\nabla Q, \epsilon_{2})_{L^{2}} + \frac{|\xi(t)|^{2}}{\lambda(t)^{2}} (Q, \epsilon_{1})_{L^{2}} - \frac{1}{\lambda(t)^{2}} (Q^{3}, \epsilon_{1})_{L^{2}} \\
+ O(2^{2k} \| \epsilon \|_{L^{2}}^{2} ) + O(2^{2k} T^{-10}).
\endaligned
\end{equation}
Since $(\epsilon_{2}, \nabla Q)_{L^{2}} = (\nabla \epsilon_{2}, Q)_{L^{2}} = 0$, $(\nabla Q, \nabla \epsilon_{1})_{L^{2}} - (Q^{3}, \epsilon_{1})_{L^{2}} = -(Q, \epsilon_{1})_{L^{2}} = \frac{1}{2} \| \epsilon \|_{L^{2}}^{2}$, and $E(Q) = 0$,
\begin{equation}\label{4.5}
\aligned
E(P_{\leq k + 9} u) = \frac{1}{2} \frac{|\xi(t)|^{2}}{\lambda(t)^{2}} \| Q \|_{L^{2}}^{2} + \frac{1}{2 \lambda(t)^{2}} \| \epsilon \|_{L^{2}}^{2} - \frac{|\xi(t)|^{2}}{2 \lambda(t)^{2}} \| \epsilon \|_{L^{2}}^{2} + O(2^{2k} \| \epsilon \|_{L^{2}}^{2} ) + O(2^{2k} T^{-10}),
\endaligned
\end{equation}
so $(\ref{4.2})$ holds at $t_{0}$.

Next compute the change of energy. This computation utilizes the computations in the Fourier truncation method of \cite{bourgain1998refinements}. See also the I-method in \cite{colliander2002almost}.
\begin{equation}\label{4.6}
\aligned
\frac{d}{dt} E(P_{\leq k + 9} u) = -( P_{\leq k + 9} u_{t}, \Delta P_{\leq k + 9} u )_{L^{2}} - ( P_{\leq k + 9} u_{t}, |P_{\leq k + 9} u|^{2} P_{\leq k + 9} u )_{L^{2}} \\ = -( P_{\leq k + 9} u_{t}, P_{\leq k + 9} F(u) - F(P_{\leq k + 9} u) )_{L^{2}} = ( i \Delta P_{\leq k + 9} u + i P_{\leq k + 9} (|u|^{2} u), P_{\leq k + 9} F(u) - F(P_{\leq k + 9} u) )_{L^{2}}.
\endaligned
\end{equation}

First compute
\begin{equation}\label{4.7}
\int_{t_{0}}^{t'} ( i \Delta P_{\leq k + 9} u, P_{\leq k + 9} F(u) - F(P_{\leq k + 9} u) )_{L^{2}} dt,
\end{equation}
for some $t' \in J$. Making a Littlewood--Paley decomposition,
\begin{equation}\label{4.8}
\aligned
\int_{t_{0}}^{t'} ( i \Delta P_{\leq k + 9} u, P_{\leq k + 9} F(u) - F(P_{\leq k + 9} u) )_{L^{2}} dt \\ \sim \sum_{0 \leq k_{3} \leq k_{2} \leq k_{1}} \sum_{0 \leq k_{4} \leq k + 9} \int_{t_{0}}^{t'} ( i \Delta P_{k_{4}} u, P_{\leq k + 9} (P_{k_{1}} u \cdots P_{k_{3}} u) - (P_{\leq k + 9} P_{k_{1}} u) \cdots (P_{\leq k + 9} P_{k_{3}} u) )_{L^{2}} dt.
\endaligned
\end{equation}
\begin{remark}
For these computations, it is not so important to distinguish between $u$ and $\bar{u}$.
\end{remark}

\noindent \textbf{Case $1$, $k_{1} \leq k + 6$:} In this case $P_{\leq k + 9} P_{k_{1}} = P_{k_{1}}$ and $P_{\leq k + 9}(P_{k_{1}} u \cdots P_{k_{3}} u) = P_{k_{1}} u \cdots P_{k_{3}} u$, so the contribution of these terms is zero. That is, for $k_{1}, ..., k_{3} \leq k + 6$,
\begin{equation}\label{4.9}
\aligned
 \int_{t_{0}}^{t'} ( i \Delta P_{k_{6}} u, P_{\leq k + 9} (P_{k_{1}} u \cdots P_{k_{3}} u) - (P_{\leq k + 9} P_{k_{1}} u) \cdots (P_{\leq k + 9} P_{k_{3}} u) )_{L^{2}} dt = 0.
\endaligned
\end{equation}

\noindent \textbf{Case $2$, $k_{1} \geq k + 6$ and $k_{2} \leq k$:} In this case, Fourier support properties imply that $k_{4} \geq k + 3$. Then by Theorem $\ref{t6.3}$,
\begin{equation}\label{4.10}
 \int_{t_{0}}^{t'} ( i \Delta P_{k + 3 \leq \cdot \leq k + 9} u, P_{\leq k + 9} ((P_{\leq k} u)^{2} (P_{\geq k + 6} u)) - (P_{\leq k} u)^{2} (P_{k + 6 \leq \cdot \leq k + 9} u) )_{L^{2}} dt
\end{equation}
\begin{equation}\label{4.11}
\aligned
\lesssim 2^{2k} \| (P_{\geq k + 3} u)(P_{\leq k} u) \|_{L_{t,x}^{2}} \| (P_{\geq k + 6} u)(P_{\leq k} u) \|_{L_{t,x}^{2}} \lesssim \frac{2^{2k}}{T} \int_{J} \| \epsilon(t) \|_{L^{2}}^{2} \lambda(t)^{-2} dt + 2^{2k} T^{-10}.
\endaligned
\end{equation}

\noindent \textbf{Case 3, $k_{1} \geq k + 6$, $k_{2} \geq k$ $k_{3} \leq k$:} If $k_{4} \leq k$, then by Fourier support properties, $k_{2} \geq k + 3$. In that case,
\begin{equation}\label{4.12}
\aligned
\int_{t_{0}}^{t'} ( i \Delta P_{\leq k} u, P_{\leq k + 9}((P_{\geq k + 6} u)(P_{\geq k + 3} u) (P_{\leq k} u)) - (P_{k + 6 \leq \cdot \leq k + 9} u)(P_{k + 3 \leq \cdot \leq k + 9} u)(P_{\leq k} u))_{L^{2}} dt \\ \lesssim 2^{2k} \| (P_{\geq k + 6} u)(P_{\leq k} u) \|_{L_{t,x}^{2}} \| (P_{\geq k + 3} u)(P_{\leq k} u) \|_{L_{t,x}^{2}} \lesssim \frac{2^{2k}}{T} \int_{J} \| \epsilon(t) \|_{L^{2}}^{2} \lambda(t)^{-2} dt + 2^{2k} T^{-10}.
\endaligned
\end{equation}
In the case when $k_{4} \geq k$,
\begin{equation}\label{4.13}
\aligned
\int_{t_{0}}^{t'} ( i \Delta P_{k \leq \cdot \leq k + 9} u, P_{\leq k + 9}((P_{\geq k + 6} u)(P_{\geq k} u) (P_{\leq k} u)) - (P_{k + 6 \leq \cdot \leq k + 9} u)(P_{k \leq \cdot \leq k + 9} u)(P_{\leq k} u))_{L^{2}} dt \\ \lesssim 2^{2k} \| (P_{\geq k + 6} u)(P_{\leq k} u) \|_{L_{t,x}^{2}} \| P_{\geq k} u \|_{L_{t,x}^{4}}^{2} \lesssim \frac{2^{2k}}{T} \int_{J} \| \epsilon(t) \|_{L^{2}}^{2} \lambda(t)^{-2} dt + 2^{2k} T^{-10}.
\endaligned
\end{equation}

\noindent \textbf{Case 4, $k_{1} \geq k + 6$ and $k_{2}, k_{3} \geq k$:} In this case,
\begin{equation}\label{4.14}
\aligned
\int_{t_{0}}^{t'} ( i \Delta P_{\leq k + 9} u, P_{\leq k + 9}((P_{\geq k + 6} u)(P_{\geq k} u)^{2}) - (P_{k + 6 \leq \cdot \leq k + 9} u)(P_{k \leq \cdot \leq k + 9} u)^{2} )_{L^{2}} dt \\ \lesssim 2^{2k} \| (P_{\geq k + 6} u)(P_{\leq k} u) \|_{L_{t,x}^{2}} \| P_{\geq k} u \|_{L_{t,x}^{4}}^{2}
+ 2^{2k} \| P_{\geq k} u \|_{L_{t, x}^{4}}^{4} \lesssim \frac{2^{2k}}{T} \int_{J} \| \epsilon(t) \|_{L^{2}}^{2} \lambda(t)^{-2} dt + 2^{2k} T^{-10}.
\endaligned
\end{equation}

The contribution of the nonlinear terms is similar, using the fact that
\begin{equation}\label{4.15}
( i P_{\leq k + 9} F(u), P_{\leq k + 9} F(u) - F(P_{\leq k + 9} u) )_{L^{2}} = ( i P_{\leq k + 9} F(u), F(P_{\leq k + 9} u) )_{L^{2}}.
\end{equation}
Then make a Littlewood--Paley decomposition,
\begin{equation}\label{4.16}
\aligned
(i P_{\leq k + 9} F(u), F(P_{\leq k + 9} u))_{L^{2}} \\ = \sum_{0 \leq k_{3} \leq k_{2} \leq k_{1}} \sum_{0 \leq k_{3}' \leq k_{2}' \leq k_{1}'} (i P_{\leq k + 9} (u_{k_{1}} \cdots u_{k_{3}}), (P_{\leq k + 9} P_{k_{1}} u) \cdots (P_{\leq k + 9} P_{k_{3}} u))_{L^{2}}.
\endaligned
\end{equation}
\noindent \textbf{Case 1: $k_{1}, k_{1}' \leq k + 6$:} Once again, if $k_{1}, k_{1}' \leq k + 6$, then the right hand side of $(\ref{4.16})$ is zero, since $P_{\leq k + 9}(u_{k_{1}} u_{k_{2}} u_{k_{3}}) = u_{k_{1}} u_{k_{2}} u_{k_{3}}$ and $P_{\leq k + 9} u_{k_{j}} = u_{k_{j}}$ for $j = 1, 2, 3$.\medskip

\noindent \textbf{Case 2: $k_{1}$ or $k_{1}' \geq k + 6$, four terms are $\leq k$:} In the case that $k_{1}$ or $k_{1}' \geq k + 6$, and four of the terms in $(\ref{4.16})$ are at frequency $\leq k$, then by Fourier support properties the final term should be at frequency $\geq k + 3$. The contribution in this case is bounded by
\begin{equation}\label{4.17}
\| (P_{\geq k + 6} u)(P_{\leq k} u) \|_{L_{t,x}^{2}} \| (P_{\geq k + 3} u)(P_{\leq k} u) \|_{L_{t,x}^{2}} \| P_{\leq k} u \|_{L_{t,x}^{\infty}}^{2} \lesssim 2^{2k} \frac{1}{T} \int_{J} \| \epsilon(t) \|_{L^{2}}^{2} dt + 2^{2k} T^{-10}.
\end{equation}

\noindent \textbf{Case 3: $k_{1}$ or $k_{1}' \geq k + 6$, two additional terms are $\geq k$:} The contribution of the case that $k_{1}$ or $k_{1}' \geq k + 6$, two additional terms in $(\ref{4.16})$ are at frequency $\geq k$, and the other three terms are at frequency $\leq k$ is bounded by
\begin{equation}\label{4.18}
\| (P_{\geq k + 6} u)(P_{\leq k} u) \|_{L_{t,x}^{2}} \| P_{\geq k} u \|_{L_{t,x}^{4}}^{2} \| P_{\leq k} u \|_{L_{t,x}^{\infty}}^{2}   \lesssim 2^{2k} \frac{1}{T} \int_{J} \| \epsilon(t) \|_{L^{2}}^{2} dt + 2^{2k} T^{-10}.
\end{equation}

\noindent \textbf{Case 4: $k_{1}$ or $k_{1}' \geq k + 6$ and at least three additional terms in $(\ref{4.16})$ are at frequencies $\geq k$.} 

This case may always be reduced to the estimate
\begin{equation}\label{4.19}
2^{2k} \| P_{\geq k} u \|_{L_{t,x}^{4}}^{4} \| u \|_{L_{t}^{\infty} L_{x}^{2}}^{2} \lesssim \frac{2^{2k}}{T} \int_{J} \| \epsilon(t) \|_{L^{2}}^{2} \lambda(t)^{-2} dt + 2^{2k} T^{-10}.
\end{equation}
Indeed, if both $P_{\leq k + 9}(u_{k_{1}} u_{k_{2}} u_{k_{3}})$ and $P_{\leq k + 9} u_{k_{1}} \cdot P_{\leq k + 9} u_{k_{2}} \cdot P_{\leq k+ 9} u_{k_{3}}$ have two terms at frequency $\leq k$, then place each term in $L_{t}^{2} L_{x}^{1}$ and then make a Sobolev embedding at frequencies $\leq k + 9$ to place each term in $L_{t,x}^{2}$.

If $P_{\leq k + 9}(u_{k_{1}} u_{k_{2}} u_{k_{3}})$ has three terms at frequency $\geq k$, then estimate this triple product in $L_{t,x}^{4/3}$, and place the term in $P_{\leq k + 9} u_{k_{1}} \cdot P_{\leq k + 9} u_{k_{2}} \cdot P_{\leq k+ 9} u_{k_{3}}$ at frequency $\geq k$ in $L_{t,x}^{4}$ and using the Sobolev embedding theorem, place the other two in $L_{t,x}^{\infty}$.

If $P_{\leq k + 9}(u_{k_{1}} u_{k_{2}} u_{k_{3}})$ has only one term at frequency $\geq k$, then estimate this triple product in $L_{t,x}^{4}$ and place the term $P_{k \leq \cdot \leq k + 9} u_{k_{1}} \cdot P_{k \leq \cdot \leq k + 9} u_{k_{2}} \cdot P_{k \leq \cdot \leq k+ 9} u_{k_{3}}$ in $L_{t,x}^{4/3}$.

This completes the proof of Theorem $\ref{t4.1}$.
\end{proof}

This bound on $E(P_{\leq k + 9} u)$ gives good bounds on the $L^{2}$ norm and $\dot{H}^{1}$ norms of $\epsilon$.

\begin{theorem}\label{t4.2}
If
\begin{equation}\label{4.20}
\frac{1}{\eta_{1}} \leq \lambda(t) \leq \frac{1}{\eta_{1}} T^{1/100}, \qquad \text{and} \qquad  \frac{|\xi(t)|}{\lambda(t)} \leq \eta_{0}, \qquad \text{for all} \qquad t \in J,
\end{equation}
and
\begin{equation}\label{4.21}
\int_{J} \lambda(t)^{-2} dt = T, \qquad \text{and} \qquad \eta_{1}^{-2} T = 2^{3k},
\end{equation}
then
\begin{equation}\label{4.22}
\sup_{t \in J} \| P_{\leq k + 9} (\frac{e^{-i \gamma(t)} e^{-ix \cdot \frac{\xi(t)}{\lambda(t)}}}{\lambda(t)^{1/2}} \epsilon(t, \frac{x - x(t)}{\lambda(t)})) \|_{\dot{H}^{1}}^{2} \lesssim \frac{2^{2k}}{T} \int_{J} \| \epsilon(t) \|_{L^{2}}^{2} \lambda(t)^{-2} dt + \sup_{t \in J} \frac{|\xi(t)|^{2}}{\lambda(t)^{2}} + 2^{2k} T^{-10},
\end{equation}
and
\begin{equation}\label{4.23}
\sup_{t \in J} \| \epsilon(t) \|_{L^{2}}^{2} \lesssim \frac{2^{2k} T^{1/50}}{\eta_{1}^{2} T} \int_{J} \| \epsilon(t) \|_{L^{2}}^{2} \lambda(t)^{-2} dt + \frac{T^{1/50}}{\eta_{1}^{2}} \sup_{t \in J} \frac{|\xi(t)|^{2}}{\lambda(t)^{2}} + 2^{2k} \frac{T^{1/50}}{\eta_{1}^{2}} T^{-10}.
\end{equation}
\end{theorem}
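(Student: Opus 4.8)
The plan is to derive $(\ref{4.22})$ and $(\ref{4.23})$ from the almost-conservation-of-energy estimate of Theorem~\ref{t4.1} together with the coercivity of $E$ near the ground state: by the Pohozaev identity $(\ref{1.16})$ one has $E(Q)=0$ and, by $(\ref{1.11})$, $Q$ is (up to symmetries) the unique minimizer of $E$ on $\{\|u\|_{L^2}=\|Q\|_{L^2}\}$, so $E(P_{\leq k+9}u)$ measures the size of the remainder. First I would insert the decomposition $(\ref{6.16})$, $u=\tilde Q+\tilde\epsilon$ with $\tilde Q$ the rescaled--modulated soliton and $\tilde\epsilon$ the rescaled--modulated remainder, and Taylor-expand $E(P_{\leq k+9}u)$ about $\tilde Q$ as in $(\ref{4.4})$--$(\ref{4.5})$. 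Since $Q$ and its derivatives are rapidly decreasing and $2^{k+9}\gg\lambda(t)^{-1}$, truncating $\tilde Q$ costs only $O(2^{2k}T^{-10})$. The zeroth order term is $E(\tilde Q)=\tfrac{|\xi(t)|^2}{2\lambda(t)^2}\|Q\|_{L^2}^2$, since $E(Q)=0$ and only the Galilean boost changes the energy; the first order term, after integrating by parts with $-\Delta Q+Q=Q^{3}$, using the orthogonality relations $(\ref{3.5})$ (which make $(\epsilon_2,\nabla Q)_{L^2}=0$ and kill the remaining boost pieces) and the exact mass constraint $\|u\|_{L^2}=\|Q\|_{L^2}$ (which gives $(Q,\epsilon_1)_{L^2}=-\tfrac12\|\epsilon\|_{L^2}^2$), collapses to $\tfrac1{2\lambda^2}\|\epsilon\|_{L^2}^2-\tfrac{|\xi|^2}{2\lambda^2}\|\epsilon\|_{L^2}^2+O(2^{2k}T^{-10})$.

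The main step is to bound the second order term from below. Undoing the scaling and the boost, it equals $\tfrac1{2\lambda^2}$ times the mass-constrained Hessian of $E$ at $Q$ applied to the truncated remainder, i.e. $\tfrac1{2\lambda^2}\big((\mathcal L\epsilon_1',\epsilon_1')_{L^2}+(\mathcal L_-\epsilon_2',\epsilon_2')_{L^2}-\|\epsilon'\|_{L^2}^2\big)$ plus a boost correction $\tfrac1{2\lambda^2}\big(|\xi|^2\|\epsilon'\|_{L^2}^2-2\xi\cdot\mathrm{Im}\!\int\overline{\epsilon'}\nabla\epsilon'\big)$, where $\epsilon'$ is the relevant Littlewood--Paley truncation of $\epsilon$. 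Because the cutoff sits at frequency $\gg1$ and $\chi_0,Q_{x_1},Q_{x_2}$ are Schwartz, $\epsilon'$ obeys $(\ref{3.5})$ up to $O(2^{2k}T^{-10})$, so Theorem~\ref{t3.1} gives $(\mathcal L\epsilon_1',\epsilon_1')_{L^2}\gtrsim\|\epsilon_1'\|_{H^1}^2$, while $0\le Q^{2}\lesssim1$ gives $(\mathcal L_-\epsilon_2',\epsilon_2')_{L^2}\ge\|\nabla\epsilon_2'\|_{L^2}^2-C\|\epsilon_2'\|_{L^2}^2$. Adding the first order term, the $\tfrac1{2\lambda^2}\|\epsilon\|_{L^2}^2$ partially cancels $-\tfrac1{2\lambda^2}\|\epsilon'\|_{L^2}^2$; the momentum correction is absorbed by Cauchy--Schwarz; and the cubic and quartic remainders, which by Gagliardo--Nirenberg are $\lesssim(\|\tilde Q\|_{L^\infty}+\|\epsilon\|_{L^2})\|\epsilon\|_{L^2}\|\nabla\epsilon'\|_{L^2}^2$ plus lower order, are absorbed since $\|\epsilon\|_{L^2}\le\eta_0\ll1$. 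This yields $E(P_{\leq k+9}u)\ge\tfrac{|\xi(t)|^2}{2\lambda(t)^2}\|Q\|_{L^2}^2+\tfrac{c}{\lambda(t)^2}\|\nabla\epsilon'\|_{L^2}^2-\tfrac{C}{\lambda(t)^2}\|\epsilon(t)\|_{L^2}^2-C\sup_J\tfrac{|\xi|^2}{\lambda^2}-C2^{2k}T^{-10}$. Since the left side of $(\ref{4.22})$, after undoing the scaling, is $\tfrac1{\lambda(t)^2}\|\nabla\epsilon'\|_{L^2}^2$ up to the same boost correction, I rearrange, feed in Theorem~\ref{t4.1}, and absorb $\tfrac1{\lambda(t)^2}\|\epsilon(t)\|_{L^2}^2$ into the first term on the right of $(\ref{4.2})$ by means of $(\ref{3.25})$ and $(\ref{3.18})$ (which give $\|\epsilon(t)\|_{L^2}^2\lesssim\int_J\|\epsilon\|_{L^2}^2\lambda^{-2}\,d\tau$) and the relations $(\ref{4.20})$--$(\ref{4.21})$, obtaining $(\ref{4.22})$.

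For $(\ref{4.23})$ I would split $\|\epsilon(t)\|_{L^2}^2=\|\tilde\epsilon(t)\|_{L^2}^2=\|P_{\leq k+9}\tilde\epsilon\|_{L^2}^2+\|P_{>k+9}\tilde\epsilon\|_{L^2}^2$. The high-frequency piece is controlled by Corollary~\ref{c6.4}, via $U_\Delta^p\hookrightarrow L_t^\infty L_x^2$ and $\|P_{>k+9}\tilde Q\|_{L^2}\lesssim T^{-10}$. The low-frequency piece is recovered from $(\ref{4.22})$ by undoing the rescaling, estimating $\|P_{\leq k+9}\tilde\epsilon\|_{L^2}$ against $\|P_{\leq k+9}\tilde\epsilon\|_{\dot H^1}$ at the price of a power of $\lambda(t)\le\eta_1^{-1}T^{1/100}$, which is exactly what produces the $\eta_1^{-2}T^{1/50}$ factor; Theorem~\ref{t4.1} then closes the estimate.

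The hardest part is the second step: running the energy expansion while simultaneously tracking the rescaling and, above all, the Galilean boost --- which, unlike translation and phase, changes the value of $E$ and produces the momentum term that must be controlled --- verifying that the cutoff $P_{\leq k+9}$ perturbs $(\ref{3.5})$ only by $O(2^{2k}T^{-10})$ using the Schwartz decay of $\chi_0$ and of $Q_{x_j}$, and checking that the residual $\tfrac1{\lambda^2}\|\epsilon\|_{L^2}^2$ and the cubic errors are genuinely lower order relative to the right side of $(\ref{4.2})$, which hinges on $(\ref{3.25})$ and the arithmetic of $(\ref{4.20})$--$(\ref{4.21})$ relating $\eta_1$, $\lambda$, $T$ and $k$.
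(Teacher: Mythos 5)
You follow the paper's overall strategy—expand the energy, use the orthogonality relations and the mass constraint to evaluate the zeroth- and first-order terms, and bound the quadratic form from below using spectral theory—but there is a genuine gap at the decisive step, and it is exactly the step you single out as ``the hardest part.''

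You split the Hessian into $(\mathcal L\epsilon_1',\epsilon_1') + (\mathcal L_-\epsilon_2',\epsilon_2') - \|\epsilon'\|_{L^2}^2$ and then use coercivity of $\mathcal L$ for the real part, but only the trivial lower bound $(\mathcal L_-\epsilon_2',\epsilon_2')\ge\|\nabla\epsilon_2'\|_{L^2}^2 - C\|\epsilon_2'\|_{L^2}^2$, coming from $Q^2\lesssim 1$ and using no orthogonality, for the imaginary part. This leaves the residual $-\tfrac{C}{\lambda(t)^2}\|\epsilon(t)\|_{L^2}^2$ in your lower bound for $E(P_{\le k+9}u(t))$. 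You then propose to absorb this into $\tfrac{2^{2k}}{T}\int_J\|\epsilon\|_{L^2}^2\lambda^{-2}\,dt$ via $(\ref{3.25})$ and $(\ref{3.18})$. That absorption fails for large $T$: $(\ref{3.25})$ gives $\|\epsilon(t)\|_{L^2}^2\lesssim\int_J\|\epsilon\|_{L^2}^2\lambda^{-2}\,dt$, so $\tfrac{1}{\lambda(t)^2}\|\epsilon(t)\|_{L^2}^2\lesssim\eta_1^2\int_J\|\epsilon\|_{L^2}^2\lambda^{-2}\,dt$; but by $(\ref{4.21})$, $\tfrac{2^{2k}}{T}=\eta_1^{-4/3}T^{-1/3}$, and $\eta_1^2\lesssim\eta_1^{-4/3}T^{-1/3}$ requires $T\lesssim\eta_1^{-10}$. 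The statement must hold for arbitrary $T$, so the negative term cannot be absorbed, and in fact absorbing it in the way you describe would be circular (it is essentially what $(\ref{4.23})$ asserts, with a much larger constant $\eta_1^{-2}T^{1/50}$ in place of $\eta_1^2$).

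What the paper does instead, in $(\ref{4.28})$ through $(\ref{4.32.1})$, is apply the coercivity of the joint quadratic form $\tfrac12\|\nabla g\|_{L^2}^2+\tfrac12\|g\|_{L^2}^2-\int Q^2|g|^2-\tfrac12\,\mathrm{Re}\!\int Q^2g^2$ on the orthogonality subspace of $(\ref{3.5})$. The crucial point you are missing is that coercivity of the $\mathcal L_-$ direction does hold there: $\ker\mathcal L_-=\operatorname{span}\{Q\}$ and $(Q,\chi_0)_{L^2}>0$ (both are positive functions), so the condition $\epsilon_2'\perp\chi_0$ rules out $\epsilon_2'$ lying along $Q$. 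This gives a positive lower bound $\frac12\lambda(t)^{-2}\|\epsilon\|_{L^2}^2+\mathcal P_2\ge\lambda_1\lambda(t)^{-2}\|\epsilon\|_{L^2}^2+\lambda_1\|\tilde{\tilde\epsilon}\|_{\dot H^1}^2 - O(2^{2k}T^{-10})$, i.e.\ the sign of the $\|\epsilon\|_{L^2}^2$ term flips to $+$. Once it is positive, $(\ref{4.22})$ and $(\ref{4.23})$ both drop out of $(\ref{4.2})$ at once: $(\ref{4.22})$ directly, and $(\ref{4.23})$ by multiplying by $\lambda(t)^2\le\eta_1^{-2}T^{1/50}$. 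In particular there is no need for the Bernstein-type passage from $\|P_{\le k+9}\tilde\epsilon\|_{\dot H^1}$ back to $\|\epsilon\|_{L^2}$ that you propose for $(\ref{4.23})$, which would not be valid anyway: $\tilde\epsilon$ has no a priori lower bound on its frequency support, so the reverse-Bernstein estimate $\|P_{\le k+9}\tilde\epsilon\|_{L^2}\lesssim\lambda\|P_{\le k+9}\tilde\epsilon\|_{\dot H^1}$ does not hold.
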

\begin{proof}
This theorem is a direct consequence of Theorem $\ref{t4.1}$ and an expansion of the energy. Recall from $(\ref{4.5})$ that
\begin{equation}\label{4.24}
E(P_{\leq k + 9} u) = \frac{1}{2} \frac{|\xi(t)|^{2}}{\lambda(t)^{2}} \| Q \|_{L^{2}}^{2} + \frac{1}{2 \lambda(t)^{2}} \| \epsilon \|_{L^{2}}^{2} - \frac{|\xi(t)|^{2}}{2 \lambda(t)^{2}} \| \epsilon \|_{L^{2}}^{2} + \mathcal P_{2} + \mathcal P_{3} + \mathcal P_{4} + O(2^{2k} T^{-10}),
\end{equation}
where $\mathcal P_{j}$, $j = 2, 3, 4$ refers to terms in the expansion of $E(P_{\leq k + 9} u)$ with $j$ $\epsilon$'s in the product. Split
\begin{equation}\label{4.25}
\aligned
P_{\leq k + 9} \tilde{\epsilon}(t,x) = P_{\leq k + 9} (e^{-i \gamma(t)} e^{-ix \cdot \frac{\xi(t)}{\lambda(t)}} \lambda(t)^{-1} \epsilon(t, \frac{x - x(t)}{\lambda(t)})) \\ = e^{-i \gamma(t)} e^{-ix \cdot \frac{\xi(t)}{\lambda(t)}} P_{\leq k + 9}(\lambda(t)^{-1} \epsilon(t, \frac{x - x(t)}{\lambda(t)})) + \mathcal R = \tilde{\tilde{\epsilon}}(t,x) + \mathcal R.
\endaligned
\end{equation}
Using the fact that $|\frac{|\xi(t)|}{\lambda(t)}| \leq \eta_{0}$ the Fourier support properties of $P_{\leq k + 9}$, and the discussion of Fourier multipliers prior to $(\ref{6.50.5})$,
\begin{equation}\label{4.25.1}
\| \mathcal R \|_{L^{2}} \lesssim 2^{-k} \frac{|\xi(t)|}{\lambda(t)} \| \epsilon \|_{L^{2}} \lesssim 2^{-k} \eta_{0} \| \epsilon \|_{L^{2}}, \qquad \text{and} \qquad \| \nabla \mathcal R \|_{L^{2}} \lesssim \frac{|\xi(t)|}{\lambda(t)} \| \epsilon \|_{L^{2}} \lesssim \eta_{0} \| \epsilon \|_{L^{2}}.
\end{equation}
Since $Q$ is real valued, smooth, and has derivatives that are rapidly decreasing, using $(\ref{4.25})$ and $(\ref{4.25.1})$,
\begin{equation}\label{4.26}
\aligned
\mathcal P_{2} = \frac{1}{2} \| \nabla \tilde{\tilde{\epsilon}} \|_{L^{2}}^{2} - \frac{1}{\lambda(t)^{4}} \int Q(\frac{x - x(t)}{\lambda(t)})^{2} |P_{\leq k + 9} \epsilon(t,\frac{x - x(t)}{\lambda(t)})|^{2} dx \\ - \frac{1}{2 \lambda(t)^{4}} Re \int Q(\frac{x - x(t)}{\lambda(t)})^{2} (P_{\leq k + 9} \epsilon (t,\frac{x - x(t)}{\lambda(t)}))^{2} dx + O(2^{2k} T^{-10}) + O(\eta_{0}^{2} \| \epsilon \|_{L^{2}}^{2}) + O(\eta_{0} \| \epsilon \|_{L^{2}} \| \nabla \tilde{\tilde{\epsilon}} \|_{L^{2}}).
\endaligned
\end{equation}
By the product rule,
\begin{equation}\label{4.27}
\aligned
\frac{1}{2} \| \nabla \tilde{\tilde{\epsilon}} \|_{L^{2}}^{2} = \frac{|\xi(t)|^{2}}{2 \lambda(t)^{4}} \| P_{\leq k + 9} \epsilon(t, \frac{x - x(t)}{\lambda(t)}) \|_{L^{2}}^{2} + \frac{\xi(t)}{\lambda(t)^{3}} \cdot (P_{\leq k + 9} \epsilon(t, \frac{x - x(t)}{\lambda(t)}), i P_{\leq k + 9} \nabla \epsilon(t, \frac{x - x(t)}{\lambda(t)}))_{L^{2}} \\ + \frac{1}{2\lambda(t)^{2}} \| P_{\leq k + 9} \epsilon(t, \frac{x - x(t)}{\lambda(t)}) \|_{\dot{H}^{1}}^{2}.
\endaligned
\end{equation}
Rescaling, if $2^{n(t)} = \lambda(t)$,
\begin{equation}\label{4.27.0}
\aligned
\frac{1}{2 \lambda(t)^{2}} \| P_{\leq k + 9} \epsilon(t, \frac{x - x(t)}{\lambda(t)}) \|_{\dot{H}^{1}}^{2} - \frac{1}{\lambda(t)^{4}} \int Q(\frac{x - x(t)}{\lambda(t)})^{2} |P_{\leq k + 9} \epsilon(t,\frac{x - x(t)}{\lambda(t)})|^{2} dx \\ - \frac{1}{2 \lambda(t)^{4}} Re \int Q(\frac{x - x(t)}{\lambda(t)})^{2} (P_{\leq k + 9} \epsilon (t,\frac{x - x(t)}{\lambda(t)}))^{2} dx \\
= \frac{1}{2 \lambda(t)^{2}} \| P_{\leq k + 9 + n(t)} \epsilon(t, x) \|_{\dot{H}^{1}}^{2} - \frac{1}{\lambda(t)^{2}} \int Q(x)^{2} |P_{\leq k + 9 + n(t)} \epsilon(t,x)|^{2} dx \\ - \frac{1}{2 \lambda(t)^{2}} Re \int Q(x)^{2} (P_{\leq k + 9 + n(t)} \epsilon (t,x)^{2} dx.
\endaligned
\end{equation}

Then using the spectral theory of $\mathcal L$ in Theorem $\ref{t3.1}$, provided
\begin{equation}\label{4.27.1}
 g \perp span \{ \chi_{0}, i \chi_{0}, Q_{x_{j}}, i Q_{x_{j}} \},
\end{equation}
there exists a fixed constant $\lambda_{1} > 0$ such that
\begin{equation}\label{4.28}
\frac{1}{2} \| \nabla g \|_{L^{2}}^{2} + \frac{1}{2} \| g \|_{L^{2}}^{2} - \int Q(x)^{2} |g(x)|^{2} - \frac{1}{2} Re \int Q(x)^{2} g(x)^{2} dx \geq \lambda_{1} \| g \|_{\dot{H}^{1}}^{2}.
 \end{equation}
It is not quite true for $P_{\leq k + 9 + n(t)} \epsilon$ that $(\ref{4.27.1})$ holds, however, by $(\ref{3.5})$, the fact that $\chi_{0}$ and $Q_{x_{j}}$ are smooth with rapidly decreasing derivatives, and the bounds on $\lambda(t)$ in $(\ref{4.20})$,
\begin{equation}\label{4.32}
(P_{\leq k + 9 + n(t)} \epsilon, f)_{L^{2}} \lesssim T^{-10},
\end{equation}
for any $f$ in $(\ref{3.7})$. Therefore, for $\eta_{0} \ll 1$ sufficiently small, there exists some fixed $\lambda_{1} > 0$ such that
\begin{equation}\label{4.32.1}
\frac{1}{2 \lambda(t)^{2}} \| \epsilon \|_{L^{2}}^{2} + \mathcal P_{2} \geq \frac{\lambda_{1}}{\lambda(t)^{2}} \| \epsilon \|_{L^{2}}^{2} + \lambda_{1} \| \tilde{\tilde{\epsilon}} \|_{\dot{H}^{1}}^{2} - O(2^{2k} T^{-10}).
\end{equation}

 Next, by the Sobolev embedding theorem and $(\ref{4.25.1})$,
\begin{equation}\label{4.29}
\int |P_{\leq k + 9} \tilde{\epsilon}(t,x)|^{4} dx \lesssim \| \tilde{\tilde{\epsilon}} \|_{L^{4}}^{4} + \| \mathcal R \|_{L^{4}}^{4} \lesssim \| \tilde{\tilde{\epsilon}} \|_{\dot{H}^{1}}^{2} \| \epsilon \|_{L^{2}}^{2} + \| \mathcal R \|_{L^{4}}^{4} \lesssim \eta_{0}^{2} \| \tilde{\tilde{\epsilon}} \|_{\dot{H}^{1}}^{2} + 2^{-2k} \eta_{0}^{2} \frac{|\xi(t)|^{2}}{\lambda(t)^{2}} \| \epsilon \|_{L^{2}}^{4},
\end{equation}
and
\begin{equation}\label{4.30}
\frac{1}{\lambda(t)} \int |Q(\frac{x - x(t)}{\lambda(t)}) |\tilde{\epsilon}(t,x)|^{3} dx \lesssim \| \tilde{\epsilon} \|_{L^{2}} \| \tilde{\epsilon} \|_{L^{4}}^{2} \lesssim \frac{1}{\lambda(t)} \| \tilde{\epsilon} \|_{L^{2}}^{2} \| \tilde{\epsilon} \|_{\dot{H}^{1}} \lesssim \frac{1}{\lambda(t)} \| \epsilon \|_{L^{2}} (\eta_{0} \| \tilde{\tilde{\epsilon}} \|_{\dot{H}^{1}} + 2^{-k} \eta_{0} \frac{|\xi(t)|}{\lambda(t)} \| \epsilon \|_{L^{2}}^{2}).
\end{equation}
For $\eta_{0} \ll 1$ sufficiently small, we have therefore proved
\begin{equation}\label{4.31}
\aligned
E(P_{\leq k + 9} u) &\geq \frac{1}{2} \frac{|\xi(t)|^{2}}{\lambda(t)^{2}} \| Q \|_{L^{2}}^{2} + \frac{\lambda_{1}}{2 \lambda(t)^{2}} \| \epsilon \|_{L^{2}}^{2} + \frac{\lambda_{1}}{2} \| \tilde{\tilde{\epsilon}} \|_{\dot{H}^{1}}^{2} - \frac{|\xi(t)|^{2}}{2 \lambda(t)^{2}} \| \epsilon \|_{L^{2}}^{2} - O(2^{2k} T^{-10}) \\
&\geq \frac{1}{4} \frac{|\xi(t)|^{2}}{\lambda(t)^{2}} \| Q \|_{L^{2}}^{2} + \frac{\lambda_{1}}{2 \lambda(t)^{2}} \| \epsilon \|_{L^{2}}^{2} + \frac{\lambda_{1}}{2} \| \tilde{\tilde{\epsilon}} \|_{\dot{H}^{1}}^{2} - O(2^{2k} T^{-10}).
\endaligned
\end{equation}
Plugging $(\ref{4.31})$ into $(\ref{4.2})$ proves Theorem $\ref{t4.2}$.
\end{proof}

\subsection{Almost conservation of energy in dimension $d \geq 3$}

\begin{theorem}\label{t4.3}
Let $J = [a, b]$ be an interval for which $(\ref{5.4})$ and $(\ref{5.5})$ hold, as well as $|x(t)| \leq T^{\frac{1}{2000d^{2}}}$. To simplify notation let $k_{0} = k(1 + \frac{1}{10d})$. Then,
\begin{equation}\label{4.32}
\sup_{t \in J} E(P_{\leq k_{0} + 9} u(t)) \lesssim \frac{2^{2k_{0}}}{T} \int_{J} \| \epsilon(t) \|_{L^{2}}^{2} \lambda(t)^{-2} dt + \sup_{t \in J} \frac{|\xi(t)|^{2}}{\lambda(t)^{2}} + 2^{2k_{0}} T^{-10}.
\end{equation}
\end{theorem}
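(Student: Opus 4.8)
The plan is to follow the proof of Theorem~\ref{t4.1} almost line by line — that argument is already the one–dimensional argument of \cite{dodson2021determination} transplanted to $d=2$ — and to replace each step that used the Littlewood--Paley product rule for the cubic nonlinearity by the Taylor‑expansion‑around‑the‑soliton machinery developed in the proof of Theorem~\ref{t5.1}. First I would use the mean value theorem to pick $t_0\in J$ with $\|\epsilon(t_0)\|_{L^2}^2=\frac1T\int_J\|\epsilon(t)\|_{L^2}^2\lambda(t)^{-2}\,dt$, insert the decomposition $(\ref{5.6.1})$ $u=\tilde Q+\tilde\epsilon$ into $E(P_{\leq k_0+9}u(t_0))$, and expand exactly as in $(\ref{4.4})$--$(\ref{4.5})$. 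Since $Q$ is Schwartz, $\tilde Q$ lives, up to an $O(T^{-10})$ tail, at frequencies $\ll 2^{k_0}$, so the truncation $P_{\leq k_0+9}$ costs only $O(2^{2k_0}T^{-10})$; using the orthogonality relations $(\ref{3.5})$, the identity $(\nabla Q,\nabla\epsilon_1)_{L^2}-(Q^{1+4/d},\epsilon_1)_{L^2}=-(Q,\epsilon_1)_{L^2}=\tfrac12\|\epsilon\|_{L^2}^2$ (from $(\ref{1.12})$ and mass conservation), the Pohozaev identity $E(Q)=0$ from $(\ref{1.16})$, and the Gagliardo--Nirenberg/Sobolev bound $(\ref{1.10.1})$ for the remaining critical term, one obtains $E(P_{\leq k_0+9}u(t_0))\lesssim\lambda(t_0)^{-2}\|\epsilon(t_0)\|_{L^2}^2+\lambda(t_0)^{-2}|\xi(t_0)|^2+2^{2k_0}T^{-10}$; since $\lambda(t)^{-2}\leq\eta_1^2\leq1\leq2^{2k_0}$, this is $(\ref{4.32})$ evaluated at $t_0$.

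Next I would control the change of energy. As in $(\ref{4.6})$, $\frac{d}{dt}E(P_{\leq k_0+9}u)=(i\Delta P_{\leq k_0+9}u+iP_{\leq k_0+9}F(u),\,P_{\leq k_0+9}F(u)-F(P_{\leq k_0+9}u))_{L^2}$, and $(iP_{\leq k_0+9}F(u),P_{\leq k_0+9}F(u))_{L^2}=0$. The main point is to decompose the commutator, which I would do by splitting off the soliton core: $P_{\leq k_0+9}F(u)-F(P_{\leq k_0+9}u)=\big(P_{\leq k_0+9}F(\tilde Q)-F(\tilde Q)\big)+P_{\leq k_0+9}\big(F(u)-F(\tilde Q)\big)-\big(F(P_{\leq k_0+9}u)-F(\tilde Q)\big)$. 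The first bracket is $O(T^{-10})$ in $L_t^1L_x^2$ (cf. $(\ref{5.9})$), since $F(\tilde Q)=\tilde Q^{1+4/d}$ is, like $\tilde Q$, essentially frequency‑localized at $\ll 2^{k_0}$. For the two remaining differences I would expand by Taylor's formula as in $(\ref{5.10})$: bound $F'$ via the splitting $(\ref{5.12})$ into a soliton piece plus an $\epsilon$ piece, handle the soliton piece with the local‑smoothing bound of Lemma~\ref{lind} together with the pointwise estimate on $\nabla Q/Q^{1-\alpha}$ from Theorem~\ref{t5.0}, handle the $\epsilon$ piece by $\eta_0^{4/d}$ times an endpoint Strichartz norm, and handle the genuinely regular remainder $\nabla(P_{\leq k_0+9}u-\tilde Q)\,F'(P_{\leq k_0+9}u-\tilde Q)$ with the fractional chain rule of Proposition~\ref{pvisan}, choosing $\sigma>\alpha_d/2$ exactly as in $(\ref{5.25.13})$.

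Then I would integrate $\frac{d}{dt}E$ over $[t_0,t']\subset J$ and pair the commutator against $i\Delta P_{\leq k_0+9}u$ (costing a factor $2^{2k_0}$, which is precisely the $2^{2k_0}$ in $(\ref{4.32})$) and against $iP_{\leq k_0+9}F(u)$ (no $\Delta$, but with an extra factor of $F(P_{\leq k_0+9}u)$, which after one more split around $\tilde Q$ costs at worst $\|Q\|_{L^2}^{4/d}$ times controllable quantities). Every high‑frequency factor that survives in either pairing is of the form $P_{\geq i}u$ with $i$ comparable to $k$ (in the admissible range $\tfrac{k}{10d}\leq i\leq k(1+\tfrac1{10d})$), so Theorem~\ref{t5.1} bounds it by $\|P_{\geq i}u\|_{U_\Delta^p}+\|P_{\geq i}u\|_{L_t^2L_x^{2d/(d-2)}}\lesssim 2^{\frac{\alpha_d}{2}(k_0-i)}\big(\tfrac1T\int_J\|\epsilon\|_{L^2}^2\lambda^{-2}\big)^{1/2}+T^{-10}$; the smallness coming from $\eta_0^{4/d}$, from $\eta_1^{1/p}2^{-i/p}T^{1/(2000d^2)}\ll1$, and from the $T^{-10}$ Schwartz tails of $\tilde Q$ absorbs all geometric sums over dyadic frequencies down to $\lesssim\frac{2^{2k_0}}{T}\int_J\|\epsilon(t)\|_{L^2}^2\lambda(t)^{-2}\,dt+2^{2k_0}T^{-10}$. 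Adding this to the bound from the first step and taking the supremum over $t'\in J$ yields $(\ref{4.32})$.

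The hard part will be the commutator structure rather than any single estimate. In $d=2$ the term $P_{\leq k+9}F(u)-F(P_{\leq k+9}u)$ vanishes \emph{exactly} whenever all three Littlewood--Paley inputs sit at frequencies $\leq k+6$, and that algebraic cancellation is what makes the case analysis of $(\ref{4.8})$--$(\ref{4.19})$ close. For $d\geq3$ there is no such identity, since $|z|^{4/d}$ is not a polynomial, so I would instead rely on the structural fact that, once the smooth frequency‑localized core $\tilde Q$ is removed (its own contribution to the commutator being $O(T^{-10})$), every remaining contribution produced by the Taylor expansion carries either a factor of $P_{>k_0+9}u$, or of $P_{>k_0+9}$ of a product, forcing a high‑frequency input controlled by Theorem~\ref{t5.1}, or else a genuinely small factor $\eta_0$ or $\eta_1$. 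Verifying that this really exhausts all such terms, and keeping the exponent bookkeeping straight ($\alpha_d$, $k_0=k(1+\tfrac1{10d})$, $p$ with $\tfrac1p=\tfrac12-\tfrac1{4000d^2}$, and the two distinct $T$‑powers in $(\ref{5.4})$ and in the local‑smoothing estimates, where the hypothesis $|x(t)|\leq T^{1/(2000d^2)}$ is exactly what makes Lemma~\ref{lind} applicable), is where essentially all of the effort goes.
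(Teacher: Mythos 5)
Your setup — the mean–value–theorem choice of $t_0$, the expansion of $E(P_{\leq k_0+9}u(t_0))$ in $(\ref{4.34})$--$(\ref{4.35})$, and the identity $(\ref{4.36})$ for $\frac{d}{dt}E$ — agrees with the paper. But there is a genuine gap in the treatment of the commutator, and the decomposition you propose makes it worse rather than better. You write the commutator as $\big(P_{\leq k_0+9}F(\tilde Q)-F(\tilde Q)\big)+P_{\leq k_0+9}\big(F(u)-F(\tilde Q)\big)-\big(F(P_{\leq k_0+9}u)-F(\tilde Q)\big)$ and propose to estimate the last two brackets separately by Taylor's formula. Individually these are only $O(\|\tilde\epsilon\|_{L^2})$ and carry no high–frequency factor $P_{\geq k_0+9}u$; for instance the pairing of $P_{\leq k_0+9}(F(u)-F(\tilde Q))$ with the $\lambda^{-2}\tilde Q$ piece of $iP_{\leq k_0+9}(\Delta u+F(u))$ produces at leading order $\lambda^{-4}\,(Q^{1+4/d},\epsilon_2)_{L^2}$, which the orthogonality conditions $(\ref{3.5})$ do not kill — it is \emph{linear} in $\|\epsilon\|_{L^2}$, and after integrating in $t$ one cannot reach the required quadratic bound $\frac{2^{2k_0}}{T}\int_J\|\epsilon\|_{L^2}^2\lambda^{-2}\,dt$. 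The commutator must be kept as $F(u)-F(P_{\leq k_0+9}u)-P_{\geq k_0+9}F(u)$, so that the Taylor expansion around $P_{\leq k_0+9}u$ forces a factor of $P_{\geq k_0+9}u$ into every surviving term; splitting around $\tilde Q$ destroys this structure, and the cancellation you need is only recovered after recombining the two brackets, which brings you back to the paper's decomposition.

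Even with the paper's decomposition, your plan omits the one step that is not routine. After applying the soliton equation $(\ref{4.43})$--$(\ref{4.44})$ one is left with $\int_{t_0}^{t'}\big(i\lambda^{-2}\tilde Q,\ F'(P_{\leq k_0+9}u+sP_{\geq k_0+9}u)\,P_{\geq k_0+9}u\big)_{L^2}\,ds\,dt$, and the ``genuinely small factor $\eta_1$'' you invoke is \emph{not} enough: estimating $\lambda^{-2}\tilde Q$ in $L_t^2 L_x^{2d/(d-2)}$ gives at best $\eta_1^2(\int_J\|\epsilon\|^2\lambda^{-2})^{1/2}$, and Young's inequality then leaves a residual $\eta_1^4 T\,2^{-2k_0}$ which is \emph{not} $\lesssim 2^{2k_0}T^{-10}$ when $T$ is large (indeed $\eta_1^4 T^{11}\gg 2^{4k_0}$ since $2^{\alpha_d k}=\eta_1^{-2}T$ with $\alpha_d\leq 3$). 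The paper's resolution is the algebraic manoeuvre $(\ref{4.47})$: replace $\tilde Q$ by $P_{\leq k_0+9}u+sP_{\geq k_0+9}u$ up to $O(\|\tilde\epsilon\|)$ errors, then use the pointwise identity $\big(i v,\ F'(v)w\big)_{L^2}=\big(iF(v),\ w\big)_{L^2}$ (which encodes a cancellation between the $\partial_z F$ and $\partial_{\bar z}F$ contributions), so the term becomes $\int\lambda^{-2}\big(iF(P_{\leq k_0+9}u+sP_{\geq k_0+9}u),P_{\geq k_0+9}u\big)$, a product of two quantities controlled by the internal estimates of Theorem~\ref{t5.1}. Neither the local–smoothing Lemma~\ref{lind}, nor the $\nabla Q/Q^{1-\alpha}$ bound of Theorem~\ref{t5.0}, nor the fractional chain rule of Proposition~\ref{pvisan} substitutes for this step; they are used elsewhere in the proof, but the $(\ref{4.43})$--$(\ref{4.47})$ manipulation is the load–bearing new idea in passing from Theorem~\ref{t4.1} ($d=2$) to Theorem~\ref{t4.3} ($d\geq 3$), and your plan does not contain it.
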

\begin{proof}
Again choose $t_{0} \in J$ such that $(\ref{4.1})$ holds. As in the two dimensional case, by $(\ref{5.17.0})$, the fact that $Q$ is a real valued function, $Q$ is smooth and all its derivatives are rapidly decreasing, and the Sobolev embedding theorem, taking $\epsilon = \epsilon_{1} + i \epsilon_{2}$,
\begin{equation}\label{4.34}
\aligned
E(P_{\leq k_{0} + 9} u) = \frac{1}{2} \| \nabla P_{\leq k_{0} + 9} u \|_{L^{2}}^{2} - \frac{d}{2(d + 2)} \| P_{\leq k_{0} + 9} u \|_{L^{\frac{2(d + 2)}{d}}}^{\frac{2(d + 2)}{d}} \\ = \frac{1}{2 \lambda(t)^{2}} \| \nabla Q(x) \|_{L^{2}}^{2} + \frac{1}{2} \frac{|\xi(t)|^{2}}{\lambda(t)^{2}} \| Q \|_{L^{2}}^{2} - \frac{d}{2(d + 2) \lambda(t)^{2}} \| Q \|_{L^{\frac{2(d + 2)}{d}}}^{\frac{2(d + 2)}{d}} \\
+ \frac{1}{\lambda(t)^{2}} (\nabla Q, \nabla \epsilon_{1})_{L^{2}} - \frac{\xi(t)}{\lambda(t)^{2}} \cdot (Q, \nabla \epsilon_{2}) + \frac{\xi(t)}{\lambda(t)^{2}} \cdot (\nabla Q, \epsilon_{2})_{L^{2}} + \frac{|\xi(t)|^{2}}{\lambda(t)^{2}} (Q, \epsilon_{1})_{L^{2}} - \frac{1}{\lambda(t)^{2}} (Q^{1 + \frac{4}{d}}, \epsilon_{1})_{L^{2}} \\
+ O(2^{2k_{0}} \| \epsilon \|_{L^{2}}^{2} ) + O(2^{2k_{0}} T^{-10}).
\endaligned
\end{equation}
As in the two dimensional case, $(\epsilon_{2}, \nabla Q)_{L^{2}} = (\nabla \epsilon_{2}, Q)_{L^{2}} = 0$, $(\nabla Q, \nabla \epsilon_{1})_{L^{2}} - (Q^{1 + \frac{4}{d}}, \epsilon_{1})_{L^{2}} = -(Q, \epsilon_{1})_{L^{2}} = \frac{1}{2} \| \epsilon \|_{L^{2}}^{2}$, and $E(Q) = 0$, so
\begin{equation}\label{4.35}
\aligned
E(P_{\leq k_{0} + 9} u) = \frac{1}{2} \frac{|\xi(t)|^{2}}{\lambda(t)^{2}} \| Q \|_{L^{2}}^{2} + \frac{1}{2 \lambda(t)^{2}} \| \epsilon \|_{L^{2}}^{2} - \frac{|\xi(t)|^{2}}{2 \lambda(t)^{2}} \| \epsilon \|_{L^{2}}^{2} + O(2^{2k_{0}} \| \epsilon \|_{L^{2}}^{2} ) + O(2^{2k_{0}} T^{-10}),
\endaligned
\end{equation}
so $(\ref{4.32})$ holds at $t_{0}$.

Next compute the change of energy. Following the computations in the two dimensional case,
\begin{equation}\label{4.36}
\aligned
\frac{d}{dt} E(P_{\leq k_{0} + 9} u)  = ( i \Delta P_{\leq k_{0} + 9} u + i P_{\leq k_{0} + 9} F(u), P_{\leq k_{0} + 9} F(u) - F(P_{\leq k_{0} + 9} u) )_{L^{2}}.
\endaligned
\end{equation}
Decompose
\begin{equation}\label{4.38}
P_{\leq k_{0} + 9} F(u) - F(P_{\leq k_{0} + 9} u) = F(u) - F(P_{\leq k_{0} + 9} u) - P_{\geq k_{0} + 9} F(u).
\end{equation}
From the proof of Theorem $\ref{t5.1}$, the fact that $Q$ is smooth and all its derivatives are rapidly decreasing, the Sobolev embedding theorem, and Fourier support properties,
\begin{equation}\label{4.39}
\aligned
 \int_{t_{0}}^{t'} ( i \Delta P_{\leq k_{0} + 9} u + i P_{\leq k_{0} + 9} F(u), P_{\geq k_{0} + 9} F(u))_{L^{2}} dt \lesssim 2^{2k_{0}} \| P_{k_{0} \leq \cdot \leq k_{0} + 9} u \|_{L_{t}^{2} L_{x}^{\frac{2d}{d - 2}}} \| P_{\geq k_{0}} F(u) \|_{L_{t}^{2} L_{x}^{\frac{2d}{d + 2}}} \\ + 2^{2k_{0}} \| P_{k_{0} \leq \cdot \leq k_{0} + 9} u \|_{L_{t}^{2} L_{x}^{\frac{2d}{d + 2}}}^{2} \lesssim \frac{2^{2k_{0}}}{T} \int_{J} \| \epsilon(t) \|_{L^{2}}^{2} \lambda(t)^{-2} dt + 2^{2k_{0}} T^{-10}.
\endaligned
\end{equation}

Next, by Taylor's formula,
\begin{equation}\label{4.41}
F(u) - F(P_{\leq k_{0} + 9} u) = (P_{\geq k_{0} + 9} u) \cdot \int_{0}^{1} F'(P_{\leq k_{0} + 9} u + s P_{\geq k_{0} + 9} u) ds.
\end{equation}
By $(\ref{5.6.1})$,
\begin{equation}\label{4.37}
\Delta u + F(u) = \Delta \tilde{Q} + \Delta \tilde{\epsilon} + F(\tilde{Q}) + F(u) - F(\tilde{Q}).
\end{equation}
Then by Theorem $\ref{t5.1}$, $(\ref{5.4})$, and $(\ref{5.5})$,
\begin{equation}\label{4.42}
\aligned
\int_{t_{0}}^{t'} (i P_{\leq k_{0} + 9} \Delta \tilde{\epsilon}, \int_{0}^{1} F'(P_{\leq k_{0} + 9} u + s P_{\geq k_{0} + 9} u) ds \cdot P_{\geq k_{0} + 9} u)_{L^{2}} dt \lesssim 2^{2k_{0}} (\frac{1}{T} \int_{J} \| \epsilon(t) \|_{L^{2}}^{2} \lambda(t)^{-2} dt) + 2^{2k_{0}} T^{-10}.
\endaligned
\end{equation}
Also, by Theorem $\ref{t5.1}$, the Sobolev embedding theorem, and interpolation,
\begin{equation}\label{4.40}
\aligned
\int_{t_{0}}^{t'} (i P_{\leq k_{0} + 9} (F(u) - F(\tilde{Q})), \int_{0}^{1} F'(P_{\leq k_{0} + 9} u + s P_{\geq k_{0} + 9} u) ds \cdot P_{\geq k_{0} + 9} u)_{L^{2}} dt \\ \lesssim  \| \tilde{\epsilon} \|_{L_{t}^{2} L_{x}^{\frac{2d}{d - 2}}} \| P_{\geq k_{0} + 9} u \|_{L_{t}^{2} L_{x}^{\frac{2d}{d - 2}}} \| \tilde{Q} \|_{L_{t,x}^{\infty}}^{4/d} \| u \|_{L_{t}^{\infty} L_{x}^{2}}^{4/d} + \| \Delta P_{\leq k_{0} + 9} \tilde{\epsilon} \|_{L_{t}^{2} L_{x}^{\frac{2d}{d - 2}}} \| \tilde{\epsilon} \|_{L_{t}^{\infty} L_{x}^{2}}^{4/d} \| P_{\geq k_{0} + 9} u \|_{L_{t}^{2} L_{x}^{\frac{2d}{d - 2}}} \| u \|_{L_{t}^{\infty} L_{x}^{2}}^{4/d} \\ + 2^{2k_{0}} \| P_{\geq k_{0} + 9} u \|_{L_{t}^{2} L_{x}^{\frac{2d}{d - 2}}} \| P_{\geq k_{0} + 9} \tilde{\epsilon} \|_{L_{t}^{2} L_{x}^{\frac{2d}{d - 2}}} \| \tilde{\epsilon} \|_{L_{t}^{\infty} L_{x}^{2}}^{4/d} \| u \|_{L_{t}^{\infty} L_{x}^{2}}^{4/d} \lesssim 2^{2k_{0}} (\frac{1}{T} \int_{J} \| \epsilon(t) \|_{L^{2}}^{2} \lambda(t)^{-2} dt) + 2^{2k_{0}} T^{-10}.
\endaligned
\end{equation}
Now expand,
\begin{equation}\label{4.43}
\aligned
\Delta \tilde{Q} + F(\tilde{Q}) = e^{-i \gamma(t)} e^{-ix \cdot \frac{\xi(t)}{\lambda(t)}} \lambda(t)^{-d/2 - 2} \Delta Q(\frac{x - x(t)}{\lambda(t)}) - 2i e^{-i \gamma(t)} e^{-ix \cdot \frac{\xi(t)}{\lambda(t)}} \lambda(t)^{-d/2 - 2} \xi(t) \cdot \nabla Q(\frac{x - x(t)}{\lambda(t)}) \\ -e^{-i \gamma(t)} e^{-ix \cdot \frac{\xi(t)}{\lambda(t)}} \lambda(t)^{-d/2 - 2} |\xi(t)|^{2} Q(\frac{x - x(t)}{\lambda(t)}) + e^{-i \gamma(t)} e^{-ix \cdot \frac{\xi(t)}{\lambda(t)}} \lambda(t)^{-d/2 - 2} Q^{1 + 4/d}(\frac{x - x(t)}{\lambda(t)}).
\endaligned
\end{equation}
By $(\ref{1.12})$,
\begin{equation}\label{4.44}
\aligned
e^{-i \gamma(t)} e^{-ix \cdot \frac{\xi(t)}{\lambda(t)}} \lambda(t)^{-d/2 - 2} \Delta Q(\frac{x - x(t)}{\lambda(t)}) + e^{-i \gamma(t)} e^{-ix \cdot \frac{\xi(t)}{\lambda(t)}} \lambda(t)^{-d/2 - 2} Q^{1 + 4/d}(\frac{x - x(t)}{\lambda(t)}) = \lambda(t)^{-2} \tilde{Q}.
\endaligned
\end{equation}
Plugging $(\ref{4.44})$ into $(\ref{4.40})$, using $(\ref{5.4})$ and $(\ref{5.5})$,
\begin{equation}\label{4.47}
\aligned
\int_{t_{0}}^{t'} \lambda(t)^{-2} \int_{0}^{1} (i \lambda(t)^{-2} \tilde{Q}, F'(P_{\leq k_{0} + 9} u + s P_{\geq k_{0} + 9} u)(P_{\geq k_{0} +9} u))_{L^{2}} ds dt \\ 
= \int_{t_{0}}^{t'} \lambda(t)^{-2} \int_{0}^{1} (i (P_{\leq k_{0} + 9} u + s P_{\geq k_{0} + 9} u), F'(P_{\leq k_{0} + 9} u + s P_{\geq k_{0} + 9} u)(P_{\geq k_{0} + 9} u))_{L^{2}} ds dt \\ 
+ \int_{t_{0}}^{t'} \lambda(t)^{-2} \int_{0}^{1} (i (P_{\geq k_{0} + 9} \tilde{Q} - P_{\leq k_{0} + 9} \tilde{\epsilon} - sP_{\geq k_{0} + 9} \tilde{\epsilon}), F'(P_{\leq k_{0} + 9} u + s P_{\geq k_{0} + 9} u)(P_{\geq k_{0} + 9} u))_{L^{2}} ds dt  \\
\lesssim (1 + \frac{4}{d}) \int_{t_{0}}^{t'} \lambda(t)^{-2} \int_{0}^{1} (i F(P_{\leq k_{0} + 9} u + s P_{\geq k_{0} + 9} u), (P_{\geq k_{0} + 9} u))_{L^{2}} ds dt \\ + \eta_{1}^{2} \| \tilde{\epsilon} \|_{L_{t}^{2} L_{x}^{\frac{2d}{d - 2}}} \| P_{\geq k_{0} + 9} u \|_{L_{t}^{2} L_{x}^{\frac{2d}{d - 2}}} \| u \|_{L_{t}^{\infty} L_{x}^{2}}^{4/d} +  \eta_{1}^{2} \| P_{\geq k_{0} + 9} \tilde{Q} \|_{L_{t}^{2} L_{x}^{\frac{2d}{d - 2}}} \| P_{\geq k_{0} + 9} u \|_{L_{t}^{2} L_{x}^{\frac{2d}{d - 2}}} \| u \|_{L_{t}^{\infty} L_{x}^{2}}^{4/d} \\
\lesssim (1 + \frac{4}{d}) \int_{t_{0}}^{t'} \lambda(t)^{-2} (i F(P_{\leq k_{0}} u), P_{\geq k_{0} + 9} u)_{L^{2}} dt + \eta_{1}^{2} \| P_{\geq k_{0}} u \|_{L_{t}^{2} L_{x}^{\frac{2d}{d - 2}}}^{2} \| u \|_{L_{t}^{\infty} L_{x}^{2}}^{4/d} \\ + \eta_{1}^{2} \| \tilde{\epsilon} \|_{L_{t}^{2} L_{x}^{\frac{2d}{d - 2}}} \| P_{\geq k_{0} + 9} u \|_{L_{t}^{2} L_{x}^{\frac{2d}{d - 2}}} \| u \|_{L_{t}^{\infty} L_{x}^{2}}^{4/d} +  \eta_{1}^{2} \| P_{\geq k_{0} + 9} \tilde{Q} \|_{L_{t}^{2} L_{x}^{\frac{2d}{d - 2}}} \| P_{\geq k_{0} + 9} u \|_{L_{t}^{2} L_{x}^{\frac{2d}{d - 2}}} \| u \|_{L_{t}^{\infty} L_{x}^{2}}^{4/d}.
\endaligned
\end{equation}
Then by Theorem $\ref{t5.1}$,
\begin{equation}\label{4.45}
\aligned
(1 + \frac{4}{d}) \int_{t_{0}}^{t'} \lambda(t)^{-2} (i F(P_{\leq k_{0}} u), P_{\geq k_{0} + 9} u)_{L^{2}} dt + \eta_{1}^{2} \| P_{\geq k_{0}} u \|_{L_{t}^{2} L_{x}^{\frac{2d}{d - 2}}}^{2} \| u \|_{L_{t}^{\infty} L_{x}^{2}}^{4/d} \\ + \eta_{1}^{2} \| \tilde{\epsilon} \|_{L_{t}^{2} L_{x}^{\frac{2d}{d - 2}}} \| P_{\geq k_{0} + 9} u \|_{L_{t}^{2} L_{x}^{\frac{2d}{d - 2}}} \| u \|_{L_{t}^{\infty} L_{x}^{2}}^{4/d} +  \eta_{1}^{2} \| P_{\geq k_{0} + 9} \tilde{Q} \|_{L_{t}^{2} L_{x}^{\frac{2d}{d - 2}}} \| P_{\geq k_{0} + 9} u \|_{L_{t}^{2} L_{x}^{\frac{2d}{d - 2}}} \| u \|_{L_{t}^{\infty} L_{x}^{2}}^{4/d} \\
\lesssim \eta_{1}^{2} \| P_{\geq k_{0}} F(u_{\leq k_{0}}) \|_{L_{t}^{2} L_{x}^{\frac{2d}{d + 2}}} \| P_{\geq k_{0} + 9} u \|_{L_{t}^{2} L_{x}^{\frac{2d}{d - 2}}} + \eta_{1}^{2} \| P_{\geq k_{0}} u \|_{L_{t}^{2} L_{x}^{\frac{2d}{d - 2}}}^{2} \| u \|_{L_{t}^{\infty} L_{x}^{2}}^{4/d} \\ + \eta_{1}^{2} \| \tilde{\epsilon} \|_{L_{t}^{2} L_{x}^{\frac{2d}{d - 2}}} \| P_{\geq k_{0} + 9} u \|_{L_{t}^{2} L_{x}^{\frac{2d}{d - 2}}} \| u \|_{L_{t}^{\infty} L_{x}^{2}}^{4/d} +  \eta_{1}^{2} \| P_{\geq k_{0} + 9} \tilde{Q} \|_{L_{t}^{2} L_{x}^{\frac{2d}{d - 2}}} \| P_{\geq k_{0} + 9} u \|_{L_{t}^{2} L_{x}^{\frac{2d}{d - 2}}} \| u \|_{L_{t}^{\infty} L_{x}^{2}}^{4/d} \\
\lesssim 2^{2k_{0}} (\frac{1}{T} \int_{J} \| \epsilon(t) \|_{L^{2}}^{2} \lambda(t)^{-2} dt) + 2^{2k_{0}} T^{-10}.
\endaligned
\end{equation}
By the product rule,
\begin{equation}\label{4.46}
\aligned
- 2i e^{-i \gamma(t)} e^{-ix \cdot \frac{\xi(t)}{\lambda(t)}} \lambda(t)^{-d/2 - 2} \xi(t) \cdot \nabla Q(\frac{x - x(t)}{\lambda(t)}) -e^{-i \gamma(t)} e^{-ix \cdot \frac{\xi(t)}{\lambda(t)}} \lambda(t)^{-d/2 - 2} |\xi(t)|^{2} Q(\frac{x - x(t)}{\lambda(t)}) \\ = -2i \frac{\xi(t)}{\lambda(t)^{2}} \cdot \nabla \tilde{Q} + \frac{|\xi(t)|^{2}}{\lambda(t)^{2}} \tilde{Q}.
\endaligned
\end{equation}
Plugging $\frac{|\xi(t)|^{2}}{\lambda(t)^{2}} \tilde{Q}$ into $(\ref{4.47})$ and using $(\ref{5.4})$ and $(\ref{5.5})$, the contribution of $\frac{|\xi(t)|^{2}}{\lambda(t)^{2}} \tilde{Q}$ can be handled in the same way as $\lambda(t)^{-2} \tilde{Q}$.

Finally, use the computations in the proof of Theorem $\ref{t5.1}$ to compute
\begin{equation}
\aligned
2 \int_{t_{0}}^{t'} (P_{\leq k_{0} + 9} \nabla \tilde{Q}, \int_{0}^{1} F'(P_{\leq k_{0} + 9} u + s P_{\geq k_{0} + 9} u) ds \cdot P_{\geq k_{0} + 9} u)_{L^{2}} dt \\ \lesssim 2^{2k_{0}} (\frac{1}{T} \int_{J} \| \epsilon(t) \|_{L^{2}}^{2} \lambda(t)^{-2} dt) + 2^{2k_{0}} T^{-10}.
\endaligned
\end{equation}

\end{proof}

The bound on $E(P_{\leq k_{0} + 9} u)$ gives good bounds on the $L^{2}$ and $\dot{H}^{1}$ norms of $\epsilon$ in higher dimensions as well.
\begin{theorem}\label{t4.4}
If $|x(t)| \leq T^{\frac{1}{2000d^{2}}}$ for all $t \in J$, and $(\ref{5.4})$ and $(\ref{5.5})$ hold, then
\begin{equation}\label{4.53}
\sup_{t \in J} \| P_{\leq k_{0} + 9} (\frac{e^{-i \gamma(t)} e^{-ix \cdot \frac{\xi(t)}{\lambda(t)}}}{\lambda(t)^{1/2}} \epsilon(t, \frac{x - x(t)}{\lambda(t)})) \|_{\dot{H}^{1}}^{2} \lesssim \frac{2^{2k_{0}}}{T} \int_{J} \| \epsilon(t) \|_{L^{2}}^{2} \lambda(t)^{-2} dt + \sup_{t \in J} \frac{|\xi(t)|^{2}}{\lambda(t)^{2}} + 2^{2k_{0}} T^{-10},
\end{equation}
and
\begin{equation}\label{4.54}
\sup_{t \in J} \| \epsilon(t) \|_{L^{2}}^{2} \lesssim \frac{2^{2k_{0}} T^{1/25d}}{\eta_{1}^{2} T} \int_{J} \| \epsilon(t) \|_{L^{2}}^{2} \lambda(t)^{-2} dt + \frac{T^{1/25d}}{\eta_{1}^{2}} \sup_{t \in J} \frac{|\xi(t)|^{2}}{\lambda(t)^{2}} + 2^{2k_{0}} \frac{T^{1/25d}}{\eta_{1}^{2}} T^{-10}.
\end{equation}
\end{theorem}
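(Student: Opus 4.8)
The plan is to mirror the two-dimensional argument of Theorems \ref{t4.1}--\ref{t4.2}, using Theorem \ref{t4.3} in place of Theorem \ref{t4.1} and the spectral theory of Theorem \ref{t3.1.1} (together with the ground state equation $(\ref{1.12})$) in place of Theorem \ref{t3.1}. Theorem \ref{t4.3} already supplies the upper bound $\sup_{t \in J} E(P_{\leq k_{0} + 9} u(t)) \lesssim \frac{2^{2 k_{0}}}{T} \int_{J} \| \epsilon(t) \|_{L^{2}}^{2} \lambda(t)^{-2} dt + \sup_{t} \frac{|\xi(t)|^{2}}{\lambda(t)^{2}} + 2^{2 k_{0}} T^{-10}$, so the whole content of Theorem \ref{t4.4} is a matching lower bound, the exact analogue of $(\ref{4.31})$: $E(P_{\leq k_{0} + 9} u(t)) \geq \frac{1}{4} \frac{|\xi(t)|^{2}}{\lambda(t)^{2}} \| Q \|_{L^{2}}^{2} + \frac{\lambda_{1}}{2 \lambda(t)^{2}} \| \epsilon(t) \|_{L^{2}}^{2} + \frac{\lambda_{1}}{2} \| \tilde{\tilde{\epsilon}}(t) \|_{\dot{H}^{1}}^{2} - O(2^{2 k_{0}} T^{-10})$, where $\tilde{\tilde{\epsilon}}$ is the rescaled low-frequency part of $\epsilon$ defined as in $(\ref{4.25})$ but with the $d$-dimensional scaling and with $k$ replaced by $k_{0}$. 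Granting this, $(\ref{4.53})$ is immediate, and $(\ref{4.54})$ follows by multiplying the $\| \epsilon(t) \|_{L^{2}}^{2}$ term through by $\lambda(t)^{2} \leq \eta_{1}^{-2} T^{1/25 d}$ from $(\ref{5.4})$, exactly as $(\ref{4.23})$ was deduced from $(\ref{4.31})$ in two dimensions.

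For the lower bound I would start from the expansion of $E(P_{\leq k_{0} + 9} u)$ at time $t$: as in $(\ref{4.34})$--$(\ref{4.35})$, the linear-in-$\epsilon$ terms collapse using $(\ref{3.5})$, the equation $(\ref{1.12})$, and the mass constraint $\| u \|_{L^{2}} = \| Q \|_{L^{2}}$, leaving $\frac{1}{2} \frac{|\xi(t)|^{2}}{\lambda(t)^{2}} \| Q \|_{L^{2}}^{2} + \frac{1}{2 \lambda(t)^{2}} \| \epsilon \|_{L^{2}}^{2} - \frac{|\xi(t)|^{2}}{2 \lambda(t)^{2}} \| \epsilon \|_{L^{2}}^{2} + \mathcal{P}_{2} + R + O(2^{2 k_{0}} T^{-10})$, where $\mathcal{P}_{2}$ is the exact quadratic-in-$\tilde{\epsilon}$ part of the kinetic plus potential energy and $R$ collects the higher-order terms. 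The key input is the coercivity estimate descending from Theorem \ref{t3.1.1}: writing $g = g_{1} + i g_{2}$, the form $\frac{1}{2} \| \nabla g \|_{L^{2}}^{2} + \frac{1}{2} \| g \|_{L^{2}}^{2} - \frac{d + 2}{2 d} \int Q^{4/d} |g|^{2} dx - \frac{1}{d} Re \int Q^{4/d} g^{2} dx$ equals $\frac{1}{2} ( \mathcal{L} g_{1}, g_{1} )_{L^{2}} + \frac{1}{2} ( \mathcal{L}_{-} g_{2}, g_{2} )_{L^{2}}$ with $\mathcal{L}, \mathcal{L}_{-}$ as in $(\ref{3.2.1})$; since $\mathcal{L}$ is coercive on the orthogonal complement of $span \{ \chi_{0}, Q_{x_{1}}, \dots, Q_{x_{d}} \}$ and $\mathcal{L}_{-} \geq 0$, this form is $\geq \lambda_{1} \| g \|_{\dot{H}^{1}}^{2}$ for some fixed $\lambda_{1} > 0$ whenever $g \perp span \{ \chi_{0}, i \chi_{0}, Q_{x_{j}}, i Q_{x_{j}} \}$. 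Applying this to $g = P_{\leq k_{0} + 9 + n(t)} \epsilon$ with $2^{n(t)} = \lambda(t)$ and rescaling as in $(\ref{4.25})$--$(\ref{4.32.1})$, using that $( P_{\leq k_{0} + 9 + n(t)} \epsilon, f )_{L^{2}} \lesssim T^{-10}$ for each $f$ in $(\ref{3.7})$ because $\lambda(t) \leq \eta_{1}^{-1} T^{1/50 d}$ and the $f$ are Schwartz, and that the extra high-frequency cutoff costs only $O(\eta_{0} \| \epsilon \|_{L^{2}})$ in $\dot{H}^{1}$ by $(\ref{4.25.1})$ since $|\xi(t)| / \lambda(t) \leq \eta_{0}$, yields $\frac{1}{2 \lambda(t)^{2}} \| \epsilon \|_{L^{2}}^{2} + \mathcal{P}_{2} \geq \frac{\lambda_{1}}{\lambda(t)^{2}} \| \epsilon \|_{L^{2}}^{2} + \lambda_{1} \| \tilde{\tilde{\epsilon}} \|_{\dot{H}^{1}}^{2} - O(2^{2 k_{0}} T^{-10})$.

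The one genuinely new point compared with two dimensions is that $|u|^{\frac{2(d + 2)}{d}}$ is no longer polynomial in $(Re \, u, Im \, u)$: it is $C^{2}$ but, for $d \geq 5$, fails to be $C^{3}$, so the energy does not split into a finite sum $\mathcal{P}_{2} + \mathcal{P}_{3} + \mathcal{P}_{4}$. Instead I would Taylor-expand the potential energy density to second order about $\tilde{Q}$; the remainder $R$ then satisfies $|R| \lesssim |\tilde{Q}|^{4/d - 1} |\tilde{\epsilon}|^{3} + |\tilde{\epsilon}|^{2 + 4/d}$ when $2 \leq d \leq 4$ and $|R| \lesssim |\tilde{\epsilon}|^{2 + 4/d}$ when $d \geq 5$, in parallel with the error structure of $(\ref{3.19})$. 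By Gagliardo--Nirenberg and Sobolev, $\int |\tilde{\epsilon}|^{2 + 4/d} \lesssim \| \tilde{\epsilon} \|_{L^{2}}^{4/d} \| \tilde{\epsilon} \|_{\dot{H}^{1}}^{2} \lesssim \eta_{0}^{4/d} \| \tilde{\tilde{\epsilon}} \|_{\dot{H}^{1}}^{2} + O(2^{-2 k_{0}} \eta_{0}^{2} \tfrac{|\xi(t)|^{2}}{\lambda(t)^{2}} \| \epsilon \|_{L^{2}}^{4})$ as in $(\ref{4.29})$, while the $|\tilde{Q}|^{4/d - 1} |\tilde{\epsilon}|^{3}$ term is estimated just as $(\ref{4.30})$ by $\eta_{0} \| \tilde{\tilde{\epsilon}} \|_{\dot{H}^{1}}^{2}$ plus lower-order contributions (if a derivative of $Q$ appears it is handled by Theorem \ref{t5.0}). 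For $\eta_{0} \ll 1$ these remainders are absorbed into $\frac{\lambda_{1}}{2} \| \tilde{\tilde{\epsilon}} \|_{\dot{H}^{1}}^{2}$ and a small fraction of $\frac{\lambda_{1}}{\lambda(t)^{2}} \| \epsilon \|_{L^{2}}^{2}$, giving $(\ref{4.31})$ with $k$ replaced by $k_{0}$, and combining with Theorem \ref{t4.3} finishes the proof. The main obstacle I anticipate is making this remainder estimate rigorous in the non-smooth regime $d \geq 5$, where one must work with the order-$2$ Taylor remainder of a nonlinearity whose second derivatives are merely Hölder continuous, and bound it using only smallness of $\| \epsilon \|_{L^{2}}$ and boundedness of $\| \tilde{\tilde{\epsilon}} \|_{\dot{H}^{1}}$; but this is the same kind of estimate already carried out, via the fractional chain rule of Proposition \ref{pvisan} and the bound of Theorem \ref{t5.0}, in the proofs of Theorems \ref{t5.1} and \ref{t4.3}.
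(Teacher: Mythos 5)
Your proposal follows the paper's proof of Theorem \ref{t4.4} essentially step for step: Theorem \ref{t4.3} gives the upper bound on $E(P_{\leq k_{0}+9}u)$, the mass constraint together with $(\ref{3.5})$ reduces the linear terms exactly as in $(\ref{4.55})$, the coercivity of $\mathcal L$ and $\mathcal L_{-}$ from Theorem \ref{t3.1.1} controls $\mathcal P_{2}$ after rescaling to $g = P_{\leq k_{0}+9+n(t)}\epsilon$ and using the approximate orthogonality $(\ref{4.63})$, and the Taylor remainder $O(|\tilde\epsilon|^{2+4/d})$ (plus the cubic term for $d\leq 4$) is absorbed via Gagliardo--Nirenberg for $\eta_0$ small, matching $(\ref{4.65})$--$(\ref{4.66})$. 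One small imprecision worth noting: the coercivity estimate should be stated as $\geq \lambda_{d}\|g\|_{H^{1}}^{2}$ (as in the paper's $(\ref{4.62})$) rather than merely $\dot H^{1}$, since the $L^{2}$ component of the coercivity is what produces the $\frac{\lambda_{d}}{\lambda(t)^{2}}\|\epsilon\|_{L^{2}}^{2}$ term on the right-hand side of $(\ref{4.64})$ and hence yields $(\ref{4.54})$ after multiplying by $\lambda(t)^{2}\leq \eta_{1}^{-2}T^{1/25d}$.
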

\begin{proof}
This theorem is a direct consequence of Theorem $\ref{t4.1}$ and an expansion of the energy. Recall from $(\ref{4.35})$ that
\begin{equation}\label{4.55}
\aligned
E(P_{\leq k_{0} + 9} u) = \frac{1}{2} \frac{|\xi(t)|^{2}}{\lambda(t)^{2}} \| Q \|_{L^{2}}^{2} + \frac{1}{2 \lambda(t)^{2}} \| \epsilon \|_{L^{2}}^{2} - \frac{|\xi(t)|^{2}}{2 \lambda(t)^{2}} \| \epsilon \|_{L^{2}}^{2} \\ + \mathcal P_{2} + O(|P_{\leq k_{0} + 9} \tilde{\epsilon}|^{3} |P_{\leq k_{0} + 9} \tilde{Q}|) + O(|P_{\leq k_{0} + 9} \tilde{\epsilon}|^{2 + 4/d}) + O(2^{2k_{0}} T^{-10}).
\endaligned
\end{equation}
Again let
\begin{equation}\label{4.56}
P_{\leq k_{0} + 9} (e^{-i \gamma(t)} e^{-ix \cdot \frac{\xi(t)}{\lambda(t)}} \lambda(t)^{-1} \epsilon(t, \frac{x - x(t)}{\lambda(t)})) = e^{-i \gamma(t)} e^{-ix \cdot \frac{\xi(t)}{\lambda(t)}} P_{\leq k_{0} + 9}(\lambda(t)^{-1} \epsilon(t, \frac{x - x(t)}{\lambda(t)})) + \mathcal R = \tilde{\tilde{\epsilon}}(t,x) + \mathcal R,
\end{equation}
where
\begin{equation}\label{4.57}
\| \mathcal R \|_{L^{2}} \lesssim 2^{-k} \frac{|\xi(t)|}{\lambda(t)} \| \epsilon \|_{L^{2}} \lesssim 2^{-k} \eta_{0} \| \epsilon \|_{L^{2}}^{2}, \qquad \text{and} \qquad \| \nabla \mathcal R \|_{L^{2}} \lesssim \frac{|\xi(t)|}{\lambda(t)} \| \epsilon \|_{L^{2}} \lesssim \eta_{0} \| \epsilon \|_{L^{2}}.
\end{equation}
As in dimension $d = 2$, since $Q$ is real valued, smooth, and has derivatives that are rapidly decreasing,
\begin{equation}\label{4.58}
\aligned
\mathcal P_{2} = \frac{1}{2} \| \nabla \tilde{\epsilon} \|_{L^{2}}^{2} - \frac{d + 2}{2d \lambda(t)^{d + 2}} \int Q(\frac{x - x(t)}{\lambda(t)})^{4/d} |P_{\leq k_{0} + 9} \epsilon(t,\frac{x - x(t)}{\lambda(t)})|^{2} dx \\ - \frac{1}{d \lambda(t)^{d + 2}} Re \int Q(\frac{x - x(t)}{\lambda(t)})^{4/d} (P_{\leq k_{0} + 9} \epsilon (t,\frac{x - x(t)}{\lambda(t)}))^{2} dx + O(2^{2k_{0}} T^{-10}) \\ + O(\eta_{0}^{2} \| \epsilon \|_{L^{2}}^{2}) + O(\eta_{0} \| \epsilon \|_{L^{2}} \| \nabla \tilde{\epsilon} \|_{L^{2}}).
\endaligned
\end{equation}
Also, as in dimension $d = 2$, by the product rule,
\begin{equation}\label{4.59}
\aligned
\frac{1}{2} \| \nabla \tilde{\epsilon} \|_{L^{2}}^{2} = \frac{|\xi(t)|^{2}}{2 \lambda(t)^{d + 2}} \| P_{\leq k_{0} + 9} \epsilon(t, \frac{x - x(t)}{\lambda(t)}) \|_{L^{2}}^{2} \\ + \frac{\xi(t)}{\lambda(t)^{d + 1}} \cdot (P_{\leq k_{0} + 9} \epsilon(t, \frac{x - x(t)}{\lambda(t)}), i P_{\leq k_{0} + 9} \nabla \epsilon(t, \frac{x - x(t)}{\lambda(t)}))_{L^{2}} + \frac{1}{2\lambda(t)^{d}} \| P_{\leq k_{0} + 9} \epsilon(t, \frac{x - x(t)}{\lambda(t)}) \|_{\dot{H}^{1}}^{2}.
\endaligned
\end{equation}
Rescaling, if $2^{n(t)} = \lambda(t)$,
\begin{equation}\label{4.60}
\aligned
\frac{1}{2 \lambda(t)^{d}} \| P_{\leq k_{0} + 9} \epsilon(t, \frac{x - x(t)}{\lambda(t)}) \|_{\dot{H}^{1}}^{2} - \frac{d + 2}{2d\lambda(t)^{d + 2}} \int Q(\frac{x - x(t)}{\lambda(t)})^{4/d} |P_{\leq k_{0} + 9} \epsilon(t,\frac{x - x(t)}{\lambda(t)})|^{2} dx \\ - \frac{1}{d \lambda(t)^{d + 2}} Re \int Q(\frac{x - x(t)}{\lambda(t)})^{4/d} (P_{\leq k_{0} + 9} \epsilon (t,\frac{x - x(t)}{\lambda(t)}))^{2} dx \\
= \frac{1}{2 \lambda(t)^{2}} \| P_{\leq k_{0} + 9 + n(t)} \epsilon(t, x) \|_{\dot{H}^{1}}^{2} - \frac{d + 2}{2d\lambda(t)^{2}} \int Q(x)^{4/d} |P_{\leq k_{0} + 9 + n(t)} \epsilon(t,x)|^{2} dx \\ - \frac{1}{d \lambda(t)^{2}} Re \int Q(x)^{4/d} (P_{\leq k_{0} + 9 + n(t)} \epsilon (t,x)^{2} dx.
\endaligned
\end{equation}

Then using the spectral theory of $\mathcal L$ in Theorem $\ref{t3.1}$, provided
\begin{equation}\label{4.61}
 g \perp span \{ \chi_{0}, i \chi_{0}, Q_{x_{j}}, i Q_{x_{j}} \},
\end{equation}
there exists a fixed constant $\lambda_{d} > 0$ such that
\begin{equation}\label{4.62}
\frac{1}{2} \| \nabla g \|_{L^{2}}^{2} + \frac{1}{2} \| g \|_{L^{2}}^{2} - \frac{d + 2}{2d} \int Q(x)^{2} |g(x)|^{2} - \frac{1}{d} Re \int Q(x)^{2} g(x)^{2} dx \geq \lambda_{d} \| g \|_{H^{1}}^{2}.
 \end{equation}
Again using the fact that $\chi_{0}$ and $Q_{x_{j}}$ are smooth with rapidly decreasing derivatives, and the bounds on $\lambda(t)$ in $(\ref{4.20})$,
\begin{equation}\label{4.63}
(P_{\leq k_{0} + 9 + n(t)} \epsilon, f)_{L^{2}} \lesssim T^{-10}.
\end{equation}
Therefore, for $\eta_{0} \ll 1$ sufficiently small, there exists some fixed $\lambda_{1} > 0$ such that
\begin{equation}\label{4.64}
\frac{1}{2 \lambda(t)^{2}} \| \epsilon \|_{L^{2}}^{2} + \mathcal P_{2} \geq \frac{\lambda_{d}}{\lambda(t)^{2}} \| \epsilon \|_{L^{2}}^{2} + \lambda_{d} \| \tilde{\epsilon} \|_{\dot{H}^{1}}^{2} - O(2^{2k_{0}} T^{-10}).
\end{equation}

 Next, by the Sobolev embedding theorem and $(\ref{4.57})$,
\begin{equation}\label{4.65}
\int |\tilde{\epsilon}(t,x)|^{2 + \frac{4}{d}} dx \lesssim \| \tilde{\tilde{\epsilon}} \|_{\dot{H}^{1}}^{2} \| \epsilon \|_{L^{2}}^{\frac{4}{d}} + \frac{|\xi(t)|^{2}}{\lambda(t)^{2}} \| \epsilon \|_{L^{2}}^{2 + 8/d} \lesssim \eta_{0}^{\frac{4}{d}} \| \tilde{\tilde{\epsilon}} \|_{\dot{H}^{1}}^{2} + \eta_{0}^{8/d} \frac{|\xi(t)|^{2}}{\lambda(t)^{2}} \| \epsilon \|_{L^{2}}^{2}.
\end{equation}
Therefore, by interpolation, for $\eta_{0} \ll 1$ sufficiently small,
\begin{equation}\label{4.66}
\aligned
E(P_{\leq k_{0} + 9} u) &\geq \frac{1}{2} \frac{|\xi(t)|^{2}}{\lambda(t)^{2}} \| Q \|_{L^{2}}^{2} + \frac{\lambda_{d}}{2 \lambda(t)^{2}} \| \epsilon \|_{L^{2}}^{2} + \frac{\lambda_{d}}{2} \| \tilde{\tilde{\epsilon}} \|_{\dot{H}^{1}}^{2} - \frac{|\xi(t)|^{2}}{2 \lambda(t)^{2}} \| \epsilon \|_{L^{2}}^{2} - O(2^{2k_{0}} T^{-10}) \\
&\geq \frac{1}{4} \frac{|\xi(t)|^{2}}{\lambda(t)^{2}} \| Q \|_{L^{2}}^{2} + \frac{\lambda_{d}}{2 \lambda(t)^{2}} \| \epsilon \|_{L^{2}}^{2} + \frac{\lambda_{d}}{2} \| \tilde{\tilde{\epsilon}} \|_{\dot{H}^{1}}^{2} - O(2^{2k_{0}} T^{-10})
\endaligned
\end{equation}
Plugging $(\ref{4.66})$ into $(\ref{4.32})$ proves Theorem $\ref{t4.4}$.
\end{proof}

\section{A frequency localized Morawetz estimate}
Proceeding to the frequency localized Morawetz estimates, again start with dimension $d = 2$.
\subsection{Two dimensions}
\begin{theorem}\label{t10.1}
Let $J = [a, b]$ be an interval on which
\begin{equation}\label{10.1}
\frac{|\xi(t)|}{\lambda(t)} \leq \eta_{0}, \qquad \frac{1}{\eta_{1}} \leq \lambda(t) \leq \frac{1}{\eta_{1}} T^{1/100}, \qquad \text{for all} \qquad t \in J, \qquad \int_{J} \lambda(t)^{-2} dt = T, \qquad \eta_{1}^{-2} T = 2^{3k}.
\end{equation}
Also suppose that $\epsilon = \epsilon_{1} + i \epsilon_{2}$ and suppose $\xi(a) = x(b) = 0$. Finally suppose there exists a uniform bound on $x(t)$,
\begin{equation}\label{10.1.1}
\sup_{t \in J} |x(t)| \leq R = T^{1/25}.
\end{equation}
Then for $T$ sufficiently large,
\begin{equation}\label{10.1.2}
\int_{a}^{b} \| \epsilon(t) \|_{L^{2}}^{2} \lambda(t)^{-2} dt \leq 3 (\epsilon_{2}(a), Q + x \cdot \nabla Q)_{L^{2}} - 3(\epsilon_{2}(b), Q + x \cdot \nabla Q)_{L^{2}} + \frac{T^{1/15}}{\eta_{1}^{2}} \sup_{t \in J} \frac{|\xi(t)|^{2}}{\lambda(t)^{2}} + O(T^{-8}).
\end{equation}
\end{theorem}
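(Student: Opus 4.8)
The plan is to run a frequency-localized, spatially truncated virial (Morawetz) argument in the self-similar time variable $s$ of $(\ref{3.18})$, following the one-dimensional scheme of \cite{dodson2021determination} but feeding in the two-dimensional long time Strichartz estimate (Theorem $\ref{t6.3}$, Corollary $\ref{c6.4}$) and the almost conservation of energy (Theorems $\ref{t4.1}$, $\ref{t4.2}$). First I would fix a smooth cutoff $\phi$, $\phi(x)=1$ for $|x|\le 1$, $\phi$ supported in $|x|\le 2$, and a truncation radius $R'\gtrsim R$ chosen large enough that the exponential tails of $Q$ on the support of $\nabla\phi(x/R')$ are smaller than any power of $T$, and consider a functional of the schematic shape
\begin{equation*}
\mathcal{M}(t)=\operatorname{Im}\int \phi\big(\tfrac{x}{R'}\big)\, x\cdot\overline{P_{\leq k+9}u}\;\nabla P_{\leq k+9}u\;dx .
\end{equation*}
Since $\sup_{t\in J}|x(t)|\le R\ll R'$ and all derivatives of $Q$ decay exponentially, every contribution of the cutoff is $O(T^{-10})$. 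Expanding $\mathcal M$ around the soliton via $(\ref{6.16})$, the pure-$Q$ part vanishes because $Q$ is real, the quadratic part is $O(\|\epsilon\|_{L^2}^2)$ together with an $O(\eta_0)$ piece carrying the Galilean weight, and the linear part, after rescaling, equals $-(\epsilon_2(t),\,dQ+2x\cdot\nabla Q)_{L^2}=-2(\epsilon_2(t),\,\tfrac d2 Q+x\cdot\nabla Q)_{L^2}$, which in $d=2$ is $-2(\epsilon_2(t),\,Q+x\cdot\nabla Q)_{L^2}$. In particular $|\mathcal M(t)|\lesssim\|\epsilon(t)\|_{L^2}+(\text{small})$ uniformly on $J$, so after integration the boundary data will produce exactly the terms in $(\ref{10.1.2})$ up to acceptable errors.

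The core of the argument is the identity for $\tfrac{d}{dt}\mathcal M$. Writing the equation for $P_{\leq k+9}u$ as in $(\ref{4.6})$ so that the commutator $P_{\leq k+9}F(u)-F(P_{\leq k+9}u)$ appears as a source, and differentiating the weighted virial, the main term is the localized virial of $P_{\leq k+9}u$, whose leading part is $4E(P_{\leq k+9}u)$ by the mass-critical virial identity. Substituting the energy expansion $(\ref{4.5})$ and then the spectral coercivity $(\ref{4.28})$ for $\mathcal L$ on the orthogonal complement of $\{\chi_0,i\chi_0,Q_{x_j},iQ_{x_j}\}$ — valid up to the $O(T^{-10})$ defect caused by applying $P_{\leq k+9}$ to the orthogonality relations $(\ref{3.5})$ — one obtains a lower bound
\begin{equation*}
\frac{d}{dt}\mathcal M(t)\;\gtrsim\;\frac{1}{\lambda(t)^2}\,\|\epsilon(t)\|_{L^2}^2\;+\;\frac{|\xi(t)|^2}{\lambda(t)^2}\;-\;(\text{error}),
\end{equation*}
where the error splits into three types: (i) cutoff and soliton-tail terms, $O(T^{-10})$ by the choice of $R'$; (ii) frequency-truncation terms generated by $P_{\leq k+9}F(u)-F(P_{\leq k+9}u)$ and by the Duhamel remainders in $(\ref{3.19})$, which after splitting $u$ into $P_{\leq k+9}u$ and $P_{>k+9}u$ are controlled by the bilinear long time Strichartz bounds of Theorem $\ref{t6.3}$ and Corollary $\ref{c6.4}$ and the $\dot H^1$ bound of Theorem $\ref{t4.2}$; and (iii) modulation and Galilean terms, carrying the factors $\gamma_s+1-\tfrac{x_s}{\lambda}\cdot\xi-|\xi|^2$, $\xi_s-\tfrac{\lambda_s}{\lambda}\xi$, $\tfrac{\lambda_s}{\lambda}$, $\tfrac{x_s}{\lambda}+2\xi$ and powers of $\tfrac{|\xi|}{\lambda}$ against the (large) virial weight.

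To finish I would multiply by $\lambda(t)^{-2}$, integrate over $J$, and collect terms. The main term yields $\int_a^b\|\epsilon(t)\|_{L^2}^2\lambda(t)^{-2}\,dt$, and the boundary data is $\mathcal M(b)-\mathcal M(a)=2(\epsilon_2(a),Q+x\cdot\nabla Q)_{L^2}-2(\epsilon_2(b),Q+x\cdot\nabla Q)_{L^2}+O(\|\epsilon(a)\|_{L^2}^2+\|\epsilon(b)\|_{L^2}^2)+O(T^{-10})$, the quadratic remainders being harmless by $(\ref{3.25})$ and Theorem $\ref{t4.2}$. The type-(ii) error, by Theorem $\ref{t6.3}$ and Theorem $\ref{t4.1}$, is $\lesssim\tfrac{2^{2k}}{T}\int_J\|\epsilon\|_{L^2}^2\lambda^{-2}\,dt+2^{2k}T^{-10}$; since $2^{2k}=(\eta_1^{-2}T)^{2/3}$ one has $\tfrac{2^{2k}}{T}=\eta_1^{-4/3}T^{-1/3}\ll1$ for $T$ large, so this term is absorbed into the left side at the price of a bounded multiplicative constant — this is where the $3$ in $(\ref{10.1.2})$ is spent rather than $1$. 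The type-(iii) terms are integrated using $(\ref{3.21})$--$(\ref{3.24})$, $(\ref{3.25})$ and $(\ref{6.2.2})$ to trade each modulation factor for $\int_J\|\epsilon\|_{L^2}^2\lambda^{-2}$ (again a small fraction of the left side) or for powers of $\tfrac{|\xi|}{\lambda}$, and the genuinely Galilean contributions, together with the weight of size $\lesssim R'$, are estimated using $\xi(a)=x(b)=0$ to bound the accumulated drift of $\xi$ and $x$, landing them in $\tfrac{T^{1/15}}{\eta_1^2}\sup_{t\in J}\tfrac{|\xi(t)|^2}{\lambda(t)^2}$. Collecting everything gives $(\ref{10.1.2})$.

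The main obstacle, as in dimension one, is the quantitative bookkeeping of competing scales: the truncation radius $R'$ must beat $\sup_{t\in J}|x(t)|\le R=T^{1/25}$ and the soliton width $\lambda\le\eta_1^{-1}T^{1/100}$ by enough that the exponential tails of $Q$ on the support of $\nabla\phi$ are $\ll T^{-8}$, yet a virial weight of size $\sim R'$ must not inflate the Galilean errors past $\tfrac{T^{1/15}}{\eta_1^2}\sup\tfrac{|\xi|^2}{\lambda^2}$; and the frequency-truncation errors carry the factor $2^{2k}\sim T^{2/3}$, which is overcome only because the long time Strichartz estimate of Theorem $\ref{t6.3}$ gains the averaged quantity $\big(\tfrac1T\int_J\|\epsilon\|_{L^2}^2\lambda^{-2}\big)^{1/2}$ and the energy in Theorem $\ref{t4.1}$ comes with the $\tfrac1T$ normalization. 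This balance is more delicate than in \cite{dodson2021determination} precisely because in two dimensions every frequency-truncated estimate rests on the interaction Morawetz bilinear identity $(\ref{6.0.1})$--$(\ref{6.0.2})$ underlying Theorem $\ref{t6.2}$, so each individual error bound is correspondingly heavier.
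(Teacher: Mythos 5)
Your proposal reproduces the overall strategy of the paper's proof — a frequency-localized, spatially truncated virial differentiated in $t$, with the frequency-truncation commutator controlled via Theorem \ref{t6.3} and Corollary \ref{c6.4}, the energy expansion from Theorems \ref{t4.1}--\ref{t4.2} feeding the spectral coercivity \eqref{4.28}, and the boundary terms $\mathcal M(b)-\mathcal M(a)$ producing $2(\epsilon_2(a),Q+x\cdot\nabla Q)-2(\epsilon_2(b),Q+x\cdot\nabla Q)$ after the normalization $\xi(a)=x(b)=0$. That much is right. However, your specific choice of vector field creates a gap that I do not see how to close.

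You take $m(x)=\phi(x/R')\,x$ with $\phi$ a compactly supported bump, so that $m$ vanishes for $|x|\ge 2R'$. The kinetic term in $\frac{d}{dt}\mathcal M$ is $2\int \partial_j m_k\,\mathrm{Re}[\partial_j\overline{P_{\leq k+9}u}\,\partial_k P_{\leq k+9}u]$, and for your $m$ the Hessian is
\begin{equation}
\partial_j m_k=\phi\bigl(\tfrac{x}{R'}\bigr)\delta_{jk}+\frac{\phi'(|x|/R')}{R'\,|x|}\,x_jx_k,
\end{equation}
whose radial eigenvalue is $\phi(s)+s\phi'(s)=\frac{d}{ds}[s\phi(s)]$ at $s=|x|/R'$. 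Since $s\phi(s)$ runs from $1$ at $s=1$ down to $0$ at $s=2$, this eigenvalue is of order $-1$ somewhere in the transition region $R'<|x|<2R'$, independently of $R'$. This produces a genuinely negative contribution of size $\int_{|x|\sim R'}|\partial_r P_{\leq k+9}\tilde\epsilon|^2\,dx$ (the $\tilde Q$-part is indeed $O(T^{-10})$, but the $\tilde\epsilon$-part is not: $\tilde\epsilon$ carries no a priori spatial decay). Your assertion that ``every contribution of the cutoff is $O(T^{-10})$'' is therefore false on the $\epsilon$-piece. Bounding this term by Theorem \ref{t4.2} and integrating over $J$ (of physical length $|J|\sim T\eta_1^{-2}$ up to $T^{1/50}$) gives
\begin{equation}
\int_J\int_{|x|\sim R'}|\nabla P_{\leq k+9}\tilde\epsilon|^2\,dx\,dt\;\lesssim\;|J|\cdot\frac{2^{2k}}{T}\int_J\|\epsilon\|_{L^2}^2\lambda(t)^{-2}\,dt+|J|\,\sup_{t\in J}\frac{|\xi(t)|^2}{\lambda(t)^2}+\cdots,
\end{equation}
whose leading coefficient $|J|\cdot 2^{2k}/T\sim\eta_1^{-10/3}T^{2/3+O(1/50)}$ is enormous, and whose second term has coefficient $|J|\sim T^{1+O(1/50)}/\eta_1^2$ rather than the affordable $T^{1/15}/\eta_1^2$. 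Neither can be absorbed into the left side of \eqref{10.1.2} or into the Galilean budget. This is not a question of fine-tuning $R'$: the defect is order $-1$ for every admissible truncation radius.

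The paper sidesteps this entirely by taking the Morawetz weight $m(x)=\phi(|x|)\tfrac{x}{|x|}$ with $\phi(r)=\int_0^r\chi^2(s/2R)\,ds$, which grows linearly and then saturates at a constant $\approx 4R$ rather than decaying. Its Hessian decomposes as $\chi^2(r/2R)\delta_{jk}+\bigl[\tfrac{\phi(r)}{r}-\chi^2(r/2R)\bigr]\bigl(\delta_{jk}-\tfrac{x_jx_k}{|x|^2}\bigr)$, which is positive semi-definite everywhere because $\chi^2\geq 0$ and $\tfrac{\phi(r)}{r}\geq\chi^2(r/2R)$ by monotonicity of $\chi^2$. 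The transition-region part of the kinetic term is thus $\geq 0$ and is simply dropped from below (this is the fourth term in \eqref{10.5}); the only $\epsilon$-dependent cutoff error that survives is the $\psi''$ term, which comes with the small coefficient $1/R^2$ and integrates to $R^{-1}\int_J\|\epsilon\|^2\lambda^{-2}\,dt$, safely absorbable. If you switch to this weight and keep the rest of your argument, the proof goes through as you describe; as written, however, the sign defect of $\phi(x/R')x$ is a real obstruction.
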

\begin{proof}
Define a Morawetz potential. Let $\chi(r) \in C^{\infty}([0, \infty))$ be a smooth, radial function, satisfying $\chi(r) = 1$ for $0 \leq r \leq 1$, and supported on $r \leq 2$. Then let
\begin{equation}\label{10.2}
\phi(r) = \int_{0}^{r} \chi^{2}(\frac{s}{2R}) ds = \int_{0}^{r} \psi(\frac{s}{2R}) ds,
\end{equation}
and let
\begin{equation}\label{10.3}
M(t) = \int \phi(r) Im[\overline{P_{\leq k + 9} u} \partial_{r} P_{\leq k + 9} u](t,x) dx = \int \phi(|x|) \frac{x}{|x|} \cdot Im[\overline{P_{\leq k + 9} u} \nabla P_{\leq k + 9} u](t,x) dx.
\end{equation}
Following $(\ref{6.34})$,
\begin{equation}\label{10.4}
i \partial_{t} P_{\leq k + 9} u + \Delta P_{\leq k + 9} u + F(P_{\leq k + 9} u) = F(P_{\leq k + 9} u) - P_{\leq k + 9} F(u) = -\mathcal N.
\end{equation}

Plugging in $(\ref{10.4})$ and integrating by parts,
\begin{equation}\label{10.5}
\aligned
\frac{d}{dt} M(t) = \int \phi(r) Re[-\Delta \overline{P_{\leq k + 9} u} \partial_{r} P_{\leq k + 9} u + \overline{P_{\leq k + 9} u} \Delta \partial_{r} P_{\leq k + 9} u] \\ + \int \phi(r) Re[-F(\overline{P_{\leq k + 9} u}) \partial_{r} P_{\leq k + 9} u + \overline{P_{\leq k + 9} u} \partial_{r} F(P_{\leq k + 9} u)] \\ + \int \phi(r) Re[\overline{P_{\leq k + 9} u} \partial_{r} \mathcal N](t,x) dx - \int \phi(r) Re[\bar{\mathcal N} \partial_{r} P_{\leq k + 9} u](t,x) dx \\
= 2 \int \chi^{2}(\frac{x}{R}) |\nabla P_{\leq k + 9} u|^{2} dx  - \frac{1}{2R^{2}} \int \psi''(\frac{x}{R}) |P_{\leq k + 9} u|^{2} dx - \int \chi^{2}(\frac{x}{R}) |P_{\leq k + 9} u|^{4} dx \\ + 2 \int [\frac{1}{|x|} \phi(x) - \chi^{2}(\frac{x}{R})](\delta_{jk} - \frac{x_{j} x_{k}}{|x|^{2}}) Re(\overline{\partial_{j} P_{\leq k + 9} u} \partial_{k} P_{\leq k + 9} u) dx \\
+ \int \phi(r) Re[\overline{P_{\leq k + 9} u} \partial_{r} \mathcal N](t,x) dx - \int \phi(r) Re[\bar{\mathcal N} \partial_{r} P_{\leq k + 9} u](t,x) dx.
\endaligned
\end{equation}

Next, following $(\ref{6.41})$,
\begin{equation}\label{10.6}
\aligned
\mathcal N = P_{\leq k + 9}(2 |u_{\leq k + 6}|^{2} u_{\geq k + 6} + (u_{\leq k + 6})^{2} \overline{u_{\geq k + 6}}) - (2 |u_{\leq k + 6}|^{2}  u_{k + 6 \leq \cdot \leq k + 9} + (u_{\leq k + 6})^{2} \overline{u_{k + 6 \leq \cdot \leq k + 9}}) \\
+ P_{\leq k + 9} O((u_{\geq k + 6})^{2} u) + O((u_{k + 6 \leq \cdot \leq k + 9})^{2} u) = \mathcal N^{(1)} + \mathcal N^{(2)}.
\endaligned
\end{equation}
As in $(\ref{6.42})$, by Theorems $\ref{t6.2}$ and $\ref{t6.3}$,
\begin{equation}\label{10.7}
\aligned
\int_{a}^{b} \int \phi(r) Re[\overline{P_{\leq k + 9} u} \partial_{r} \mathcal N^{(2)}] dx dt - \int_{0}^{T} \int \phi(r) Re[\overline{\mathcal N^{(2)}} \partial_{r} P_{\leq k + 9} u] dx dt \\
\lesssim 2^{k} R \| (u_{\geq k + 6})(u_{\leq k + 3}) \|_{L_{t,x}^{2}}^{2} + 2^{k} R \| u_{\geq k + 6} \|_{L_{t,x}^{4}}^{2} \| u_{\geq k + 3} \|_{L_{t,x}^{4}}^{2} \\
\lesssim 2^{k} R (\frac{1}{T} \int_{a}^{b} \| \epsilon(t) \|_{L^{2}}^{2} \lambda(t)^{-2} dt + 2^{2k} T^{-10}).
\endaligned
\end{equation}
Splitting $\mathcal N^{(1)}$,
\begin{equation}\label{10.8}
\mathcal N^{(1)} = O((u_{\geq k + 6})(u_{\geq k + 3}) u) + O((u_{\geq k + 6})(u_{\leq k + 3})^{2}) = \mathcal N^{(1,1)} + \mathcal N^{(1,2)}.
\end{equation}
Making calculations identical to the estimate $(\ref{10.7})$,
\begin{equation}\label{10.9}
\aligned
\int_{a}^{b} \int \phi(r) Re[\overline{P_{\leq k + 9} u} \partial_{r} \mathcal N^{(1,1)}] dx dt - \int_{a}^{b} \int \phi(r) Re[\overline{\mathcal N^{(1,1)}} \partial_{r} P_{\leq k + 9} u] dx dt \\
+ \int_{0}^{T} \int \phi(x) Re[\overline{P_{k + 3 \leq \cdot \leq k + 9} u} \partial_{r} \mathcal N^{(1,1)}] dx dt - \int_{a}^{b} \int \phi(x) Re[\overline{\mathcal N^{(1,1)}} \partial_{r} P_{\geq k + 3 \leq \cdot \leq k + 9} u] dx dt \\
\lesssim 2^{k} R \| (u_{\geq k + 6})(u_{\leq k}) \|_{L_{t,x}^{2}} \| (u_{\geq k + 3})(u_{\leq k}) \|_{L_{t,x}^{2}} + 2^{k} R \| u_{\geq k + 3} \|_{L_{t,x}^{4}}^{2} \| u_{\geq k} \|_{L_{t,x}^{4}}^{2} \\
\lesssim 2^{k} R (\frac{1}{T} \int_{a}^{b} \| \epsilon(t) \|_{L^{2}}^{2} \lambda(t)^{-2} dt  + 2^{2k} T^{-10}).
\endaligned
\end{equation}
Finally, using Bernstein's inequality, the fact that $\phi$ is smooth, rapidly decreasing, $R = T^{1/25}$, and $\| u \|_{L_{t,x}^{4}([a, b] \times \mathbb{R}^{2})} \lesssim T^{1/4}$,
\begin{equation}\label{10.10}
\aligned
 \int_{a}^{b} \int \phi(r) Re[\overline{u_{\leq k + 3}} \partial_{r} \mathcal N^{(1,2)}] dx dt - \int_{a}^{b} \int \phi(r) Re[\overline{\mathcal N^{(1,2)}} \partial_{r} u_{\leq k + 3}] dx dt \\
 \lesssim \| P_{\geq k + 3} \phi(x) \|_{L^{\infty}} \| (u_{\geq k + 6} u)(u_{\leq k + 3}) \|_{L_{t,x}^{2}} \| u_{\leq k + 3} \|_{L_{t,x}^{4}}^{2} \lesssim \frac{1}{T^{9}}.
 \endaligned
\end{equation}
Therefore, the error arising from frequency truncation is bounded by
\begin{equation}
2^{k} R (\frac{1}{T} \int_{a}^{b} \| \epsilon(t) \|_{L^{2}}^{2} \lambda(t)^{-2} dt + 2^{2k} T^{-10}).
\end{equation}

Using the fact that $(\frac{1}{r} \phi(r) - \chi^{2}(r)) (\delta_{jk} - \frac{x_{j} x_{k}}{|x|^{2}})$ is a positive definite matrix, by the fundamental theorem of calculus,
\begin{equation}\label{10.11}
\aligned
2 \int_{a}^{b} \int \chi^{2}(\frac{x}{R}) |\nabla P_{\leq k + 9} u|^{2} dx dt - \frac{1}{2R^{2}} \int_{a}^{b} \int \psi''(\frac{x}{R}) |P_{\leq k + 9} u|^{2} dx dt - \int_{a}^{b} \int \chi^{2}(\frac{x}{R}) |P_{\leq k + 9} u|^{4} dx dt \\ \lesssim M(b) - M(a) + 2^{k} R(\frac{1}{T} \int_{J} \| \epsilon(t) \|_{L^{2}}^{2} \lambda(t)^{-2} dt)) + O(T^{-8}).
\endaligned
\end{equation}

Next compute $M(b) - M(a)$ under the assumption that $\xi(a) = x(b) = 0$. Since $Q$ is smooth, real valued, rapidly decreasing, all its derivatives are rapidly decreasing, by $(\ref{10.1})$, $(\ref{10.1.1})$, $(\ref{10.2})$, and $\xi(b) = x(a) = 0$,
\begin{equation}\label{10.17}
\int \phi(|x|) \frac{x}{|x|} \cdot Im[\overline{P_{\leq k + 9} (e^{-i \gamma(t)} e^{-ix \cdot \frac{\xi(t)}{\lambda(t)}} \frac{1}{\lambda(t)} Q(\frac{x - x(t)}{\lambda(t)}))} \nabla (P_{\leq k + 9} (e^{-i \gamma(t)} e^{-ix \cdot \frac{\xi(t)}{\lambda(t)}} \frac{1}{\lambda(t)} Q(\frac{x - x(t)}{\lambda(t)})))] dx|_{a}^{b}
\end{equation}
\begin{equation}\label{10.16}
\aligned
= \frac{1}{\lambda(t)^{2}} \int \phi(|x|) \frac{x}{|x|} \cdot \frac{\xi(t)}{\lambda(t)} |Q(\frac{x - x(t)}{\lambda(t)})|^{2} dx + O(2^{k} R T^{-10}) \\ = \frac{x(t)}{\lambda(t)^{2}} \cdot \frac{\xi(t)}{\lambda(t)} \int Q(\frac{x}{\lambda(t)})^{2} dx|_{a}^{b} + O(2^{k} R T^{-10}) = O(2^{k} R T^{-10}).
\endaligned
\end{equation}

Next, by Theorem $\ref{t4.2}$, $(\ref{10.1})$, $(\ref{10.1.1})$, $(\ref{10.2})$, $\xi(b) = x(a) = 0$, $(\epsilon_{1}, \nabla Q)_{L^{2}} = (\epsilon_{2}, \nabla Q)_{L^{2}}$, the fact that $Q$ is smooth and rapidly decreasing, and that $\phi(|x|) \frac{x}{|x|}$ is smooth,
\begin{equation}\label{10.18}
\aligned
\int \phi(|x|) \frac{x}{|x|} \cdot Im[\overline{P_{\leq k + 9} (e^{-i \gamma(t)} e^{-ix \cdot \frac{\xi(t)}{\lambda(t)}} \frac{1}{\lambda(t)} \epsilon(\frac{x - x(t)}{\lambda(t)}))} \nabla (P_{\leq k + 9} (e^{-i \gamma(t)} e^{-ix \cdot \frac{\xi(t)}{\lambda(t)}} \frac{1}{\lambda(t)} Q(\frac{x - x(t)}{\lambda(t)})))] dx|_{a}^{b} \\
= \xi(t) \cdot (\epsilon_{1}, x Q)_{L^{2}} + \frac{x(t) \cdot \xi(t)}{\lambda(t)} (\epsilon_{1}, Q)_{L^{2}} - (\epsilon_{2}, x \cdot \nabla Q)_{L^{2}} - x(t)(\epsilon_{2}, \nabla Q)_{L^{2}} + O(T^{-10})|_{a}^{b}, \\
= \xi(t) \cdot (\epsilon_{1}, x Q)_{L^{2}} + (\epsilon_{2}, x \cdot \nabla Q) + O(T^{-10})|_{a}^{b} = O(\frac{|\xi(t)|}{\lambda(t)} \lambda(t) \| \epsilon \|_{L^{2}}) - (\epsilon_{2}, x \cdot \nabla Q)_{L^{2}}|_{a}^{b} + O(T^{-10}) \\
= \frac{T^{1/50}}{\eta_{1}^{2}} O(\frac{|\xi(t)|^{2}}{\lambda(t)^{2}}) + \| \epsilon \|_{L^{2}}^{2} - (\epsilon_{2}, x \cdot \nabla Q)_{L^{2}}|_{a}^{b} + O(T^{-10}) \\ = -(\epsilon_{2}, x \cdot \nabla Q)_{L^{2}}|_{a}^{b} +  O(\frac{2^{2k} T^{1/50}}{\eta_{1}^{2} T} \int_{J} \| \epsilon(t) \|_{L^{2}}^{2} \lambda(t)^{-2} dt + \frac{T^{1/50}}{\eta_{1}^{2}} \sup_{t \in J} \frac{|\xi(t)|^{2}}{\lambda(t)^{2}} + 2^{2k} \frac{T^{1/50}}{\eta_{1}^{2}} T^{-10}).
\endaligned
\end{equation}
Also, integrating by parts, since $Q$ and all its derivatives are rapidly decreasing, as well as $\phi(|x|) \frac{x}{|x|}$ is smooth,
\begin{equation}\label{10.19}
\aligned
\int \phi(|x|) \frac{x}{|x|} \cdot Im[\overline{P_{\leq k + 9} (e^{-i \gamma(t)} e^{-ix \cdot \frac{\xi(t)}{\lambda(t)}} \frac{1}{\lambda(t)} Q(\frac{x - x(t)}{\lambda(t)}))} \nabla (P_{\leq k + 9} (e^{-i \gamma(t)} e^{-ix \cdot \frac{\xi(t)}{\lambda(t)}} \frac{1}{\lambda(t)} \epsilon(\frac{x - x(t)}{\lambda(t)})))] dx|_{a}^{b}  \\ = (\ref{10.18}) - \int \nabla \cdot (\phi(|x|) \frac{x}{|x|}) \cdot Im[\overline{ \frac{1}{\lambda(t)} Q(\frac{x - x(t)}{\lambda(t)}))} ( \frac{1}{\lambda(t)} \epsilon(\frac{x - x(t)}{\lambda(t)})))] dx|_{a}^{b} + O(T^{-10})  \\ = (\ref{10.18}) - 2(\epsilon_{2}, Q)_{L^{2}}|_{a}^{b} + O(T^{-10}).
\endaligned
\end{equation}
Finally, by Theorem $\ref{t4.2}$, for any $t \in J$,
\begin{equation}\label{10.20}
\aligned
\int \phi(|x|) \frac{x}{|x|} \cdot Im[\overline{P_{\leq k + 9} (e^{-i \gamma(t)} e^{-ix \cdot \frac{\xi(t)}{\lambda(t)}} \frac{1}{\lambda(t)} \epsilon(\frac{x - x(t)}{\lambda(t)}))} \nabla (P_{\leq k + 9} (e^{-i \gamma(t)} e^{-ix \cdot \frac{\xi(t)}{\lambda(t)}} \frac{1}{\lambda(t)} \epsilon(\frac{x - x(t)}{\lambda(t)})))] dx \\
\lesssim R \| \epsilon \|_{L^{2}} \| P_{\leq k + 9} (e^{-i \gamma(t)} e^{-ix \cdot \frac{\xi(t)}{\lambda(t)}} \frac{1}{\lambda(t)} \epsilon(\frac{x - x(t)}{\lambda(t)})) \|_{\dot{H}^{1}} + R \frac{|\xi(t)|}{\lambda(t)} \| \epsilon \|_{L^{2}}^{2} \\
\lesssim R (\frac{2^{2k} T^{1/50}}{\eta_{1}^{2} T} \int_{J} \| \epsilon(t) \|_{L^{2}}^{2} \lambda(t)^{-2} dt + \frac{T^{1/50}}{\eta_{1}^{2}} \sup_{t \in J} \frac{|\xi(t)|^{2}}{\lambda(t)^{2}} + 2^{2k} \frac{T^{1/50}}{\eta_{1}^{2}} T^{-10}).
\endaligned
\end{equation}
Therefore, 
\begin{equation}\label{10.21}
\aligned
M(b) - M(a) = 2 (\epsilon_{2}(a), Q + x \cdot \nabla Q)_{L^{2}} - 2(\epsilon_{2}(b), Q + x \cdot \nabla Q)_{L^{2}} \\ + O(\frac{2^{2k} T^{1/15}}{\eta_{1}^{2} T} \int_{J} \| \epsilon(t) \|_{L^{2}}^{2} \lambda(t)^{-2} dt + \frac{T^{1/15}}{\eta_{1}^{2}} \sup_{t \in J} \frac{|\xi(t)|^{2}}{\lambda(t)^{2}} + 2^{2k} \frac{T^{1/15}}{\eta_{1}^{2}} T^{-10})).
\endaligned
\end{equation}

Therefore, to complete the proof of Theorem $\ref{t10.1}$, it only remains to obtain a lower bound for
\begin{equation}\label{10.12}
\aligned
2 \int_{a}^{b} \int \chi^{2}(\frac{x}{R}) |\nabla P_{\leq k + 9} u|^{2} dx dt - \frac{1}{2R^{2}} \int_{a}^{b} \int \psi''(\frac{x}{R}) |P_{\leq k + 9} u|^{2} dx dt - \int_{a}^{b} \int \chi^{2}(\frac{x}{R}) |P_{\leq k + 9} u|^{4} dx dt.
\endaligned
\end{equation}

Splitting
\begin{equation}\label{10.14}
|P_{\leq k + 9} u|^{2} \leq 2 \lambda(t)^{-2} |P_{\leq k + 9}(e^{-i \gamma(t)} e^{ix \cdot \frac{\xi(t)}{\lambda(t)}}  Q(\frac{x - x(t)}{\lambda(t)})|^{2} + 2 \lambda(t)^{-2} |P_{\leq k + 9}(e^{-i \gamma(t)} e^{ix \cdot \frac{\xi(t)}{\lambda(t)}}  \epsilon(t, \frac{x - x(t)}{\lambda(t)})|^{2}.
\end{equation}
By $(\ref{10.1})$, $(\ref{10.1.1})$, and $(\ref{10.2})$, the support of $\psi''(x)$, and the fact that $Q$ is smooth and all its derivatives are rapidly decreasing,
\begin{equation}\label{10.15.1}
\frac{1}{R^{2}} \lambda(t)^{-2} \int \psi''(\frac{x}{R}) |P_{\leq k + 9}(e^{-i \gamma(t)} e^{ix \cdot \frac{\xi(t)}{\lambda(t)}} Q(\frac{x - x(t)}{\lambda(t)})|^{2} dx \lesssim \lambda(t)^{-2} \frac{1}{T^{10}}.
\end{equation}
On the other hand, by $(\ref{10.1})$, $(\ref{10.1.1})$, and $(\ref{10.2})$, for $T$ sufficiently large,
\begin{equation}\label{10.16}
\frac{1}{R^{2}} \lambda(t)^{-2} \int \psi''(\frac{x}{R}) |P_{\leq k + 9}(e^{-i \gamma(t)} e^{ix \cdot \frac{\xi(t)}{\lambda(t)}} \epsilon(t, \frac{x - x(t)}{\lambda(t)})|^{2} dx \lesssim  \frac{1}{R} \lambda(t)^{-2} \| \epsilon \|_{L^{2}}^{2}.
\end{equation}

Since $Q$ is smooth and all its derivatives are rapidly decreasing, by $(\ref{10.1})$, $(\ref{10.1.1})$, and $(\ref{10.2})$,
\begin{equation}\label{10.17}
\aligned
\frac{1}{2} \int (1 - \chi^{2}(\frac{x}{2R})) |\nabla P_{\leq k + 9} (e^{-i \gamma(t)} e^{-ix \cdot \xi(t)} \frac{1}{\lambda(t)} Q(\frac{x - x(t)}{\lambda(t)})|^{2} dx \\ - \frac{1}{4} \int (1 - \chi^{2}(\frac{x}{2R})) |P_{\leq k + 9} (e^{-i \gamma(t)} e^{-ix \cdot \xi(t)} \frac{1}{\lambda(t)} Q(\frac{x - x(t)}{\lambda(t)})|^{4} dx = O(2^{2k} T^{-10}).
\endaligned
\end{equation}
Also,
\begin{equation}\label{10.19}
\aligned
\int (1 - \chi^{2}(\frac{x}{2R})) Re(\overline{\nabla P_{\leq k + 9} (e^{-i \gamma(t)} e^{-ix \cdot \xi(t)} \frac{1}{\lambda(t)} Q(\frac{x - x(t)}{\lambda(t)})} \cdot \nabla P_{\leq k + 9} (e^{-i \gamma(t)} e^{-ix \cdot \xi(t)} \frac{1}{\lambda(t)} \epsilon(t, \frac{x - x(t)}{\lambda(t)})) dx \\ -  \int (1 - \chi^{2}(\frac{x}{2R})) |P_{\leq k + 9} (e^{-i \gamma(t)} e^{-ix \cdot \xi(t)} \frac{1}{\lambda(t)} Q(\frac{x - x(t)}{\lambda(t)})|^{2} \\ \times Re(\overline{ P_{\leq k + 9} (e^{-i \gamma(t)} e^{-ix \cdot \xi(t)} \frac{1}{\lambda(t)} Q(\frac{x - x(t)}{\lambda(t)})} \cdot  P_{\leq k + 9} (e^{-i \gamma(t)} e^{-ix \cdot \xi(t)} \frac{1}{\lambda(t)} \epsilon(t, \frac{x - x(t)}{\lambda(t)})) dx = O(2^{2k} T^{-10}).
\endaligned
\end{equation}
Therefore, from $(\ref{4.24})$,
\begin{equation}\label{10.20}
\aligned
\frac{1}{2} \int \chi^{2}(\frac{x}{2R}) |\nabla P_{\leq k + 9} (e^{-i \gamma(t)} e^{-ix \cdot \xi(t)} \frac{1}{\lambda(t)} Q(\frac{x - x(t)}{\lambda(t)})|^{2} dx \\ - \frac{1}{4} \int  \chi^{2}(\frac{x}{2R}) |P_{\leq k + 9} (e^{-i \gamma(t)} e^{-ix \cdot \xi(t)} \frac{1}{\lambda(t)} Q(\frac{x - x(t)}{\lambda(t)})|^{4} dx \\
+ \int  \chi^{2}(\frac{x}{2R}) Re(\overline{\nabla P_{\leq k + 9} (e^{-i \gamma(t)} e^{-ix \cdot \xi(t)} \frac{1}{\lambda(t)} Q(\frac{x - x(t)}{\lambda(t)})} \cdot \nabla P_{\leq k + 9} (e^{-i \gamma(t)} e^{-ix \cdot \xi(t)} \frac{1}{\lambda(t)} \epsilon(t, \frac{x - x(t)}{\lambda(t)})) dx \\ -  \int \chi^{2}(\frac{x}{2R}) |P_{\leq k + 9} (e^{-i \gamma(t)} e^{-ix \cdot \xi(t)} \frac{1}{\lambda(t)} Q(\frac{x - x(t)}{\lambda(t)})|^{2} \\ \times Re(\overline{ P_{\leq k + 9} (e^{-i \gamma(t)} e^{-ix \cdot \xi(t)} \frac{1}{\lambda(t)} Q(\frac{x - x(t)}{\lambda(t)})} \cdot  P_{\leq k + 9} (e^{-i \gamma(t)} e^{-ix \cdot \xi(t)} \frac{1}{\lambda(t)} \epsilon(t, \frac{x - x(t)}{\lambda(t)})) dx \\
= \frac{1}{2} \frac{|\xi(t)|^{2}}{\lambda(t)^{2}} \| Q \|_{L^{2}}^{2} + \frac{1}{2 \lambda(t)^{2}} \| \epsilon \|_{L^{2}}^{2} - \frac{|\xi(t)|^{2}}{2 \lambda(t)^{2}} \| \epsilon \|_{L^{2}}^{2} + O(2^{2k} T^{-10}).
\endaligned
\end{equation}

Turning to the terms with two $\epsilon$'s, by the product rule,
\begin{equation}\label{10.22}
\aligned
\frac{1}{2} \int \chi^{2}(\frac{x}{2R}) |\nabla P_{\leq k + 9} (e^{-i \gamma(t)} e^{-ix \cdot \xi(t)} \frac{1}{\lambda(t)} \epsilon(t, \frac{x - x(t)}{\lambda(t)})|^{2} dx \\ = \frac{1}{2} \| \chi(\frac{x}{2R}) P_{\leq k + 9} (e^{-i \gamma(t)} e^{-ix \cdot \xi(t)} \frac{1}{\lambda(t)} \epsilon(t, \frac{x - x(t)}{\lambda(t)})) \|_{\dot{H}^{1}}^{2} + O(\frac{1}{R \lambda(t)^{2}} \| \epsilon \|_{L^{2}}^{2}).
\endaligned
\end{equation}
Then by $(\ref{10.1})$, $(\ref{10.1.1})$, $(\ref{10.2})$, $(\ref{4.32})$, and the fact that $Q_{x_{j}}$ and $\chi_{0}$ are rapidly decreasing,
\begin{equation}\label{10.23}
(\chi(\frac{x \lambda(t) + x(t)}{2R}) \epsilon, f)_{L^{2}} \lesssim T^{-10}.
\end{equation}
Therefore, following the analysis in $(\ref{4.25})$--$(\ref{4.32.1})$,
\begin{equation}\label{10.24}
\aligned
\frac{1}{2} \int \chi^{2}(\frac{x}{2R}) |\nabla P_{\leq k + 9} (e^{-i \gamma(t)} e^{-ix \cdot \xi(t)} \frac{1}{\lambda(t)} \epsilon(t, \frac{x - x(t)}{\lambda(t)})|^{2} dx \\ - \frac{1}{\lambda(t)^{4}} \int |P_{\leq k + 9}(e^{-i \gamma(t)} e^{-ix \cdot \frac{\xi(t)}{\lambda(t)}}Q(\frac{x - x(t)}{\lambda(t)}))|^{2} |P_{\leq k + 9} (e^{-i \gamma(t)} e^{-ix \cdot \xi(t)} \epsilon(t,\frac{x - x(t)}{\lambda(t)}))|^{2} dx \\ 
- \frac{1}{2 \lambda(t)^{4}} Re \int (\overline{P_{\leq k + 9}(e^{-i \gamma(t)} e^{-ix \cdot \frac{\xi(t)}{\lambda(t)}}Q(\frac{x - x(t)}{\lambda(t)}))})^{2} (P_{\leq k + 9} (e^{-i \gamma(t)} e^{-ix \cdot \xi(t)} \epsilon(t,\frac{x - x(t)}{\lambda(t)})))^{2} dx \\
\geq \frac{\lambda_{1}}{\lambda(t)^{2}} \| \epsilon \|_{L^{2}}^{2} + \lambda_{1} \int \chi(\frac{x}{2R})^{2} |\nabla P_{\leq k + 9} (e^{-i \gamma(t)} e^{-ix \cdot \xi(t)} \frac{1}{\lambda(t)} \epsilon(t, \frac{x - x(t)}{\lambda(t)})|^{2} dx  - O(2^{2k} T^{-10}).
\endaligned
\end{equation}

Following $(\ref{1.8})$,
\begin{equation}\label{10.25}
\aligned
\int \chi^{2}(\frac{x}{2R}) |g(x_{1}, x_{2})|^{4} dx_{1} dx_{2} \leq \int \chi(\frac{x}{2R}) |g(x_{1}, x_{2})|^{2} \cdot (\sup_{x_{1} \in \mathbb{R}} \chi(\frac{x}{2R}) |g(x_{1}, x_{2})|^{2}) dx_{1} dx_{2} \\
\leq \int \chi(\frac{x}{2R}) |g(x_{1}, x_{2})|^{2} \cdot (\int |\partial_{x_{1}}(\chi(\frac{x}{2R}) |g(s, x_{2})|^{2})| ds) dx_{1} dx_{2} \\ = \int (\int \chi(\frac{x}{2R}) |g(x_{1}, x_{2})|^{2} dx_{1}) \cdot (\int |\partial_{x_{1}}(\chi(\frac{x}{2R}) |g(x_{1}, x_{2})|^{2})| dx_{1}) dx_{2} \\
\leq \sup_{x_{2} \in \mathbb{R}} (\int \chi(\frac{x}{2R}) |g(x_{1}, x_{2})|^{2} dx_{1}) \cdot \int (\int |\partial_{x_{1}}(\chi(\frac{x}{2R}) |g(x_{1}, x_{2})|^{2})| dx_{1}) dx_{2} \\
\leq \int \int |\partial_{x_{2}}(\chi(\frac{x}{2R}) |g(x_{1}, x_{2})|^{2})| dx_{1} dx_{2} \cdot \int \int |\partial_{x_{1}}(\chi(\frac{x}{2R}) |g(x_{1}, x_{2})|^{2})| dx_{1} dx_{2} \\
\lesssim \| \chi(\frac{x}{2R}) \nabla g \|_{L^{2}}^{2} \| g \|_{L^{2}}^{2} + \frac{1}{R^{2}} \| g \|_{L^{2}}^{4}.
\endaligned
\end{equation}
Therefore,
\begin{equation}\label{10.26}
\aligned
\int \chi(\frac{x}{2R})^{2} |P_{\leq k + 9}(e^{-i \gamma(t)} e^{-ix \cdot \xi(t)} \lambda(t)^{-1} \epsilon(t, \frac{x - x(t)}{\lambda(t)}))|^{4} dx \\\lesssim \eta_{0}^{2} \| \chi(\frac{x}{2R}) \nabla P_{\leq k + 9}(e^{-i \gamma(t)} e^{-ix \cdot \xi(t)} \lambda(t)^{-1} \epsilon(t, \frac{x - x(t)}{\lambda(t)}) \|_{L^{2}}^{2} + \frac{\eta_{0}^{2}}{R^{2}} \| \epsilon \|_{L^{2}}^{2}.
\endaligned
\end{equation}
Finally, by interpolation,
\begin{equation}\label{10.27}
\aligned
\int \chi(\frac{x}{2R})^{2} |P_{\leq k + 9}(e^{-i \gamma(t)} e^{-ix \cdot \xi(t)} \lambda(t)^{-1} \epsilon(t, \frac{x - x(t)}{\lambda(t)}))|^{3} |P_{\leq k + 9} (e^{-i \gamma(t)} e^{-ix \cdot \frac{\xi(t)}{\lambda(t)}} \frac{1}{\lambda(t)} Q(\frac{x - x(t)}{\lambda(t)}))| dx \\\lesssim \eta_{0} \| \chi(\frac{x}{2R}) \nabla P_{\leq k + 9}(e^{-i \gamma(t)} e^{-ix \cdot \xi(t)} \lambda(t)^{-1} \epsilon(t, \frac{x - x(t)}{\lambda(t)}) \|_{L^{2}} \frac{\| \epsilon \|_{L^{2}}}{\lambda(t)} + \frac{\eta_{0}}{R \lambda(t)} \| \epsilon \|_{L^{2}}^{2}.
\endaligned
\end{equation}

Therefore, we have finally proved that there exists some $\lambda_{1} > 0$ such that for $\eta_{0}$ sufficiently small, 
\begin{equation}\label{10.28}
(\ref{10.12}) \gtrsim \lambda_{1} \int_{a}^{b} \| \epsilon \|_{L^{2}}^{2} \lambda(t)^{-2} dt - O(T^{-8}).
\end{equation}
Combining $(\ref{10.28})$, $(\ref{10.11})$, $(\ref{10.15.1})$, $(\ref{10.16})$, and $(\ref{10.21})$ proves Theorem $\ref{t10.1}$.
\end{proof}

\subsection{Dimensions $d \geq 3$}
The computations in dimensions $d \geq 3$ are quite similar.

\begin{theorem}\label{t10.2}
Let $J = [a, b]$ be an interval on which $(\ref{5.4})$ and $(\ref{5.5})$ hold, and $|x(t)| \leq T^{\frac{1}{2000d^{2}}}$.  Also suppose that $\epsilon = \epsilon_{1} + i \epsilon_{2}$, and that $\xi(a) = 0$ and $x(b) = 0$. Then for $T$ sufficiently large,
\begin{equation}\label{10.29}
\int_{a}^{b} \| \epsilon(t) \|_{L^{2}}^{2} \lambda(t)^{-2} dt \leq 3 (\epsilon_{2}(a), Q + x \cdot \nabla Q)_{L^{2}} - 3(\epsilon_{2}(b), Q + x \cdot \nabla Q)_{L^{2}} + \frac{T^{1/25d}}{\eta_{1}^{2}} \sup_{t \in J} \frac{|\xi(t)|^{2}}{\lambda(t)^{2}} + O(T^{-8}).
\end{equation}
\end{theorem}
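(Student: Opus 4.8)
The plan is to adapt the proof of Theorem \ref{t10.1} to dimensions $d \geq 3$, replacing the cutoff $P_{\leq k+9}$ by $P_{\leq k_0+9}$ with $k_0 = k(1+\frac{1}{10d})$, replacing the two-dimensional long time Strichartz estimate and energy increment (Theorems \ref{t6.2}, \ref{t6.3}, \ref{t4.2}) by their higher-dimensional counterparts (Theorems \ref{t5.1} and \ref{t4.4}), and replacing the two-dimensional Gagliardo--Nirenberg step $(\ref{10.25})$ by the Sobolev inequality $(\ref{1.10.1})$. First I would introduce the frequency-truncated Morawetz potential
\[
M(t) = \int \phi(|x|)\,\frac{x}{|x|}\cdot Im[\overline{P_{\leq k_0+9}u}\,\nabla P_{\leq k_0+9}u](t,x)\,dx,
\]
with $\phi$ as in $(\ref{10.2})$ and $R$ a small power of $T$ with $R \geq \sup_{t\in J}|x(t)|$, say $R = T^{1/(2000d^2)}$. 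Using the equation $(\ref{10.4})$ satisfied by $P_{\leq k_0+9}u$, one computes $\frac{d}{dt}M(t)$ exactly as in $(\ref{10.5})$: this produces the positive virial term $2\int\chi^2(\tfrac xR)|\nabla P_{\leq k_0+9}u|^2$, the bulk nonlinear term $-c_d\int\chi^2(\tfrac xR)|P_{\leq k_0+9}u|^{2(d+2)/d}$ with $c_d>0$, a Laplacian-of-cutoff error of size $R^{-2}\|P_{\leq k_0+9}u\|_{L^2}^2$, a manifestly nonnegative angular term built from $(\frac1r\phi-\chi^2)(\delta_{jk}-\frac{x_jx_k}{|x|^2})$, and the frequency-truncation commutator terms involving $\mathcal N = F(P_{\leq k_0+9}u) - P_{\leq k_0+9}F(u)$.

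The commutator terms are controlled as in $(\ref{10.6})$--$(\ref{10.10})$: decompose $\mathcal N$ into pieces whose Fourier support forces at least two factors to lie above frequency $\sim 2^{k_0}$, and estimate those bilinear pieces using Theorem \ref{t5.1}, which gives $\|P_{\geq k_0}u\|_{L_t^2 L_x^{2d/(d-2)}} \lesssim (\frac1T\int_J\|\epsilon(t)\|_{L^2}^2\lambda(t)^{-2}\,dt)^{1/2} + T^{-10}$; the non-smoothness of $F(x)=|x|^{4/d}x$ is absorbed exactly as in Section~5, namely through the Taylor expansion $(\ref{5.10})$, the local smoothing Lemma \ref{lind}, the fractional chain rule (Proposition \ref{pvisan}), and the pointwise bound $|\nabla Q|\lesssim Q^{1-\alpha}$ of Theorem \ref{t5.0}. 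Then I would evaluate $M(b)-M(a)$ through the decomposition $u = \tilde Q + \tilde\epsilon$ of $(\ref{5.6.1})$, using $\xi(a)=x(b)=0$, the orthogonality relations $(\ref{3.5})$, the smoothness and exponential decay of $Q$ and its derivatives, and Theorem \ref{t4.4} for the $\dot H^1$ and $L^2$ bounds on $\epsilon$; this yields $M(b)-M(a) = 2(\epsilon_2(a),Q+x\cdot\nabla Q)_{L^2} - 2(\epsilon_2(b),Q+x\cdot\nabla Q)_{L^2}$ plus errors of the type appearing on the right-hand side of $(\ref{10.29})$, just as in $(\ref{10.16})$--$(\ref{10.21})$.

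Finally I would establish the coercive lower bound for the virial quantity
\[
2\int_a^b\!\int \chi^2(\tfrac xR)|\nabla P_{\leq k_0+9}u|^2 - \frac{1}{2R^2}\int_a^b\!\int \psi''(\tfrac xR)|P_{\leq k_0+9}u|^2 - c_d\int_a^b\!\int \chi^2(\tfrac xR)|P_{\leq k_0+9}u|^{2(d+2)/d}.
\]
I would split $P_{\leq k_0+9}u$ into its soliton part and its $\tilde\epsilon$ part, note that the soliton part contributes $\frac12\frac{|\xi(t)|^2}{\lambda(t)^2}\|Q\|_{L^2}^2$ up to $O(2^{2k_0}T^{-10})$ by $E(Q)=0$ and the Pohozaev identity $(\ref{1.16})$, and for the quadratic-in-$\tilde\epsilon$ part invoke the spectral coercivity of $\mathcal L$ on the orthogonal complement of $\mathrm{span}\{\chi_0,i\chi_0,Q_{x_j},iQ_{x_j}\}$, as in $(\ref{4.60})$--$(\ref{4.62})$, using that the localized function $\chi(\tfrac{x}{2R})P_{\leq k_0+9}\tilde\epsilon$ is almost orthogonal to that span by $(\ref{4.63})$. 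The cubic and higher remainders are handled by the Sobolev embedding $(\ref{1.10.1})$ and interpolation as in $(\ref{4.65})$, producing small factors $\eta_0^{4/d}$. Combining these shows the displayed quantity is $\gtrsim \lambda_d\int_a^b\|\epsilon(t)\|_{L^2}^2\lambda(t)^{-2}\,dt - O(T^{-8})$; assembling this with the bound on $M(b)-M(a)$ and the truncation errors proves $(\ref{10.29})$. The main obstacle is exactly the one flagged in the introduction: since $F(x)=|x|^{4/d}x$ is only Hölder continuous in its derivatives when $d\geq 3$, the Morawetz commutator $\mathcal N$ and the nonlinear differences appearing in $\frac{d}{dt}M$ cannot be expanded by a smooth Taylor series, which is why Theorem \ref{t5.1}, Proposition \ref{pvisan}, and the pointwise bound $|\nabla Q|\lesssim Q^{1-\alpha}$ are the essential new inputs; once those are granted, the Morawetz argument is a routine adaptation of the two-dimensional case.
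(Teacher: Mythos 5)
Your plan reproduces the paper's proof of Theorem~\ref{t10.2} almost verbatim: the truncated Morawetz potential with $P_{\leq k_0+9}$, the decomposition $\mathcal N = (F(u)-F(P_{\leq k_0+9}u)) - P_{\geq k_0+9}F(u)$ controlled via Theorem~\ref{t5.1}, Taylor expansion, Lemma~\ref{lind}, Proposition~\ref{pvisan}, and the bound $|\nabla Q|\lesssim Q^{1-\alpha}$ from Theorem~\ref{t5.0}, the boundary evaluation of $M(b)-M(a)$ through $(\ref{5.6.1})$ with the orthogonality relations and Theorem~\ref{t4.4}, and finally the spectral coercivity of $\mathcal L$ plus the localized Sobolev--Gagliardo--Nirenberg step.

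However, your proposed cutoff radius $R = T^{1/(2000d^2)}$ would break the argument. You identify only the constraint $R\geq\sup_{t\in J}|x(t)|$, but the cutoff $\chi(\tfrac{x}{2R})$ must also comfortably contain the modulated soliton, which is spread over a region of size comparable to $\lambda(t)$ around $x(t)$. By $(\ref{5.4})$, $\lambda(t)$ can be as large as $\eta_1^{-1}T^{1/50d}$, and since $\tfrac{1}{2000d^2}<\tfrac{1}{50d}$ for all $d\geq 3$, your $R$ is asymptotically much smaller than $\lambda(t)$. Then the soliton contribution to $-\tfrac{1}{2R^2}\int\psi''(\tfrac{x}{R})|P_{\leq k_0+9}u|^2\,dx$ is no longer $O(\lambda(t)^{-2}T^{-10})$ as in $(\ref{10.56})$, the identities $(\ref{10.58})$--$(\ref{10.60})$ that recover $E(P_{\leq k_0+9}u)$ on the ball $\{|x|\lesssim R\}$ lose all but a negligible fraction of $\|\nabla\tilde Q\|_{L^2}^2$, and the coercivity lower bound $(\ref{10.63})$ fails. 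The paper takes $R = T^{1/25d}$, which both dominates $\sup|x(t)|\leq T^{1/(2000d^2)}$ and satisfies $R/\lambda(t)\gtrsim\eta_1 T^{1/50d}\gg 1$; that choice is also what produces the stated error term $\tfrac{T^{1/25d}}{\eta_1^2}\sup_{t\in J}\tfrac{|\xi(t)|^2}{\lambda(t)^2}$ via $(\ref{10.51})$. With that single replacement your outline is the paper's proof.
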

\begin{proof}
Once again let $R = T^{1/25d}$, $\chi(r) \in C^{\infty}([0, \infty))$ be a smooth, radial function, satisfying $\chi(r) = 1$ for $0 \leq r \leq 1$, and supported on $r \leq 2$, and let
\begin{equation}\label{10.30}
\phi(r) = \int_{0}^{r} \chi^{2}(\frac{s}{2R}) ds = \int_{0}^{r} \psi(\frac{s}{2R}) ds,
\end{equation}
and
\begin{equation}\label{10.31}
M(t) = \int \phi(r) Im[\overline{P_{\leq k_{0} + 9} u} \partial_{r} P_{\leq k_{0} + 9} u](t,x) dx = \int \phi(|x|) \frac{x}{|x|} \cdot Im[\overline{P_{\leq k_{0} + 9} u} \nabla P_{\leq k_{0} + 9} u](t,x) dx.
\end{equation}
Once again,
\begin{equation}\label{10.32}
i \partial_{t} P_{\leq k_{0} + 9} u + \Delta P_{\leq k_{0} + 9} u + F(P_{\leq k_{0} + 9} u) = F(P_{\leq k_{0} + 9} u) - P_{\leq k_{0} + 9} F(u) = -\mathcal N.
\end{equation}

Plugging in $(\ref{10.32})$ and integrating by parts,
\begin{equation}\label{10.33}
\aligned
\frac{d}{dt} M(t) = \int \phi(r) Re[-\Delta \overline{P_{\leq k_{0} + 9} u} \partial_{r} P_{\leq k_{0} + 9} u + \overline{P_{\leq k_{0} + 9} u} \Delta \partial_{r} P_{\leq k_{0} + 9} u] \\ + \int \phi(r) Re[-F(\overline{P_{\leq k_{0} + 9} u}) \partial_{r} P_{\leq k_{0} + 9} u + \overline{P_{\leq k_{0} + 9} u} \partial_{r} F(P_{\leq k_{0} + 9} u)] \\ + \int \phi(r) Re[\overline{P_{\leq k_{0} + 9} u} \partial_{r} \mathcal N](t,x) - \int \phi(r) Re[\bar{\mathcal N} \partial_{r} P_{\leq k_{0} + 9} u](t,x) \\
= 2 \int \chi^{2}(\frac{x}{2R}) |\nabla P_{\leq k_{0} + 9} u|^{2} dx  - \frac{1}{2R^{2}} \int \psi''(\frac{x}{2R}) |P_{\leq k_{0} + 9} u|^{2} dx - \frac{2d}{d + 2} \int \chi^{2}(\frac{x}{2R}) |P_{\leq k_{0} + 9} u|^{4} dx \\ + 2 \int [\frac{1}{|x|} \phi(x) - \chi^{2}(\frac{x}{2R})](\delta_{jk} - \frac{x_{j} x_{k}}{|x|^{2}}) Re(\overline{\partial_{j} P_{\leq k_{0} + 9} u} \partial_{k} P_{\leq k_{0} + 9} u) dx 
\\ + \int \phi(r) Re[\overline{P_{\leq k_{0} + 9} u} \partial_{r} \mathcal N](t,x) - \int \phi(r) Re[\bar{\mathcal N} \partial_{r} P_{\leq k_{0} + 9} u](t,x).
\endaligned
\end{equation}

Decompose
\begin{equation}\label{10.34}
\mathcal N = P_{\leq k_{0} + 9} F(u) - F(P_{\leq k_{0} + 9} u) = F(u) - F(P_{\leq k_{0} + 9} u) - P_{\geq k_{0} + 9} F(u).
\end{equation}
Then by Theorem $\ref{t5.1}$, Bernstein's inequality, Fourier support properties of $P_{\geq k_{0} + 9}$ and $P_{\leq k_{0}}$, and the fact that $\phi$ is smooth and all its derivatives are rapidly decreasing,
\begin{equation}\label{10.35}
\aligned
\int_{J} \int \phi(r) Re[\overline{P_{\geq k_{0} + 9} F(u)} \partial_{r} P_{\leq k_{0} + 9} u](t,x) dx dt \lesssim 2^{k_{0}} R \| P_{\geq k_{0}} F(u) \|_{L_{t}^{2} L_{x}^{\frac{2d}{d + 2}}} \| P_{\geq k_{0}} u \|_{L_{t}^{2} L_{x}^{\frac{2d}{d - 2}}} \\
+ \int_{J} \int \phi(r) Re[\overline{P_{\geq k_{0} + 9} F(u)} \partial_{r} P_{\leq k_{0}} u](t,x) dx dt,
\endaligned
\end{equation}
and
\begin{equation}\label{10.36}
\aligned
 \int_{J} \int \phi(r) Re[\overline{P_{\geq k_{0} + 9} F(u)} \partial_{r} P_{\leq k_{0}} u](t,x) dx dt \lesssim 2^{k_{0}} R \| P_{\geq k_{0}} F(u) \|_{L_{t}^{2} L_{x}^{\frac{2d}{d + 2}}} T^{-5} \\
 \lesssim 2^{k_{0}} R (\frac{1}{T} \int_{J} \| \epsilon(t) \|_{L^{2}}^{2} \lambda(t)^{-2} dt) + T^{-9}.
 \endaligned
\end{equation}
Next, by Taylor's formula,
\begin{equation}\label{10.37}
\aligned
\int \phi(r) Im[\overline{F(u) - F(P_{\leq k_{0} + 9} u)} \partial_{r} (P_{\leq k_{0} + 9} u)] dx dt \\ = \int \int_{0}^{1}  \phi(r) Im[\overline{F'(P_{\leq k_{0} + 9} u + s P_{\geq k_{0} + 9} u) (P_{\geq k_{0} + 9} u)} \partial_{r} (P_{\leq k_{0} + 9} u)] ds dx dt.
\endaligned
\end{equation}
By the chain rule and the computations in $(\ref{10.35})$--$(\ref{10.36})$,
\begin{equation}\label{10.38}
\aligned
\int  \phi(r) Im[\overline{F'(P_{\leq k_{0} + 9} u) (P_{\geq k_{0} + 9} u)} \partial_{r} (P_{\leq k_{0} + 9} u)] dx dt = \int \phi(r) Im[\overline{P_{\geq k_{0} + 9} u}] \partial_{r} F(P_{\leq k_{0} + 9} u) dx dt \\ \lesssim 2^{k_{0}} R (\frac{1}{T} \int_{J} \| \epsilon(t) \|_{L^{2}}^{2} \lambda(t)^{-2} dt) + T^{-9}.
\endaligned
\end{equation}
Next, by Theorem $\ref{t5.1}$, if $\frac{1}{p} = \frac{1}{2} \frac{1}{1 + 4/d}$, and $\frac{1}{q} = \frac{d + 2}{2d} \frac{1}{1 + 4/d}$,
\begin{equation}\label{10.40}
\aligned
\int \int_{0}^{1}  \phi(r) Im[\overline{[F'(P_{\leq k_{0} + 9} u + s P_{\geq k_{0} + 9} u) - F'(P_{\leq k_{0} + 9} u)] (P_{\geq k_{0} + 9} u)} \partial_{r} (P_{\leq k_{0} + 9} \tilde{\epsilon})] ds dx dt \\
\lesssim R \| P_{\geq k_{0} + 9} u \|_{L_{t}^{2} L_{x}^{\frac{2d}{d - 2}}} \| P_{\geq k_{0} + 9} u \|_{L_{t}^{p} L_{x}^{q}}^{4/d} \| \nabla P_{\leq k_{0} + 9} \tilde{\epsilon} \|_{L_{t}^{p} L_{x}^{q}} \lesssim 2^{k_{0}} R (\frac{1}{T} \int_{J} \| \epsilon(t) \|_{L^{2}}^{2} \lambda(t)^{-2} dt) + T^{-9}.
\endaligned
\end{equation}
Then using the fact that $Q$ is smooth with all derivatives rapidly decreasing,
\begin{equation}\label{10.39}
\aligned
\int \int_{0}^{1}  \phi(r) Im[\overline{[F'(P_{\leq k_{0} + 9} u + s P_{\geq k_{0} + 9} u) - F'(P_{\leq k_{0} + 9} \tilde{Q} + P_{\leq k_{0} + 9} \tilde{\epsilon})] (P_{\geq k_{0} + 9} u)} \partial_{r} (P_{\leq k_{0} + 9} \tilde{Q})] ds dx dt \\
\lesssim \int \int_{0}^{1}  \phi(r) Im[\overline{[F'(\tilde{Q} + P_{\leq k_{0} + 9} \tilde{\epsilon} + s P_{\geq k_{0} + 9} \tilde{\epsilon}) - F'(\tilde{Q} )] (P_{\geq k_{0} + 9} u)} \partial_{r} (\tilde{Q})] ds dx dt \\ +  \int \int_{0}^{1}  \phi(r) Im[\overline{[F'(\tilde{Q} + P_{\leq k_{0} + 9} \tilde{\epsilon}) - F'(\tilde{Q} )] (P_{\geq k_{0} + 9} u)} \partial_{r} (\tilde{Q})] ds dx dt+ T^{-9}.
\endaligned
\end{equation}
Using the computations in Theorem $\ref{t5.1}$,
\begin{equation}\label{10.40}
\aligned
\int \int_{0}^{1}  \phi(r) Im[\overline{[F'(\tilde{Q} + P_{\leq k_{0} + 9} \tilde{\epsilon} + s P_{\geq k_{0} + 9} \tilde{\epsilon}) - F'(\tilde{Q})] (P_{\geq k_{0} + 9} u)} \partial_{r} (\tilde{Q})] ds dx dt \\ \lesssim 2^{k_{0}} R (\frac{1}{T} \int_{J} \| \epsilon(t) \|_{L^{2}}^{2} \lambda(t)^{-2} dt) + T^{-9},
\endaligned
\end{equation}
and
\begin{equation}\label{10.40.1}
\aligned
\int \int_{0}^{1}  \phi(r) Im[\overline{[F'(\tilde{Q} + P_{\leq k_{0} + 9} \tilde{\epsilon}) - F'(\tilde{Q})] (P_{\geq k_{0} + 9} u)} \partial_{r} (\tilde{Q})] ds dx dt \\ \lesssim 2^{k_{0}} R (\frac{1}{T} \int_{J} \| \epsilon(t) \|_{L^{2}}^{2} \lambda(t)^{-2} dt) + T^{-9},
\endaligned
\end{equation}

Next, integrating by parts,
\begin{equation}\label{10.41}
\aligned
\int \int \phi(x) \frac{x}{|x|} \cdot Im[(\overline{P_{\leq k_{0} + 9} u}) \nabla \mathcal N] dx dt = -\int \int \phi(x) \frac{x}{|x|} \cdot Im[(
\nabla \overline{P_{\leq k_{0} + 9} u}) \mathcal N] dx dt \\ - \int \int \nabla \cdot (\phi(x) \frac{x}{|x|}) Im[(
\overline{P_{\leq k_{0} + 9} u}) \mathcal N] dx dt.
\endaligned
\end{equation}
The first term in $(\ref{10.41})$ is handled by the computations in $(\ref{10.35})$--$(\ref{10.40})$. Again using the smoothness of $\phi(x) \frac{x}{|x|}$, Theorem $\ref{t5.1}$, and Fourier support arguments,
\begin{equation}\label{10.42}
\int \int \nabla \cdot (\phi(x) \frac{x}{|x|}) Im[(\overline{P_{\leq k_{0} + 9} u})P_{\geq k_{0} + 9} F(u)] dx dt \lesssim \frac{1}{T} \int_{J} \| \epsilon(t) \|_{L^{2}}^{2} \lambda(t)^{-2} dt + T^{-9}.
\end{equation}
Also, since $\nabla \cdot (\phi(x) \frac{x}{|x|}) \lesssim \inf\{ 1, \frac{R}{|x|} \}$, using the analysis in $(\ref{10.37})$--$(\ref{10.40})$,
\begin{equation}\label{10.43}
\aligned
\int \int \nabla \cdot (\phi(x) \frac{x}{|x|}) Im[(\overline{P_{\leq k_{0} + 9} u})(F(u) - F(P_{\leq k_{0} + 9} u))] dx dt \\ = \int \int \int_{0}^{1} \nabla \cdot (\phi(x) \frac{x}{|x|}) Im[(\overline{P_{\leq k_{0} + 9} u})F'(P_{\leq k_{0} + 9} u + s P_{\geq k_{0} + 9} u)(P_{\geq k_{0} + 9} u)] ds dx dt \\
\lesssim (1 + \frac{4}{d}) \int \int \int_{0}^{1} \nabla \cdot (\phi(x) \frac{x}{|x|}) Im[\overline{F(P_{\leq k_{0} + 9} u + s P_{\geq k_{0} + 9} u)}(P_{\geq k_{0} + 9} u)] ds dx dt + \| P_{\geq k_{0} + 9} u \|_{L_{t}^{2} L_{x}^{\frac{2d}{d - 2}}}^{2}  \\
\lesssim (1 + \frac{4}{d}) \int \int \nabla \cdot (\phi(x) \frac{x}{|x|}) Im[\overline{F(P_{\leq k_{0}} u)}(P_{\geq k_{0} + 9} u)] dx dt + \| P_{\geq k_{0} + 9} u \|_{L_{t}^{2} L_{x}^{\frac{2d}{d - 2}}}^{2}. 
\endaligned
\end{equation}
By Theorem $\ref{t5.1}$,
\begin{equation}
\| P_{\geq k_{0} + 9} u \|_{L_{t}^{2} L_{x}^{\frac{2d}{d - 2}}}^{2} + \| P_{\geq k_{0}} F(P_{\leq k_{0}} u) \|_{L_{t}^{2} L_{x}^{\frac{2d}{d + 2}}} \| P_{\geq k_{0} + 9} u \|_{L_{t}^{2} L_{x}^{\frac{2d}{d - 2}}} \lesssim \frac{1}{T} \int_{J} \| \epsilon(t) \|_{L^{2}}^{2} \lambda(t)^{-2} dt + T^{-9}.
\end{equation}
Meanwhile, since $\nabla \cdot (\frac{x}{|x|} \phi(x) )$ is smooth,
\begin{equation}
\aligned
(1 + \frac{4}{d}) \int \int \nabla \cdot (\phi(x) \frac{x}{|x|}) Im[\overline{P_{\leq k_{0}} F(P_{\leq k_{0}} u)}(P_{\geq k_{0} + 9} u)] dx dt \lesssim T^{-5} \| P_{\geq k_{0} + 9} u \|_{L_{t}^{2} L_{x}^{\frac{2d}{d - 2}}} \\ \lesssim \frac{1}{T} \int_{J} \| \epsilon(t) \|_{L^{2}}^{2} \lambda(t)^{-2} dt + T^{-9}.
\endaligned
\end{equation}
Therefore, the error arising from frequency truncation is bounded by
\begin{equation}\label{10.44}
2^{k_{0}} R (\frac{1}{T} \int_{J} \| \epsilon(t) \|_{L^{2}}^{2} \lambda(t)^{-2} dt) + T^{-9}.
\end{equation}

Using the fact that $(\frac{1}{r} \phi(r) - \chi^{2}(r)) (\delta_{jk} - \frac{x_{j} x_{k}}{|x|^{2}})$ is a positive definite matrix, we have proved
\begin{equation}\label{10.45}
\aligned
2 \int_{a}^{b} \int \chi^{2}(\frac{x}{R}) |\nabla P_{\leq k_{0} + 9} u|^{2} dx dt - \frac{1}{2R^{2}} \int_{a}^{b} \int \psi''(\frac{x}{R}) |P_{\leq k_{0} + 9} u|^{2} dx dt \\ - \frac{2d}{d + 2} \int_{a}^{b} \int \chi^{2}(\frac{x}{R}) |P_{\leq k_{0} + 9} u|^{\frac{2(d + 2)}{d}} dx dt \lesssim M(b) - M(a) + \frac{1}{\eta_{1}^{2} T^{9}} + 2^{k_{0}} R(\frac{1}{T} \int_{0}^{T} \| \epsilon(t) \|_{L^{2}}^{2} \lambda(t)^{-2} dt)).
\endaligned
\end{equation}

The estimate of $M(b) - M(a)$ under the assumption that $\xi(a) = x(b) = 0$ is the same as in dimension $d = 2$. Indeed, since $Q$ is smooth, real valued, rapidly decreasing, all its derivatives are rapidly decreasing, by $(\ref{5.4})$, $(\ref{5.5})$, and $(\ref{10.30})$, $\xi(b) = x(a) = 0$, as in dimension $d = 2$,
\begin{equation}\label{10.47}
\aligned
\int \phi(|x|) \frac{x}{|x|} \cdot Im[\overline{P_{\leq k_{0} + 9} (e^{-i \gamma(t)} e^{-ix \cdot \frac{\xi(t)}{\lambda(t)}} \frac{1}{\lambda(t)^{d/2}} Q(\frac{x - x(t)}{\lambda(t)}))} \\ \times \nabla (P_{\leq k_{0} + 9} (e^{-i \gamma(t)} e^{-ix \cdot \frac{\xi(t)}{\lambda(t)}} \frac{1}{\lambda(t)^{d/2}} Q(\frac{x - x(t)}{\lambda(t)})))] dx|_{a}^{b} = O(2^{k_{0}} R T^{-10}).
\endaligned
\end{equation}

Also,
\begin{equation}\label{10.49}
\aligned
\int \phi(|x|) \frac{x}{|x|} \cdot Im[\overline{P_{\leq k_{0} + 9} (e^{-i \gamma(t)} e^{-ix \cdot \frac{\xi(t)}{\lambda(t)}} \frac{1}{\lambda(t)^{d/2}} \epsilon(\frac{x - x(t)}{\lambda(t)}))} \\ \times \nabla (P_{\leq k_{0} + 9} (e^{-i \gamma(t)} e^{-ix \cdot \frac{\xi(t)}{\lambda(t)}} \frac{1}{\lambda(t)^{d/2}} Q(\frac{x - x(t)}{\lambda(t)})))] dx|_{a}^{b} \\
= -(\epsilon_{2}, x \cdot \nabla Q)_{L^{2}}|_{a}^{b} +  O(\frac{2^{2k_{0}} T^{1/25d}}{\eta_{1}^{2} T} \int_{J} \| \epsilon(t) \|_{L^{2}}^{2} \lambda(t)^{-2} dt + \frac{T^{1/25d}}{\eta_{1}^{2}} \sup_{t \in J} \frac{|\xi(t)|^{2}}{\lambda(t)^{2}} + 2^{2k_{0}} \frac{T^{1/25d}}{\eta_{1}^{2}} T^{-10}).
\endaligned
\end{equation}
Also, integrating by parts, since $Q$ and all its derivatives are rapidly decreasing, as well as that $\phi(|x|) \frac{x}{|x|}$ is smooth,
\begin{equation}\label{10.50}
\aligned
\int \phi(|x|) \frac{x}{|x|} \cdot Im[\overline{P_{\leq k_{0} + 9} (e^{-i \gamma(t)} e^{-ix \cdot \frac{\xi(t)}{\lambda(t)}} \frac{1}{\lambda(t)^{d/2}} Q(\frac{x - x(t)}{\lambda(t)}))} \\ \times \nabla (P_{\leq k_{0} + 9} (e^{-i \gamma(t)} e^{-ix \cdot \frac{\xi(t)}{\lambda(t)}} \frac{1}{\lambda(t)^{d/2}} \epsilon(\frac{x - x(t)}{\lambda(t)})))] dx  \\ = (\ref{10.49}) - \int \nabla \cdot (\phi(|x|) \frac{x}{|x|}) \cdot Im[\frac{1}{\lambda(t)^{d/2}} \overline{ Q(\frac{x - x(t)}{\lambda(t)}))} ( \frac{1}{\lambda(t)^{d/2}} \epsilon(\frac{x - x(t)}{\lambda(t)})))] dx + O(T^{-10})  \\ = (\ref{10.49}) - d (\epsilon_{2}, Q)_{L^{2}} + O(T^{-10}).
\endaligned
\end{equation}
Finally, by Theorem $\ref{t4.4}$,
\begin{equation}\label{10.51}
\aligned
\int \phi(|x|) \frac{x}{|x|} \cdot Im[\overline{P_{\leq k_{0} + 9} (e^{-i \gamma(t)} e^{-ix \cdot \frac{\xi(t)}{\lambda(t)}} \frac{1}{\lambda(t)^{d/2}} \epsilon(\frac{x - x(t)}{\lambda(t)}))} \\ \times  \nabla (P_{\leq k_{0} + 9} (e^{-i \gamma(t)} e^{-ix \cdot \frac{\xi(t)}{\lambda(t)}} \frac{1}{\lambda(t)^{d/2}} \epsilon(\frac{x - x(t)}{\lambda(t)})))] dx \\
\lesssim R \| \epsilon \|_{L^{2}} \| P_{\leq k_{0} + 9} (e^{-i \gamma(t)} e^{-ix \cdot \frac{\xi(t)}{\lambda(t)}} \frac{1}{\lambda(t)^{d/2}} \epsilon(\frac{x - x(t)}{\lambda(t)})) \|_{\dot{H}^{1}} + R \frac{|\xi(t)|}{\lambda(t)} \| \epsilon \|_{L^{2}}^{2} \\
\lesssim R (\frac{2^{2k_{0}} T^{1/25d}}{\eta_{1}^{2} T} \int_{J} \| \epsilon(t) \|_{L^{2}}^{2} \lambda(t)^{-2} dt + \frac{T^{1/25d}}{\eta_{1}^{2}} \sup_{t \in J} \frac{|\xi(t)|^{2}}{\lambda(t)^{2}} + 2^{2k_{0}} \frac{T^{1/25d}}{\eta_{1}^{2}} T^{-10}).
\endaligned
\end{equation}
Therefore, letting $\Lambda$ denote the operator
\begin{equation}\label{10.52}
\Lambda f = x \cdot \nabla f + \frac{d}{2} f,
\end{equation}
\begin{equation}\label{10.53}
\aligned
M(b) - M(a) = 2 (\epsilon_{2}(a), \Lambda Q)_{L^{2}} - 2(\epsilon_{2}(b), \Lambda Q)_{L^{2}} \\ + O(\frac{2^{2k_{0}} T^{2/15d}}{\eta_{1}^{2} T} \int_{J} \| \epsilon(t) \|_{L^{2}}^{2} \lambda(t)^{-2} dt + \frac{T^{2/15d}}{\eta_{1}^{2}} \sup_{t \in J} \frac{|\xi(t)|^{2}}{\lambda(t)^{2}} + 2^{2k_{0}} \frac{T^{2/15d}}{\eta_{1}^{2}} T^{-10})).
\endaligned
\end{equation}

Therefore, to complete the proof of Theorem $\ref{t10.2}$, it only remains to obtain a lower bound for
\begin{equation}\label{10.54}
\aligned
2 \int_{a}^{b} \int \chi^{2}(\frac{x}{2R}) |\nabla P_{\leq k_{0} + 9} u|^{2} dx dt - \frac{1}{2R^{2}} \int_{a}^{b} \int \psi''(\frac{x}{2R}) |P_{\leq k_{0} + 9} u|^{2} dx dt \\ - \frac{2d}{d + 2} \int_{a}^{b} \int \chi^{2}(\frac{x}{2R}) |P_{\leq k_{0} + 9} u|^{\frac{2(d + 2)}{d}} dx dt.
\endaligned
\end{equation}

Again splitting
\begin{equation}\label{10.55}
|P_{\leq k_{0} + 9} u|^{2} \leq 2 \lambda(t)^{-d} |P_{\leq k_{0} + 9}(e^{-i \gamma(t)} e^{ix \cdot \frac{\xi(t)}{\lambda(t)}} Q(\frac{x - x(t)}{\lambda(t)})|^{2} + 2 \lambda(t)^{-d} |P_{\leq k_{0} + 9}(e^{-i \gamma(t)} e^{ix \cdot \frac{\xi(t)}{\lambda(t)}} \epsilon(t, \frac{x - x(t)}{\lambda(t)})|^{2}.
\end{equation}
By $(\ref{5.4})$, $(\ref{5.5})$, and $(\ref{10.30})$, the support of $\psi''(x)$, and the fact that $Q$ is smooth and all its derivatives are rapidly decreasing,
\begin{equation}\label{10.56}
\frac{1}{R^{2}} \lambda(t)^{-d} \int \psi''(\frac{x}{R}) |P_{\leq k_{0} + 9}(e^{-i \gamma(t)} e^{ix \cdot \frac{\xi(t)}{\lambda(t)}}  Q(\frac{x - x(t)}{\lambda(t)})|^{2} dx \lesssim \lambda(t)^{-2} \frac{1}{T^{10}}.
\end{equation}
On the other hand, by $(\ref{5.4})$, $(\ref{5.5})$, and $(\ref{10.30})$, for $T$ sufficiently large,
\begin{equation}\label{10.57}
\frac{1}{R^{2}} \lambda(t)^{-d} \int \psi''(\frac{x}{R}) |P_{\leq k + 9}(e^{-i \gamma(t)} e^{ix \cdot \frac{\xi(t)}{\lambda(t)}} \epsilon(t, \frac{x - x(t)}{\lambda(t)})|^{2} dx \lesssim  \frac{1}{R} \lambda(t)^{-2} \| \epsilon \|_{L^{2}}^{2}.
\end{equation}

As in two dimensions, since $Q$ is smooth and all its derivatives are rapidly decreasing, by $(\ref{5.4})$, $(\ref{5.5})$, and $(\ref{10.30})$,
\begin{equation}\label{10.58}
\aligned
\frac{1}{2} \int (1 - \chi^{2}(\frac{x}{2R})) |\nabla P_{\leq k_{0} + 9} (e^{-i \gamma(t)} e^{-ix \cdot \xi(t)} \frac{1}{\lambda(t)^{d/2}} Q(\frac{x - x(t)}{\lambda(t)})|^{2} dx \\ - \frac{d}{2(d + 2)} \int (1 - \chi^{2}(\frac{x}{2R})) |P_{\leq k_{0} + 9} (e^{-i \gamma(t)} e^{-ix \cdot \xi(t)} \frac{1}{\lambda(t)} Q(\frac{x - x(t)}{\lambda(t)})|^{\frac{2(d + 2)}{d}} dx = O(T^{-10}).
\endaligned
\end{equation}
Also,
\begin{equation}\label{10.59}
\aligned
\int (1 - \chi^{2}(\frac{x}{2R})) Re(\overline{\nabla P_{\leq k_{0} + 9} (e^{-i \gamma(t)} e^{-ix \cdot \xi(t)} \frac{1}{\lambda(t)^{d/2}} Q(\frac{x - x(t)}{\lambda(t)})} \\ \cdot \nabla P_{\leq k_{0} + 9} (e^{-i \gamma(t)} e^{-ix \cdot \xi(t)} \frac{1}{\lambda(t)^{d/2}} \epsilon(t, \frac{x - x(t)}{\lambda(t)})) dx \\ -  \int (1 - \chi^{2}(\frac{x}{2R})) |P_{\leq k_{0} + 9} (e^{-i \gamma(t)} e^{-ix \cdot \xi(t)} \frac{1}{\lambda(t)^{d/2}} Q(\frac{x - x(t)}{\lambda(t)})|^{\frac{4}{d}} \\ \times Re(\overline{ P_{\leq k_{0} + 9} (e^{-i \gamma(t)} e^{-ix \cdot \xi(t)} \frac{1}{\lambda(t)^{d/2}} Q(\frac{x - x(t)}{\lambda(t)})} \\ \cdot  P_{\leq k_{0} + 9} (e^{-i \gamma(t)} e^{-ix \cdot \xi(t)} \frac{1}{\lambda(t)^{d/2}} \epsilon(t, \frac{x - x(t)}{\lambda(t)})) dx = O(2^{2k_{0}} T^{-10}).
\endaligned
\end{equation}
Therefore, from $(\ref{4.55})$,
\begin{equation}\label{10.60}
\aligned
\frac{1}{2} \int \chi^{2}(\frac{x}{2R}) |\nabla P_{\leq k_{0} + 9} (e^{-i \gamma(t)} e^{-ix \cdot \xi(t)} \frac{1}{\lambda(t)^{d/2}} Q(\frac{x - x(t)}{\lambda(t)}))|^{2} dx \\ - \frac{d}{2(d + 2)} \int  \chi^{2}(\frac{x}{2R}) |P_{\leq k_{0} + 9} (e^{-i \gamma(t)} e^{-ix \cdot \xi(t)} \frac{1}{\lambda(t)^{d/2}} Q(\frac{x - x(t)}{\lambda(t)}))|^{\frac{2(d + 2)}{d}} dx \\
+ \int  \chi^{2}(\frac{x}{2R}) Re(\overline{\nabla P_{\leq k_{0} + 9} (e^{-i \gamma(t)} e^{-ix \cdot \xi(t)} \frac{1}{\lambda(t)} Q(\frac{x - x(t)}{\lambda(t)})}) \cdot \nabla P_{\leq k_{0} + 9} (e^{-i \gamma(t)} e^{-ix \cdot \xi(t)} \frac{1}{\lambda(t)} \epsilon(t, \frac{x - x(t)}{\lambda(t)})) dx \\ -  \int \chi^{2}(\frac{x}{2R}) |P_{\leq k_{0} + 9} (e^{-i \gamma(t)} e^{-ix \cdot \xi(t)} \frac{1}{\lambda(t)^{d/2}} Q(\frac{x - x(t)}{\lambda(t)})|^{2} \\ \times Re(\overline{ P_{\leq k_{0} + 9} (e^{-i \gamma(t)} e^{-ix \cdot \xi(t)} \frac{1}{\lambda(t)^{d/2}} Q(\frac{x - x(t)}{\lambda(t)}))} \cdot  P_{\leq k_{0} + 9} (e^{-i \gamma(t)} e^{-ix \cdot \xi(t)} \frac{1}{\lambda(t)} \epsilon(t, \frac{x - x(t)}{\lambda(t)})) dx \\
= \frac{1}{2} \frac{|\xi(t)|^{2}}{\lambda(t)^{2}} \| Q \|_{L^{2}}^{2} + \frac{1}{2 \lambda(t)^{2}} \| \epsilon \|_{L^{2}}^{2} - \frac{|\xi(t)|^{2}}{2 \lambda(t)^{2}} \| \epsilon \|_{L^{2}}^{2} + O(2^{2k_{0}} T^{-10}).
\endaligned
\end{equation}

Turning to the terms with two $\epsilon$'s, by the product rule,
\begin{equation}\label{10.61}
\aligned
\frac{1}{2} \int \chi^{2}(\frac{x}{2R}) |\nabla P_{\leq k_{0} + 9} (e^{-i \gamma(t)} e^{-ix \cdot \xi(t)} \frac{1}{\lambda(t)^{d/2}} \epsilon(t, \frac{x - x(t)}{\lambda(t)})|^{2} dx \\ = \frac{1}{2} \| \chi(\frac{x}{2R}) P_{\leq k_{0} + 9} (e^{-i \gamma(t)} e^{-ix \cdot \xi(t)} \frac{1}{\lambda(t)^{d/2}} \epsilon(t, \frac{x - x(t)}{\lambda(t)})) \|_{\dot{H}^{1}}^{2} + O(\frac{1}{R \lambda(t)^{2}} \| \epsilon \|_{L^{2}}^{2}).
\endaligned
\end{equation}
Then by $(\ref{5.4})$, $(\ref{5.5})$, $(\ref{10.30})$, $(\ref{4.32})$, and the fact that $Q_{x_{j}}$ and $\chi_{0}$ are rapidly decreasing,
\begin{equation}\label{10.62}
(\chi(\frac{x \lambda(t) + x(t)}{2R}) \epsilon, f)_{L^{2}} \lesssim T^{-10}.
\end{equation}
Therefore, following the analysis in $(\ref{4.56})$--$(\ref{4.64})$,
\begin{equation}\label{10.63}
\aligned
\frac{1}{2} \int \chi^{2}(\frac{x}{2R}) |\nabla P_{\leq k_{0} + 9} (e^{-i \gamma(t)} e^{-ix \cdot \xi(t)} \frac{1}{\lambda(t)^{d/2}} \epsilon(t, \frac{x - x(t)}{\lambda(t)})|^{2} dx \\ - \frac{d + 2}{2d\lambda(t)^{d + 2}} \int |P_{\leq k_{0} + 9}(e^{-i \gamma(t)} e^{-ix \cdot \frac{\xi(t)}{\lambda(t)}}Q(\frac{x - x(t)}{\lambda(t)}))|^{2} |P_{\leq k_{0} + 9} (e^{-i \gamma(t)} e^{-ix \cdot \xi(t)} \epsilon(t,\frac{x - x(t)}{\lambda(t)}))|^{2} dx \\ 
- \frac{1}{d \lambda(t)^{d + 2}} Re \int (\overline{P_{\leq k + 9}(e^{-i \gamma(t)} e^{-ix \cdot \frac{\xi(t)}{\lambda(t)}}Q(\frac{x - x(t)}{\lambda(t)}))})^{2} (P_{\leq k + 9} (e^{-i \gamma(t)} e^{-ix \cdot \xi(t)} \epsilon(t,\frac{x - x(t)}{\lambda(t)})))^{2} dx \\
\geq \frac{\lambda_{d}}{\lambda(t)^{2}} \| \epsilon \|_{L^{2}}^{2} + \lambda_{d} \int \chi(\frac{x}{2R})^{2} |\nabla P_{\leq k_{0} + 9} (e^{-i \gamma(t)} e^{-ix \cdot \xi(t)} \frac{1}{\lambda(t)^{d/2}} \epsilon(t, \frac{x - x(t)}{\lambda(t)})|^{2} dx  - O(2^{2k_{0}} T^{-10}).
\endaligned
\end{equation}

By the Sobolev embedding theorem, for $g \in H^{1}$,
\begin{equation}\label{10.64}
\aligned
\int \chi^{2}(\frac{x}{2R}) |g(x)|^{\frac{2(d + 2)}{d}} dx \lesssim \| \chi(\frac{x}{2R}) g \|_{\dot{H}^{1}}^{2} \| g \|_{L^{2}}^{4/d}.
\endaligned
\end{equation}
Therefore, by the product rule,
\begin{equation}\label{10.65}
\aligned
\int \chi(\frac{x}{2R})^{2} |P_{\leq k_{0} + 9}(e^{-i \gamma(t)} e^{-ix \cdot \xi(t)} \lambda(t)^{-d/2} \epsilon(t, \frac{x - x(t)}{\lambda(t)}))|^{\frac{2(d + 2)}{d}} dx \\\lesssim \eta_{0}^{4/d} \| \chi(\frac{x}{2R}) \nabla P_{\leq k_{0} + 9}(e^{-i \gamma(t)} e^{-ix \cdot \xi(t)} \lambda(t)^{-d/2} \epsilon(t, \frac{x - x(t)}{\lambda(t)}) \|_{L^{2}}^{2} + \frac{\eta_{0}^{4/d}}{R^{2}} \| \epsilon \|_{L^{2}}^{2}.
\endaligned
\end{equation}
Therefore, we have proved that for any $d \geq 3$, there exists some $\lambda_{d} > 0$ such that for $\eta_{0}$ sufficiently small, \begin{equation}\label{10.66}
(\ref{10.45}) \gtrsim \lambda_{d} \int_{a}^{b} \| \epsilon \|_{L^{2}}^{2} \lambda(t)^{-2} dt - O(T^{-8}).
\end{equation}
Combining $(\ref{10.66})$, $(\ref{10.45})$, $(\ref{10.56})$, $(\ref{10.57})$, and $(\ref{10.45})$ proves Theorem $\ref{t10.2}$.
\end{proof}

\section{An $L_{s}^{p}$ bound on $\| \epsilon(s) \|_{L^{2}}$ when $p > 1$}
As in one dimension, Theorems $\ref{t10.1}$ and $\ref{t10.2}$ imply that $\| \epsilon(s) \|_{L^{2}}$ lies in $L_{s}^{p}$ for any $p > 1$.
\begin{theorem}\label{t11.1}
Let $u$ be a solution to $(\ref{1.1})$ that satisfies $\| u \|_{L^{2}} = \| Q \|_{L^{2}}$, and suppose
\begin{equation}\label{11.1}
\sup_{s \in [0, \infty)} \| \epsilon(s) \|_{L^{2}} \leq \eta_{\ast},
\end{equation}
and $\| \epsilon(0) \|_{L^{2}} = \eta_{\ast}$. Then
\begin{equation}\label{11.2}
\int_{0}^{\infty} \| \epsilon(s) \|_{L^{2}}^{2} ds \lesssim \eta_{\ast},
\end{equation}
with implicit constant independent of $\eta_{\ast}$ when $\eta_{\ast} \ll 1$ is sufficiently small.

Furthermore, for any $j \in \mathbb{Z}_{\geq 0}$, let
\begin{equation}\label{11.3}
s_{j} = \inf \{ s \in [0, \infty) : \| \epsilon(s) \|_{L^{2}} = 2^{-j} \eta_{\ast} \}.
\end{equation}
By definition, $s_{0} = 0$, and the continuity of $\| \epsilon(s) \|_{L^{2}}$ combined with Theorem $\ref{t2.4}$ implies that such an $s_{j}$ exists for any $j > 0$. Then,
\begin{equation}\label{11.4}
\int_{s_{j}}^{\infty} \| \epsilon(s) \|_{L^{2}}^{2} ds \lesssim 2^{-j} \eta_{\ast},
\end{equation}
for each $j \geq 0$, with implicit constant independent of $\eta_{\ast}$.
\end{theorem}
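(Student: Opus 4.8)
The plan is to work in the rescaled time variable $s$ of $(\ref{3.18})$, for which $ds=\lambda(t)^{-2}\,dt$ and hence $\int_{a}^{b}\|\epsilon(t)\|_{L^{2}}^{2}\lambda(t)^{-2}\,dt=\int_{s(a)}^{s(b)}\|\epsilon(s)\|_{L^{2}}^{2}\,ds$. Since $s_{0}=0$ and $\|\epsilon(0)\|_{L^{2}}=\eta_{\ast}$, estimate $(\ref{11.2})$ is exactly $(\ref{11.4})$ with $j=0$, so it suffices to prove $(\ref{11.4})$. By Theorem $\ref{t2.4}$ there is a sequence $t_{n}\nearrow\sup(I)$ with $\|\epsilon(s(t_{n}))\|_{L^{2}}\to 0$; write $S_{n}=s(t_{n})\to\infty$. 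The strategy is to apply the frequency localized Morawetz estimate of Theorem $\ref{t10.1}$ (when $d=2$) or Theorem $\ref{t10.2}$ (when $d\geq 3$) on the finite interval $[s_{j},S_{n}]$, subdivided into consecutive pieces $[\sigma_{\ell},\sigma_{\ell+1}]$ on each of which the hypotheses of those theorems can be arranged by the symmetries, sum the resulting inequalities, and let $n\to\infty$.

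First I would set up the parameter normalization. Conservation of a suitably frequency truncated momentum together with the orthogonality relations $(\ref{3.5})$ — which give $(\epsilon_{2},\nabla Q)_{L^{2}}=0$ — shows that after one Galilean transformation of the whole solution the quantity $\frac{|\xi(t)|}{\lambda(t)}$ is quadratically small in $\|\epsilon\|_{L^{2}}$; combining this with $(\ref{3.22})$, normalized so that $\xi=0$ at the left endpoint of a piece $J$, yields $\sup_{t\in J}\frac{|\xi(t)|^{2}}{\lambda(t)^{2}}\lesssim\big(\int_{J}\|\epsilon(s)\|_{L^{2}}^{2}\,ds\big)^{2}$. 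Next, $(\ref{3.23})$ and $(\ref{3.24})$ control the drift of $\lambda$ and $x$: $\int|\tfrac{\lambda_{s}}{\lambda}|\,ds\lesssim\int\|\epsilon\|_{L^{2}}\,ds$ and $|\tfrac{x_{s}}{\lambda}+2\xi|\lesssim\|\epsilon\|_{L^{2}}$. Using these, together with the scaling $(\ref{1.5.1})$ (to normalize $\lambda\approx\eta_{1}^{-1}$ at the left endpoint), spatial translation (to arrange $x=0$ at the right endpoint), and a tiny Galilean correction (to arrange $\xi=0$ at the left endpoint), I would choose the breakpoints $\sigma_{\ell}$ and the parameters $\eta_{1},\eta_{0},k,R,T_{\ell}=\sigma_{\ell+1}-\sigma_{\ell}$ so that $(\ref{10.1})$–$(\ref{10.1.1})$ (resp. $(\ref{5.4})$–$(\ref{5.5})$ with $|x(t)|\leq T_{\ell}^{1/2000d^{2}}$) hold on $[\sigma_{\ell},\sigma_{\ell+1}]$; the breakpoints are taken among the times $s_{m}$, $m>j$, so that $\|\epsilon(\sigma_{\ell})\|_{L^{2}}$ decays geometrically, further subdivided whenever the $\lambda$– or $x$–drift would otherwise violate an admissibility condition.

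With the partition in hand, Theorem $\ref{t10.1}$/$\ref{t10.2}$ gives on each piece
\begin{equation*}
\int_{\sigma_{\ell}}^{\sigma_{\ell+1}}\|\epsilon(s)\|_{L^{2}}^{2}\,ds\leq 3(\epsilon_{2}(\sigma_{\ell}),Q+x\cdot\nabla Q)_{L^{2}}-3(\epsilon_{2}(\sigma_{\ell+1}),Q+x\cdot\nabla Q)_{L^{2}}+\frac{T_{\ell}^{1/25d}}{\eta_{1}^{2}}\sup_{J_{\ell}}\frac{|\xi|^{2}}{\lambda^{2}}+O(T_{\ell}^{-8}),
\end{equation*}
and I would estimate $|(\epsilon_{2},Q+x\cdot\nabla Q)_{L^{2}}|\lesssim\|\epsilon(\sigma_{\ell})\|_{L^{2}}$ (as $Q+x\cdot\nabla Q$ is a fixed Schwartz function) and, by the momentum bound, $\frac{T_{\ell}^{1/25d}}{\eta_{1}^{2}}\sup_{J_{\ell}}\frac{|\xi|^{2}}{\lambda^{2}}\lesssim T_{\ell}^{1/25d}\big(\int_{J_{\ell}}\|\epsilon\|_{L^{2}}^{2}\,ds\big)^{2}$. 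Summing over $\ell$: the boundary terms contribute $\lesssim\sum_{\ell}\|\epsilon(\sigma_{\ell})\|_{L^{2}}\lesssim\sum_{m\geq j}2^{-m}\eta_{\ast}\lesssim 2^{-j}\eta_{\ast}$ by the geometric decay of $\|\epsilon\|_{L^{2}}$ along the breakpoints (and $(\ref{3.25})$); the quadratic error sums to $\lesssim\big(\sup_{\ell}T_{\ell}^{1/25d}\int_{J_{\ell}}\|\epsilon\|_{L^{2}}^{2}\big)\int_{s_{j}}^{S_{n}}\|\epsilon(s)\|_{L^{2}}^{2}\,ds$, which — since $\int_{s_{j}}^{S_{n}}\|\epsilon\|_{L^{2}}^{2}\,ds$ is finite for each fixed $n$ and the partition is chosen with $T_{\ell}$ not too large relative to $1/\int_{J_{\ell}}\|\epsilon\|_{L^{2}}^{2}$ — is absorbed into the left-hand side; the $O(T_{\ell}^{-8})$ terms sum to a negligible constant once the $T_{\ell}$ are taken to grow. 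Finally $(\epsilon_{2}(S_{n}),Q+x\cdot\nabla Q)_{L^{2}}\to 0$, so letting $n\to\infty$ gives $\int_{s_{j}}^{\infty}\|\epsilon(s)\|_{L^{2}}^{2}\,ds\lesssim 2^{-j}\eta_{\ast}$, which is $(\ref{11.4})$.

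The main obstacle is the circularity in constructing the partition: verifying the admissibility hypotheses $\tfrac1{\eta_{1}}\leq\lambda(t)\leq\tfrac1{\eta_{1}}T_{\ell}^{1/100}$ and $|x(t)|\leq R$ on a piece of length $T_{\ell}$ requires drift control that itself depends on $\int\|\epsilon\|_{L^{2}}\,ds$ over that piece, i.e. on the very quantity being estimated, and the accumulated errors $\sum_{\ell}\|\epsilon(\sigma_{\ell})\|_{L^{2}}$, $\sum_{\ell}T_{\ell}^{1/25d}(\int_{J_{\ell}}\|\epsilon\|_{L^{2}}^{2})^{2}$, and $\sum_{\ell}T_{\ell}^{-8}$ must all be kept summable and small at once. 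This is resolved as in \cite{dodson2021determination}: one works on the compact intervals $[s_{j},S_{n}]$, where finiteness of $\int\|\epsilon\|_{L^{2}}^{2}$ is automatic and legitimizes the absorption, runs a downward induction on $j$ — the base case supplied by Theorem $\ref{t2.4}$, i.e. $\|\epsilon\|_{L^{2}}$ small for $s$ near $S_{n}$ — and only then passes to the limit $n\to\infty$ with constants independent of $n$ and $j$.
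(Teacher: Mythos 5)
Your overall strategy — subdivide $[s_{j},\infty)$ into admissible pieces, apply the frequency-localized Morawetz estimate of Theorem~\ref{t10.1}/\ref{t10.2} on each piece, control $\xi,\lambda,x$ via the modulation ODE's $(\ref{3.21})$--$(\ref{3.24})$, and sum — is the right shape and matches the paper. You have also correctly located the heart of the matter: the admissibility hypotheses ($\lambda$ and $x$ confined, $|\xi|/\lambda$ small) are controlled by $\int\|\epsilon\|_{L^{2}}\,ds$ over the piece, which is what you are trying to bound. The problem is that your proposed resolution of this circularity does not actually close the loop.

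Concretely, you say the circularity is resolved by ``a downward induction on $j$, base case from Theorem~\ref{t2.4}.'' But $(\ref{11.4})$ gets \emph{weaker} as $j$ decreases, so knowing it for $j=n$ does not supply the drift control you need to verify the Morawetz hypotheses on the additional stretch $[s_{n-1},s_{n}]$ — a single such stretch can already be arbitrarily long, and controlling $\int_{s_{n-1}}^{s_{n}}\|\epsilon\|_{L^{2}}\,ds$ on it is exactly a Morawetz bound you have not yet established. Likewise the claim that finiteness of $\int\|\epsilon\|_{L^{2}}^{2}$ on $[s_{j},S_{n}]$ ``legitimizes the absorption'' is not enough: to absorb you need $T_{\ell}^{1/25d}\int_{J_{\ell}}\|\epsilon\|_{L^{2}}^{2}\ll 1$, not merely finite, and this constraint competes with the need to take $T_{\ell}$ large so that the $O(T_{\ell}^{-8})$ errors sum favorably. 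Choosing $T_{\ell}\sim 1/\int_{J_{\ell}}\|\epsilon\|^{2}_{L^{2}}$, as you suggest, presupposes knowledge of the quantity being estimated.

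The paper breaks the circularity differently, by a bootstrap on the \emph{window scale} rather than on $j$. One first works on windows of length $JT_{\ast}$ with $T_{\ast}=\eta_{\ast}^{-1}$ and $J\lesssim|\ln\eta_{\ast}|^{1/4}$; on such windows the drift of $\lambda$ and $x$ and the size of $|\xi|/\lambda$ are controlled \emph{unconditionally} from the uniform bound $\|\epsilon(s)\|_{L^{2}}\leq\eta_{\ast}$ together with $(\ref{3.22})$--$(\ref{3.24})$ (see $(\ref{11.5})$--$(\ref{11.11})$), so Theorems~\ref{t10.1}/\ref{t10.2} apply and give $(\ref{11.12})$. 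A self-improvement inequality of the form $X\lesssim T_{\ast}^{-1/2}X^{1/2}+\eta_{1}^{2}J^{2}\eta_{\ast}^{2}+\cdots$ (see $(\ref{11.14})$--$(\ref{11.16})$) then pins down $X=\sup_{a}\int_{s_{j_\ast}+aJT_{\ast}}^{s_{j_\ast}+(a+1)JT_{\ast}}\|\epsilon\|_{L^{2}}^{2}\,ds\lesssim 2^{-j_\ast}\eta_{\ast}$. With this bound in hand, and the consequent $L^{1}$ control by H\"older, the drift hypotheses can now be verified on the larger windows of length $J^{2}T_{\ast}$, and so on: the induction $(\ref{11.19})$--$(\ref{11.24})$ runs \emph{upward} in the scale exponent $n$, each step using the bound from scale $J^{n}T_{\ast}$ to certify admissibility at scale $J^{n+1}T_{\ast}$, and only afterwards is the estimate interpolated to an arbitrary index $j$ via $(\ref{11.26})$--$(\ref{11.31})$. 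Without this multi-scale mechanism — or some equivalent device that produces drift control before, rather than after, the Morawetz estimate is invoked on a long piece — the argument as you have sketched it does not go through.
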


\begin{proof}
Set $T_{\ast} = \frac{1}{\eta_{\ast}}$ and suppose that $T_{\ast}$ is sufficiently large such that Theorems $\ref{t10.1}$ and $\ref{t10.2}$ hold. Then by $(\ref{3.23})$ and $(\ref{11.1})$, for any $s' \geq 0$,
\begin{equation}\label{11.5}
|\sup_{s \in [s', s' + T_{\ast}]} \ln(\lambda(s)) - \inf_{s \in [s', s' + T_{\ast}]} \ln(\lambda(s))| \lesssim 1,
\end{equation}
with implicit constant independent of $s' \geq 0$. Let $J$ be the largest dyadic integer that satisfies
\begin{equation}\label{11.6}
J = 2^{j_{\ast}} \leq -\ln(\eta_{\ast})^{1/4}.
\end{equation}
By $(\ref{11.5})$ and the triangle inequality,
\begin{equation}\label{11.7}
\aligned
|\sup_{s \in [s', s' + J T_{\ast}]} \ln(\lambda(s)) - \inf_{s \in [s', s' + J T_{\ast}]} \ln(\lambda(s))| \lesssim J,
\endaligned
\end{equation}
and therefore,
\begin{equation}\label{11.8}
\frac{\sup_{s \in [s', s' + 3 J T_{\ast}]} \lambda(s)}{\inf_{s \in [s', s' + 3JT^{\ast}]} \lambda(s)} \lesssim T_{\ast}^{\frac{1}{5000d^{2}}}.
\end{equation}
Rescale so that
\begin{equation}\label{11.9}
\frac{1}{\eta_{1}} \leq \lambda(s) \leq \frac{1}{\eta_{1}} T_{\ast}^{\frac{1}{5000d^{2}}} \qquad \text{for any} \qquad s \in [s', s' + 3J T_{\ast}].
\end{equation}
Now take a subset $[a, b] \subset [s', s' + 3 J T_{\ast}]$, make a Galilean transformation so that $\xi(a) = 0$, and a translation in space so that $x(b) = 0$. By $(\ref{3.22})$ and $(\ref{11.9})$,
\begin{equation}\label{11.10}
\sup_{s \in [s', s' + 3J T_{\ast}]} |\frac{\xi(s)}{\lambda(s)}| \lesssim \eta_{1} J \eta_{\ast} \ll \eta_{0}.
\end{equation}
Also by $(\ref{3.24})$,
\begin{equation}\label{11.11}
|x(s)| \lesssim \frac{1}{\eta_{1}^{2}} T_{\ast}^{\frac{1}{5000d^{2}}} \eta_{1} J + \frac{1}{\eta_{1}} T_{\ast}^{\frac{1}{5000d^{2}}} J \ll T_{\ast}^{\frac{1}{2000d^{2}}}.
\end{equation}
Therefore, Theorems $\ref{t10.1}$ and $\ref{t10.2}$ may be utilized on $[s', s' + J T_{\ast}]$, proving that for any $s' \geq 0$,
\begin{equation}\label{11.12}
\int_{s'}^{s' + J T_{\ast}} \| \epsilon(s) \|_{L^{2}}^{2} ds \lesssim \| \epsilon(s') \|_{L^{2}} + \| \epsilon(s' + J T_{\ast}) \|_{L^{2}} + \eta_{1}^{2} J^{2} \eta_{\ast}^{2} + O(\frac{1}{J^{9} T_{\ast}^{9}}).
\end{equation}
Note that the left hand side of $(\ref{11.12})$ is scale invariant.

Moreover, for any $s' > J T_{\ast}$,
\begin{equation}\label{11.13}
\int_{s'}^{s' + J T_{\ast}} \| \epsilon(s) \|_{L^{2}}^{2} ds \lesssim \inf_{s \in [s' - J T_{\ast}, s']} \| \epsilon(s) \|_{L^{2}} + \inf_{s \in [s' + J T_{\ast}, s' + 2J T_{\ast}]} \| \epsilon(s) \|_{L^{2}} + \eta_{1}^{2} J^{2} \eta_{\ast}^{2} + O(\frac{1}{J^{9} T_{\ast}^{9}}).
\end{equation}
In particular, for a fixed $s' \geq 0$,
\begin{equation}\label{11.14}
\sup_{a > 0} \int_{s' + a J T_{\ast}}^{s' + (a + 1) J T_{\ast}} \| \epsilon(s) \|_{L^{2}}^{2} \lesssim \frac{1}{J^{1/2} T_{\ast}^{1/2}} (\sup_{a \geq 0} \int_{s' + a J T_{\ast}}^{s' + (a + 1) J T_{\ast}} \| \epsilon(s) \|_{L^{2}}^{2} ds)^{1/2} + \eta_{1}^{2} J^{2} \eta_{\ast}^{2} + O(\frac{1}{J^{9} T_{\ast}^{9}}).
\end{equation}
Meanwhile, when $a = 0$,
\begin{equation}\label{11.15}
 \int_{s'}^{s' + J T_{\ast}} \| \epsilon(s) \|_{L^{2}}^{2} \lesssim \| \epsilon(s') \|_{L^{2}} + \frac{1}{J^{1/2} T_{\ast}^{1/2}} (\sup_{a \geq 0} \int_{s' + a J T_{\ast}}^{s' + (a + 1) J T_{\ast}} \| \epsilon(s) \|_{L^{2}}^{2} ds)^{1/2} + \eta_{1}^{2} J^{2} \eta_{\ast}^{2} + O(\frac{1}{J^{9} T_{\ast}^{9}}).
\end{equation}
Therefore, taking $s' = s_{j_{\ast}}$,
\begin{equation}\label{11.16}
\sup_{a \geq 0} \int_{s_{j_{\ast}} + a J T_{\ast}}^{s_{j_{\ast}} + (a + 1) J T_{\ast}} \| \epsilon(s) \|_{L^{2}}^{2} ds \lesssim 2^{-j_{\ast}} \eta_{\ast} + \eta_{1}^{2} J^{2} \eta_{\ast}^{2} + O(2^{-9 j_{\ast}} \eta_{\ast}^{9}).
\end{equation}
Then by the triangle inequality,
\begin{equation}\label{11.17}
\sup_{s' \geq s_{j_{\ast}}} \int_{s'}^{s' + J T_{\ast}} \| \epsilon(s) \|_{L^{2}}^{2} ds \lesssim 2^{-j_{\ast}} \eta_{\ast},
\end{equation}
and by H{\"o}lder's inequality,
\begin{equation}\label{11.18}
\sup_{s' \geq s_{j_{\ast}}} \int_{s'}^{s' + J T_{\ast}} \| \epsilon(s) \|_{L^{2}} ds \lesssim 1.
\end{equation}

Repeating this argument, Theorem $\ref{t11.1}$ can be proved by induction. Indeed, fix a constant $C < \infty$ and suppose that there exists a positive integer $n_{0}$ such that for all integers $0 \leq n \leq n_{0}$,
\begin{equation}\label{11.19}
\sup_{s' \geq s_{nj_{\ast}}} \int_{s'}^{s' + J^{n} T_{\ast}} \| \epsilon(s) \|_{L^{2}} ds \leq C, \qquad \sup_{s' \geq s_{nj_{\ast}}} \int_{s'}^{s' + J^{n} T_{\ast}} \| \epsilon(s) \|_{L^{2}}^{2} ds \leq C J^{-n} \eta_{\ast}.
\end{equation}
Then by $(\ref{11.5})$, for $s' \geq s_{n j_{\ast}}$,
\begin{equation}\label{11.20}
\frac{\sup_{s \in [s', s' + 3 J^{n + 1} T_{\ast}]} \lambda(s)}{\inf_{s \in [s', s' + 3 J^{n + 1} T_{\ast}]} \lambda(s)} \lesssim T_{\ast}^{\frac{1}{5000d^{2}}}.
\end{equation}
\begin{remark}
The $C$ in $(\ref{11.19})$ will ultimately be given by the implicit constants in Theorems $\ref{t10.1}$ and $\ref{t10.2}$, so for $T_{\ast}$ sufficiently large, $(\ref{11.20})$ will hold.
\end{remark}
Rescale so that $(\ref{11.9})$ holds. Also, for $[a, b] \subset [s', s' + 3 J^{n + 1} T_{\ast}]$, setting $\xi(a) = 0$ and $x(b) = 0$, by $(\ref{3.22})$ and $(\ref{11.19})$,
\begin{equation}\label{11.21}
\sup_{s \in [s', s' + 3 J^{n + 1} T_{\ast}]} \frac{|\xi(s)|}{\lambda(s)} \lesssim C J^{-n} \eta_{1} \eta_{\ast},
\end{equation}
and by $(\ref{3.24})$ and $(\ref{11.19})$,
\begin{equation}\label{11.22}
\aligned
|x(s)| \lesssim \sup \lambda(s) \int_{s'}^{s' + 3 J^{n + 1} T_{\ast}} \| \epsilon(s) \|_{L^{2}} ds + \sup \lambda(s)^{2} \cdot \sup \frac{|\xi(s)|}{\lambda(s)} \int_{s'}^{s' + 3 J^{n + 1} T_{\ast}} 1 ds \\
\lesssim \frac{1}{\eta_{1}} T_{\ast}^{\frac{1}{5000d^{2}}} CJ + \frac{1}{\eta_{1}^{2}} T_{\ast}^{1/25d} C J^{-n} \eta_{1} \eta_{\ast} J^{n + 1} T_{\ast} \lesssim \frac{1}{\eta_{1}} T_{\ast}^{\frac{1}{5000d^{2}}} CJ + \frac{1}{\eta_{1}} T_{\ast}^{\frac{1}{2500d^{2}}} CJ \ll T_{\ast}^{\frac{1}{2000d^{2}}}.
\endaligned
\end{equation}
Then by Theorems $\ref{t10.1}$ and $\ref{t10.2}$,
\begin{equation}\label{11.23}
\sup_{s' \geq s_{(n + 1) j_{\ast}}} \int_{s'}^{s' + J^{n + 1} T_{\ast}} \| \epsilon(s) \|_{L^{2}}^{2} ds \leq C J^{-(n + 1)} T_{\ast}^{-1},
\end{equation}
and by H{\"o}lder's inequality,
\begin{equation}\label{11.24}
\sup_{s' \geq s_{(n + 1) j_{\ast}}} \int_{s'}^{s' + J^{n + 1} T_{\ast}} \| \epsilon(s) \|_{L^{2}} ds \leq C.
\end{equation}
Therefore, $(\ref{11.19})$ holds for any integer $n > 0$.


Now take any $j \in \mathbb{Z}$ and suppose $n j_{\ast} < j \leq (n + 1) j_{\ast}$. Then $(\ref{11.20})$--$(\ref{11.22})$ hold on $[s_{j} + a J^{n + 1} T_{\ast}, s_{j} + (a + 1) J^{n + 1} T_{\ast}]$ for any $a \geq 0$, so
by Theorems $\ref{t10.1}$ and $\ref{t10.2}$,
\begin{equation}\label{11.26}
\sup_{a \geq 0} \int_{s_{j} + a J^{n + 1} T_{\ast}}^{s_{j} + (a + 1) J^{n + 1} T_{\ast}} \| \epsilon(s) \|_{L^{2}}^{2} ds \lesssim 2^{-j} \eta_{\ast},
\end{equation}
and therefore by H{\"o}lder's inequality, for any $s' \geq s_{j}$,
\begin{equation}\label{11.27}
\sup_{s' \geq s_{j}} \int_{s'}^{s' + 2^{j} T_{\ast}} \| \epsilon(s) \|_{L^{2}} ds \lesssim 1,
\end{equation}
with bound independent of $j$. Inequalities $(\ref{11.26})$ and $(\ref{11.27})$ imply that $(\ref{11.20})$--$(\ref{11.22})$ hold on $[s', s' + 3 \cdot 2^{j} J T_{\ast}]$ for any $s' \geq s_{j}$, so
\begin{equation}\label{11.28}
\int_{s_{j}}^{s_{j} + 2^{j} J T_{\ast}} \| \epsilon(s) \|_{L^{2}}^{2} \lesssim 2^{-j} \eta_{\ast},
\end{equation}
and therefore, by the mean value theorem,
\begin{equation}\label{11.29}
\inf_{s \in [s_{j}, s_{j} + 2^{j} J T_{\ast}]} \| \epsilon(s) \|_{L^{2}} \lesssim 2^{-j} \eta_{\ast} J^{-1/2},
\end{equation}
which implies
\begin{equation}\label{11.30}
s_{j + 1} \in [s_{j}, s_{j} + 2^{j} J T_{\ast}].
\end{equation}
Therefore, by $(\ref{11.28})$ and H{\"o}lder's inequality,
\begin{equation}\label{11.31}
\int_{s_{j}}^{s_{j + 1}} \| \epsilon(s) \|_{L^{2}}^{2} ds \lesssim 2^{-j} \eta_{\ast}, \qquad \text{and} \qquad \int_{s_{j}}^{s_{j + 1}} \| \epsilon(s) \|_{L^{2}} ds \lesssim 1,
\end{equation}
with constant independent of $j$. Summing in $j$ gives $(\ref{11.2})$ and $(\ref{11.4})$.
\end{proof}

Now then, $(\ref{3.25})$ and $(\ref{11.2})$ imply
\begin{equation}\label{11.33}
\lim_{s \rightarrow \infty} \| \epsilon(s) \|_{L^{2}} = 0.
\end{equation}
Next, by definition of $s_{j}$, $(\ref{11.4})$ implies
\begin{equation}\label{11.34}
\int_{s_{j}}^{s_{j + 1}} \| \epsilon(s) \|_{L^{2}} ds \lesssim 1,
\end{equation}
and for any $1 < p < \infty$,
\begin{equation}\label{11.35}
(\int_{s_{j}}^{s_{j + 1}} \| \epsilon(s) \|_{L^{2}}^{p} ds) \lesssim \eta_{\ast}^{p - 1} 2^{-j(p - 1)}, 
\end{equation}
which implies that $\| \epsilon(s) \|_{L^{2}}$ belongs to $L_{s}^{p}$ for any $p > 1$, but not $L_{s}^{1}$.

Comparing $(\ref{11.35})$ to the pseudoconformal transformation of the soliton, $(\ref{1.15})$, for $0 < t < 1$,
\begin{equation}\label{11.36}
\lambda(t) \sim t, \qquad \text{and} \qquad \| \epsilon(t) \|_{L^{2}} \sim t,
\end{equation}
so
\begin{equation}\label{11.36}
\int_{0}^{1} \| \epsilon(t) \|_{L^{2}} \lambda(t)^{-2} dt = \infty,
\end{equation}
but for any $p > 1$,
\begin{equation}\label{11.38}
\int_{0}^{1} \| \epsilon(t) \|_{L^{2}}^{p} \lambda(t)^{-2} dt < \infty.
\end{equation}
For the soliton, $\epsilon(s) \equiv 0$ for any $s \in \mathbb{R}$, so obviously, $\| \epsilon(s) \|_{L^{2}} \in L_{s}^{p}$ for $1 \leq p \leq \infty$.

\section{Monotonicity of $\lambda$}
As in the one dimensional case, it is possible to use the virial identity from \cite{merle2005blow}, to show that $\lambda(s)$ is an approximately monotone decreasing function.
\begin{theorem}\label{t8.2}
For any $s \geq 0$, let
\begin{equation}\label{14.0}
\tilde{\lambda}(s) = \inf_{\tau \in [0, s]} \lambda(\tau).
\end{equation}
Then for any $s \geq 0$,
\begin{equation}\label{8.0}
1 \leq \frac{\lambda(s)}{\tilde{\lambda}(s)} \leq 3.
\end{equation}

\end{theorem}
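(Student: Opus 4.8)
The plan is to reduce $(\ref{8.0})$ to a uniform bound $\bigl|\ln\lambda(s)-\ln\lambda(s')\bigr|\le C\eta_{\ast}$ for all $0\le s'\le s$, with $C$ absolute. By the scaling symmetry $(\ref{1.5.1})$ we may assume $\lambda(0)=1$; since $\int_{s'}^{s}\tfrac{\lambda_\tau}{\lambda}\,d\tau=\ln\lambda(s)-\ln\lambda(s')$, such a bound gives $\lambda(s)\le e^{C\eta_{\ast}}\lambda(s')$ for every $s'\le s$, and taking the infimum over $s'\in[0,s]$ yields $\lambda(s)\le e^{C\eta_{\ast}}\tilde\lambda(s)\le 3\tilde\lambda(s)$ once $\eta_{\ast}$ is small, while $\lambda(s)/\tilde\lambda(s)\ge 1$ is immediate. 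It is convenient first to apply a Galilean transformation so that the conserved momentum $P(u)=\mathrm{Im}\int\bar u\,\nabla u\,dx$ vanishes.

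First I would extract the leading behaviour of $\lambda_s/\lambda$ from the modulation equations. Differentiating $(\epsilon,\chi_0)_{L^2}=0$ and inserting $(\ref{3.19})$, and using $\mathcal L_-\chi_0=-\lambda_d\chi_0+\tfrac4d Q^{4/d}\chi_0$, the identity $\mathcal L(\Lambda Q)=-2Q$ (where $\Lambda Q=x\cdot\nabla Q+\tfrac d2 Q$), and $(Q,\chi_0)_{L^2}>0$, together with the pointwise estimates on $\gamma_s+1$, $\xi_s$, $x_s/\lambda$ underlying $(\ref{3.21})$–$(\ref{3.24})$, one finds, for a fixed $c_0>0$ depending only on $Q$ and $d$,
\begin{equation}
\frac{\lambda_s}{\lambda}(s)=c_0\,(\epsilon_2(s),Q^{4/d}\chi_0)_{L^2}+O\!\Bigl(\|\epsilon(s)\|_{L^2}^2+\tfrac{|\xi(s)|^2}{\lambda(s)^2}\Bigr),
\end{equation}
up to a frequency–truncation error which sums to $\lesssim\eta_{\ast}$ over $[0,\infty)$. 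The crucial feature is that the main term is \emph{linear} in $\epsilon$.

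Next, following the virial computation of \cite{merle2005blow}, I would realize this linear functional as an $s$–derivative of a bounded quantity. Let $h$ solve $\mathcal L_-h=Q^{4/d}\chi_0-c_1Q$ with $c_1=(Q^{4/d}\chi_0,Q)_{L^2}/\|Q\|_{L^2}^2$ (so the right side is orthogonal to $\mathrm{Ker}(\mathcal L_-)=\mathrm{span}\{Q\}$), and set $\mathcal J(s)=-c_0(\epsilon_1(s),h)_{L^2}+(\text{a quadratic correction in }\epsilon)$; then $|\mathcal J(s)|\lesssim\|\epsilon(s)\|_{L^2}$, and differentiating and using $(\ref{3.19})$ once more gives
\begin{equation}
\frac{d}{ds}\mathcal J(s)=\frac{\lambda_s}{\lambda}(s)-c_0c_1\,(\epsilon_2(s),Q)_{L^2}+O\!\Bigl(\|\epsilon(s)\|_{L^2}^2+\tfrac{|\xi(s)|^2}{\lambda(s)^2}\Bigr),
\end{equation}
again up to an error with total size $\lesssim\eta_{\ast}$. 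The residual term $(\epsilon_2,Q)_{L^2}$ is the one piece that is genuinely \emph{not} a total $s$–derivative — this is exactly the kernel obstruction $Q\in\mathrm{Ker}(\mathcal L_-)$ — and it is here that one must bring in the virial identity $\frac{d}{dt}\int x\cdot\mathrm{Im}[\bar u\,\nabla u]\,dx=4E(u)$: in the renormalized frame this reads $\frac{d}{ds}\bigl[-2(\epsilon_2,\Lambda Q)_{L^2}+O(\text{quadratic})\bigr]=4\lambda(s)^2E(u)$, and this, combined with the almost–conservation of energy of Theorems $\ref{t4.2}$ and $\ref{t4.4}$, the momentum expansion $\lambda P(u)=-\xi\|Q\|_{L^2}^2+O(\|\epsilon\|_{L^2}^2+\|\epsilon\|_{L^2}\|\epsilon\|_{\dot{H}^{1}})$ (using $(\epsilon_2,\nabla Q)_{L^2}=0$ and $P(u)=0$), and the refined dyadic bounds $\int_{s_j}^{\infty}\|\epsilon(s)\|_{L^2}^2\,ds\lesssim 2^{-j}\eta_{\ast}$ of Theorem $\ref{t11.1}$, yields
\begin{equation}
\int_0^{\infty}\Bigl(\|\epsilon(s)\|_{L^2}^2+\tfrac{|\xi(s)|^2}{\lambda(s)^2}+\bigl|(\epsilon_2(s),Q)_{L^2}\bigr|\Bigr)\,ds\lesssim\eta_{\ast}.
\end{equation}

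Integrating the previous identity from $s'$ to $s$ then gives
\[
\bigl|\ln\lambda(s)-\ln\lambda(s')\bigr|\le|\mathcal J(s)|+|\mathcal J(s')|+c_0c_1\!\int_{s'}^{s}\!\bigl|(\epsilon_2,Q)_{L^2}\bigr|\,d\tau+\int_{s'}^{s}(\text{errors})\lesssim\eta_{\ast},
\]
which by the first paragraph proves $(\ref{8.0})$. The main obstacle is precisely the step producing the last displayed integral bound: turning the algebraic appearance of the non–total–derivative residue $(\epsilon_2,Q)_{L^2}$ and of the momentum ratio $|\xi(s)|/\lambda(s)$ into genuinely integrable quantities forces one to run the virial identity, the mass/momentum/energy conservation laws, and the dyadic $\|\epsilon(s)\|_{L^2}$ estimates of Theorem $\ref{t11.1}$ in tandem — the coupled bootstrap that the Merle–Raphael monotonicity argument is designed to carry out.
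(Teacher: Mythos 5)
Your plan is to rewrite $\lambda_s/\lambda$ as a total $s$--derivative of a bounded quantity plus integrable errors and then integrate, which would produce the \emph{two-sided} bound $|\ln\lambda(s)-\ln\lambda(s')|\lesssim\eta_\ast$ for all $s'\le s$. That is strictly stronger than what the theorem asserts, and it is in fact false: for the pseudoconformal transformation of the soliton $\lambda(s)\sim 1/s\to 0$, so $\ln\lambda(s)\to-\infty$ while $\|\epsilon(s)\|_{L^2}\to 0$. Any decomposition $\lambda_s/\lambda=\tfrac{d}{ds}\mathcal J+\mathcal E$ with $|\mathcal J|\lesssim\|\epsilon\|_{L^2}$ therefore forces $\int_{0}^{s}\mathcal E\,d\tau\to-\infty$, so $\mathcal E$ cannot be absolutely integrable. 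In your bookkeeping the non-total-derivative residue is exactly $c_0 c_1(\epsilon_2,Q)_{L^2}$, and the third displayed estimate $\int_0^\infty|(\epsilon_2,Q)_{L^2}|\,ds\lesssim\eta_\ast$ is where the proof breaks: $(\epsilon_2,Q)_{L^2}$ is genuinely linear in $\epsilon$ (unlike $(\epsilon_1,Q)_{L^2}=-\tfrac12\|\epsilon\|_{L^2}^2$, which mass conservation makes quadratic), so it is only in $L_s^p$ for $p>1$ by Theorem \ref{t11.1}, never in $L_s^1$. No combination of the virial identity, almost conservation of energy, and the dyadic bounds will make it absolutely integrable, because it is precisely this term that drives the non-integrable decay of $\lambda$ in the pseudoconformal solution.

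The paper's proof avoids this problem by proving only the one-sided statement and by running an indirect argument rather than estimating $\int\lambda_s/\lambda$. Assume for contradiction that $\lambda(s_+)/\lambda(s_-)=e$. The Merle--Rapha\"el virial test function is $(\epsilon,|x|^2Q)_{L^2}$, not your $\mathcal J$: differentiating and integrating over $[s_-,s_+]$ yields $\|xQ\|_{L^2}^2+4\int_{s_-}^{s_+}(\epsilon_2,\Lambda Q)_{L^2}\,ds=O(\eta_\ast)$, so some $s'\in[s_-,s_+]$ has $(\epsilon_2(s'),\Lambda Q)_{L^2}<0$. That sign is then fed into the boundary terms of the frequency-localized Morawetz estimate (Theorems \ref{t10.1}, \ref{t10.2}), whose right-hand side is $3(\epsilon_2(a),\Lambda Q)-3(\epsilon_2(b),\Lambda Q)+\cdots$; with $a=s'$ the first boundary term is favorable, and together with the decay of $\|\epsilon(b)\|_{L^2}$ from Theorem \ref{t11.1} this forces $\int_{s'}^\infty\|\epsilon\|_{L^2}^2\,ds=0$, hence $\epsilon\equiv 0$ and $u$ is a soliton with constant $\lambda$ --- a contradiction. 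The crucial point is that the argument never needs $\int(\epsilon_2,\Lambda Q)$ or $\int(\epsilon_2,Q)$ to be absolutely convergent; it only needs a single negative value of $(\epsilon_2,\Lambda Q)$ produced by the assumed increase of $\lambda$. Your sketch lacks this mechanism.
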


\begin{proof}
Suppose there exist $0 \leq s_{-} \leq s_{+} < \infty$ satisfying
\begin{equation}\label{8.19}
\frac{\lambda(s_{+})}{\lambda(s_{-})} = e.
\end{equation}
Then we can show that $u$ is a soliton solution to $(\ref{1.1})$, which is a contradiction, since $\lambda(s)$ is constant in that case.

The proof that $(\ref{8.19})$ implies that $u$ is a soliton uses a virial identity from \cite{merle2005blow}, combined with the $L_{s}^{p}$ bounds on $\| \epsilon(s) \|_{L^{2}}$ obtained in Theorem $\ref{t11.1}$. Using $(\ref{3.19})$, compute
\begin{equation}\label{8.41}
\aligned
\frac{d}{ds} (\epsilon, |x|^{2} Q) + \frac{\lambda_{s}}{\lambda} \| x Q \|_{L^{2}}^{2} + 4 (\frac{d}{2} Q + x \cdot \nabla Q, \epsilon_{2})_{L^{2}} \\ = O(|\gamma_{s} + 1 - \frac{x_{s}}{\lambda} \cdot \xi(s) - |\xi(s)|^{2} | \| \epsilon \|_{L^{2}}) + O(|\xi_{s} - \frac{\lambda_{s}}{\lambda} \xi(s)| \| \epsilon \|_{L^{2}}) + O(|\frac{\lambda_{s}}{\lambda}| \| \epsilon \|_{L^{2}}) + O(|\frac{x_{s}}{\lambda} + 2 \xi| \| \epsilon \|_{L^{2}}) \\
 + O(\| \epsilon \|_{L^{2}}^{2}) + O(\| \epsilon \|_{L^{2}} \| \epsilon \|_{L^{2(1 + \frac{4}{d}}}^{1 + \frac{4}{d}}), \qquad \text{if} \qquad 2 \leq d \leq 4, \qquad + O(\| \epsilon \|_{L^{2}}^{1 + 4/d}), \qquad \text{if} \qquad d > 5.
\endaligned
\end{equation}
Indeed, by direct computation,
\begin{equation}
(\nabla Q, x^{2} Q)_{L^{2}} = (iQ, x^{2} Q)_{L^{2}} = (i \nabla Q, x^{2} Q) = 0, \qquad \text{and} \qquad -\mathcal L_{-}(y^{2} Q) = -2d Q - 4x \cdot \nabla Q.
\end{equation}
Also,
\begin{equation}
\aligned
(\frac{d}{2} Q + x \cdot \nabla Q, |x|^{2} Q)_{L^{2}} = \frac{d}{2} \| xQ \|_{L^{2}}^{2} + \frac{1}{2} (|x|^{2} x, \nabla Q^{2})_{L^{2}} = \frac{d}{2} \| x Q \|_{L^{2}}^{2} + \frac{1}{8} (\nabla |x|^{4}, \nabla Q)_{L^{2}} \\
= \frac{d}{2} \| x Q \|_{L^{2}}^{2} - \frac{1}{8} (\Delta |x|^{4}, Q^{2})_{L^{2}} = \frac{d}{2} \| xQ \|_{L^{2}}^{2} - \frac{d + 2}{2} \| xQ \|_{L^{2}}^{2} = -\| xQ \|_{L^{2}}^{2}.
\endaligned
\end{equation}
Then by Theorem $\ref{t11.1}$, the fundamental theorem of calculus, and $(\ref{3.21})$--$(\ref{3.24})$,
\begin{equation}\label{8.42}
\| x Q \|_{L^{2}}^{2} + 4 \int_{s_{-}}^{s_{+}} (\epsilon_{2}, \frac{Q}{2} + x Q_{x})_{L^{2}} = O(\eta_{\ast}).
\end{equation}
Therefore, there exists $s' \in [s_{-}, s_{+}]$ such that
\begin{equation}\label{8.43}
(\epsilon_{2}, \frac{d}{2} Q + x \cdot \nabla Q)_{L^{2}} < 0.
\end{equation}
Rescale so that $\lambda(s') = \frac{1}{\eta_{1}}$. 

Since $s' \geq 0$, there exists some $j \geq 0$ such that $s_{j} \leq s' + T_{\ast} < s_{j + 1}$. Using the proof of Theorem $\ref{t11.1}$, in particular $(\ref{11.20})$--$(\ref{11.22})$, setting $\xi(s') = 0$ and $x(s_{j + 1 + J}) = 0$,
\begin{equation}\label{8.44}
\int_{s'}^{s_{j + 1 + J}} |\frac{\lambda_{s}}{\lambda}| ds \lesssim J.
\end{equation}
Then by Theorems $\ref{t10.1}$ and $\ref{t10.2}$, $(\ref{8.43})$ implies
\begin{equation}\label{8.45}
\int_{s'}^{s_{j + 1 + J}} \| \epsilon(s) \|_{L^{2}}^{2} ds \lesssim 2^{-(j + 1 + J)} \eta_{\ast},
\end{equation}
and therefore by definition of $s_{j + 1 + J}$,
\begin{equation}\label{8.46}
\int_{s'}^{s_{j + 1 + J}} \| \epsilon(s) \|_{L^{2}} ds \lesssim 1.
\end{equation}

Arguing by induction, suppose that for some $1 \leq k \leq k_{0}$,
\begin{equation}\label{8.47}
\int_{s'}^{s_{j + k}} \| \epsilon(s) \|_{L^{2}}^{2} ds \lesssim 2^{-j - k} \eta_{\ast},
\end{equation}
and
\begin{equation}\label{8.48}
\int_{s'}^{s_{j + k}} \| \epsilon(s) \|_{L^{2}} ds \lesssim 1,
\end{equation}
with implicit constant independent of $k$. By Theorem $\ref{t11.1}$,
\begin{equation}\label{8.47}
\int_{s'}^{s_{j + k + J}} \| \epsilon(s) \|_{L^{2}}^{2} ds \lesssim 2^{-j - k} \eta_{\ast},
\end{equation}
and
\begin{equation}\label{8.48}
\int_{s'}^{s_{j + k + J}} \| \epsilon(s) \|_{L^{2}} ds \lesssim J.
\end{equation}
Then by $(\ref{11.20})$--$(\ref{11.22})$ and setting $\xi(s') = x(s_{j + k + J}) = 0$,
\begin{equation}\label{8.28.1}
\sup_{s \in [s', s_{j + k + J}]} |x(s)| \lesssim T_{\ast}^{\frac{1}{2000d^{2}}}, \qquad \sup_{s \in [s', s_{j + k + J}]} |\xi(s)| \leq \eta_{0}, \qquad \frac{\sup_{s \in [s', s_{j + k + J}]} \lambda(s)}{\inf_{s \in [s', s_{j + k + J}]} \lambda(s)} \leq T_{\ast}^{\frac{1}{5000d^{2}}},
\end{equation}
so by Theorems $\ref{t10.1}$ and $\ref{t10.2}$ and $(\ref{8.43})$,
\begin{equation}\label{8.47.1}
\int_{s'}^{s_{j + k}} \| \epsilon(s) \|_{L^{2}}^{2} ds \lesssim 2^{-j - k} \eta_{\ast},
\end{equation}
and
\begin{equation}\label{8.48.1}
\int_{s'}^{s_{j + k}} \| \epsilon(s) \|_{L^{2}} ds \lesssim 1,
\end{equation}
for $1 \leq k \leq k_{0} + J$. Therefore, $(\ref{8.47.1})$ and $(\ref{8.48.1})$ hold for any $k$, with implicit constant independent of $k$.

Taking $k \rightarrow \infty$,
\begin{equation}\label{8.49}
\int_{s'}^{\infty} \| \epsilon(s) \|_{L^{2}}^{2} ds = 0,
\end{equation}
which implies that $\epsilon(s) = 0$ for all $s \geq s'$. Therefore,
\begin{equation}\label{8.50}
u_{0}(x) = e^{i \gamma} e^{ix \cdot \xi} \lambda^{d/2} Q(\lambda x + x_{0}),
\end{equation}
for some $\gamma \in \mathbb{R}$, $\lambda > 0$, $\xi \in \mathbb{R}^{d}$, $x_{0} \in \mathbb{R}^{d}$, which proves that $u$ is a soliton solution.
\end{proof}

\section{Proof of Theorem $\ref{t2.2}$}
The key difference between the soliton solution $(\ref{1.3.1})$ and the pseudoconformal transformation of the soliton $(\ref{1.5})$ is that the soliton is global, while the pseudoconformal transformation of the soliton blows up in finite time. To prove Theorem $\ref{t2.2}$, it suffices to show that a solution to $(\ref{1.1})$ satisfying the conditions of Theorem $\ref{t2.2}$ that blows up in infinite time must be a soliton. A pseudoconformal transformation of a finite time blowup solution will then show that it must be a pseudoconformal transformation of a soliton.

\begin{theorem}\label{t14.0}
In dimensions $2 \leq d \leq 15$, if $u$ is a solution to $(\ref{1.1})$ that satisfies the conditions of Theorem $\ref{t2.2}$, blows up forward in time, and
\begin{equation}\label{14.1}
\sup(I) = \infty,
\end{equation}
then $u$ is equal to a soliton solution.
\end{theorem}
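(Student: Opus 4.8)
The plan is to exploit the contrast between a soliton (for which $\lambda$ is constant and the solution is global) and the pseudoconformal blow-up solution $(\ref{1.15})$ (for which $\lambda(s)\to 0$ and $\sup(I)<\infty$): the extra hypothesis $(\ref{14.1})$ should force us into the soliton regime. First I would record that $s(t)=\int_0^t\lambda(\tau)^{-2}\,d\tau$ maps $[0,\infty)$ onto $[0,\infty)$, since $\lambda(s)\le 3\lambda(0)$ (Theorem \ref{t8.2}) gives $s(t)\ge(3\lambda(0))^{-2}t\to\infty$. Hence all the stopping times $s_j$ of Theorem \ref{t11.1} exist and are finite, $\|\epsilon(s)\|_{L^2}\to 0$, and $\|\epsilon(s)\|_{L^2}\in L^p_s$ for every $p>1$ with the bounds $(\ref{11.31})$. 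Set $\lambda_\infty=\lim_{s\to\infty}\tilde\lambda(s)\in[0,\lambda(0)]$; by Theorem \ref{t8.2}, $\lambda_\infty\le\lambda(s)\le 3\tilde\lambda(s)$, so $\limsup_{s}\lambda(s)\le 3\lambda_\infty$. The argument then splits on whether $\lambda_\infty=0$ or $\lambda_\infty>0$.

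If $\lambda_\infty=0$, so $\lambda(s)\to 0$, I claim $(\ref{14.1})$ fails. The point is to convert $\lambda(s)\to 0$ into a quantitative decay: on $[s_j,s_{j+1}]$ one has $\int_{s_j}^{s_{j+1}}\|\epsilon(s)\|_{L^2}\,ds\lesssim 1$, $s_{j+1}\le s_j+2^j J T_\ast$ by $(\ref{11.30})$, and $(\ref{3.23})$ together with the monotonicity of $\lambda$ ties the scale to the remainder, $\lambda(s)\lesssim\|\epsilon(s)\|_{L^2}$ for $s$ large — precisely the behaviour exhibited by the pseudoconformal solution $(\ref{1.15})$. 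Then
\[
\sup(I)=\lim_{s\to\infty}t(s)=\int_0^\infty\lambda(s)^2\,ds\ \lesssim\ 1+\sum_{j\ge 0}2^{-2j}\eta_\ast^2\,(s_{j+1}-s_j)\ \lesssim\ 1+\sum_{j\ge 0}2^{-j}\eta_\ast\ <\ \infty,
\]
contradicting $(\ref{14.1})$; hence $\lambda_\infty>0$.

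It remains to prove that $\lambda_\infty>0$ forces $u$ to be a soliton. Rescaling so that $\lambda(s)\sim 1$, the solution is uniformly bounded, $\xi$ and $x$ are controlled via $(\ref{3.21})$--$(\ref{3.24})$, and $\int_0^\infty\|\epsilon(s)\|_{L^2}^2\,ds\lesssim\eta_\ast$. I would then argue as in the proof of Theorem \ref{t8.2}: integrating the virial identity $(\ref{8.41})$ over an interval $[s_-,s_+]$ with $s_-$ large, using $\|\epsilon(s_\pm)\|_{L^2}\to 0$ and $(\epsilon(s),|x|^2Q)_{L^2}\to 0$ to control the boundary terms, gives
\[
\|xQ\|_{L^2}^2\,\ln\frac{\lambda(s_+)}{\lambda(s_-)}+4\int_{s_-}^{s_+}\Big(\epsilon_2,\tfrac d2 Q+x\cdot\nabla Q\Big)_{L^2}\,ds=o(1).
\]
If $u$ is not a soliton then $\lambda$ is not eventually constant (Theorem \ref{t8.2}), so there are arbitrarily late intervals on which $\lambda$ increases by a fixed factor, i.e.\ $\ln(\lambda(s_+)/\lambda(s_-))\ge\ln(1+\delta)>0$, which forces a time $s'$ with $(\epsilon_2(s'),\tfrac d2 Q+x\cdot\nabla Q)_{L^2}<0$. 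Feeding that sign into the frequency-localized Morawetz estimates of Theorems \ref{t10.1}--\ref{t10.2} on $[s',\infty)$ and iterating over the $s_j$ as in Theorem \ref{t8.2} gives $\int_{s'}^\infty\|\epsilon(s)\|_{L^2}^2\,ds=0$, so $\epsilon\equiv 0$ on $[s',\infty)$ and $u$ agrees there with a soliton; by uniqueness for $(\ref{1.1})$, $u$ is a soliton.

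The main obstacle is the $\lambda_\infty=0$ step: rigorously promoting $\lambda(s)\to 0$ to a summable bound on $\int_{s_j}^{s_{j+1}}\lambda^2\,ds$ (e.g.\ through $\lambda(s)\lesssim\|\epsilon(s)\|_{L^2}$ for large $s$) using only the modulation equations $(\ref{3.21})$--$(\ref{3.24})$, the monotonicity of Theorem \ref{t8.2}, and the stopping-time structure of Theorem \ref{t11.1}. A secondary delicate point in the $\lambda_\infty>0$ case is the borderline scenario in which $\lambda$ converges to its positive limit without ever increasing by a definite factor; this must be treated separately, for instance by combining the nearly-stationary virial identity with the coercivity built into Theorems \ref{t4.2} and \ref{t4.4}.
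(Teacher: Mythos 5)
Your proposal diverges substantially from the paper's argument and both branches contain genuine gaps that you partially acknowledge but cannot close with the tools you invoke.

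\textbf{The missing idea.} The paper's proof of Theorem~\ref{t14.0} does not attempt to rule out the combination $\lambda(s)\to 0$ with $\sup(I)=\infty$ by a time-integral argument, nor does it use the virial identity as in Theorem~\ref{t8.2}. Instead, in \emph{both} cases ($\lambda(s)\to 0$ and $\lambda(s)\geq\delta>0$) it runs a \emph{backward propagation of smallness} argument: using $\sup(I)=\infty$ and the intervals $I(k)$ of~$(\ref{14.4})$, it locates a late time $s_n'$ at which $\|\epsilon(s_n')\|_{L^2}\lesssim k_n^2 2^{-2k_n}$ (via the stopping times $s_j$ of Theorem~\ref{t11.1}), then invokes the long-time Strichartz estimates (Corollary~\ref{c6.4}, Theorem~\ref{t5.1}) and almost conservation of energy (Theorems~\ref{t4.1}--\ref{t4.4}) to push that smallness backward to $t=0$, obtaining $\|\epsilon(0)\|_{L^2}\lesssim 2^{-\alpha k_n}\to 0$, hence $\epsilon(0)=0$. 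This machinery is entirely absent from your sketch, and it is precisely the source of the dimensional restriction $d\leq 15$ — the exponent bookkeeping in~$(\ref{14.18.0})$ fails for $d\geq 16$. Your proposal, which never touches the Strichartz/energy estimates, would have no reason to be dimension-sensitive, which is already a red flag that it cannot be the intended argument.

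\textbf{Why the $\lambda_\infty=0$ branch fails.} Your contradiction with $\sup(I)=\int_0^\infty \lambda(s)^2\,ds<\infty$ requires a pointwise bound $\lambda(s)\lesssim\|\epsilon(s)\|_{L^2}$. This does not follow from~$(\ref{3.23})$ or Theorem~\ref{t8.2}. The modulation equation gives only $\int_a^{a+1}|\lambda_s/\lambda|\,ds\lesssim\int_a^{a+1}\|\epsilon\|_{L^2}\,ds$, i.e.\ a lower bound $\|\epsilon\|_{L^2}\gtrsim|\lambda_s/\lambda|$ on average. For a hypothetical profile $\lambda(s)\sim(1+s)^{-1/4}$ one has $|\lambda_s/\lambda|\sim(1+s)^{-1}$, so all the available information allows $\|\epsilon(s)\|_{L^2}\sim(1+s)^{-1}$ while $\lambda(s)\sim(1+s)^{-1/4}\gg\|\epsilon(s)\|_{L^2}$, and indeed $\int_0^\infty\lambda^2\,ds=\infty$. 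Thus the bound you need is not available, and the case $\lambda(s)\to 0$, $\sup(I)=\infty$ cannot be dispatched by this route: the paper does not discard it a priori but shows directly that it forces $\epsilon(0)=0$.

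\textbf{Why the $\lambda_\infty>0$ branch fails.} The Theorem~\ref{t8.2} mechanism requires a time pair $s_-<s_+$ with $\lambda(s_+)/\lambda(s_-)=e$; its conclusion is that no such pair exists, i.e.\ $\lambda(s)/\tilde\lambda(s)\leq 3$. If $\lambda$ is nonincreasing or converges to $\lambda_\infty>0$ without ever rising by a definite factor — which Theorem~\ref{t8.2} permits — there is no negative sign for $(\epsilon_2,\frac d2 Q+x\cdot\nabla Q)_{L^2}$ to extract, and the iteration collapses. You flag this as ``a secondary delicate point,'' but it is in fact a dead end: the virial identity on its own gives no contradiction when $\lambda$ is merely decreasing, and the coercivity of Theorems~\ref{t4.2}/\ref{t4.4} cannot substitute for the missing sign without the Strichartz input. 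The paper's energy-propagation argument sidesteps this entirely and handles the case $\lambda(s)\geq\delta$ in a few lines.
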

\begin{remark}
This theorem is the only place where the proof of Theorem $\ref{t2.2}$ does not work in dimensions $d \geq 16$.
\end{remark}
\begin{proof}
For any integer $k \geq 0$, let
\begin{equation}\label{14.4}
I(k) = \{ s \geq 0 : 2^{-k + 2} \leq \tilde{\lambda}(s) \leq 2^{-k + 3} \}.
\end{equation}
Then by $(\ref{8.0})$,
\begin{equation}\label{14.5}
2^{-k} \leq \lambda(s) \leq 2^{-k + 3},
\end{equation}
for all $s \in I(k)$. By $(\ref{3.18})$, the fact that $\sup(I) = \infty$ implies that
\begin{equation}\label{14.7}
\sum 2^{-2k} |I(k)| = \infty.
\end{equation}
If $\lambda(s) \rightarrow 0$ as $s \rightarrow \infty$, then there exists a sequence $k_{n} \nearrow \infty$ such that
\begin{equation}\label{14.8}
|I(k_{n})| 2^{-2k_{n}} \geq \frac{1}{k_{n}^{2}}.
\end{equation}
Now let $I(k_{n}) = [a_{n}, b_{n}]$. Then by $(\ref{11.3})$ and $(\ref{11.31})$, for any $j \geq 0$,
\begin{equation}\label{14.9}
|s_{j + 1} - s_{j}| \lesssim 2^{j} \eta_{\ast}^{-1}, \qquad \| \epsilon(s_{j + 1}) \|_{L^{2}} = 2^{-(j + 1)} \eta_{\ast},
\end{equation}
and therefore there exists $s' \in [0, b_{n}]$ such that
\begin{equation}\label{14.9.1}
\| \epsilon(s_{n}') \|_{L^{2}} \lesssim k_{n}^{2} 2^{-2k_{n}}.
\end{equation}

When $d = 2$, the proof of Theorem $\ref{t14.0}$ is much simpler, so we will start with that. Make a Galilean transformation so that $\xi(s') = 0$. By $(\ref{3.22})$ and Theorem $\ref{t10.1}$, for any $0 \leq s \leq s_{n}'$,
\begin{equation}\label{14.10.3}
\frac{|\xi(s)|}{\lambda(s)} \lesssim \int_{0}^{s_{n}'} \frac{1}{\lambda(s)} \| \epsilon(s) \|_{L^{2}}^{2} ds \lesssim \eta_{1} \eta_{\ast}.
\end{equation}

Using $(\ref{1.5.1})$, rescale so that
\begin{equation}\label{14.10.2}
\frac{1}{\eta_{1}} \leq \lambda(s) \leq \frac{1}{\eta_{1}} 2^{k_{n}}, \qquad \text{for any} \qquad 0 \leq s \leq s_{n}'.
\end{equation}
Setting $t_{n}' = s^{-1}(s_{n}')$, using $(\ref{14.10.3})$--$(\ref{14.10.2})$, Corollary $\ref{c6.4}$ implies that for $r_{n} = \frac{2 k_{n}}{3}$,
\begin{equation}\label{14.14}
\| P_{\geq r_{n}} u \|_{U_{\Delta}^{2}([0, t_{n}'] \times \mathbb{R}^{2})} \lesssim \eta_{\ast}.
\end{equation}
Furthermore, arguing by induction on frequency, and using $(\ref{6.61})$ and the preceding computations,
\begin{equation}\label{14.15}
\| P_{\geq r_{n} + \frac{k_{n}}{4}} u \|_{U_{\Delta}^{2}([0, t_{n}'] \times \mathbb{R}^{2})} \lesssim k_{n}^{2} 2^{-2k_{n}}.
\end{equation}
Then using the computations in $(\ref{4.5})$,
\begin{equation}\label{14.16}
E(P_{\leq r_{n} + \frac{k_{n}}{4}} u(t_{n}')) \lesssim (k_{n}^{2} 2^{-2k_{n}} 2^{r_{n} + \frac{k_{n}}{4}})^{2} \sim (k_{n}^{2} 2^{-\frac{13k_{n}}{12}})^{2}.
\end{equation}
Next, following the computations in the proof of Theorem $\ref{t4.1}$, and using $(\ref{14.15})$,
\begin{equation}\label{14.17}
\sup_{t \in [0, t_{n}']} E(P_{\leq r_{n} + \frac{k_{n}}{4}} u(t)) \lesssim (k_{n}^{2} 2^{-\frac{13k_{n}}{12}})^{2}.
\end{equation}
Therefore, by $(\ref{4.31})$ and $(\ref{14.10.2})$,
\begin{equation}\label{14.12}
\| \epsilon(0) \|_{L^{2}}^{2} \lesssim (k_{n}^{2} 2^{-\frac{k_{n}}{12}} \eta_{1}^{-1})^{2}.
\end{equation}
Since $k_{n} \rightarrow \infty$ as $n \rightarrow \infty$, $(\ref{14.12})$ implies that $\epsilon(0) = 0$, or that $u$ is a soliton solution to $(\ref{1.1})$.

If $\lambda(s) \geq \delta > 0$ for some $\delta > 0$, then rescale so that
\begin{equation}\label{14.12.1}
\frac{1}{\eta_{1}} \leq \lambda(s) \leq \frac{1}{\eta_{1}} 2^{k_{0}},
\end{equation}
for some $k_{0} \in \mathbb{Z}_{\geq 0}$. Then for $k_{n} = n$, $|s_{n}| \lesssim 2^{n} \eta_{\ast}^{-1}$, by $(\ref{14.10.3})$, $(\ref{14.12.1})$, Corollary $\ref{c6.4}$, and $(\ref{14.14})$--$(\ref{14.17})$,
\begin{equation}\label{14.12.2}
\sup_{t \in [0, t_{n}']} E(P_{\leq r_{n} + \frac{k_{n}}{4}} u(t)) \lesssim 2^{-\frac{13k_{n}}{6}}, \qquad \text{and} \qquad \sup_{t \in [0, t_{n}']} \| \epsilon(t) \|_{L^{2}} \lesssim \eta_{1}^{-1} 2^{-\frac{13 k_{n}}{12}} 2^{k_{0}}.
\end{equation}
In this case as well, since $k_{n} \rightarrow \infty$, $\epsilon(0) = 0$.

In dimensions $d \geq 3$, the proof is complicated by two factors. The first is that the long time Strichartz estimates in Theorem $\ref{t5.1}$ depend on a bound on $|x(t)|$, which was not needed in two dimensions. The second is that in dimensions $d \geq 3$, $F(x) = |x|^{\frac{4}{d}} x$ is not a smooth function of $x$. 

First suppose $\lambda(s) \searrow 0$ as $s \rightarrow \infty$. Again rescale so that
\begin{equation}\label{14.11}
\frac{1}{\eta_{1}} \leq \lambda(s) \leq \frac{1}{\eta_{1}} 2^{k_{n}}, \qquad \text{for any} \qquad 0 \leq s \leq s_{n}'.
\end{equation}
Suppose $s_{n}' \in I(\tilde{k}_{n}) = [a_{n}, b_{n}]$, for $\tilde{k}_{n} \leq k_{n}$. By $(\ref{14.10.3})$,
\begin{equation}\label{14.10}
\frac{|\xi(s)|}{\lambda(s)} \lesssim \eta_{1} \eta_{\ast}, \qquad \text{for all} \qquad 0 \leq s \leq s_{n}'.
\end{equation}
Also, for any $s_{j} \in I(\tilde{k}_{n})$, $s_{j} \in [a_{n}, b_{n}]$, setting $x(s_{j}) = \xi(s_{j}) = 0$, by $(\ref{14.10.3})$,
\begin{equation}\label{14.10.1}
\sup_{s \in [s_{j}, s_{j + 1}]} |x(s)| \lesssim \frac{1}{\eta_{1}} + \frac{1}{\eta_{1}^{2}} \eta_{1} \eta_{\ast} 2^{-j} \frac{2^{j}}{\eta_{\ast}} \lesssim \frac{1}{\eta_{1}}.
\end{equation}
Since $|s_{n}' - a_{n}| \lesssim \frac{1}{k_{n}^{2}} 2^{2k_{n}}$, setting $T \sim \frac{1}{k_{n}^{2}} 2^{2k_{n}}$ and $\eta_{1}^{-2} T = 2^{\alpha_{d} k}$, as in $(\ref{5.5})$, Theorem $\ref{t5.1}$ implies that for any $i \geq \frac{2k_{n}}{10d}$, letting $a_{n}' = s^{-1}(a_{n})$,
\begin{equation}\label{14.10.2.1}
\aligned
\| P_{\geq i} u \|_{L_{t}^{2} L_{x}^{\frac{2d}{d - 2}}([a_{n}', t_{n}'] \times \mathbb{R}^{d})} + \| P_{\geq i} u \|_{U_{\Delta}^{p}([a_{n}', t_{n}'] \times \mathbb{R}^{d})} \\ \lesssim k_{n} 2^{\frac{\alpha_{d}}{2}((1 + \frac{1}{10d})k - i)} (\frac{1}{T} \int_{a_{n}'}^{t_{n}'} \| \epsilon(t) \|_{L^{2}}^{2} \lambda(t)^{-2} dt)^{1/2} + k_{n}^{2} 2^{-2k_{n}} + T^{-10} \\
= \eta_{1}^{-1 + \frac{1}{10d}} k_{n}^{1 - \frac{1}{5d}} 2^{\frac{k_{n}}{5d}} 2^{-\frac{\alpha_{d}}{2} i} (\int_{a_{n}'}^{t_{n}'} \| \epsilon(t) \|_{L^{2}}^{2} \lambda(t)^{-2} dt)^{1/2} + k_{n}^{2} 2^{-2k_{n}} + T^{-10}.
\endaligned
\end{equation}

In fact, revisiting the proof of Theorem $\ref{t5.1}$, by $(\ref{14.9.1})$, the right hand side of $(\ref{5.8.1})$ can be replaced by $k_{n}^{2} 2^{-2k_{n}} + T^{-10}$, and $(\ref{5.14.2})$ can be replaced by
\begin{equation}
\aligned
\| P_{\geq i} u \|_{U_{\Delta}^{p}(J \times \mathbb{R}^{d})} + \| P_{\geq i} u \|_{L_{t}^{2} L_{x}^{\frac{2d}{d - 2}}(J \times \mathbb{R}^{d})} \lesssim \| \int_{t_{0}}^{t} e^{i(t - \tau) \Delta} P_{\geq i} F(u_{\leq i}) d\tau \|_{U_{\Delta}^{p} \cap L_{t}^{2} L_{x}^{\frac{2d}{d - 2}}} + k_{n}^{2} 2^{-2k_{n}} + T^{-10}.
\endaligned
\end{equation}
Next, the contributions of $(\ref{5.9})$, $(\ref{5.25.4})$--$(\ref{5.25.5})$, and $(\ref{5.25.13})$ can be replaced by
\begin{equation}\label{14.12.0}
 \eta_{1}^{-1 + \frac{1}{10d}} k_{n}^{1 - \frac{1}{5d}} 2^{\frac{k_{n}}{5d}} 2^{-\frac{\alpha_{d}}{2} i} (\int_{a_{n}'}^{t_{n}'} \| \epsilon(t) \|_{L^{2}}^{2} \lambda(t)^{-2} dt)^{1/2} \| \epsilon \|_{L_{t}^{\infty} L_{x}^{2}([a_{n}', t_{n}'] \times \mathbb{R}^{d})}^{2/d} + k_{n}^{2} 2^{-2k_{n}} \| \epsilon \|_{L_{t}^{\infty} L_{x}^{2}([a_{n}', t_{n}'] \times \mathbb{R}^{d})}^{2/d} + T^{-10}.
\end{equation}
Furthermore, by the support properties of $\psi^{2}(x)$, where $\psi$ is as defined in the proof of Theorem $\ref{t5.1}$, the contribution of $(\ref{5.25.1})$ and $(\ref{5.25.3})$ may be controlled by the right hand side of $(\ref{14.12.0})$ plus
\begin{equation}\label{14.12.1}
\aligned
2^{-i} \int  (v, (\nabla (u_{\leq i} - \tilde{Q}) \tilde{Q}^{4/d}))_{L^{2}} dt + 2^{-i} (v, (\nabla \tilde{Q} \frac{(u_{\leq i} - \tilde{Q})}{\tilde{Q}^{1 - 4/d}}))_{L^{2}} dt,
\endaligned
\end{equation}
where $\| v \|_{V_{\Delta}^{2}} = 1$ and $\hat{v}$ is supported on $|\xi| \geq 2^{i}$. Theorem $\ref{t5.0}$ implies that $Q^{4/d}$ is differentiable, and the gradient is smooth and rapidly decreasing. Theorem $\ref{t5.0}$ and $(\ref{1.12})$ also imply that since $Q$ is radially symmetric,
\begin{equation}
\aligned
\nabla \frac{\nabla Q}{Q^{1 - 4/d}} = \nabla_{k} \frac{Q_{r}}{Q^{1 - 4/d}} \frac{x_{j}}{|x|} = (\frac{\delta_{jk}}{|x|} + \frac{x_{j} x_{k}}{|x|^{3}}) \frac{Q_{r}}{Q^{1 - 4/d}} + \frac{Q_{rr}}{Q^{1 - 4/d}} \frac{x_{j} x_{k}}{|x|^{2}} - (1 - \frac{4}{d}) \frac{Q_{r}^{2}}{Q^{2 - 4/d}} \frac{x_{j} x_{k}}{|x|^{2}} \\
\lesssim \frac{Q_{r}}{|x| Q^{1 - 4/d}} + Q^{4/d} + \frac{Q_{r}^{2}}{Q^{2 - 4/d}} \in L_{x}^{d/2}.
\endaligned
\end{equation}
Therefore, by Bernstein's inequality, arguing as in $(\ref{5.25.1})$ and $(\ref{5.25.3})$,
\begin{equation}
(\ref{14.12.1}) \lesssim \eta_{1}^{1/p} 2^{-\frac{i}{p} - 2i} T^{\frac{1}{2000d^{2}}}\| \langle \nabla \rangle^{2} (P_{\leq i} u - \tilde{Q}) \|_{L_{t}^{2} L_{x}^{\frac{2d}{d - 2}}}.
\end{equation}
Therefore, plugging in $(\ref{14.10.2.1})$, we have proved
\begin{equation}\label{14.12.2}
\aligned
\| P_{\geq i} u \|_{U_{\Delta}^{p}([a_{n}', t_{n}'] \times \mathbb{R}^{d})} + \| P_{\geq i} u \|_{L_{t}^{2} L_{x}^{\frac{2d}{d - 2}}([a_{n}', t_{n}'] \times \mathbb{R}^{d})} \lesssim \eta_{1}^{-1 + \frac{1}{10d}} k_{n}^{1 - \frac{1}{5d}} 2^{\frac{k_{n}}{5d}} 2^{-(\alpha_{d} - 1) i} (\int_{a_{n}'}^{t_{n}'} \| \epsilon(t) \|_{L^{2}}^{2} \lambda(t)^{-2} dt)^{1/2} \\ +  \eta_{1}^{-1 + \frac{1}{10d}} k_{n}^{1 - \frac{1}{5d}} 2^{\frac{k_{n}}{5d}} 2^{-\frac{\alpha_{d}}{2} i} (\int_{a_{n}'}^{t_{n}'} \| \epsilon(t) \|_{L^{2}}^{2} \lambda(t)^{-2} dt)^{1/2} \| \epsilon \|_{L_{t}^{\infty} L_{x}^{2}([a_{n}', t_{n}] \times \mathbb{R}^{d})}^{2/d} + k_{n}^{2} 2^{-2k_{n}} + T^{-10},
\endaligned
\end{equation}
and revisiting $(\ref{5.13})$ and $(\ref{5.13.0})$,
\begin{equation}\label{14.12.3}
\aligned
\| P_{\geq i} F(u)\|_{U_{\Delta}^{p'} \cap L_{t}^{2} L_{x}^{\frac{2d}{d + 2}}([a_{n}', t_{n}'] \times \mathbb{R}^{d})} \lesssim \eta_{1}^{-1 + \frac{1}{10d}} k_{n}^{1 - \frac{1}{5d}} 2^{\frac{k_{n}}{5d}} 2^{-(\alpha_{d} - 1)i} (\int_{a_{n}'}^{t_{n}'} \| \epsilon(t) \|_{L^{2}}^{2} \lambda(t)^{-2} dt)^{1/2} \\ +  \eta_{1}^{-1 + \frac{1}{10d}} k_{n}^{1 - \frac{1}{5d}} 2^{\frac{k_{n}}{5d}} 2^{-\frac{\alpha_{d}}{2} i} (\int_{a_{n}'}^{t_{n}'} \| \epsilon(t) \|_{L^{2}}^{2} \lambda(t)^{-2} dt)^{1/2} \| \epsilon \|_{L_{t}^{\infty} L_{x}^{2}([a_{n}', t_{n}] \times \mathbb{R}^{d})}^{2/d} \\ + k_{n}^{2} 2^{-2k_{n}} \| \epsilon \|_{L_{t}^{\infty} L_{x}^{2}}^{4/d} + k_{n}^{2} 2^{-2k_{n}} 2^{-\frac{k_{n}}{10d}}+ T^{-10}.
\endaligned
\end{equation}
Combining $(\ref{14.12.2})$ and $(\ref{14.12.3})$ with the proof of Theorem $\ref{t4.3}$, for $t \in [a_{n}', t_{n}']$, and Theorem $\ref{t11.1}$,
\begin{equation}\label{14.12.3.1}
\aligned
E(P_{\leq k_{n}(1 - \frac{1}{10d})} u(t)) \lesssim 2^{-4k_{n}} 2^{2k_{n}(1 - \frac{1}{10d})} k_{n}^{4} + 2^{(4 - 2 \alpha_{d})k_{n}(1 - \frac{1}{10d})} 2^{\frac{2k_{n}}{5d}} k_{n}^{2} \eta_{1}^{-2} \| \epsilon \|_{L_{t}^{\infty} L_{x}^{2}([a_{n}', t_{n}'] \times \mathbb{R}^{d})} \\
+ \eta_{1}^{-2} k_{n}^{2} 2^{\frac{2k_{n}}{5d}} 2^{-(\alpha_{d} - 2) k_{n}(1 - \frac{1}{10d})} \| \epsilon \|_{L_{t}^{\infty} L_{x}^{2}([a_{n}', t_{n}'] \times \mathbb{R}^{d})}^{1 + 4/d} + T^{-10}.
\endaligned
\end{equation}
Now then, when $d = 3, 4$, by $(\ref{4.66})$, $\| \epsilon(t) \|_{L^{2}}^{2} \lesssim \lambda(t)^{2} E(u(t)) + T^{-10}$, so
\begin{equation}\label{14.12.3.2}
\aligned
E(P_{\leq k_{n}(1 - \frac{1}{10d})} u(t)) \lesssim 2^{-4k_{n}} 2^{2k_{n}(1 - \frac{1}{10d})} k_{n}^{4} + 2^{(8 - 4 \alpha_{d}) k_{n}(1 - \frac{1}{10d})} 2^{\frac{4k_{n}}{5d}} k_{n}^{4} \eta_{1}^{-8} + T^{-10},
\endaligned
\end{equation}
so for $k_{n}$ sufficiently large,
\begin{equation}\label{14.12.3.3}
E(P_{\leq k_{n}(1 - \frac{1}{10d})} u(t)) \lesssim 2^{-4k_{n}} 2^{2k_{n}(1 - \frac{1}{10d})} k_{n}^{4} + T^{-10},
\end{equation}
for any $t \in [a_{n}', t_{n}']$. Furthermore, for any $j \leq \tilde{k}_{n}$, suppose $I(j) = [a_{j}, b_{j}]$. Then, $\lambda(s) \sim \frac{1}{\eta_{1}} 2^{(\tilde{k}_{n} - j)}$. Rescaling so that $\lambda(s) \sim \frac{1}{\eta_{1}}$ on this interval, repeating the calculations obtaining $(\ref{14.12.2})$ and $(\ref{14.12.3})$, and then rescaling back, if $a_{j}' = s^{-1}(a_{j})$ and $b_{j}' = s^{-1}(b_{j})$,
\begin{equation}\label{14.12.4}
\aligned
\| P_{\geq i} u \|_{U_{\Delta}^{p}([a_{j}', b_{j}'] \times \mathbb{R}^{d})} + \| P_{\geq i} u \|_{L_{t}^{2} L_{x}^{\frac{2d}{d - 2}}([a_{j}', b_{j}'] \times \mathbb{R}^{d})} \lesssim \eta_{1}^{-1 + \frac{1}{10d}} k_{n}^{1 - \frac{1}{5d}} 2^{\frac{k_{n}}{5d}} 2^{-(\alpha_{d} - 1) (i + \tilde{k}_{n} - j)} \| \epsilon \|_{L_{t}^{\infty} L_{x}^{2}([a_{j}', b_{j}'] \times \mathbb{R}^{d})}^{1/2} \\ +  \eta_{1}^{-1 + \frac{1}{10d}} k_{n}^{1 - \frac{1}{5d}} 2^{\frac{k_{n}}{5d}} 2^{-\frac{\alpha_{d}}{2} (i + \tilde{k}_{n} - j)} \| \epsilon \|_{L_{t}^{\infty} L_{x}^{2}([a_{j}', b_{j}'] \times \mathbb{R}^{d})}^{1/2+2/d} + \sup \{ 2^{-i}, 2^{-k_{n}(1 - \frac{1}{10d})} \} E(P_{\leq k_{n}(1 - \frac{1}{10d})} u(b_{j}'))^{1/2} + T^{-10},
\endaligned
\end{equation}
and letting $Y$ denote the dual space to $U_{\Delta}^{p} \cap L_{t}^{2} L_{x}^{\frac{2d}{d - 2}}$,
\begin{equation}\label{14.12.5}
\aligned
\| P_{\geq i} F(u)\|_{Y(J \times \mathbb{R}^{d})} \lesssim \eta_{1}^{-1 + \frac{1}{10d}} k_{n}^{1 - \frac{1}{5d}} 2^{\frac{k_{n}}{5d}} 2^{-(\alpha_{d} - 1) (i + \tilde{k}_{n} - j)} \| \epsilon \|_{L_{t}^{\infty} L_{x}^{2}([a_{j}', b_{j}'] \times \mathbb{R}^{d})}^{1/2} \\ +  \eta_{1}^{-1 + \frac{1}{10d}} k_{n}^{1 - \frac{1}{5d}} 2^{\frac{k_{n}}{5d}} 2^{-\frac{\alpha_{d}}{2} (i + \tilde{k}_{n} - j)} \| \epsilon \|_{L_{t}^{\infty} L_{x}^{2}([a_{j}', b_{j}'] \times \mathbb{R}^{d})}^{1/2+2/d} \\ + \sup \{ 2^{-i}, 2^{-k_{n}(1 - \frac{1}{10d})} \} E(P_{\leq k_{n}(1 - \frac{1}{10d})} u(b_{j}'))^{1/2} (\| \epsilon \|_{L_{t}^{\infty} L_{x}^{2}}^{4/d} + 2^{-\frac{k_{n}}{10d}})+ T^{-10}.
\endaligned
\end{equation}

Using $(\ref{4.66})$, for $0 \leq j \leq \tilde{k}_{n}$,
\begin{equation}
\eta_{1} 2^{j - k_{n}} \| \epsilon(t) \|_{L^{2}} \lesssim E(u(t))^{1/2},
\end{equation}
so arguing by induction on $j$, $0 \leq j \leq \tilde{k}_{n}$, and following $(\ref{14.12.3.1})$--$(\ref{14.12.3.3})$,
\begin{equation}\label{14.12.6}
\sup_{t \in [0, t_{n}']} E(u(t)) \lesssim 2^{-4k_{n}} 2^{2k_{n}(1 - \frac{1}{10d})} k_{n}^{4} + T^{-10}.
\end{equation}
Again by $(\ref{4.66})$ and $(\ref{14.11})$, $(\ref{14.12.6})$ implies
\begin{equation}
\| \epsilon(0) \|_{L^{2}} \lesssim \frac{1}{\eta_{1}^{2}} 2^{-2k_{n}} 2^{2k_{n}(1 - \frac{1}{10d})} k_{n}^{4} + \frac{1}{\eta_{1}^{2}} 2^{2k_{n}} T^{-10}.
\end{equation}
Taking $k_{n} \rightarrow \infty$ implies $\epsilon(0) = 0$, so $u$ is a soliton. The case when $\lambda(s)$ has a positive lower bound is easier, as in the two dimensional case.

In dimensions $5 \leq d \leq 8$,
\begin{equation}\label{14.12.7}
\aligned
E(P_{\leq k_{n}(1 - \frac{1}{10d})} u(t)) \lesssim 2^{-4k_{n}} 2^{2k_{n}(1 - \frac{1}{10d})} k_{n}^{4} + 2^{(4 - 2 \alpha_{d})k_{n}(1 - \frac{1}{10d})} 2^{\frac{2k_{n}}{5d}} k_{n}^{2} \eta_{1}^{-2} \| \epsilon \|_{L_{t}^{\infty} L_{x}^{2}([a_{n}', t_{n}'] \times \mathbb{R}^{d})} \\
+ \eta_{1}^{-2} k_{n}^{2} 2^{\frac{2k_{n}}{5d}} 2^{-(\alpha_{d} - 2) k_{n}(1 - \frac{1}{10d})} \| \epsilon \|_{L_{t}^{\infty} L_{x}^{2}([a_{n}', t_{n}'] \times \mathbb{R}^{d})}^{1 + 4/d} + T^{-10},
\endaligned
\end{equation}
for $t \in [a_{n}', t_{n}']$ implies
\begin{equation}
\aligned
E(P_{\leq k_{n}(1 - \frac{1}{10d})} u(t)) \lesssim 2^{-4k_{n}} 2^{2k_{n}(1 - \frac{1}{10d})} k_{n}^{4} + 2^{(8 - 4 \alpha_{d})k_{n}(1 - \frac{1}{10d})} 2^{\frac{4k_{n}}{5d}} k_{n}^{4} \eta_{1}^{-4}  \\
+ (\eta_{1}^{-2} k_{n}^{2} 2^{\frac{2k_{n}}{5d}} 2^{-(\alpha_{d} - 2) k_{n}(1 - \frac{1}{10d})})^{\frac{2d}{d - 4}} + T^{-10}.
\endaligned
\end{equation}
Doing some algebra, since $\alpha_{d} = 3 - \frac{1}{5d}$,
\begin{equation}
\frac{2 k_{n}}{5d} \cdot \frac{2d}{d - 4} - (1 - \frac{3}{10d} + \frac{1}{50d^{2}}) \frac{d}{d - 4} = 2 + \frac{8}{d - 4} - \frac{3}{10(d - 4)} - \frac{1}{50d(d - 4)} - \frac{4}{5d(d - 4)} \geq 2 + \frac{4}{d - 4}.
\end{equation}
Therefore, for $t \in [a_{n}', t_{n}']$,
\begin{equation}
E(P_{\leq k_{n}(1 - \frac{1}{10d})} u(t)) \lesssim 2^{-2k_{n}} 2^{-\frac{4k_{n}}{d - 4}}.
\end{equation}
Then as in $(\ref{14.12.4})$ and $(\ref{14.12.5})$,
\begin{equation}
\sup_{t \in [0, t_{n}']} E(P_{\leq k_{n}(1 - \frac{1}{10d})} u(t)) \lesssim 2^{-2k_{n}} 2^{-\frac{4k_{n}}{d - 4}}.
\end{equation}
Then by by $(\ref{4.66})$ and $(\ref{14.10.2})$, taking $k_{n} \rightarrow \infty$, $\epsilon(0) = 0$.

When $d \geq 9$, recall that $\alpha_{d} = 2 + \frac{8}{d} - \frac{1}{5d}$. However, examining the proof of Theorem $\ref{t5.1}$, it is only the contribution of $(\ref{5.25.13})$ that needs $\alpha_{d} = 2 + \frac{8}{d} - \frac{1}{5d}$, the other terms have the regularity $\frac{3}{2} - \frac{1}{10d}$. Therefore, for $t \in [a_{n}', t_{n}']$, replace $(\ref{14.12.7})$ by
\begin{equation}\label{14.18.0}
\aligned
E(P_{\leq k_{n}(1 - \frac{1}{10d})} u(t)) \lesssim 2^{-4k_{n}} 2^{2k_{n}(1 - \frac{1}{10d})} k_{n}^{4} + 2^{(4 - 2 \alpha_{d})k_{n}(1 - \frac{1}{10d})} 2^{\frac{2k_{n}}{5d}} k_{n}^{2} \eta_{1}^{-2} \| \epsilon \|_{L_{t}^{\infty} L_{x}^{2}([a_{n}', t_{n}'] \times \mathbb{R}^{d})} \\
+ \eta_{1}^{-2} k_{n}^{2} 2^{\frac{2k_{n}}{5d}} 2^{-(1 - \frac{1}{5d}) k_{n}(1 - \frac{1}{10d})} \| \epsilon \|_{L_{t}^{\infty} L_{x}^{2}([a_{n}', t_{n}'] \times \mathbb{R}^{d})}^{1 + 4/d} + \eta_{1}^{-2} k_{n}^{2} 2^{\frac{2k_{n}}{5d}} 2^{-(\frac{8}{d} - \frac{1}{5d}) k_{n}(1 - \frac{1}{10d})} \| \epsilon \|_{L_{t}^{\infty} L_{x}^{2}}^{1 + 8/d} + T^{-10}.
\endaligned
\end{equation}
Then
\begin{equation}
\eta_{1}^{-2} k_{n}^{2} 2^{\frac{2k_{n}}{5d}} 2^{-(\frac{8}{d} - \frac{1}{5d}) k_{n}(1 - \frac{1}{10d})} \| \epsilon \|_{L_{t}^{\infty} L_{x}^{2}}^{1 + 8/d} \lesssim \| \epsilon \|_{L_{t}^{\infty} L_{x}^{2}}^{2} + (\eta_{1}^{-2} k_{n}^{2} 2^{\frac{2k_{n}}{5d}} 2^{-(\frac{8}{d} - \frac{1}{5d}) k_{n}(1 - \frac{1}{10d})})^{\frac{2d}{d - 8}}.
\end{equation}
Doing some algebra, for $8 \leq d \leq 15$,
\begin{equation}
(\frac{8}{d} - \frac{3}{5d} - \frac{7}{10d^{2}}) \cdot \frac{2d}{d - 8} \geq 2 + \frac{3}{70}.
\end{equation}
Therefore, for $8 \leq d \leq 15$, for $k_{n}$ sufficiently large,
\begin{equation}
 (\eta_{1}^{-2} k_{n}^{2} 2^{\frac{2k_{n}}{5d}} 2^{-(\frac{8}{d} - \frac{1}{5d}) k_{n}(1 - \frac{1}{10d})})^{\frac{2d}{d - 8}} \lesssim 2^{-2k_{n}} 2^{-\frac{3}{70} k_{n}}.
\end{equation}
Once again, taking $k_{n} \rightarrow \infty$ proves $\epsilon(0) = 0$.

Dimensions $d \geq 16$ remain unresolved.
\end{proof}

Now turn to a finite time blowup solution. As in dimension one, $\sup(I) < \infty$ implies that $u$ is a pseudoconformal transformation of the soliton. Suppose without loss of generality that $\sup(I) = 0$, and
\begin{equation}\label{14.18}
\sup_{-1 < t < 0} \| \epsilon(t) \|_{L^{2}} \leq \eta_{\ast}.
\end{equation}
Then decomposing $u$,
\begin{equation}\label{14.19}
u(t,x) = \frac{e^{-i \gamma(t)} e^{-ix \cdot \frac{\xi(t)}{\lambda(t)}}}{\lambda(t)^{d/2}} Q(\frac{x - x(t)}{\lambda(t)}) + \frac{e^{-i \gamma(t)} e^{-i x \cdot \frac{\xi(t)}{\lambda(t)}}}{\lambda(t)^{d/2}} \epsilon(t,\frac{x - x(t)}{\lambda(t)}).
\end{equation}
Then apply the pseudoconformal transformation to $u(t,x)$. For $-\infty < t < -1$, let
\begin{equation}\label{14.20}
\aligned
v(t,x) = \frac{1}{t^{d/2}} \overline{u(\frac{1}{t}, \frac{x}{t})} e^{i |x|^{2}/4t} = \frac{1}{t^{d/2}} \frac{e^{i \gamma(1/t)} e^{i \frac{x}{t} \cdot \frac{\xi(\frac{1}{t})}{\lambda(\frac{1}{t})}}}{\lambda(1/t)^{d/2}} Q(\frac{x - t x(\frac{1}{t})}{t \lambda(1/t)}) e^{i |x|^{2}/4t} \\ + \frac{1}{t^{d/2}} \frac{e^{i \gamma(1/t)} e^{i \frac{x}{t} \cdot \frac{\xi(\frac{1}{t})}{\lambda(\frac{1}{t})}}}{\lambda(1/t)^{d/2}} \overline{\epsilon(\frac{1}{t}, \frac{x - t x(\frac{1}{t})}{t \lambda(1/t)})} e^{i |x|^{2}/4t}.
\endaligned
\end{equation}
Since the $L^{2}$ norm is preserved by the pseudoconformal transformation,
\begin{equation}\label{14.21}
\aligned
\lim_{t \searrow -\infty} \| \frac{1}{t^{d/2}} \frac{e^{i \gamma(1/t)} e^{i \frac{x}{t} \cdot \frac{\xi(\frac{1}{t})}{\lambda(\frac{1}{t})}}}{\lambda(1/t)^{d/2}} \overline{\epsilon(\frac{1}{t}, \frac{x - t x(\frac{1}{t})}{t \lambda(1/t)})} e^{i |x|^{2}/4t} \|_{L^{2}} = 0, \qquad \text{and} \\ 
\qquad \sup_{-\infty < t < -1}  \| \frac{1}{t^{d/2}} \frac{e^{i \gamma(1/t)} e^{i \frac{x}{t} \cdot \frac{\xi(\frac{1}{t})}{\lambda(\frac{1}{t})}}}{\lambda(1/t)^{d/2}} \overline{\epsilon(\frac{1}{t}, \frac{x - t x(\frac{1}{t})}{t \lambda(1/t)})} e^{i x^{2}/4t} \|_{L^{2}} \leq \eta_{\ast}.
\endaligned
\end{equation}

Decompose
\begin{equation}
\frac{|x|^{2}}{4t} = \frac{|x - t x(\frac{1}{t})|^{2}}{4t} + \frac{x \cdot x(\frac{1}{t})}{2} - \frac{t}{4} |x(\frac{1}{t})|^{2}.
\end{equation}
Since
\begin{equation}\label{14.22}
 \frac{1}{t^{d/2}} \frac{e^{i \gamma(1/t)} e^{i \frac{x}{t} \cdot \frac{\xi(\frac{1}{t})}{\lambda(\frac{1}{t})}} e^{i x \cdot \frac{x(\frac{1}{t})}{2}} e^{i \frac{t}{4} |x(\frac{1}{t})|^{2}}}{\lambda(1/t)^{d/2}} Q(\frac{x - t x(\frac{1}{t})}{t \lambda(1/t)})
\end{equation}
is in the form of $\frac{e^{i \tilde{\gamma}(t)} e^{ix \cdot \frac{\tilde{\xi(t)}}{\tilde{\lambda(t)}}}}{\tilde{\lambda}(t)^{d/2}} Q(\frac{x - \tilde{x}(t)}{\tilde{\lambda}(t)})$, it only remains to estimate
\begin{equation}\label{14.23}
 \| \frac{1}{t^{d/2}} \frac{e^{i \gamma(1/t)} e^{i\frac{x}{t} \cdot \frac{\xi(t)}{\lambda(t)}}}{\lambda(1/t)^{d/2}} Q(\frac{x - t x(\frac{1}{t})}{t \lambda(1/t)}) (e^{i |x - t x(\frac{1}{t})|^{2}/4t} - 1) \|_{L^{2}}.
\end{equation}

As in $(\ref{14.5})$, for any $k \geq 0$, $\lambda(s) \sim 2^{-k}$ for all $s \in I(k)$. Furthermore, by  $(\ref{3.23})$, $\| \epsilon(t) \|_{L^{2}} \rightarrow 0$ as $t \nearrow 0$ implies that there exists a sequence $c_{k} \nearrow \infty$ such that
\begin{equation}\label{14.24}
|I(k)| \geq c_{k}, \qquad \text{for all} \qquad k \geq 0.
\end{equation}
Then by $(\ref{3.18})$, there exists $r(t) \searrow 0$ as $t \nearrow 0$ such that
\begin{equation}\label{14.25}
\lambda(t) \leq t^{1/2} r(t), \qquad \text{so} \qquad \lambda(1/t) \leq t^{-1/2} r(1/t).
\end{equation}
Therefore, since $Q$ is rapidly decreasing,
\begin{equation}\label{14.26}
\lim_{t \searrow -\infty} \| \frac{1}{t^{d/2} \lambda(1/t)^{d/2}} Q(\frac{x - t x(\frac{1}{t})}{t \lambda(1/t)}) \frac{|x - t x(\frac{1}{t})|^{2}}{4t} \|_{L^{2}} = 0,
\end{equation}
as well as
\begin{equation}\label{14.27}
\lim_{t \searrow -\infty} \| \frac{1}{t^{d/2} \lambda(1/t)^{d/2}} Q(\frac{x - t x(\frac{1}{t})}{t \lambda(1/t)}) (e^{i |x - t x(\frac{1}{t})|^{2}/4t} - 1)\|_{L^{2}} = 0,
\end{equation}
Therefore, $v$ is a solution that blows up backward in time at $\inf(I) = -\infty$ and $v$ satisfies the conditions of Theorem $\ref{t3.1}$ on $(-\infty, t_{0}]$ for some $t_{0} \in \mathbb{R}$. Therefore, by time reversal symmetry and Theorem $\ref{t14.0}$, $v$ must be a soliton. Therefore, $u$ is the pseudoconformal transformation of a soliton.

\section{A Liouville result}
Recall the Liouville theorem for the generalized KdV equation from \cite{martel2000liouville}.
\begin{theorem}\label{t15.1}
Let $u_{0} \in H^{1}(\mathbb{R})$ and let $\alpha = \| u_{0} - Q \|_{H^{1}}$. Suppose that the solution to the focusing, mass-critical generalized KdV problem,
\begin{equation}
u_{t} + \partial_{x}(u_{xx} + u^{5}) = 0, \qquad u(0,x) = u_{0},
\end{equation}
 is global in time, and for all $t \in \mathbb{R}$, and assume that for some $c_{1}, c_{2} > 0$,
\begin{equation}
c_{1} \leq \| u(t) \|_{H^{1}} \leq c_{2}.
\end{equation}
Also suppose that there exists $x(t)$ such that
\begin{equation}
v(t, x) = u(t, x + x(t)),
\end{equation}
satisfies
\begin{equation}
\forall \epsilon_{0}, \qquad \exists R_{0}(\epsilon_{0}) > 0, \qquad \forall t \in \mathbb{R}, \qquad \int_{|x| > R_{0}} v(t,x)^{2} dx \leq \epsilon_{0}.
\end{equation}
There exists $\alpha_{0} > 0$ such that if $\alpha < \alpha_{0}$, there exists $\lambda_{0}$, $x_{0}$ such that
\begin{equation}
u(t, x) = \lambda_{0}^{1/2} Q(\lambda_{0}(x - x_{0}) - \lambda_{0}^{3} t).
\end{equation}
\end{theorem}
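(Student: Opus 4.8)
The plan is to follow the argument of \cite{martel2000liouville}, which combines modulation theory with a rigidity mechanism for the scaling parameter and a virial identity. First I would set up the geometric decomposition: since $\alpha = \| u_0 - Q \|_{H^1} < \alpha_0$ and, by the bounds $c_1 \le \| u(t) \|_{H^1} \le c_2$ together with conservation of mass and energy, the solution stays in a fixed small $H^1$-neighborhood of the soliton manifold for all $t \in \mathbb{R}$, the implicit function theorem produces $C^1$ modulation parameters $\lambda(t) > 0$ and $x(t) \in \mathbb{R}$ such that
\begin{equation}
\varepsilon(t,y) = \lambda(t)^{1/2} u\bigl(t, \lambda(t) y + x(t)\bigr) - Q(y)
\end{equation}
satisfies two orthogonality conditions, e.g. $(\varepsilon, Q)_{L^2} = (\varepsilon, \Lambda Q)_{L^2} = 0$ with $\Lambda Q = \frac{1}{2} Q + y Q'$, together with $\| \varepsilon(t) \|_{H^1} \lesssim \alpha$ uniformly in $t$. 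One then records the modulation equations, which give $|\lambda_t / \lambda| + |x_t / \lambda - 1| = O(\| \varepsilon \|_{L^2})$, and — using $E(Q) = 0$ for the critical problem and conservation laws — the coercive lower bound $\lambda(t)^2 E(u) + (\| u \|_{L^2}^2 - \| Q \|_{L^2}^2) \gtrsim \| \varepsilon(t) \|_{H^1}^2$, which comes from positivity of the linearized operator $L = -\partial_{yy} + 1 - 5 Q^4$ modulo its negative and zero directions (the gKdV analogue of the spectral facts used here for NLS).

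The heart of the proof is to show that $\lambda(t)$ is constant. This is exactly the mechanism behind Theorem \ref{t8.2} of the present paper: one introduces an almost-monotone localized mass functional, roughly $\int u(t,x)^2 \psi\bigl((x - x(t))/A\bigr)\,dx$ for a suitable increasing cutoff $\psi$ and scale $A$ depending on the localization radius $R_0(\cdot)$, and shows that, after using the equation, integration by parts and the modulation bounds, its time derivative is sign-definite up to controllable nonlinear and error terms. If $\lambda(t)$ were not constant, the net change of scale over $\mathbb{R}$ would force this functional to be unbounded, contradicting the uniform $L^2$-localization of $v(t,x) = u(t, x + x(t))$. Hence $\lambda_t \equiv 0$; write $\lambda(t) \equiv \lambda_0$, and the modulation equation then gives $x_t(t) \to \lambda_0^2$.

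With $\lambda$ constant (after rescaling, $\lambda_0 = 1$) one runs the virial argument: there is a virial-type functional of Martel--Merle type, bounded uniformly in $t$ thanks again to the localization hypothesis, whose time derivative equals $-c\,\| \varepsilon(t) \|_{H^1}^2$ up to terms that are genuinely lower order once the orthogonality conditions and the smallness of $\alpha_0$ are invoked. Integrating over $t \in \mathbb{R}$ yields $\int_{\mathbb{R}} \| \varepsilon(t) \|_{H^1}^2\,dt < \infty$; combined with an almost-monotonicity estimate preventing $\| \varepsilon(t) \|_{L^2}$ from oscillating (the analogue of (\ref{3.25})), this forces $\varepsilon \equiv 0$, so $u(t,x) = \lambda_0^{1/2} Q(\lambda_0(x - x_0) - \lambda_0^3 t)$. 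The same scheme yields the NLS Liouville statement of this paper, now with Theorem \ref{t2.2} playing the role of the gKdV rigidity: a global, uniformly-$H^1$-bounded, uniformly-$L^2$-localized solution of (\ref{1.1}) with $\| u_0 \|_{L^2} = \| Q \|_{L^2}$ satisfies (\ref{2.6}), hence must be a soliton.

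The main obstacle is the construction and error control of the two functionals — the almost-monotone localized mass that pins $\lambda$, and the virial that annihilates $\varepsilon$. In the $L^2$-critical case there is no spectral gap to absorb errors, so the cutoff scale $A$ and the choice of orthogonality directions must be arranged so that every nonlinear and modulation-induced term is strictly subordinate to the coercive main term; and one must move carefully between the $L^2$ statements (uniform localization, and the $L^2$ output of the virial) and the $H^1$ coercivity of $L$. Making these two steps close is essentially the entire content of \cite{martel2000liouville}.
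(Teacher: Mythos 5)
The paper does not give a proof of Theorem \ref{t15.1} at all: it is stated purely as a recall of the Martel--Merle result \cite{martel2000liouville} and is cited without argument. So there is no in-paper proof to compare against, and what you have written is a plausible sketch of the Martel--Merle argument rather than a reconstruction of anything in this paper.

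Your sketch has the right ingredients for the gKdV Liouville theorem (modulation decomposition with orthogonality conditions, coercivity of $L$ on the orthogonal complement, an almost-monotone localized mass to control the scaling parameter, and a virial functional), and that is indeed the spirit of \cite{martel2000liouville}. However, your concluding step is compressed to the point of being unconvincing: the assertion that $\int_{\mathbb{R}} \|\varepsilon(t)\|_{H^1}^2\,dt < \infty$ together with a no-oscillation bound of the type $(\ref{3.25})$ ``forces $\varepsilon \equiv 0$'' does not follow. Those two facts give $\|\varepsilon(t)\| \to 0$ as $t \to \pm\infty$, not vanishing at finite times; in Martel--Merle the actual closing of the argument runs through a linear Liouville property for the linearized flow around $Q$ (their key technical lemma), which you do not mention. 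If you want your sketch to be a complete outline, that lemma needs to be flagged explicitly as the step that upgrades decay at infinity to identical vanishing.

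Separately, your final remark that ``the same scheme yields the NLS Liouville statement of this paper'' does not match how this paper actually proves Theorems \ref{t15.2} and \ref{t15.3}. The paper's NLS Liouville argument does not re-run a Martel--Merle-type monotonicity/virial scheme; instead it leverages the rigidity Theorem \ref{t1.3} directly. Concretely: the sequential convergence results of \cite{fan20182} and \cite{dodson20212} give a sequence $t_n$ along which a rescaled and translated $u(t_n)$ converges weakly to $Q$; the hypotheses $(\ref{15.1})$--$(\ref{15.3})$ upgrade this to strong $L^2$ convergence, forcing $\|u_0\|_{L^2} = \|Q\|_{L^2}$; and Theorem \ref{t1.3} then classifies $u$. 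This is a materially different, and substantially shorter, route than the one you outline, because the hard work (the rigidity at critical mass) has already been done in the body of the paper.
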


Using Theorems $\ref{t1.2}$ and $\ref{t1.3}$, we can prove such a result for the nonlinear Schr{\"o}dinger equation, without requiring the initial data to lie close to the soliton.
\begin{theorem}\label{t15.2}
Let $u_{0} \in H^{1}(\mathbb{R}^{d})$ and suppose $\| u_{0} \|_{L^{2}} \leq \| Q \|_{L^{2}} + \alpha$, for some $0 < \alpha < \alpha_{0} \ll 1$. Suppose that a solution $u(t)$ to $(\ref{1.1})$ is defined for all $t \in \mathbb{R}$ and for some $c_{1}, c_{2} > 0$,
\begin{equation}\label{15.1}
c_{1} \leq \| u(t) \|_{H^{1}} \leq c_{2}, \qquad \text{for all} \qquad t \in \mathbb{R}.
\end{equation}
Also suppose that for all $t \in \mathbb{R}$ there exists $x(t) \in \mathbb{R}^{d}$ such that
\begin{equation}\label{15.2}
v(t, x) = u(t, x + x(t)),
\end{equation}
satisfies
\begin{equation}\label{15.3}
\forall \epsilon_{0} > 0, \qquad \exists R_{0} > 0, \qquad \forall t \in \mathbb{R}, \qquad \int_{|x| > R_{0}} |v(t, x)|^{2} dx \leq \epsilon_{0}.
\end{equation}
Then there exists $\alpha_{0} > 0$ sufficiently small such that if $\alpha < \alpha_{0}$, $u$ should be in the form $(\ref{1.2})$.
\end{theorem}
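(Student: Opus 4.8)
The plan is to first normalize the momentum by a Galilean transformation, then run a truncated virial identity to force $E(u)=0$, and finally combine this with the sharp Gagliardo--Nirenberg inequality $(\ref{1.11})$ and a trichotomy on $\| u_{0}\|_{L^{2}}$ versus $\| Q\|_{L^{2}}$. Replacing $u$ by $\tilde{u}(t,x)=e^{-it|\xi_{0}|^{2}}e^{ix\cdot\xi_{0}}u(t,x-2t\xi_{0})$ for the $\xi_{0}$ that makes the (conserved) momentum $\int\mathrm{Im}[\bar{u}\nabla u]\,dx$ vanish does not disturb the hypotheses $(\ref{15.1})$--$(\ref{15.3})$ (it only shifts the center $x(t)$ to $x(t)+2t\xi_{0}$, and $|\tilde{u}|=|u|$ preserves the $L^{2}$-tightness), so assume from now on the momentum is zero. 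Introduce the truncated virial $M_{R}(t)=\int\nabla\phi_{R}(x-x(t))\cdot\mathrm{Im}[\bar{u}\nabla u](t,x)\,dx$, where $\phi_{R}$ is radial with $\phi_{R}(x)=\tfrac{1}{2}|x|^{2}$ for $|x|\le R$, $\nabla^{2}\phi_{R}\le 2\,\mathrm{Id}$ everywhere, and $\phi_{R}$ constant for $|x|\ge 2R$. The usual computation gives $\frac{d}{dt}M_{R}(t)=4E(u(t))+\mathrm{Err}_{R}(t)$, where (the $\dot{x}(t)$-contribution being harmless because the momentum vanishes) $|\mathrm{Err}_{R}(t)|\lesssim\int_{|x-x(t)|\ge R}\big(|\nabla u|^{2}+|u|^{\frac{2(d+2)}{d}}+R^{-2}|u|^{2}\big)\,dx$. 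By $(\ref{15.1})$, Gagliardo--Nirenberg on the exterior region, $(\ref{15.3})$, and the upgrade of $(\ref{15.3})$ to $\dot{H}^{1}$-tightness (standard for a solution with precompact orbit, via the equation, $(\ref{15.1})$ and local smoothing), $\sup_{t}|\mathrm{Err}_{R}(t)|=\delta(R)\to 0$ as $R\to\infty$.

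Since $|M_{R}(t)|\lesssim R\,\| u(t)\|_{L^{2}}\| \nabla u(t)\|_{L^{2}}\lesssim R$ uniformly, integrating $\frac{d}{dt}M_{R}$ over $[0,T]$ gives $4|E(u)|\,T\le 2\sup_{t}|M_{R}(t)|+T\,\delta(R)\lesssim R+T\,\delta(R)$; dividing by $T$, letting $T\to\infty$ and then $R\to\infty$ yields $E(u)=0$. Now the trichotomy. If $\| u_{0}\|_{L^{2}}<\| Q\|_{L^{2}}$, then $(\ref{1.11})$ gives $0=E(u(t))\ge\tfrac12\| \nabla u(t)\|_{L^{2}}^{2}\big(1-(\| u_{0}\|_{L^{2}}/\| Q\|_{L^{2}})^{4/d}\big)$ with a strictly positive bracket, so $\nabla u\equiv 0$, hence $u\equiv 0$, contradicting $\| u(t)\|_{H^{1}}\ge c_{1}>0$; this case does not occur. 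If $\| u_{0}\|_{L^{2}}=\| Q\|_{L^{2}}$, then $u$ is global and, being $L^{2}$-tight about $x(t)$, cannot scatter forward in time (a nonzero free evolution disperses: splitting $\widehat{u_{+}}$ into two disjoint frequency blocks, their $L^{2}$-barycenters separate linearly in $t$, so no fixed-radius ball can capture most of the mass about any moving center), hence $u$ blows up forward in time in the sense of $(\ref{1.6.1})$ and Theorem $\ref{t1.3}$ (or Theorem $\ref{t1.2}$ when $d=1$) applies; the pseudoconformal form $(\ref{1.5})$ is not global, so $u$ is a soliton, of the form $(\ref{1.3.1})$ --- that is, $(\ref{1.2})$ in the notation of the statement. (Equivalently, since we have also shown $E(u)=0$, one can argue that $E(u(t))=0$ with $\| u(t)\|_{L^{2}}=\| Q\|_{L^{2}}$ saturates $(\ref{1.11})$, so $u(t)$ is a modulated ground state, i.e.\ the remainder $\epsilon(t)$ of Theorem $\ref{t3.2}$ vanishes identically by the coercivity of $\mathcal{L},\mathcal{L}_{-}$ on the space cut out by $(\ref{3.5})$, and then the modulation ODE together with globality forces $(\ref{1.2})$.)

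It remains to exclude $\| Q\|_{L^{2}}<\| u_{0}\|_{L^{2}}\le\| Q\|_{L^{2}}+\alpha_{0}$, which is where the smallness of $\alpha_{0}$ enters and where \textbf{the main obstacle} lies. Write $\delta_{0}:=\| u_{0}\|_{L^{2}}^{2}-\| Q\|_{L^{2}}^{2}\in(0,C\alpha_{0}]$. From $E(u(t))=0$, $(\ref{15.1})$ and $(\ref{1.11})$ the Gagliardo--Nirenberg deficit of $u(t)$ is $O(\delta_{0})$ uniformly in $t$; by the quantitative stability of Gagliardo--Nirenberg extremizers, $u(t)$ lies within $O(\sqrt{\delta_{0}})$ of the soliton manifold, with scale $\sim 1$ (bounded above by $(\ref{15.1})$ and below by $(\ref{15.3})$, since an over-dilated ground state is not $L^{2}$-tight at the fixed scale $R_{0}$). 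Applying Theorem $\ref{t3.2}$ gives the decomposition $u(t)=e^{-i\gamma(t)}e^{-ix\cdot\xi(t)/\lambda(t)}\lambda(t)^{-d/2}(Q+\epsilon(t))\big(\tfrac{x-x(t)}{\lambda(t)}\big)$ with $(\ref{3.5})$, $\lambda(t)\sim 1$, and $\| \epsilon(t)\|_{L^{2}}=O(\sqrt{\delta_{0}})$, now subject to the modified mass relation $2(\epsilon(t),Q)_{L^{2}}+\| \epsilon(t)\|_{L^{2}}^{2}=\delta_{0}$, which forces $\| \epsilon(t)\|_{H^{1}}^{2}\gtrsim\delta_{0}$ for every $t$. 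The plan is then to re-run the machinery of the previous sections with the mass defect carried along: the long time Strichartz estimates (Corollary $\ref{c6.4}$, Theorem $\ref{t5.1}$), the almost conservation of energy (Theorems $\ref{t4.2}$, $\ref{t4.4}$), the frequency-localized Morawetz estimates (Theorems $\ref{t10.1}$, $\ref{t10.2}$), and the $L^{p}_{s}$ estimate of Theorem $\ref{t11.1}$, each picking up additional error terms proportional to $\delta_{0}$, should still combine (using $E(u)=0$) to give a bound of the form $\int_{0}^{\infty}\| \epsilon(s)\|_{L^{2}}^{2}\,ds\lesssim\eta_{\ast}+(\text{controlled }\delta_{0}\text{-errors})$; since $\lambda(t)\sim 1$ and $u$ is global, the rescaled time runs over all of $[0,\infty)$, so the uniform lower bound $\| \epsilon(s)\|_{L^{2}}^{2}\gtrsim\delta_{0}$ makes the left-hand side infinite --- a contradiction, provided $\alpha_{0}$ is small enough that the $\delta_{0}$-errors do not overwhelm the main terms.

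The delicate step is precisely this last one: the rigidity machinery is built on $E(Q)=0$ and on the \emph{exact} mass constraint $\| u\|_{L^{2}}=\| Q\|_{L^{2}}$ --- which is what makes $\mathcal{L},\mathcal{L}_{-}$ coercive on the subspace determined by $(\ref{3.5})$ and makes $E(u)$ a faithful measure of the distance to the soliton --- so every estimate of the earlier sections must be reproved with a small mass defect and one must check that the resulting $\delta_{0}$-corrections are genuinely lower order. Alternatively, one may try a more direct route for this case: the truncated virial argument also shows $\int x\cdot\mathrm{Im}[\bar{u}\nabla u]\,dx\equiv 0$, and one can attempt to show that $E(u)=0$, vanishing virial, and $\| u_{0}\|_{L^{2}}>\| Q\|_{L^{2}}$ are jointly incompatible with a globally bounded, $L^{2}$-tight orbit by a pseudoconformal/Glassey-type argument; this avoids re-running the modulation analysis but must cope with the absence of finite variance. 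Once the supercritical case is ruled out, the three cases together give the theorem.
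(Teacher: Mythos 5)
Your proposal diverges from the paper's proof in a way that leaves the key case open. The paper's argument is remarkably short and does not invoke any virial identity at all: by Theorem $\ref{t1.3}$ together with scattering below the ground state mass, the only nontrivial case is $\| Q \|_{L^{2}} < \| u_{0} \|_{L^{2}} \leq \| Q \|_{L^{2}} + \alpha$. In that regime the paper simply applies the sequential compactness results of \cite{fan20182}, \cite{dodson20202}, \cite{dodson20212} (which hold for mass slightly above $\| Q \|_{L^{2}}$ as well) to produce a sequence $t_{n}$ along which the modulated solution converges weakly to $Q$. Hypotheses $(\ref{15.1})$ and $(\ref{15.3})$ give $H^{1}$-boundedness and $L^{2}$-tightness, which, together with the fact that the scales $\lambda_{\ast,n}$ are forced to stay $\sim 1$, upgrade the weak limit to strong $L^{2}$ convergence; mass conservation then yields $\| u_{0} \|_{L^{2}} = \| Q \|_{L^{2}}$, a contradiction. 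No energy computation, virial estimate, or modulation analysis is needed.

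Your first two cases are a correct (and arguably more self-contained) alternative. Establishing $E(u) = 0$ via the $x(t)$-centered truncated virial for a global, $H^{1}$-bounded, $L^{2}$-tight orbit with zero momentum is a standard Kenig--Merle-type argument, although you pass over real technical points: $x(t)$ from $(\ref{15.3})$ is only defined pointwise in $t$, so $\dot{x}(t)$ is not a priori meaningful and must be replaced by a Lipschitz regularization before the $\dot{x}(t)\cdot(\text{momentum})$ cancellation can be invoked; and the upgrade of $(\ref{15.3})$ to $\dot{H}^{1}$-tightness (needed to make $\mathrm{Err}_{R}(t)\to 0$ uniformly) deserves an argument rather than a parenthetical. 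These are fixable, but they are not free.

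The genuine gap is the supercritical case $\| u_{0} \|_{L^{2}} > \| Q \|_{L^{2}}$. Your plan is to re-run the entire rigidity apparatus of Sections~4--9 with the mass defect $\delta_{0} = \| u_{0} \|_{L^{2}}^{2} - \| Q \|_{L^{2}}^{2}$ carried along, aiming for an estimate of the form $\int_{0}^{\infty} \| \epsilon(s) \|_{L^{2}}^{2}\,ds < \infty$ that contradicts the lower bound $\| \epsilon(s) \|_{L^{2}}^{2} \gtrsim \delta_{0}$. You acknowledge that this is ``the delicate step'' and that ``every estimate of the earlier sections must be reproved.'' That acknowledgment is exactly the problem: this is a research program, not a proof. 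Worse, you also identify why it is likely to fail as stated: the relation $2(\epsilon, Q)_{L^{2}} + \| \epsilon \|_{L^{2}}^{2} = \delta_{0}$ destroys the identity $(\epsilon_{1}, Q)_{L^{2}} = -\tfrac{1}{2}\| \epsilon \|_{L^{2}}^{2}$ that underpins the energy expansion $(\ref{4.5})$ and $(\ref{4.35})$, and since the expected size of $\| \epsilon \|_{L^{2}}^{2}$ is precisely $\delta_{0}$, the ``error'' terms are not lower order --- they are the same order as the quantities the Morawetz/virial machinery of Theorems $\ref{t10.1}$, $\ref{t10.2}$ is trying to control. The alternative ``Glassey-type'' route you sketch is likewise speculative. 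The missing idea is the one the paper actually uses: appeal directly to the sequential convergence to $Q$ from the cited works (valid for mass slightly above $\| Q \|_{L^{2}}$), upgrade weak to strong $L^{2}$ convergence via the compactness hypotheses, and conclude $\| u_{0} \|_{L^{2}} = \| Q \|_{L^{2}}$ from mass conservation, contradicting the supercritical assumption without any quantitative modulation analysis.
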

\begin{proof}
By Theorem $\ref{t1.3}$ and scattering for $\| u_{0} \|_{L^{2}} < \| Q \|_{L^{2}}$, it suffices to check
\begin{equation}\label{15.4}
\| Q \|_{L^{2}} < \| u \|_{L^{2}} + \| Q \|_{L^{2}} + \alpha.
\end{equation}

By \cite{fan20182}, \cite{dodson20202}, and \cite{dodson20212}, there exists a sequence $t_{n} \rightarrow +\infty$ and a sequence $\gamma_{\ast, n} \in \mathbb{R}$, $\xi_{\ast, n} \in \mathbb{R}^{d}$, $\lambda_{\ast, n} \in (0, \infty)$, $x_{\ast, n} \in \mathbb{R}^{d}$, such that
\begin{equation}\label{15.5}
e^{i \gamma_{\ast, n}} e^{ix \cdot \xi_{\ast, n}} \lambda_{\ast, n}^{d/2} u(t_{n}, \lambda_{\ast, n} x + x_{\ast, n}) \rightharpoonup Q, \qquad \text{weakly in} \qquad L^{2}.
\end{equation}
By $(\ref{15.1})$ and $(\ref{15.3})$, this can be upgraded to convergence in $L^{2}$, which implies $\| u \|_{L^{2}} = \| Q \|_{L^{2}}$, which proves the theorem.
\end{proof}

In fact, it is possible to say more. Suppose $u_{0}$ does not lie in $H^{1}$, but only in $L^{2}$, but we have uniform bounds on the length of the intervals for which local well-posedness of $(\ref{1.1})$ holds. The Liouville theorem still holds.
\begin{theorem}\label{t15.3}
Let $u_{0} \in L^{2}(\mathbb{R}^{d})$ and suppose $\| u_{0} \|_{L^{2}} \leq \| Q \|_{L^{2}} + \alpha$, for some $0 < \alpha < \alpha_{0} \ll 1$. Suppose that a solution $u(t)$ to $(\ref{1.1})$ is defined for all $t \in \mathbb{R}$ and for some $c_{1}, c_{2} > 0$,
\begin{equation}\label{15.6}
\aligned
\sup_{t_{0} \in \mathbb{R}} \| u \|_{L_{t,x}^{\frac{2(d + 2)}{d}}([t_{0}, t_{0} + 1] \times \mathbb{R}^{d})} \leq c_{2}, \\
\inf_{t_{0} \in \mathbb{R}} \| u \|_{L_{t,x}^{\frac{2(d + 2)}{d}}([t_{0}, t_{0} + 1] \times \mathbb{R}^{d})} \geq c_{1}.
\endaligned
\end{equation}
Also suppose that for all $t \in \mathbb{R}$ there exists $x(t) \in \mathbb{R}^{d}$ such that
\begin{equation}\label{15.7}
v(t, x) = u(t, x + x(t)),
\end{equation}
satisfies
\begin{equation}\label{15.8}
\forall \epsilon_{0} > 0, \qquad \exists R_{0} > 0, \qquad \forall t \in \mathbb{R}, \qquad \int_{|x| > R_{0}} |v(t, x)|^{2} dx \leq \epsilon_{0}.
\end{equation}
Then there exists $\alpha_{0} > 0$ sufficiently small such that if $\alpha < \alpha_{0}$, $u$ should be in the form $(\ref{1.2})$.
\end{theorem}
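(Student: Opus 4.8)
The plan is to follow the proof of Theorem~\ref{t15.2}, with the uniform spacetime bounds $(\ref{15.6})$ playing the role of the uniform $H^{1}$ bound $(\ref{15.1})$. First I would pin down the mass of $u$: if $\| u_{0} \|_{L^{2}} < \| Q \|_{L^{2}}$, then by the global well-posedness and scattering theorem of \cite{dodson2015global} the solution $u$ scatters forward in time, so $\| u \|_{L_{t,x}^{2(d+2)/d}([0,\infty) \times \mathbb{R}^{d})} < \infty$, and in particular $\| u \|_{L_{t,x}^{2(d+2)/d}([t_{0}, t_{0}+1] \times \mathbb{R}^{d})} \to 0$ as $t_{0} \nearrow \infty$, contradicting the lower bound in $(\ref{15.6})$. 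Combined with the hypothesis $\| u_{0} \|_{L^{2}} \leq \| Q \|_{L^{2}} + \alpha$, this gives $\| Q \|_{L^{2}} \leq \| u_{0} \|_{L^{2}} \leq \| Q \|_{L^{2}} + \alpha$.

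Next, since $u$ is assumed defined for all $t \in \mathbb{R}$ its maximal interval of existence is $I = \mathbb{R}$; moreover, the upper bound in $(\ref{15.6})$ and H{\"o}lder's inequality give $\| u \|_{L_{t,x}^{2(d+2)/d}([t_{0}, t_{0}+T] \times \mathbb{R}^{d})} \lesssim c_{2} T^{d/(2(d+2))}$ for all $t_{0} \in \mathbb{R}$ and all $T>0$, so $u$ has finite spacetime norm on every compact time interval, while the lower bound in $(\ref{15.6})$ shows $u$ scatters neither forward nor backward in time. Hence the sequential convergence results of \cite{fan20182}, \cite{dodson20202}, and \cite{dodson20212} apply: there are $t_{n} \nearrow \infty$ and parameters $\gamma_{\ast,n} \in \mathbb{R}$, $\xi_{\ast,n} \in \mathbb{R}^{d}$, $\lambda_{\ast,n} \in (0,\infty)$, $x_{\ast,n} \in \mathbb{R}^{d}$ with
\begin{equation}
e^{i \gamma_{\ast,n}} e^{ix \cdot \xi_{\ast,n}} \lambda_{\ast,n}^{d/2} u(t_{n}, \lambda_{\ast,n} x + x_{\ast,n}) \rightharpoonup Q \qquad \text{weakly in } L^{2}.
\end{equation}

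The key step, and the one I expect to be the main obstacle, is to upgrade this weak convergence to strong convergence in $L^{2}$; by conservation of mass and the $L^{2}$-isometry of the rescaling, strong convergence forces $\| u_{0} \|_{L^{2}} = \| Q \|_{L^{2}}$. In Theorem~\ref{t15.2} this upgrade was supplied by the uniform $H^{1}$ bound; here it must instead be extracted from $(\ref{15.6})$ and $(\ref{15.8})$. The only obstructions to strong $L^{2}$ convergence of $e^{i \gamma_{\ast,n}} e^{ix \cdot \xi_{\ast,n}} \lambda_{\ast,n}^{d/2} u(t_{n}, \lambda_{\ast,n} x + x_{\ast,n})$ to $Q$ are loss of mass to spatial infinity, to arbitrarily high or low frequency, or to a different spatial scale. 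The tightness hypothesis $(\ref{15.8})$ rules out the first (after verifying that the translation parameter $x_{\ast,n}$ stays comparable to $x(t_{n})$ and the scale $\lambda_{\ast,n}$ comparable to the natural scale of $u$), while the uniform spacetime control $(\ref{15.6})$, fed into the inverse-Strichartz and concentration-compactness machinery of \cite{dodson2015global} and \cite{killip2009characterization}, rules out the remaining two; together these yield the strong convergence, hence $\| u_{0} \|_{L^{2}} = \| Q \|_{L^{2}}$.

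Finally, with $\| u_{0} \|_{L^{2}} = \| Q \|_{L^{2}}$ in hand, Theorem~\ref{t1.3} (and Theorem~\ref{t1.2} when $d = 1$) shows that $u$ is either a soliton, given by $(\ref{1.3.1})$, or a pseudoconformal transformation of a soliton, given by $(\ref{1.5})$. The latter is defined only on a half-line $(-\infty, T)$ and blows up as $t \nearrow T$, contradicting $I = \mathbb{R}$. Hence $u$ is a soliton, i.e. it has the form $(\ref{1.2})$ (equivalently $(\ref{1.3.1})$ in dimensions $d \geq 2$), which completes the argument.
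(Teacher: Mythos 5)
Your first step (ruling out $\| u_{0} \|_{L^{2}} < \| Q \|_{L^{2}}$ via scattering and then invoking the sequential convergence of \cite{fan20182}, \cite{dodson20202}, \cite{dodson20212} to get weak $L^{2}$ convergence to $Q$ along a sequence $t_{n}$) matches the paper, as does the final observation that once the mass equals $\| Q \|_{L^{2}}$, Theorem~\ref{t1.3} together with global existence forces the soliton form $(\ref{1.3.1})$ rather than the pseudoconformal profile $(\ref{1.5})$.

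However, the heart of your argument --- upgrading the weak convergence $(\ref{15.5})$ to strong $L^{2}$ convergence by ruling out escape to spatial infinity, to high/low frequency, and to a bad scale --- has a genuine gap, and it is precisely the step the paper handles differently. The tightness hypothesis $(\ref{15.8})$ (after verifying compatibility of $x_{\ast,n}$, $\lambda_{\ast,n}$ with $x(t)$, which is legitimate) does rule out escape to spatial infinity, but it does nothing to prevent loss of mass to arbitrarily high frequency at a bounded spatial location, and the spacetime bound $(\ref{15.6})$ does not obviously forbid this either: a small high-frequency bubble attached near time $t_{n}$ would disperse and would not violate either the upper or the lower bound in $(\ref{15.6})$. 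Invoking ``inverse-Strichartz and concentration-compactness'' does not close the gap, since profile decomposition only forces the residual to vanish in the scattering norm $L_{t,x}^{2(d+2)/d}$, not in $L^{2}$, so it cannot by itself give the strong $L^{2}$ convergence you need. Indeed, directly proving strong $L^{2}$ convergence is logically equivalent to proving $\| u_{0} \|_{L^{2}} = \| Q \|_{L^{2}}$, and you cannot extract this from compactness considerations alone without using the dynamics more seriously.

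The paper circumvents this as follows. It only proves strong convergence of the frequency-truncated pieces, i.e.\ $\| P_{\leq N}(w_{n} - Q) \|_{L^{2}} \to 0$ for each fixed $N$ (equation $(\ref{15.9})$), which follows cheaply from $(\ref{15.8})$ plus weak convergence, and then extracts $N_{n} \to \infty$ so that $(\ref{15.11})$ holds. It then sets $u_{n}(0) = w_{n}$, evolves, and by $(\ref{15.11})$ and perturbation theory writes $u_{n}(t) = e^{it}Q + v_{n}(t) + r_{n}(t)$ on $[0, T_{n}]$ with $T_{n} \nearrow \infty$, $\| v_{n} \|_{L_{t,x}^{2(d+2)/d}([0,T_{n}])} \lesssim 1$, and $\| r_{n} \|_{L_{t,x}^{2(d+2)/d}([0,T_{n}])} \to 0$. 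The tightness $(\ref{15.8})$ then forces the ``extra'' mass $\| u_{0} \|_{L^{2}}^{2} - \| Q \|_{L^{2}}^{2}$ carried by $v_{n}(t)$ to live in a fixed bounded region for every $t$, so H\"older gives the pointwise-in-time lower bound $\| v_{n}(t) \|_{L_{x}^{2(d+2)/d}} \gtrsim \| u_{0} \|_{L^{2}} - \| Q \|_{L^{2}}$, which after integrating over $[0, T_{n}]$ contradicts the uniform spacetime bound on $v_{n}$ once $\| u_{0} \|_{L^{2}} > \| Q \|_{L^{2}}$. In short: the paper does not claim strong $L^{2}$ convergence; it derives a contradiction from the combination of $(\ref{15.6})$, $(\ref{15.8})$, and perturbation theory near the soliton, and you would need to supply an argument of this flavor to make your step go through.
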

\begin{remark}
Note that $(\ref{15.1})$ and $(\ref{15.3})$ imply $(\ref{15.6})$.
\end{remark}
\begin{proof}
Once again, it suffices to only consider initial data that satisfy $(\ref{15.4})$. Once again, $(\ref{15.5})$ holds for a sequence $t_{n} \nearrow \infty$.

We claim that for any $N$,
\begin{equation}\label{15.9}
\| P_{\leq N} (e^{i \gamma_{\ast, n}} e^{ix \cdot \xi_{\ast, n}} \lambda_{\ast, n}^{d/2} u(t_{n}, \lambda_{\ast, n} x + x_{\ast, n}) - Q(x)) \|_{L^{2}} \rightarrow 0.
\end{equation}
Otherwise, by $(\ref{15.8})$,
\begin{equation}\label{15.10}
P_{\leq N} (e^{i \gamma_{\ast, n}} e^{ix \cdot \xi_{\ast, n}} \lambda_{\ast, n}^{d/2} u(t_{n}, \lambda_{\ast, n} x + x_{\ast, n}) - Q(x)) \rightharpoonup f \neq 0, \qquad \text{weakly in} \qquad L^{2},
\end{equation}
which would contradict $(\ref{15.5})$. Therefore, there exists a sequence $N_{n} \nearrow \infty$ such that
\begin{equation}\label{15.11}
\| P_{\leq N_{n}} (e^{i \gamma_{\ast, n}} e^{ix \cdot \xi_{\ast, n}} \lambda_{\ast, n}^{d/2} u(t_{n}, \lambda_{\ast, n} x + x_{\ast, n}) - Q(x)) \|_{L^{2}} \rightarrow 0.
\end{equation}

Next, since $(\ref{15.6})$ implies that $u$ blows up in both time directions, for $\alpha_{0}$ sufficiently small, Theorem $\ref{t1.3}$ combined with standard perturbative arguments implies that $u$ is close to a member of the soliton family $(\ref{1.3.1})$ for all $t \in \mathbb{R}$. Furthermore, $(\ref{15.6})$ implies that $\lambda(t) \sim 1$ for all $t \in \mathbb{R}$. Therefore, let $u_{n}(t)$ be the solution to $(\ref{1.1})$ with initial data of the form
\begin{equation}\label{15.12}
u_{n}(0) = e^{i \gamma_{\ast, n}} e^{ix \cdot \xi_{\ast, n}} \lambda_{\ast, n}^{d/2} u(t_{n}, \lambda_{\ast, n} x + x_{\ast, n}).
\end{equation}
Then by $(\ref{15.11})$ and standard perturbative arguments, there exists a sequence $T_{n} \nearrow \infty$ such that
\begin{equation}\label{15.13}
u_{n}(t) = e^{it} Q + v_{n}(t) + r_{n}(t), \qquad \text{where} \qquad \| v_{n} \|_{L_{t,x}^{\frac{2(d + 2)}{d}}([0, T_{n}] \times \mathbb{R}^{d})} \lesssim 1, \qquad \| r_{n} \|_{L_{t,x}^{\frac{2(d + 2)}{d}}([0, T_{n}] \times \mathbb{R}^{d})} \searrow 0.
\end{equation}
However, by H{\"o}lder's inequality and $(\ref{15.8})$, for $n$ sufficiently large,
\begin{equation}\label{15.14}
\| v_{n}(t) \|_{L^{\frac{2(d + 2)}{d}}(\mathbb{R}^{d})} \gtrsim (\| u_{0} \|_{L^{2}} - \| Q \|_{L^{2}}),
\end{equation}
for any $t \in [0, T_{n}]$, with lower bound independent of $n$. This gives a contradiction for $n$ sufficiently large, when $\| u_{0} \|_{L^{2}} > \| Q \|_{L^{2}}$. When $\| u_{0} \|_{L^{2}} = \| Q \|_{L^{2}}$, apply Theorem $\ref{t1.3}$.
\end{proof}

\section*{Acknowledgement}
The author was partially supported by NSF Grant DMS--$1764358$. The author was also greatly helped by many stimulating discussions with Frank Merle at Cergy-Pontoise and University of Chicago, as well as his constant encouragement to pursue this problem. The author would also like to recognize the many helpful discussions that he had with Svetlana Roudenko and Anudeep Kumar Arora, both at George Washington University and Florida International University.

The author was also greatly assisted by discussions of the nonlinear Schr{\"o}dinger equation with his PhD student, Dr. Zehua Zhao. The author also gratefully acknowledges discussions with Chenjie Fan and Jason Murphy at Oberwolfach, and Robin Neumayer at the Institute for Advanced Study.

The author would also like to thank Jonas L{\"u}hrmann and Cristian Gavrus for many helpful discussions on the related mass-critical generalized KdV equation.

\bibliography{biblio}
\bibliographystyle{alpha}

\end{document}